\newtheorem{thm}{Theorem}[section]
\newtheorem{cor}[thm]{Corollary} 
\newtheorem{lem}[thm]{Lemma}
\newtheorem{prop}[thm]{Proposition} 
\newtheorem{Def}[thm]{Definition} 
\newtheorem{ex}[thm]{Example}
\numberwithin{equation}{section}
\newcommand{\ep}{\varepsilon}
\newcommand{\al}{\alpha}
\newcommand{\vp}{\varphi}
\newcommand{\bs}{\backslash}
\newcommand{\ol}{\overline}
\newcommand{\ti}{\widetilde}
\newcommand{\wh}{\widehat}
\newcommand{\diam}{\operatorname{diam}}
\newcommand{\inte}{\operatorname{int}}
\newcommand{\Lip}{\operatorname{Lip}}
\newcommand{\N}{{\mathbb N}}
\newcommand{\R}{{\mathbb R}}
\newcommand{\Q}{{\mathbb Q}}
\newcommand{\cD}{{\mathcal D}}
\newcommand{\cC}{{\mathcal C}}
\newcommand{\cF}{{\mathcal F}}
\begin{document}

\title{Nonlinear biseparating maps}

\begin{abstract}
An additive map $T$  acting between spaces of vector-valued functions is said to be biseparating if $T$ is  a bijection so that $f$ and $g$ are disjoint if and only if $Tf$ and $Tg$ are disjoint.
Note that an additive bijection retains $\Q$-linearity.
For a general nonlinear map $T$, the definition of biseparating given above turns out to be too weak to determine the structure of $T$.
In this paper, we propose a revised definition of biseparating maps for general nonlinear operators acting between spaces of vector-valued functions, which coincides with the previous definition for additive maps.  Under some mild assumptions on the function spaces involved, it turns out that a map is biseparating if and only if it is  locally determined.   We then delve deeply into some specific function spaces -- spaces of continuous functions, uniformly continuous functions and Lipschitz functions -- and characterize the  biseparating maps acting on them.
As a by-product, certain forms of automatic continuity are obtained.  We also prove some finer properties of biseparating maps in the cases of uniformly continuous and Lipschitz functions.
\end{abstract}

\author{Xianzhe Feng}
\author{Denny H.\ Leung}
\address{Department of Mathematics, National University of Singapore, Singapore 119076}

\email{a0120755@nus.edu.sg, dennyhl@u.nus.edu}

\subjclass[2010]{46E15, 46E40, 47B38, 47H30, 53E35}
\keywords{Nonlinear operators, disjointness preserving, automatic continuity, uniformly continuous functions, Lipschitz functions}

\maketitle

\tableofcontents

\section{Introduction}\label{S0}

Many function spaces of practical interest are also algebras and/or lattices.  An important example is $C(X)$, the space of continuous real-valued functions on a compact Hausdorff space. 
Hence, a natural problem in the course of understanding the structure of the space $C(X)$ is to  characterize the algebraic and/or lattice isomorphisms on it.
The classical solutions  were given by Gelfand and Kolmogorov \cite{GK} and Kaplansky \cite{K} respectively.
An in depth exposition of investigations into the algebraic structure of $C(X)$ and much more can be found in the classic monograph \cite{GJ}.
Subsequent research has tied these two strands together in the form of the disjointness structure of the function space $C(X)$.
Specifically, algebraic or lattice homomorphisms are disjointness preserving; they map disjoint functions to disjoint functions.  An algebraic or lattice isomorphism is biseparating; that is, it is a bijection $T$ so that both $T$ and $T^{-1}$ are disjointness preserving.  Moreover, generalization to disjointness preserving or biseparating maps allows for extension to function spaces that are neither algebras nor lattices, and even to vector-valued functions.
Copius research has been devoted to the study of disjointness preserving and biseparating maps on various function spaces; see, e.g., \cite{A}-\cite{AJ2}, \cite{BBH}, \cite{GJW}, \cite{HBN}-\cite{J-V W}, \cite{L}.
As far as the authors are aware of, the study  of biseparating maps thus far has been confined to linear or at least additive maps.  Since additive bijective maps are $\Q$-linear, such maps are not far removed from the linear world.
In this paper, we initiate the study of general nonlinear biseparating maps on spaces of vector-valued functions.
The following example shows that the definition of ``biseparating'' needs to be adjusted in order to obtain meaningful results.

\bigskip

\noindent{\bf Example}.  Let $A$ be the set of all functions $f\in C[0,1]$ so that the set $\{t\in [0,1]: f(t) \neq 0\}$ is dense in $[0,1]$.
Let  $T:C[0,1]\to C[0,1]$ be a map such that $T$ maps $A$ bijectively onto itself and that $Tf = f$ if $f\neq A$.
Then $T$ is a bijection so that $f\cdot g = 0 \iff Tf\cdot Tg = 0$.

\bigskip

The example shows that the definition of ``biseparating'' used for linear or additive maps is too weak when applied to general nonlinear maps.  In the next section, we propose a revised definition of ``biseparating'' for nonlinear maps.  The definition reduces to the usual one for additive maps.  Moreover, with the revised definition, a satisfactory theory of nonlinear biseparating maps arise, subject to some mild assumptions.  See the paragraph preceding Lemma \ref{l1.21}.  The theory of nonlinear biseparating maps is somewhat related to the theory or order isomorphisms developed in \cite{LT}.  It also partly generalizes the notion of ``nonlinear superposition operators''.  We refer to \cite{AZ} for a comprehensive study of the latter types of operators.

Let us give an overview of the content of the paper.  As mentioned, the definition of ``nonlinear biseparatimg maps'' is given in \S \ref{s1}. Under the mild assumptions of ``basic'' and ``compatible'', the fundamental characterization theorem of nonlinear biseparating maps (Theorem \ref{t5}) is obtained.  The theorem shows that a nonlinear bijective operator is biseparating if and only if it is  ``locally determined''.  For an exposition of some applications of locally determined operators to operator functional equations, particularly on spaces of differentiable functions, refer to \cite{KM}.
The characterization theorem applies in particular to a number of  familiar (vector-valued) function spaces such as spaces of continuous, uniformly continuous, Lipschitz and differentiable functions.
We would like to point out a general resemblance of Theorem \ref{t5} with the fundamental characterization theorem for ``nonlinear order isomorphisms'' \cite[Theorem 2.11]{LT}.
Indeed, our study of nonlinear biseparating maps is motivated and informed by the study of nonlinear order isomorphisms at various points.  However, the lack of an order makes many of the arguments more difficult in the present case, especially for uniformly continuous and Lipschitz functions.
For further information on nonlinear order isomorphisms on function spaces, we refer to \cite{LT} and the references therein.

\S \ref{s2} studies nonlinear biseparating maps between spaces of vector-valued continuous or bounded continuous functions.
One of the main results is Theorem \ref{t2.8}, which shows that if $X,Y$ are realcompact spaces and $E,F$ are Hausdorff topological vector spaces, and there is a biseparating map $T:C(X,E)\to C(Y,F)$, then $X$ and $Y$ are homeomorphic.  With reference to the classical Gelfand-Kolmogorov and Kaplansky theorems, one sees that one needs rather much less than the full algebraic or lattice structure of $C(X)$ to determine the topology of $X$.

From \S\ref{s3} onwards, we focus on metric spaces $X$ and $Y$.
In the course of \S\ref{s3} and \S\ref{s4}, full representations of biseparating maps between spaces of continuous, uniformly continuous and Lipschitz functions defined on metric spaces are obtained.  See Propositions \ref{p4.2}, \ref{p4.3} and \ref{p4.4}. \S\ref{s5} revisits spaces of continuous functions, this time defined on metric spaces.
Complete characterizations of nonlinear biseparating maps are obtained; see Theorems \ref{t5.4} and \ref{t5.5}.
We also prove an automatic continuity result Theorem \ref{t5.6}.

\S7 is concerned with nonlinear biseparating maps between spaces of uniformly continuous functions.
Characterization of such maps is carried out in two stages.  First it is shown that a biseparating map induces a uniform homeomorphism of the underlying metric spaces.  The second part involves solving the ``section problem'': determining the maps $\Xi$ so that $Sf(x) = \Xi(x,f(x))$ is uniformly continuous whenever the input function $f$ is uniformly continuous.  Refer to Theorems \ref{t6.7.1} and \ref{t6.7.2}.
From these characterization theorems, one can also obtain an automatic continuity result (Theorem \ref{t6.9}).
A classical result of Atsuji \cite{At} and Hejcman \cite{H}, rediscovered in \cite{O'F}, states that all uniformly continuous functions on a metric space $X$ are bounded if and only if $X$ is Bourbaki bounded (see definition in \S7.2).  Theorem \ref{t6.10} generalizes this result.  It shows that there is a biseparating map from $U(X,E)$ onto a space  $U_*(Y,F)$ (the space of bounded uniformly continuous functions) if and only if $X$ is Bourbaki bounded.

\S \ref{s8} focuses on spaces of Lipschitz functions. First it is shown that we may reduce to considering spaces $\Lip(X,E)$, where $X$ is bounded metric (Proposition \ref{p6.2}).
Making use of the Baire Catergory Theorem and some intricate combinatorial arguments, it is then shown that a biseparating map between vector-valued Lipschitz spaces defined on bounded complete metric spaces induces a Lipschitz homeomorphism between the underlying metric spaces (Theorem \ref{t7.5}).
Next, the section problem for Lipschitz functions is solved (Theorem \ref{t7.7}), which enables the characterization of nonlinear biseparating maps between spaces of Lipschitz functions (Theorem \ref{t7.8}).
Suppose that $\Xi$ is a ``Lipschitz section'', i.e., the function $\Xi(x,f(x))$ is a Lipschitz function of $x$ whenever $f$ is Lipschitz.  It is known that even if $x_0$ is an accumulation point, the function $\Xi(x_0,\cdot)$ need not be continuous with respect to the second variable.  Nevertheless, exploiting the Baire Category Theorem, we show that $\Xi(x_0,\cdot)$ is continuous on an open dense set if $x_0$ is an accumulation point (Theorem \ref{t7.11}).

The final section \S \ref{s9} determines the biseparating maps that act between a space of uniformly continuous functions on the one hand and a space of Lipschitz functions on the other.  The main results (Theorems \ref{t6.6} and \ref{t6.7}) show that there is a certain rigidity, so that the existence of such maps imply very strong conditions on the underlying metric spaces.

To end this introduction, we note that linear or nonlinear biseparating maps acting between spaces of differentiable functions seem to be rather more difficult to deal with.  A notable achievement in this regard is the paper by Araujo \cite{A3}.  We intend to address some of the problems raised therein in a future paper.

\section{Generalities}\label{s1}

Let $X, Y$ be sets and let $E, F$ be (real or complex) vector spaces.  
Suppose that $A(X,E)$ is a vector subspace of $E^X$ and $A(Y,F)$ is a vector subspace of $F^Y$.
If $f\in A(X,E)$, let the {\em carrier} of $f$ be the set 
\[C(f) = \{x\in X: f(x) \neq 0\}.\]   
Set $\cC(X) =  \{C(f):f\in A(X,E)\}$.
For functions $f,g,h\in A(X,E)$, say that $f$ and $g$ are {\em disjoint with respect to} $h$, $f\perp_h g$, if $C(f-h) \cap C(g-h) = \emptyset$. We abbreviate $\perp_0$ as $\perp$.
The {\em support} of a function $f\in A(X,E)$ is the set
\[ \wh{C}(f) = X\bs \bigcup\{C(g): g\in A(X,E), f\perp g\} = X\bs \bigcup\{C\in \cC(X): C \cap C(f) = \emptyset\}.\]
Obviously $C(f) \subseteq \wh{C}(f)$. Furthermore, if $f_1,f_2\in A(X,E)$ and $f_1=f_2$ on $C(f)$, then $f_1 = f_2$ on $\wh{C}(f)$.
 Set $\cD(X) = \{\wh{C}(f): f\in A(X,E)\}$.  Similar definitions apply to $A(Y,F)$.
 A map $T:A(X,E) \to A(Y,F)$  is {\em biseparating} if it is a bijection and for any $f,g,h \in A(X,E)$,
\[ f\perp_hg \iff Tf\perp_{Th}Tg.\]
For the remainder of the section, let $T:A(X,E)\to A(Y,F)$ be a  given biseparating map.
The following proposition, although simple to state and easy to prove, turns out to be  key to understanding biseparating maps.

\begin{prop}\label{p1}(Araujo's Lemma, cf \cite[Lemma 4.2]{A})
For any $f,g,h\in A(X,E)$, 
\[ \wh{C}(f-h) \subseteq \wh{C}(g-h) \iff \wh{C}(Tf-Th) \subseteq \wh{C}(Tg-Th).\]
\end{prop}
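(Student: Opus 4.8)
The plan is to reduce the biconditional to a purely ``disjointness-theoretic'' description of the containment $\wh{C}(\phi) \subseteq \wh{C}(\psi)$ that is manifestly transported by $T$. Two elementary facts will do the bookkeeping. First, disjointness with respect to $h$ is just ordinary disjointness after translation: for $\phi,\psi,h\in A(X,E)$ one has $\phi\perp_h\psi \iff (\phi-h)\perp(\psi-h)$, and since $A(X,E)$ is a vector subspace, $\chi\mapsto\chi+h$ is a bijection of $A(X,E)$ onto itself. Second, since $T$ is a bijection, $Tk$ ranges over all of $A(Y,F)$ as $k$ ranges over $A(X,E)$.

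The heart of the matter is the following characterization, valid in $A(X,E)$ and, identically, in $A(Y,F)$: for all $\phi,\psi\in A(X,E)$,
\[ \wh{C}(\phi)\subseteq\wh{C}(\psi) \iff \bigl(\text{for every }\chi\in A(X,E),\ \psi\perp\chi \implies \phi\perp\chi\bigr). \]
For the forward implication, if $\psi\perp\chi$ then $C(\chi)\subseteq\bigcup\{C(g):\psi\perp g\}=X\bs\wh{C}(\psi)\subseteq X\bs\wh{C}(\phi)$, so $C(\chi)\cap C(\phi)=\emptyset$ because $C(\phi)\subseteq\wh{C}(\phi)$; that is, $\phi\perp\chi$. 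For the converse, observe that $\wh{C}(\phi)\subseteq\wh{C}(\psi)$ is equivalent to $\bigcup\{C(g):\psi\perp g\}\subseteq\bigcup\{C(g):\phi\perp g\}$, and this follows at once by applying the hypothesis to each $g$ with $\psi\perp g$.

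It remains to assemble the pieces. Applying the characterization with $\phi=f-h$ and $\psi=g-h$, and rewriting $(g-h)\perp\chi$ as $g\perp_h(\chi+h)$ with $k:=\chi+h$, the left-hand side $\wh{C}(f-h)\subseteq\wh{C}(g-h)$ becomes: for every $k\in A(X,E)$, $g\perp_h k \implies f\perp_h k$. Since $T$ is biseparating, $g\perp_h k\iff Tg\perp_{Th}Tk$ and $f\perp_h k\iff Tf\perp_{Th}Tk$, and by bijectivity of $T$ this is equivalent to: for every $\ell\in A(Y,F)$, $Tg\perp_{Th}\ell \implies Tf\perp_{Th}\ell$. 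By the characterization applied in $A(Y,F)$ with $\phi=Tf-Th$ and $\psi=Tg-Th$, this says precisely $\wh{C}(Tf-Th)\subseteq\wh{C}(Tg-Th)$, which completes the proof.

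I do not anticipate a serious obstacle; the argument is short once the right reformulation is found. The only point demanding a little care is keeping the quantifier ranges correct, first under the translation $\chi\mapsto\chi+h$ and then under $T$, both of which are covered by the bijectivity observations above.
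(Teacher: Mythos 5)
Your proof is correct. The characterization you isolate, namely $\wh{C}(\phi)\subseteq\wh{C}(\psi)$ if and only if every $\chi$ disjoint from $\psi$ is disjoint from $\phi$, is exactly right (both directions follow from the definition of $\wh{C}$ as the complement of $\bigcup\{C(\chi):\chi\perp\psi\}$ together with $C(\phi)\subseteq\wh{C}(\phi)$), and the two translation bijections $\chi\mapsto\chi+h$ on $A(X,E)$ and $\ell\mapsto\ell-Th$ on $A(Y,F)$, combined with bijectivity of $T$, let you transport the universally quantified condition across $T$ intact. The underlying mechanism is the same as the paper's: the paper also detects the failure of the containment by producing a test function $v$ with $v\perp Tg-Th$ and $v\not\perp Tf-Th$, and pulls it back via $u=T^{-1}(v+Th)$, which is precisely your translation trick in pointwise, contrapositive form. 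What your version buys is organization: by stating the disjointness-theoretic characterization as a standalone equivalence, you obtain both implications of the proposition in a single chain of biconditionals, whereas the paper proves one implication by contradiction and invokes symmetry for the other. The one step you compress is the final appeal to the characterization in $A(Y,F)$, which again requires rewriting $(Tg-Th)\perp\chi'$ as $Tg\perp_{Th}(\chi'+Th)$; you flag this under ``keeping the quantifier ranges correct,'' and it is indeed covered by the same bijectivity observation, so nothing is missing.
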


\begin{proof}
Suppose that $\wh{C}(f-h) \subseteq \wh{C}(g-h)$.
Assume that there exists $z\in \wh{C}(Tf-Th) \bs \wh{C}(Tg-Th)$.
There exists $v\in A(Y,F)$ so that $v\perp Tg-Th$ and $z\in C(v)$.
Since $z\in \wh{C}(Tf-Th)$, $v\not\perp Tf-Th$.  
Set $u = T^{-1}(v+Th) \in A(X,E)$.  Then $v = Tu - Th$.  Hence
\[ Tu -Th = v \perp Tg-Th \implies Tu \perp_{Th} Tg \implies u\perp_h g\implies u-h \perp g-h.\]
Therefore,
\[ C(u-h) \subseteq (\wh{C}(g-h))^c \subseteq (\wh{C}(f-h))^c \implies u-h \perp f-h \implies u\perp_h f.\]
It follows that 
\[ Tu \perp_{Th}Tf \implies v = Tu-Th \perp Tf-Th.\]
This contradicts that fact that $v\not\perp Tf-Th$.  This completes the proof for the forward implication ``$\implies$".  The reverse implication follows by symmetry.
\end{proof}


\begin{prop}\label{p2}
Let $f\in A(X,E)$ be given.  The map \[\theta_f: \wh{C}(h) \mapsto \wh{C}(T(f+h) -Tf)\] is a well-defined bijection from $\cD(X)$ onto $\cD(Y)$ that preserves set inclusion.  For any $f,g\in A(X,E)$ and any $U\in\cD(X)$, $f= g$ on $U$ if and only if $Tf = Tg$ on $\theta_f(U)$.
\end{prop}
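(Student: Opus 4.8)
The plan is to deduce everything from Araujo's Lemma (Proposition \ref{p1}) applied with a shifted "origin", namely with the role of $h$ played by $f$. First I would observe that for a fixed $f$, the map $h \mapsto f+h$ is a bijection of $A(X,E)$, and similarly the map $k \mapsto k + Tf$ is a bijection of $A(Y,F)$ (here I am using that these are vector spaces, so translation is available even though $T$ is only assumed biseparating, not linear). Thus $\theta_f$ is at least a well-defined map on the \emph{set of functions}: $h \mapsto T(f+h) - Tf$ is a bijection $A(X,E) \to A(Y,F)$, and I am then proposing to show that this descends to a bijection on the level of supports $\wh C(h) \mapsto \wh C(T(f+h)-Tf)$.

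The key step is to rewrite Proposition \ref{p1} in the translated coordinates. Given $h_1, h_2 \in A(X,E)$, set $g_i = f + h_i$, so $g_i - f = h_i$. Applying Proposition \ref{p1} with the triple $(g_1, g_2, f)$ in place of $(f,g,h)$ gives
\[
\wh C(h_1) \subseteq \wh C(h_2) \iff \wh C(T g_1 - Tf) \subseteq \wh C(Tg_2 - Tf),
\]
i.e.\ $\wh C(h_1) \subseteq \wh C(h_2) \iff \wh C(T(f+h_1) - Tf) \subseteq \wh C(T(f+h_2)-Tf)$. This single biconditional does all the work: taking it together with its version with $h_1, h_2$ swapped shows that $\wh C(h_1) = \wh C(h_2)$ if and only if $\wh C(T(f+h_1)-Tf) = \wh C(T(f+h_2)-Tf)$, which is exactly well-definedness \emph{and} injectivity of $\theta_f$ on $\cD(X)$; and the forward direction of the biconditional shows $\theta_f$ preserves inclusion. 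Surjectivity onto $\cD(Y)$ follows because $T(f+\cdot) - Tf$ is a surjection $A(X,E) \to A(Y,F)$: every $U' \in \cD(Y)$ has the form $\wh C(k)$ for some $k \in A(Y,F)$, and choosing $h = T^{-1}(k+Tf) - f$ gives $T(f+h) - Tf = k$, so $\theta_f(\wh C(h)) = U'$. Running the same argument with $T^{-1}$, $Tf$ in place of $T$, $f$ shows the inverse map is also inclusion-preserving, so $\theta_f$ is an order isomorphism of the posets $(\cD(X),\subseteq)$ and $(\cD(Y),\subseteq)$.

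For the last assertion, fix $f, g \in A(X,E)$ and $U = \wh C(h) \in \cD(X)$. I want: $f = g$ on $U$ iff $Tf = Tg$ on $\theta_f(U) = \wh C(T(f+h)-Tf)$. The natural route is to relate "$f=g$ on $\wh C(h)$" to a disjointness statement. Write $g = f + (g-f)$. Using the remark in the text just before Proposition \ref{p1} — that if $f_1 = f_2$ on $C(\varphi)$ then $f_1 = f_2$ on $\wh C(\varphi)$, together with the fact that $\wh C(h)$ depends only on $C(h)$ — one reduces "$g = f$ on $\wh C(h)$" to the condition that $C(g-f) \cap \wh C(h)$ is empty in the appropriate sense; more precisely I expect the clean formulation to be: $f = g$ on $\wh C(h)$ $\iff$ $\wh C\big((g-f)\big) \cap \wh C(h)$ contributes nothing, which one packages as $(f+h) - f = g' - f$ being $\perp$-related. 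Concretely, I would argue $f = g$ on $\wh C(h)$ iff $(g - f) \perp \psi$ for every $\psi$ with $C(\psi) \subseteq \wh C(h)$, and then translate this via the biseparating property (applied around the base point $f$, using that $Tf \perp_{Tf}$-type statements transfer) into the analogous statement for $Tg - Tf$ and $\theta_f(U)$. The main obstacle I anticipate is precisely this last step: carefully matching up the set-theoretic "agreement on a support set" with a $\perp_h$-disjointness condition that $T$ is known to preserve, since $\wh C(h)$ need not itself be the carrier of a single function and one must be a little careful about the difference between carriers $\cC(X)$ and supports $\cD(X)$. Everything before that is a direct transcription of Proposition \ref{p1}.
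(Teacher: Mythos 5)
The first half of your argument --- well-definedness, injectivity, surjectivity, and inclusion-preservation of $\theta_f$, all obtained by applying Proposition \ref{p1} to the triple $(f+h_1,\, f+h_2,\, f)$ and using $h=T^{-1}(k+Tf)-f$ for surjectivity --- is exactly the paper's proof and is correct.

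For the final assertion, however, you correctly identify the crux (converting ``$f=g$ on $\wh{C}(h)$'' into a disjointness relation that $T$ preserves) but you leave it as an anticipated obstacle rather than resolving it, and the formulation you float, ``$(g-f)\perp\psi$ for every $\psi$ with $C(\psi)\subseteq\wh{C}(h)$'', introduces a quantifier over auxiliary functions that you have no single $\perp_h$-statement to transfer through $T$. The gap is closed by the remark stated just before Proposition \ref{p1}: if two functions agree on $C(h)$, they agree on all of $\wh{C}(h)$. Consequently ``$f=g$ on $\wh{C}(h)$'' is equivalent to ``$f=g$ on $C(h)$'', i.e.\ to $C(g-f)\cap C(h)=\emptyset$, i.e.\ (since $h=(f+h)-f$) to the single relation $g\perp_f (f+h)$. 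This is exactly of the form the biseparating hypothesis handles, so it is equivalent to $Tg\perp_{Tf}T(f+h)$, that is, to $Tg=Tf$ on $C(T(f+h)-Tf)$; applying the same remark on the $Y$ side upgrades this to $Tg=Tf$ on $\wh{C}(T(f+h)-Tf)=\theta_f(U)$. No quantification over $\psi$ is needed --- only $\psi=h$ itself matters --- and your worry about $\wh{C}(h)$ not being a carrier evaporates once agreement on the support is reduced to agreement on the carrier.
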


\begin{proof}
By Proposition \ref{p1}, $\wh{C}(h_1) = \wh{C}(h_2)$ if and only if 
\[ \wh{C}((h_1+f)-f) = \wh{C}((h_2+f)-f) \iff \wh{C}(T(h_1+f) - Tf) = \wh{C}(T(h_2+f)-Tf).\]
This shows that the map $\theta_f$ is well-defined and injective.
Since any $g\in A(Y,F)$ can be written in the form $T(f+h) - Tf$ with $h = T^{-1}(g+Tf) -f$, $\theta_f$ is surjective.  It follows from Proposition \ref{p1} that  $\theta_f$ preserves set inclusion.

Finally, suppose that $U = \wh{C}(h) \in \cD(X)$.
Then $f= g$ on $U$ if and only if $g-f \perp h = (f+h) -f$, which in turn is equivalent to the fact that $Tg-Tf \perp T(f+h) - Tf$.  The last statement is easily seen to be equivalent to the fact that $Tg-Tf = 0$ on $\theta_f(\wh{C}(h))$.
\end{proof}

The idea behind Proposition \ref{p2} is that a biseparating map gives rise to a collection of ``set movers'' $\theta_f$.  
In order to make  the set mover $\theta_f$  independent of the function $f$, we impose two conditions on the function space $A(X,E)$.
Say that $A(X,E)$ is 
\begin{enumerate}
\item {\em basic} if whenever $x\in C_1\cap C_2$ for some $C_1, C_2\in \cC(X)$, then there exists $C\in \cC(X)$ so that $x\in C \subseteq C_1\cap C_2$; 
\item {\em compatible} if for any $f\in A(X,E)$, any $D\in \cD(X)$ and any point $x\notin D$, there exist $g\in A(X,E)$ and $C\in \cC(X)$ so that $x\in C$ and 
\[ g = \begin{cases} f &\text{on $C$},\\
                     0 &\text{on $D$}.\end{cases}\]
 \end{enumerate}

\begin{lem}\label{l1.21}
Suppose that $A(Y,F)$ is basic.  If $f, g\in A(Y,F)$ and $V\in \cD(Y)$ are such that $f = g$ on $V$ and $V\subseteq \wh{C}(g)$, then $V \subseteq \wh{C}(f)$.
\end{lem}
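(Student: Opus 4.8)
The plan is to show $V \subseteq \wh{C}(f)$ by contradiction: suppose $v \in V \setminus \wh{C}(f)$. By the definition of the support, $v \notin \wh{C}(f)$ means there is some $h \in A(Y,F)$ with $h \perp f$ and $v \in C(h)$; equivalently, there is $C_1 \in \cC(Y)$ with $v \in C_1$ and $C_1 \cap C(f) = \emptyset$. On the other hand, $V \subseteq \wh{C}(g)$ forces $v \in \wh{C}(g)$, so \emph{no} carrier in $\cC(Y)$ containing $v$ can be disjoint from $C(g)$; in particular $C_1 \cap C(g) \neq \emptyset$.

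Now I would bring in the hypothesis $f = g$ on $V$ together with $v \in V$ and the fact that $V \in \cD(Y)$, i.e.\ $V = \wh{C}(w)$ for some $w \in A(Y,F)$. The point is that $v$ lies in $C(w)$'s support, so there should be a carrier $C_2 \in \cC(Y)$ with $v \in C_2$ and $C_2$ "close to" $V$ — more precisely, I want a carrier containing $v$ on which $f$ and $g$ agree. The cleanest route: since $v \in V = \wh{C}(w)$ and $V$ is a support, one cannot in general get $v \in C_2 \subseteq V$ (supports need not be carriers), so instead I would use that $f - g$ vanishes on $V$, hence $f - g \perp w$, hence $C(f-g) \cap \wh{C}(w) = \emptyset$... wait — actually the right statement is $C(f-g) \subseteq \wh{C}(w)^c$. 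That tells us $v \notin C(f-g)$, i.e.\ $f(v) = g(v)$, but I need an open-type neighbourhood statement, which is exactly where \emph{basic} should enter.

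Here is where \textbf{basic} does the work. Intersect the carrier $C_1 \ni v$ (disjoint from $C(f)$) with a carrier witnessing $v \in C(g)$ — note $C_1 \cap C(g) \neq \emptyset$ but that does not yet put $v$ in the intersection; so instead pick $C_3 \in \cC(Y)$ with $v \in C_3$ coming from some function equal to... Actually the correct combinatorial move is: apply basicness to $v \in C_1 \cap C_3$ where $C_3$ is any carrier containing $v$ with the property that $g$ does \emph{not} vanish identically near $v$ — and this is guaranteed because $v \in \wh C(g)$ means every carrier of $v$ meets $C(g)$, so in particular $g \not\perp (\text{any bump at } v)$, yielding $v \in \wh C(g) \setminus (\text{carriers disjoint from } C(g))$. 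From $v \in C_1$ with $C_1 \cap C(f) = \emptyset$ and $f = g$ on $V \ni v$ with $V$ a support, I get $C(g) \cap C_1 \ne \emptyset$ producing a point $v' \in C_1$ with $g(v') \ne 0$; but on $V$ we have $f = g$, and... the contradiction is sharpened by shrinking via basic to land inside $C_1 \cap (\text{a carrier on which } f \equiv g)$ — since $f$ and $g$ agree on $V = \wh C(w) \supseteq C(w)$ and $C(w) \in \cC(Y)$, apply basic to $v$ (if $v \in C(w)$; otherwise $v \in \wh C(w)\setminus C(w)$ and one argues $f=g=0$ on $C_1 \cap C(w)$ anyway) to get $C \in \cC(Y)$, $v \in C \subseteq C_1 \cap C(w)$, on which $f$ vanishes (as $C \subseteq C_1$, disjoint from $C(f)$) but $g$ also vanishes (as $f = g$ on $C(w) \supseteq C$), contradicting that $C$ being a carrier of $v$ must meet $C(g)$ — in fact $v \in C \subseteq C(g)^c$ would give $v \notin \wh C(g)$, contradicting $v \in V \subseteq \wh C(g)$.

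The main obstacle is the bookkeeping when $v \in \wh C(w) \setminus C(w)$, since then $v$ itself need not sit in any carrier on which $f = g$ is literally asserted. I expect to handle this by the remark already recorded in the paper — "if $f_1 = f_2$ on $C(f)$ then $f_1 = f_2$ on $\wh C(f)$" — applied to $w$: since $f = g$ on $V = \wh C(w)$, in particular on $C(w)$, and then basicness lets me produce the carrier $C \subseteq C_1 \cap C(w)$ with $v \in C$ only when $v \in C(w)$; for $v$ in the support-boundary I instead observe directly that $C_1$ is a carrier of $v$ disjoint from $C(f)$, and derive $C_1 \cap C(g) = \emptyset$ by showing any point of $C_1 \cap C(g)$ would also lie in $\wh C(w)$... this last implication is the delicate one and is where I would slow down and argue carefully, possibly invoking Proposition~\ref{p1} or the explicit description $\wh C(f) = X \setminus \bigcup\{C \in \cC(X): C \cap C(f) = \emptyset\}$ once more.
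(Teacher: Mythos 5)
Your argument works only in the sub-case $v \in C(w)$, and the remaining case is a genuine gap, not just bookkeeping. When $v \in \wh{C}(w)\bs C(w)$, your fallback — proving $C_1 \cap C(g) = \emptyset$ by showing that any point of $C_1 \cap C(g)$ lies in $\wh{C}(w)$ — has no reason to succeed: nothing forces a point of $C_1 \cap C(g)$ into $V$, and without landing in $V$ you cannot invoke $f = g$ there. So as written the proof does not close.

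The fix, which is how the paper argues, is to stop insisting that the carrier produced by basicness contain $v$ itself. You already have everything you need: $v \in C_1$ and $v \in \wh{C}(w)$ together force $C_1 \cap C(w) \neq \emptyset$ (a carrier containing $v$ cannot be disjoint from $C(w)$, by the definition of $\wh{C}(w)$). Now apply basicness at \emph{any} point $z$ of $C_1 \cap C(w)$ to get a nonempty $C \in \cC(Y)$ with $z \in C \subseteq C_1 \cap C(w)$. On $C$ you have $f = 0$ (since $C \subseteq C_1$ and $C_1 \cap C(f) = \emptyset$) and hence $g = f = 0$ (since $C \subseteq C(w) \subseteq V$ and $f = g$ on $V$). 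Thus $C$ is a nonempty carrier disjoint from $C(g)$, so $C \cap \wh{C}(g) = \emptyset$; but $C \subseteq C(w) \subseteq \wh{C}(w) = V \subseteq \wh{C}(g)$. The contradiction is located at the carrier $C$ (equivalently at $z$), not at $v$, which is exactly why the troublesome case $v \notin C(w)$ never needs to be split off. Everything else in your outline — the choice of $C_1$, the use of $f=g$ on $V$, the observation that a carrier disjoint from $C(g)$ misses $\wh{C}(g)$ — matches the paper's proof.
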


\begin{proof}
Assume otherwise.  There is a point $y_1\in V$ so that $y_1\notin \wh{C}(f)$.
There exists $u\in A(Y,F)$ so that $y_1\in C(u)$ and $u\perp f$. Say $V = \wh{C}(v)$.
Since $y_1\in \wh{C}(v)$, $u\not\perp v$.  As $A(Y,F)$ is basic, there exists $C\in \cC(Y)$ so that $\emptyset\neq C\subseteq C(u)\cap C(v)$.
If $y\in C$, then $y \in C(u)$ and hence $f(y) = 0$.  Moreover, $y\in C(v) \subseteq V$ and hence $g(y) = f(y) = 0$.  This proves that $C\cap C(g) = \emptyset$.  Since $C\in \cC(Y)$, it follows that $C\cap \wh{C}(g) = \emptyset$.
This is impossible since $C$ is a nonempty subset of $C(v)\subseteq \wh{C}(v) = V\subseteq \wh{C}(g)$.
\end{proof}

\begin{prop}\label{p3}
Assume that $A(Y,F)$ is basic. Suppose that $f, g\in A(X,E)$ and $U \in \cD(X)$.  If $f = g$ on $U$, then $\theta_f(U) = \theta_g(U)$.
\end{prop}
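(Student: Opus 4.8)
The plan is to prove the two inclusions $\theta_f(U)\subseteq\theta_g(U)$ and $\theta_g(U)\subseteq\theta_f(U)$ separately.  Since the hypothesis ``$f=g$ on $U$'' is symmetric in $f$ and $g$ and so is the conclusion, it will suffice to establish one of them, say $\theta_f(U)\subseteq\theta_g(U)$, and then quote the same argument with $f$ and $g$ interchanged.  Throughout, I would write $U=\wh{C}(h)$, so that by the definition of the set movers $\theta_f(U)=\wh{C}(T(f+h)-Tf)$ and $\theta_g(U)=\wh{C}(T(g+h)-Tg)$.

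The first step is to extract one inclusion from Araujo's Lemma.  Because $C(h)\subseteq\wh{C}(h)=U$ and $f=g$ on $U$, at every point $x\in C(h)$ we have $(g+h)(x)-f(x)=(g(x)-f(x))+h(x)=h(x)\neq 0$; hence $C(h)\subseteq C((g+h)-f)$, which forces $\wh{C}(h)\subseteq\wh{C}((g+h)-f)$ directly from the definition of the support.  Rewriting $\wh{C}(h)$ as $\wh{C}((f+h)-f)$ and applying Proposition \ref{p1} to the triple $f+h,\ g+h,\ f$ then yields
\[ \theta_f(U)=\wh{C}(T(f+h)-Tf)\subseteq\wh{C}(T(g+h)-Tf). \]

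The second step is to replace $Tf$ by $Tg$ on the right-hand side, and this is where the hypothesis re-enters through Proposition \ref{p2}: since $f=g$ on $U$, that proposition gives $Tf=Tg$ on $\theta_f(U)$, so the functions $T(g+h)-Tf$ and $T(g+h)-Tg$ agree on $V:=\theta_f(U)$.  Combining this agreement with the inclusion $V\subseteq\wh{C}(T(g+h)-Tf)$ just obtained and invoking Lemma \ref{l1.21} (legitimate because $A(Y,F)$ is basic and $V=\theta_f(U)\in\cD(Y)$), we conclude $V\subseteq\wh{C}(T(g+h)-Tg)=\theta_g(U)$.  Interchanging $f$ and $g$ gives $\theta_g(U)\subseteq\theta_f(U)$, and hence equality.

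I expect the only genuinely non-routine point to be recognizing that Araujo's Lemma by itself is not enough: the reverse carrier inclusion $\wh{C}((g+h)-f)\subseteq\wh{C}(h)$ generally fails, since $f$ and $g$ may differ off $U$ so that $(g+h)-f$ is supported well beyond $\wh{C}(h)$, and one therefore obtains only one containment in the display above.  The role of Lemma \ref{l1.21}, together with the ``agreement is transported by $\theta_f$'' half of Proposition \ref{p2}, is precisely to repair this asymmetry.  Everything else is routine bookkeeping with carriers and supports.
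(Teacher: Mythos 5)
Your proof is correct and follows essentially the same route as the paper's: derive the carrier inclusion $\wh{C}(h)\subseteq\wh{C}((g+h)-f)$ from the hypothesis, push it through Araujo's Lemma (Proposition \ref{p1}), use the last part of Proposition \ref{p2} to swap the base point $Tf\leftrightarrow Tg$, and close with Lemma \ref{l1.21}. The only cosmetic difference is that you establish $\theta_f(U)\subseteq\theta_g(U)$ directly where the paper establishes $\theta_g(U)\subseteq\theta_f(U)$, and you invoke Proposition \ref{p1} explicitly where the paper cites the inclusion-preservation of $\theta_g$ (which is the same fact).
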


\begin{proof}
Let $U = \wh{C}(h)$.  Since $f= g$ on $U\supseteq C(h)$, $C(h)\subseteq C(f+h-g)$.
Hence $U \subseteq \wh{C}(f+h- g)$.
Thus 
\[ \theta_g(U) \subseteq \theta_g(\wh{C}(f+h- g)) = \wh{C}(T(f+h) - Tg)).\]
By Proposition \ref{p2}, $Tf = Tg$ on $\theta_g(U)$.  In other words,
\[ T(f+h) - Tf = T(f+h)-Tg \text{ on } \theta_g(U) \subseteq \wh{C}(T(f+h) - Tg)).\]
Therefore, by Lemma \ref{l1.21}, \[\theta_g(U) \subseteq \wh{C}(T(f+h) - Tf) = \theta_f(U).\]
The reverse inclusion follows by symmetry.
\end{proof}

\begin{prop}\label{p4}
Assume that $A(Y,F)$ is both basic and compatible. Let $f,g\in A(X,E)$ and let $U$ be  a set  in $\cD(X)$.  Then $\theta_g(U) = \theta_f(U)$.
\end{prop}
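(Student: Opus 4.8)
The plan is to normalise $T$, reduce to a single inclusion of supports, and then obtain a contradiction using the compatibility hypothesis. First, fix $f_{0}\in A(X,E)$ and pass to the map $T'(k):=T(k+f_{0})-Tf_{0}$; a routine check shows $T'$ is again biseparating, $T'0=0$, and $\theta^{T'}_{k}=\theta^{T}_{k+f_{0}}$ for every $k$. Hence "$\theta^{T}_{f}(U)=\theta^{T}_{g}(U)$ for all $f,g$" is equivalent to "$\theta^{S}_{0}(U)=\theta^{S}_{g}(U)$ for all biseparating $S$ with $S0=0$ and all $g$", so we may assume $T0=0$ and prove $\theta_{0}(U)=\theta_{g}(U)$ for all $g\in A(X,E)$ and $U=\wh{C}(h)\in\cD(X)$. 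When $T0=0$ one has $\theta_{0}(\wh{C}(h))=\wh{C}(Th)$, so the target is $\wh{C}(Th)=\wh{C}(T(g+h)-Tg)$. By symmetry it suffices to prove one inclusion, and since $C(\phi)\subseteq D'$ with $D'\in\cD(Y)$ already forces $\wh{C}(\phi)\subseteq D'$ (immediate from the definition of $\wh{C}$), it is enough to show that $Th(z)\ne 0$ implies $z\in\wh{C}(T(g+h)-Tg)$. (The elementary identity $\theta_{p}(\wh{C}(m))=\theta_{p+m}(\wh{C}(m))$, immediate from the definition of $\theta$ together with $\wh{C}(\phi)=\wh{C}(-\phi)$, is handy for shifting base points below.)

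So suppose $Th(z)\ne 0$ but $z\notin D:=\wh{C}(T(g+h)-Tg)$. Pick $v\in A(Y,F)$ with $z\in C(v)$ and $C(v)\cap C(T(g+h)-Tg)=\emptyset$, so that $T(g+h)=Tg$ on $C(v)$. Because $A(Y,F)$ is compatible, applying the compatibility condition to $Th$, the set $D$, and the point $z\notin D$, and then intersecting the resulting carrier with $C(v)$ (using the basic property), produces a carrier $C_{0}\ni z$ on which $T(g+h)=Tg$ and a function $\psi\in A(Y,F)$ with $\psi=0$ on $D$ and $\psi=Th$ on $C_{0}$. Put $k:=T^{-1}(\psi+Tg)\in A(X,E)$. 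Then $Tk=Tg$ on $D=\theta_{g}(\wh{C}(h))$, so Proposition \ref{p2} gives $k=g$ on $\wh{C}(h)$, and Proposition \ref{p3} then yields $\theta_{k}(V)=\theta_{g}(V)$ for every $V\in\cD(X)$ with $V\subseteq\wh{C}(h)$. On the other hand $Tk-Tg=\psi=Th$ on $C_{0}\ni z$, so the local description of the support operator (again a consequence of "basic") places $z$ in $\wh{C}(Tk-Tg)=\theta_{g}(\wh{C}(k-g))$. The remaining task is to identify a set $V\in\cD(X)$ with $V\subseteq\wh{C}(h)$ and $z\in\theta_{k}(V)$; granting this, $z\in\theta_{k}(V)=\theta_{g}(V)\subseteq\theta_{g}(\wh{C}(h))=D$, contradicting $z\notin D$. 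The reverse inclusion follows by interchanging the roles of $0$ and $g$.

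I expect the genuinely delicate point to be the location of such a $V\subseteq\wh{C}(h)$. The obstruction is that $T$ is nonlinear, so one may prescribe $T$ only along a single function at a time; the carriers furnished by compatibility must therefore be chosen so that the one auxiliary function $k$ simultaneously agrees with $g$ throughout $\theta_{g}(\wh{C}(h))$ and still "detects" the point $z$, and one must prevent the relevant support from straying outside $\wh{C}(h)$ across its boundary — the construction as naively set up has exactly the opposite tendency, since $\psi$ vanishes on $D$. Resolving this is precisely the place where compatibility, rather than merely the basic property, is indispensable, echoing its role in forcing all the set-movers $\theta_{f}$ to coincide, and it is where the bulk of the (elementary but intricate) combinatorial work lies.
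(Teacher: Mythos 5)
Your reduction to $T0=0$, the reformulation of the goal as $\wh{C}(Th)=\wh{C}(T(g+h)-Tg)$, and the overall shape of the contradiction (build an auxiliary $k$ that agrees with $Tg$ on $D=\theta_g(\wh{C}(h))$, deduce $k=g$ on $\wh{C}(h)$ via Proposition \ref{p2}, then invoke Proposition \ref{p3} on a suitable $V\subseteq\wh{C}(h)$ to force $z\in D$) are all sound and are essentially the skeleton of the paper's argument. But the step you flag as ``the remaining task'' is not a technicality to be filled in later: it is the whole proof, and with the auxiliary function you have built it cannot be completed. You applied compatibility to $Th$, so that $Tk=\psi+Tg$ equals $Th+Tg$ on $C_0$. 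Since $T$ is nonlinear, $Th+Tg$ is not $T(\text{anything})$ you can name, so you have no way to describe $T^{-1}k=k$ near $\theta^{-1}$ of $C_0$, and hence no way to compute $\theta_k(V)$ for any candidate $V$ or to verify $z\in\theta_k(V)$. Moreover you never arranged the carrier $C_0$ to lie inside $C(Th)$, which is what would force the pulled-back set to sit inside $\wh{C}(h)$ in the first place.

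The paper's fix is exactly at this point, and it consists of two choices you did not make. First, compatibility is applied not to $Th$ but to $Tf-Tg$ (in your normalization, to $-Tg$), producing $v$ with $v=-Tg$ on a carrier $C''\ni z$ and $v=0$ on $D$; then $k:=T^{-1}(v+Tg)$ satisfies $Tk=T0$ on $C''$ and $Tk=Tg$ on $D$, and \emph{both} pieces of information pull back through Proposition \ref{p2} because each is an honest image under $T$. Second, using the basic property, $C''=C(w)$ is taken inside $C(Th)=C(T(0+h)-T0)$ (and inside your $C(v)$ disjoint from $D$), so that $\wh{C}(w)\subseteq\wh{C}(Th)=\theta_0(\wh{C}(h))$ and hence $V:=\theta_0^{-1}(\wh{C}(w))\subseteq\wh{C}(h)$. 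Then $k=0$ on $V$ and $k=g$ on $\wh{C}(h)$, so $0=g$ on $V$, Proposition \ref{p3} gives $\wh{C}(w)=\theta_0(V)=\theta_g(V)\subseteq D$, contradicting $\emptyset\neq C(w)\subseteq C(v)$ with $C(v)\cap D=\emptyset$. So the gap is genuine: the auxiliary function must encode ``$Tf$ near $z$, $Tg$ on $D$'' rather than ``$Th+Tg$ near $z$, $Tg$ on $D$'', and the carrier must be anchored in $C(T(f+h)-Tf)$; without these the decisive set $V$ does not exist by your own argument (its existence is precisely the contradiction).
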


\begin{proof} 
Suppose that $U = \wh{C}(h)$ and that there exists $y \in \theta_f(U) \bs \theta_g(U)$.
Then there exists $C\in \cC(Y)$ so that $y \in C$ and that $C \cap \theta_g(U) = \emptyset$.
Since $y\in \theta_f(U)$,  $C \cap C(T(f+h)-Tf) \neq \emptyset$.
Using the fact that $A(Y,F)$ is basic, there exist  $C'\in \cC(Y)$ and $z\in Y$ so that 
\[z\in  C' \subseteq C \cap C(T(f+h)-Tf).\]
In particular, $z\notin  \theta_g(U)$ and $\theta_g(U) \in \cD(Y)$.
Use the compatibility of $A(Y,F)$ to choose $v\in A(Y,F)$ and $C''\in \cC(Y)$ so that $z\in C''$ and that 
\[ v = \begin{cases}  Tf-Tg  &\text{on $C''$,}\\
0 &\text{on $\theta_g(U)$}.\end{cases}\]
Since $A(Y,F)$ is basic, we may also assume that $C'' \subseteq C'$.
Set  $k = v+Tg$.  We have  $k= Tg$ on $\theta_g(U)$.  By Proposition \ref{p2}, $T^{-1}k = g$ on $U$.
Say $C'' = C(w)$.  Then $k = Tf$ on $\wh{C}(w)\subseteq \theta_f(U)$.  Thus Proposition \ref{p2} implies that 
\[ T^{-1}k = f  \text{ on } (\theta_f)^{-1}(\wh{C}(w)) \subseteq U.
\]
It follows that $f = g$ on the  set $(\theta_f)^{-1}(\wh{C}(w))\in \cD(X)$.
By Proposition \ref{p3}, $\wh{C}(w) = \theta_g((\theta_f)^{-1}(\wh{C}(w)) \subseteq \theta_g(U)$.
Hence $z\in C(w)\subseteq \theta_g(U)  =\emptyset$, contrary to the choice of $z$.
This proves that $\theta_f(U) \subseteq \theta_g(U)$.  The reverse inclusion follows by symmetry.
\end{proof}



 
We now obtain the fundamental description of biseparating maps  from  the foregoing propositions.

\begin{Def}\label{d2.1}
Retain the notation above.  A bijection $T: A(X,E)\to A(Y,F)$ is {\em locally determined} if there is a bijection $\theta:\cD(X)\to\cD(Y)$, preserving set inclusions, so that 
for any $f,g\in A(X,E)$ and any $U \in \cD(X)$, $f = g$ on $U$ if and only if $Tf = Tg$ on $\theta(U)$
\end{Def}

\begin{lem}\label{l2.0}
Assume that $A(X,E)$ is basic.  Let $T:A(X,E)\to A(Y,F)$ be locally detemined, with a map $\theta$ as given in Definition \ref{d2.1}.  If $g,h \in A(X,E)$, then $C(Tg-Th) \subseteq \theta(\wh{C}(g-h))$.
\end{lem}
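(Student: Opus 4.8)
The plan is to establish the (formally stronger) inclusion $\wh C(Tg-Th)\subseteq\theta(\wh C(g-h))$, which gives the lemma since $C(Tg-Th)\subseteq\wh C(Tg-Th)$. Because $\theta$ is a bijection of $\cD(X)$ onto $\cD(Y)$ preserving inclusions, this is equivalent to $\theta^{-1}(\wh C(Tg-Th))\subseteq\wh C(g-h)$ (note $Tg-Th\in A(Y,F)$, so $\wh C(Tg-Th)\in\cD(Y)$, and $g-h\in A(X,E)$, so $\wh C(g-h)\in\cD(X)$). I would argue by contradiction: write $\wh C(\phi^*):=\theta^{-1}(\wh C(Tg-Th))$ and suppose there is a point $x_0\in\wh C(\phi^*)\setminus\wh C(g-h)$.

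First I would localise around $x_0$. Since $x_0\notin\wh C(g-h)$, there is $\psi\in A(X,E)$ with $x_0\in C(\psi)$ and $C(\psi)\cap C(g-h)=\emptyset$. Since $x_0\in\wh C(\phi^*)$ and $C(\psi)\in\cC(X)$, the carrier $C(\psi)$ cannot be disjoint from $C(\phi^*)$ (any carrier disjoint from $C(\phi^*)$ lies in $X\setminus\wh C(\phi^*)$). At this point — and only here — I would use that $A(X,E)$ is \emph{basic}, to produce $\chi\in A(X,E)$ with $\emptyset\neq C(\chi)\subseteq C(\psi)\cap C(\phi^*)$; the argument hinges on replacing the two carriers by a single carrier lying inside both.

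Then I would exploit the two inclusions $C(\chi)\subseteq C(\psi)$ and $C(\chi)\subseteq C(\phi^*)$ separately. From $C(\chi)\subseteq C(\psi)$ and $C(\psi)\cap C(g-h)=\emptyset$ we get $g=h$ on $C(\chi)$, hence $g=h$ on $\wh C(\chi)$, and so $Tg=Th$ on $\theta(\wh C(\chi))$ by local determination. From $C(\chi)\subseteq C(\phi^*)$ we get $\wh C(\chi)\subseteq\wh C(\phi^*)$ (supports are monotone in the carrier, directly from the displayed formula for $\wh C$), hence $\theta(\wh C(\chi))\subseteq\theta(\wh C(\phi^*))=\wh C(Tg-Th)$ as $\theta$ preserves inclusions. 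Thus $V:=\theta(\wh C(\chi))$ is a member of $\cD(Y)$ with $V\subseteq\wh C(Tg-Th)$ on which $Tg-Th$ vanishes.

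Finally I would extract the contradiction. Writing $V=\wh C(\rho)$, the vanishing of $Tg-Th$ on $V$ gives $C(\rho)\cap C(Tg-Th)=\emptyset$; being a carrier disjoint from $C(Tg-Th)$, $C(\rho)$ lies in $Y\setminus\wh C(Tg-Th)$, yet $C(\rho)\subseteq V\subseteq\wh C(Tg-Th)$, so $C(\rho)=\emptyset$ and $V=\wh C(0)=Y\setminus\bigcup\cC(Y)$, which is the least element of $\cD(Y)$ (it is contained in every $\wh C(f)$, $f\in A(Y,F)$). Since $\theta$ is an inclusion-preserving bijection of $\cD(X)$ onto $\cD(Y)$, the least element $X\setminus\bigcup\cC(X)$ of $\cD(X)$ must map onto the least element of $\cD(Y)$; combined with $\theta(\wh C(\chi))=V$ and injectivity of $\theta$, this forces $\wh C(\chi)=X\setminus\bigcup\cC(X)$. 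But $\emptyset\neq C(\chi)\subseteq\wh C(\chi)$, whereas $X\setminus\bigcup\cC(X)$ is disjoint from the carrier $C(\chi)$ — a contradiction, which completes the proof. The point to flag is that one cannot instead cover $X\setminus\wh C(g-h)$ by carriers and push the cover through $\theta$: since $\theta$ acts on the lattice $\cD$ rather than on points, set-theoretic covers need not transfer, which is exactly why the argument must be run locally through a single carrier.
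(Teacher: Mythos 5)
Your proof is correct, and it uses the same basic toolkit as the paper's — the ``basic'' hypothesis to squeeze a single carrier inside the intersection of two carriers, the propagation of an equality from $C(f)$ to $\wh{C}(f)$, local determination to transfer through $\theta$, and a final contradiction of the form ``a nonempty carrier lies both inside and outside a support.'' The organisation, however, is genuinely different. The paper argues pointwise on the $Y$ side: it fixes $y\notin\theta(\wh{C}(g-h))$, surrounds it by a carrier $C(v_2)$ disjoint from $\theta(\wh{C}(g-h))$, pulls back to $u_2$ with $\wh{C}(u_2)=\theta^{-1}(\wh{C}(v_2))$, proves $(g-h)\perp u_2$ via a disjointness computation in $\cD(Y)$, and pushes forward to get $Tg=Th$ at $y$. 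You instead prove the formally stronger inclusion $\wh{C}(Tg-Th)\subseteq\theta(\wh{C}(g-h))$ by contradiction on the $X$ side, localising at a point $x_0\in\theta^{-1}(\wh{C}(Tg-Th))\setminus\wh{C}(g-h)$. One concrete payoff of your route is the explicit treatment of the degenerate case: at the analogous juncture the paper writes ``let $\theta(\wh{C})=\wh{C}(v)$ for some \emph{nonzero} $v$'' and then invokes $C(v)\neq\emptyset$, which tacitly requires ruling out that $\theta(\wh{C})$ is the least element $\wh{C}(0)$ of $\cD(Y)$; your closing argument (the least element of $\cD(X)$ must map to the least element of $\cD(Y)$, and a nonempty carrier cannot sit inside $X\setminus\bigcup\cC(X)$) supplies exactly the justification the paper leaves implicit. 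Your concluding caveat about not being able to push a pointwise cover through $\theta$ is also well taken and is precisely why both proofs must route everything through single elements of $\cD$.
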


\begin{proof}
Suppose that $y \notin \theta(\wh{C}(g-h))$. Choose  $v_1\in A(Y,F)$ so that 
$\wh{C}(v_1) = \theta(\wh{C}(g-h))$.
There exists $v_2\in A(Y,F)$ such that $y\in C(v_2)$ and $v_1\perp v_2$.
Let $u_1 = g-h$ and $u_2\in A(X,E)$ be such that $\wh{C}(u_2) = \theta^{-1}(\wh{C}(v_2))$.

We claim that $u_1\perp u_2$.
Otherwise, there exists nonempty $C\in \cC(X)$ so that $C\subseteq C(u_1) \cap C(u_2)$.
Hence 
\[ \theta(\wh{C}) \subseteq \theta(\wh{C}(u_1))\cap \theta(\wh{C}(u_2)) = \wh{C}(v_1)\cap \wh{C}(v_2).\]
Let $\theta(\wh{C}) = \wh{C}(v)$ for some nonzero $v\in A(Y,F)$.
Since $v_1\perp v_2$, $C(v_1) \cap \wh{C}(v_2) = \emptyset$.
Therefore, 
\[C(v_1) \cap  C(v) \subseteq C(v_1) \cap \wh{C}(v_2) =\emptyset.
\]
Hence $\wh{C}(v_1)\cap {C}(v) = \emptyset$.  But $\emptyset \neq C(v) \subseteq \wh{C}(v) = \theta(\wh{C}) \subseteq \wh{C}(v_1)
$.  Thus we have a contradiction. This proves the claim.

From the claim, $g=h$ on $\wh{C}(u_2)$.  Hence $Tg = Th$ on $\theta(\wh{C}(u_2)) = \wh{C}(v_2)$.  In particular, $y\notin C(Tg-Th)$.  This completes the proof of the lemma.
\end{proof}

\begin{thm}
\label{t5}
Suppose that  $A(X,E)$ and  $A(Y,F)$ are both basic and compatible.  A bijection  $T:A(X,E)\to A(Y,F)$ is a  biseparating map if and only if it is locally determined.
\end{thm}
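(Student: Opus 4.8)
The plan is to prove the two implications separately, relying essentially on the propositions already established.

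For the forward direction, suppose $T$ is biseparating. I would take $\theta := \theta_0$, the set mover associated to the zero function, as given by Proposition~\ref{p2}: it is a well-defined inclusion-preserving bijection from $\cD(X)$ onto $\cD(Y)$. The crucial observation is that Proposition~\ref{p4} (which requires $A(Y,F)$ basic and compatible) tells us $\theta_f = \theta_g$ for all $f, g \in A(X,E)$, so in particular $\theta_f = \theta_0 = \theta$ for every $f$. Now given $f, g \in A(X,E)$ and $U \in \cD(X)$, I apply Proposition~\ref{p2} with base function $f$: we have $f = g$ on $U$ if and only if $Tf = Tg$ on $\theta_f(U) = \theta(U)$. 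This is exactly the locally determined condition, so the forward implication is immediate once Proposition~\ref{p4} is invoked.

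For the reverse direction, suppose $T$ is locally determined via some inclusion-preserving bijection $\theta: \cD(X) \to \cD(Y)$. I must show $f \perp_h g \iff Tf \perp_{Th} Tg$, i.e. $C(f-h) \cap C(g-h) = \emptyset \iff C(Tf - Th) \cap C(Tg - Th) = \emptyset$. By the symmetry of the hypothesis (applying everything to $T^{-1}$, which is locally determined via $\theta^{-1}$, noting $A(X,E)$ and $A(Y,F)$ play symmetric roles and both are basic and compatible), it suffices to prove one direction, say $f \perp_h g \implies Tf \perp_{Th} Tg$. Assume $C(f-h) \cap C(g-h) = \emptyset$. Then $f = h$ on $X \setminus C(g-h)$; since $X \setminus C(g-h)$ contains $C(g - h)^c \supseteq \wh{C}$(something)\dots more carefully: I want a set in $\cD(X)$ on which to compare. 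Observe $C(f - h) \subseteq X \setminus C(g-h)$, and since disjointness from $g - h$ means $f - h$ lives in the union of carriers disjoint from $C(g-h)$; the cleanest route is: $f \perp_h g$ gives $C(f-h) \cap \wh{C}(g-h) = \emptyset$ is \emph{not} automatic, so instead I argue directly with supports. Let $U = \wh{C}(g - h) \in \cD(X)$. Since $C(f-h) \cap C(g-h) = \emptyset$, every carrier meeting $C(f-h)$ — in particular via the basic property, small carriers — is disjoint from $C(g-h)$, so $C(f-h) \cap \wh{C}(g-h) = \emptyset$; hence $f = h$ on $\wh{C}(g-h)$ outside\dots Let me instead use Lemma~\ref{l2.0}: applied to $T$, it gives $C(Tf - Th) \subseteq \theta(\wh{C}(f-h))$ and $C(Tg - Th) \subseteq \theta(\wh{C}(g-h))$. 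So it suffices to show $\theta(\wh{C}(f-h)) \cap \theta(\wh{C}(g-h)) = \emptyset$, and since $\theta$ preserves inclusions and is a bijection of $\cD(Y)$, disjointness of images will follow from disjointness of $\wh{C}(f-h)$ and $\wh{C}(g-h)$ in $\cD(X)$ — but wait, $\theta$ need only preserve inclusion, not intersections, so I need an extra step.

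To handle the gap, I would show: if $D_1, D_2 \in \cD(X)$ with $D_1 \cap D_2 = \emptyset$, then $\theta(D_1) \cap \theta(D_2) = \emptyset$. This is where the \emph{basic} property of $A(Y,F)$ enters. Suppose $y \in \theta(D_1) \cap \theta(D_2)$; pick $v_i \in A(Y,F)$ with $\wh{C}(v_i) = \theta(D_i)$. Since $y$ lies in both supports, $v_1 \not\perp v_2$, so by basicness there is a nonempty $C \in \cC(Y)$ with $C \subseteq C(v_1) \cap C(v_2)$; then $\wh C \in \cD(Y)$ satisfies $\wh C \subseteq \theta(D_1) \cap \theta(D_2)$, and pulling back along $\theta^{-1}$ (inclusion-preserving bijection) gives $\theta^{-1}(\wh C) \subseteq D_1 \cap D_2 = \emptyset$, whence $\wh C = \emptyset$, contradicting $C \neq \emptyset$. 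Combining: $\wh{C}(f-h) \cap \wh{C}(g-h) = \emptyset$ (which follows from $C(f-h) \cap C(g-h) = \emptyset$ together with the basic property of $A(X,E)$, by the same argument as in Lemma~\ref{l1.21}/Proposition~\ref{p4}), hence $\theta(\wh{C}(f-h)) \cap \theta(\wh{C}(g-h)) = \emptyset$, hence by Lemma~\ref{l2.0} the carriers $C(Tf-Th)$ and $C(Tg-Th)$ are disjoint, i.e. $Tf \perp_{Th} Tg$. The reverse implication follows by symmetry, applying the same reasoning to $T^{-1}$.

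The main obstacle I anticipate is the bookkeeping around supports versus carriers: translating $f \perp_h g$ (a statement about carriers) into a statement about supports so that Lemma~\ref{l2.0} and the inclusion-preserving $\theta$ can be applied, and correctly using the basic property at both ends to pass between $C(\cdot)$ and $\wh C(\cdot)$ and to convert ``$\theta$ preserves inclusions'' into ``$\theta$ preserves disjointness.'' None of the individual steps is deep, but assembling them without circularity — and making sure the symmetry argument for the reverse direction is legitimate (it is, since the hypotheses on $A(X,E)$ and $A(Y,F)$ are symmetric) — requires care.
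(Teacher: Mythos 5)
Your forward direction is exactly the paper's argument (take $\theta=\theta_f$ for an arbitrary $f$, invoke Proposition \ref{p4} for independence of $f$ and Proposition \ref{p2} for the stated properties) and is correct. The reverse direction, however, contains a genuine gap. Your plan is to apply Lemma \ref{l2.0} to both pairs and then show $\theta(\wh{C}(f-h))\cap\theta(\wh{C}(g-h))=\emptyset$, which you reduce to the claim that $C(f-h)\cap C(g-h)=\emptyset$ forces $\wh{C}(f-h)\cap\wh{C}(g-h)=\emptyset$. That claim is false: in $C[0,1]$ take $f-h$ and $g-h$ with carriers $[0,\tfrac12)$ and $(\tfrac12,1]$; the carriers are disjoint but the supports are $[0,\tfrac12]$ and $[\tfrac12,1]$, which meet at $\tfrac12$. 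Since disjointness of the two supports fails, no lemma about $\theta$ preserving disjointness can rescue this route. (Relatedly, in your auxiliary sub-lemma the inference ``$y$ lies in both supports, hence $v_1\not\perp v_2$'' is invalid: Lemma \ref{l1.21} draws that conclusion from a point lying in a \emph{carrier} of one function and the \emph{support} of the other, and the same example shows two disjoint functions can have intersecting supports.)

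The correct route --- which you actually had in hand before abandoning it --- is asymmetric and needs only one application of Lemma \ref{l2.0}. From $f\perp_h g$, the set $C(f-h)$ is itself a carrier disjoint from $C(g-h)$, so by the very definition of support $C(f-h)\cap\wh{C}(g-h)=\emptyset$, i.e.\ $f=h$ on $\wh{C}(g-h)$; this is automatic and needs no basic property. Local determination then gives $Tf=Th$ on $\theta(\wh{C}(g-h))$, and Lemma \ref{l2.0} gives $C(Tg-Th)\subseteq\theta(\wh{C}(g-h))$, so $Tf-Th$ vanishes on $C(Tg-Th)$ and hence $C(Tf-Th)\cap C(Tg-Th)=\emptyset$, i.e.\ $Tf\perp_{Th}Tg$. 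The reverse implication follows by applying the same argument to $T^{-1}$ and $\theta^{-1}$, as you correctly observed.
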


\begin{proof}
Assume that $T$ is biseparating.  Take any $f\in A(X,E)$ and let $\theta = \theta_f$.  By Proposition \ref{p4}, $\theta$ is independent of the choice of $f$.
The properties enunciated for $\theta$ now follow from the same ones for $\theta_f$ by Proposition \ref{p2}.  Therefore, $T$ is locally determined.

Conversely, suppose that $T$ is locally determined.  Let $f,g,h\in A(X,E)$ be such that $f\perp_h g$.
Then $f =h$ on $\wh{C}(g-h)$.
Therefore, $Tf = Th$ on $\theta(\wh{C}(g-h))$.
By Lemma \ref{l2.0}, $Tf = Th$ on $C(Tg-Th)$.  Thus $Tf \perp_{Th} Tg$.
Since the same argument applies to $T^{-1}$, we see that $T$ is biseparating.
\end{proof}

Let us give some examples of function spaces that are both basic and compatible.  The verifications are simple and will be omitted.  
If $G$ is a Banach space, a {\em bump function} on $G$ is a nonzero real-valued function on $G$ with bounded support.

\begin{ex}\label{e1.7}
Let $A(X,E)$ be any of the spaces described below.  Then $A(X,E)$ is both basic and compatible.
Furthermore, $\wh{C}(f) = \ol{C(f)}$ for any $f \in A(X,E)$. 
\begin{enumerate}
\item Let $X$ be a Hausdorff completely regular topological space and let $E$ be a nonzero Hausdorff topological vector space.
$A(X,E) = C(X,E)$ or $C_*(X,E)$, the subspace consisting of all bounded functions in $C(X,E)$. (By a bounded function, we mean a function whose image is bounded in $E$, i.e, can be absorbed by any neighborhood of $0$.)
\item Let $X$ be a metric space and let $E$ be  a normed space.  Take $A(X,E)$ to be one of the following spaces.  $U(X,E)$, the space of all $E$-valued uniformly continuous functions on $X$; or $\Lip(X,E)$, the space of all $E$-valued Lipschitz functions on $X$; or  $U_*(X,E)$, respectively, $\Lip_*(X,E)$, the bounded functions in $U(X,E)$ and $\Lip(X,E)$ respectively.
\item Let $X$ be an open set in a Banach space $G$, $p\in \N\cup\{\infty\}$, and let $E$ be a normed space. Assume that $G$ supports a $C^p$ bump function and take  $A(X,E) = C^p(X,E)$, the space of all $p$-times continuous differentiable $E$-valued functions on $X$.  Alternatively, let  $A(X,E) =  C^p_*(X,E)$, the subspace of $C^p(X,E)$  so that 
$D^kf$ is bounded on $X$ for $k\in \{0\}\cup\N$, $k\leq p$.  In the latter case, assume that $G$ supports a $C^p_*$ bump function.
\end{enumerate}
\end{ex}

\section{Spaces of continuous functions}\label{s2}

In this section, let $X$ and $Y$ be Hausdorff completely regular topological spaces and $E$ and $F$ be nontrivial Hausdorff topological vector spaces.
Take $A(X,E) = C(X,E)$ or $C_*(X,E)$ and $A(Y,F) = C(Y,F)$ or $C_*(Y,F)$.
Let $T:A(X,E)\to A(Y,F)$ be a biseparating map.
Without loss of generality, we may assume that $T0 =0$.
We retain the notation in \S \ref{s1}.
The main aim is to derive topological relationship between $X$ and $Y$ based on the map $T$.
Recall that a Hausdorff completely regular topological space $X$ has a ``largest" compactification, namely the Stone-\v{C}ech compactification $\beta X$.    
If $V$ is a set in $Y$, denote its closures in $Y$ and $\beta Y$ by $\ol{V}$ and $\ol{V}^{\beta Y}$ respectively.
By  Example \ref{e1.7},  $\wh{C}(f) = \ol{C(f)}$ for any $f\in A(X,E)$.

\begin{lem}\label{l2.0.1}
Let $U_i, i=1,2$, be open sets in $\beta X$ so that $U_i\cap X\in \cC(X)$ and that $\ol{U_1}^{\beta X}\cap \ol{U_2}^{\beta X}= \emptyset$.  Then $\ol{\theta(\ol{U_1\cap X})}^{\beta X} \cap \ol{\theta(\ol{U_2\cap X})}^{\beta X} = \emptyset$.
\end{lem}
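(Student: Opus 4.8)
The plan is to recast both the hypothesis and the conclusion as complete‑separation statements, and then transport a Urysohn ladder through $T$.

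\emph{Reduction.} Fix $h_i\in A(X,E)$ with $C(h_i)=U_i\cap X$. Since $X$ is dense in $\beta X$ and $U_i$ is open, $\ol{U_i}^{\beta X}=\ol{U_i\cap X}^{\beta X}=\ol{C(h_i)}^{\beta X}$, so the hypothesis says exactly that $C(h_1)$ and $C(h_2)$ are completely separated in $X$. On the $Y$ side, take $T0=0$, let $\theta=\theta_0$ be the set map of Theorem \ref{t5}, and recall from Proposition \ref{p2} and Example \ref{e1.7} that $\theta(\wh C(h_i))=\wh C(Th_i)=\ol{C(Th_i)}^Y$, while $\ol{U_i\cap X}^X=\wh C(h_i)$. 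Using the standard fact that two subsets of a Tychonoff space are completely separated iff their Stone--\v Cech closures are disjoint, it therefore suffices to prove: \emph{if $C(h_1),C(h_2)$ are completely separated in $X$, then $C(Th_1),C(Th_2)$ are completely separated in $Y$.}

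\emph{A ladder upstairs.} The space $\beta X$ is normal, so choose $\Phi\in C(\beta X,[0,1])$ with $\Phi\equiv 0$ on $\ol{U_1}^{\beta X}$ and $\Phi\equiv 1$ on $\ol{U_2}^{\beta X}$, and put $\phi=\Phi|_X$. For each dyadic $r\in(0,1)$ fix $v_r,w_r\in A(X,E)$ with $C(v_r)=\{\phi<r\}$ and $C(w_r)=\{\phi>r\}$. For dyadic $r<u<s$ one checks directly in $X$: $C(w_r)\cup C(v_u)=X$; $C(w_u)\subseteq\wh C(w_r)$; $\wh C(w_s)\subseteq\{\phi\ge s\}\subseteq C(w_u)$; $h_1\perp w_r$ (since $C(h_1)\subseteq\{\phi=0\}$); and $C(h_2)\subseteq C(w_r)$ (since $C(h_2)\subseteq\{\phi=1\}$).

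\emph{Transport.} A biseparating $T$ with $T0=0$ preserves, in both directions, any relation expressible purely through the disjointness relation $\perp$, because $T$ is a $\perp$‑isomorphism with $T0=0$ and $\theta$ preserves the corresponding set inclusions (Proposition \ref{p1}). In particular it preserves: (i) $f\perp g$; (ii) $\wh C(f)\subseteq\wh C(g)$; (iii) $C(f)\subseteq\wh C(g)$, equivalently ``$c\perp g$ implies $c\perp f$ for every $c$''; (iv) $\wh C(f)\cup\wh C(g)=X$, equivalently ``$c\perp f$ and $c\perp g$ force $c=0$''. Applying (i)--(iv) to the relations above we obtain in $Y$, for dyadic $r<u<s$: $\wh C(Tw_r)\cup\wh C(Tv_u)=Y$; $C(Tw_u)\subseteq\wh C(Tw_r)$; $\wh C(Tw_s)\subseteq\wh C(Tw_r)$; $C(Th_1)\cap\wh C(Tw_r)=\emptyset$; and $C(Th_2)\subseteq\wh C(Tw_r)$. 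The one statement that does \emph{not} come from (i)--(iv) alone is $\wh C(Tw_s)\subseteq C(Tw_u)$; obtaining it requires a finer argument that uses the concrete structure of $A(X,E)=C(X,E)$ (or $C_*(X,E)$) --- modifying $w_u$ on a zero‑set neighbourhood of $\wh C(w_s)$ and invoking Proposition \ref{p2} (agreement of functions on a member of $\cD$ transports across $T$) --- and, combined with (iii), it yields the key nesting $\wh C(Tw_s)\subseteq C(Tw_u)\subseteq\inte_Y\wh C(Tw_r)$ for $r<u<s$.

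\emph{Assembly.} Put $O_r=Y\setminus\wh C(Tw_r)$ for dyadic $r\in(0,1)$. The transported relations make the $O_r$ open and increasing in $r$, with $\ol{O_r}^Y\subseteq O_s$ whenever $r<s$ (this is exactly the nesting $\wh C(Tw_s)\subseteq\inte_Y\wh C(Tw_r)$), and with $C(Th_1)\subseteq\bigcap_rO_r$ and $C(Th_2)\cap\bigcup_rO_r=\emptyset$. Hence $\psi(y):=\inf\{r:\ y\in O_r\}$ (with $\inf\emptyset=1$) defines a continuous function $Y\to[0,1]$ --- the conditions $\ol{O_r}^Y\subseteq O_s$ are precisely what forces all super‑ and sub‑level sets of $\psi$ to be open --- with $\psi\equiv 0$ on $C(Th_1)$ and $\psi\equiv 1$ on $C(Th_2)$. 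Thus $C(Th_1)$ and $C(Th_2)$ are completely separated in $Y$, which completes the proof.

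I expect the nesting step, $\wh C(Tw_s)\subseteq\inte_Y\wh C(Tw_r)$ (equivalently $\ol{O_r}^Y\subseteq O_s$), to be the real obstacle: the purely disjointness‑theoretic properties (i)--(iv) transport only disjoint carriers and support containments, whereas a Urysohn ladder needs the strictly stronger ``support inside carrier'' information, which has to be recovered from the function‑space structure of $C(X,E)$ together with Proposition \ref{p2}.
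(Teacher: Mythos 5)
Your reduction to complete separation is sound, and your transported relations (i)--(iv) are genuinely $\perp$-expressible and do carry across $T$. But the proof has a real gap, and it is exactly the step you flag yourself: the nesting $\wh{C}(Tw_s)\subseteq \inte_Y\wh{C}(Tw_r)$, equivalently $\ol{O_r}^Y\subseteq O_s$, without which $\psi$ need not be continuous and the whole ladder collapses. The relation ``support of $w_s$ is contained in the \emph{carrier} of $w_u$'' is not invariant under a biseparating map: disjointness only sees carriers up to closure, so the best you can extract from (i)--(iv) is $\wh{C}(Tw_s)\subseteq\wh{C}(Tw_u)=\ol{C(Tw_u)}$, which is strictly weaker. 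Upgrading this to $\wh{C}(Tw_s)\subseteq C(Tw_u)$ means proving $Tw_u(y)\neq 0$ at every point of $\wh{C}(Tw_s)$, i.e.\ a pointwise statement about values of $Tw_u$; at this stage of the paper the only tool for transporting equality of values is Proposition \ref{p2}/Theorem \ref{t5}, which works on sets in $\cD(Y)$, not at points. Your sketched fix (``modifying $w_u$ on a zero-set neighbourhood of $\wh{C}(w_s)$'') is not carried out, and if you try to carry it out you will find you are forced into producing, for each rung, a function that agrees with a fixed nonvanishing function near $\wh{C}(w_s)$ and with $0$ elsewhere and then separating the two transported values in $F$ --- which is already a complete proof of the lemma for a single rung, making the ladder redundant.

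That one-rung argument is the paper's actual proof, and it is much shorter: take $h\in C_*(X)$ with $h=1$ on $U_1\cap X$ and $h=0$ on $U_2\cap X$ (Urysohn in $\beta X$), fix $0\neq v\in F$, and set $f=h\cdot T^{-1}(1\otimes v)$. Then $f=T^{-1}(1\otimes v)$ on $\ol{U_1\cap X}\in\cD(X)$ and $f=0$ on $\ol{U_2\cap X}\in\cD(X)$, so by Theorem \ref{t5} the single function $Tf$ equals $1\otimes v$ on $\theta(\ol{U_1\cap X})$ and $0$ on $\theta(\ol{U_2\cap X})$. Composing with a continuous $g:F\to\R$ separating $v$ from $0$ and extending $g\circ Tf$ to $\beta Y$ gives the disjointness of the two closures at once. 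I suggest you replace the ladder by this: the multiplicative structure of $C(X,E)$ lets you encode the entire Urysohn separation into one function $f$ whose image under $T$ is locally constant with two distinct values, rather than trying to rebuild a Urysohn function on $Y$ rung by rung from disjointness data alone.
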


\begin{proof}
Let $v$ be a nonzero vector in $F$.  There exists $h\in C_*(X)$ so that $h(x) =1$ for all $x\in U_1\cap X$ and $h(x) = 0$ for all $x\in U_2\cap X$. The function $f = h\cdot T^{-1}(1\otimes v)$ belongs to $A(X,E)$.  By Lemma \ref{t5}, $Tf = 1\otimes v$ on $\theta(\ol{U_1\cap X})$ and $Tf = 0$ on $\theta(\ol{U_2\cap X})$.
Since $F$ is a Hausdorff topological vector space, it is completely regular. 
So there exists a continuous function $g: F\to \R$ so that $g(v) \neq g(0)$.
Set $k =g\circ (Tf):Y\to \R$.  Then $k$ is continuous on $Y$ has hence has a continuous extension $\ti{k}:\beta Y\to \R\cup \{\infty\}$.
Now $k(y) = g(v)$ for all $y\in \theta(\ol{U_1\cap X})$ and $k(y) = g(0)$ for all $y\in \theta(\ol{U_2\cap X})$.
By continuity of $\ti{k}$, the sets $\ol{\theta(\ol{U_i\cap X})}^{\beta X}$, $ i=1,2$, must be disjoint.
\end{proof}

For any $x\in \beta X$, let $\cF_x$ be the family of all open neighborhoods $U$ of $x$ in $\beta X$ so that $U\cap X \in \cC(X)$.
Define $\cF_y$ similarly for $y\in \beta Y$.
We will use the following fact which is easily deduced from the Urysohn Lemma.
Let $U$ be an open neighborhood of a point $x$ in $\beta X$. There exists an open neighborhood $V\in \cF_x$  so that $V\subseteq U$.

\begin{lem}\label{l2.0.2}
Let $x\in \beta X$, $y\in \beta Y$. Then $y\in\ol{\theta(\ol{U\cap X})}^{\beta Y}$ for all $U \in \cF_x$ if and only if 
$x\in\ol{\theta^{-1}(\ol{V\cap Y})}^{\beta X}$ for all $V \in \cF_y$.
\end{lem}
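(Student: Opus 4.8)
The plan is to prove the forward implication; the reverse then follows by symmetry, interchanging the roles of $X$ and $Y$ and of $\theta$ and $\theta^{-1}$ (note $\theta^{-1}$ is also an inclusion-preserving bijection $\cD(Y)\to\cD(X)$ coming from the biseparating map $T^{-1}$, so Lemma \ref{l2.0.1} applies to it as well). So assume $y\in\ol{\theta(\ol{U\cap X})}^{\beta Y}$ for every $U\in\cF_x$, and suppose toward a contradiction that there is some $V\in\cF_y$ with $x\notin\ol{\theta^{-1}(\ol{V\cap Y})}^{\beta X}$.

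Since $x$ lies outside the closed set $\ol{\theta^{-1}(\ol{V\cap Y})}^{\beta X}$ in $\beta X$, by the remark preceding the lemma (Urysohn) I can pick $U\in\cF_x$ whose $\beta X$-closure $\ol{U}^{\beta X}$ is disjoint from $\ol{\theta^{-1}(\ol{V\cap Y})}^{\beta X}$. Now I want to apply Lemma \ref{l2.0.1} on the $Y$-side, i.e., to $\theta^{-1}$: I need a second open set in $\beta Y$. Here is the key step. Because $V\in\cF_y$ is an open neighborhood of $y$, I can shrink it (again by Urysohn) to get $V'\in\cF_y$ with $\ol{V'}^{\beta Y}\subseteq V$; this separation will be needed because Lemma \ref{l2.0.1} requires two open sets with disjoint $\beta Y$-closures. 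Apply Lemma \ref{l2.0.1} to $\theta^{-1}$ with the pair $\ol{U}^{\beta X}$ inside $\beta X$ — wait, that is the wrong side. Let me re-set this.

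The cleaner route: apply Lemma \ref{l2.0.1} to the original $\theta$, with the two sets $U\in\cF_x$ chosen above and a set $W\in\cF_x$ with $\ol{W}^{\beta X}\cap\ol{U}^{\beta X}=\emptyset$ — but that is not quite the configuration either. Instead I use that $\ol{U}^{\beta X}$ is disjoint from $\ol{\theta^{-1}(\ol{V\cap Y})}^{\beta X}$, and I choose (Urysohn) an open $Z$ in $\beta X$ with $\ol{\theta^{-1}(\ol{V\cap Y})}^{\beta X}\subseteq Z$ and $\ol{Z}^{\beta X}\cap\ol{U}^{\beta X}=\emptyset$; intersecting with a carrier if needed, and using that $A(Y,F)$ is basic together with the fact that $\theta^{-1}(\ol{V\cap Y})\in\cD(X)$ equals $\ol{C}$ for some carrier $C$, I arrange $Z\cap X\in\cC(X)$. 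Now $\ol{U}^{\beta X}$ and $\ol{Z}^{\beta X}$ have disjoint $\beta X$-closures, so by Lemma \ref{l2.0.1} the sets $\ol{\theta(\ol{U\cap X})}^{\beta Y}$ and $\ol{\theta(\ol{Z\cap X})}^{\beta Y}$ are disjoint. But $\theta^{-1}(\ol{V\cap Y})\subseteq \ol{Z\cap X}$ (since $\theta^{-1}(\ol{V\cap Y})\subseteq Z$ and it is a support set, hence equals the closure of its carrier, which is contained in $\ol{Z\cap X}$), and applying the inclusion-preserving $\theta$ gives $\ol{V\cap Y}\subseteq\theta(\ol{Z\cap X})\subseteq\ol{\theta(\ol{Z\cap X})}^{\beta Y}$, so in particular $y\in\ol{V\cap Y}\subseteq\ol{\theta(\ol{Z\cap X})}^{\beta Y}$. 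On the other hand, by hypothesis $y\in\ol{\theta(\ol{U\cap X})}^{\beta Y}$ for this $U\in\cF_x$. This puts $y$ in both disjoint sets — contradiction.

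The main obstacle I anticipate is the bookkeeping in the middle paragraph: massaging the Urysohn-separating open sets in $\beta X$ so that their traces on $X$ genuinely lie in $\cC(X)$ (so that Lemma \ref{l2.0.1} is applicable) while still enclosing the relevant support set, and correctly using that support sets in $\cD(X)$ are closures of carriers in order to pass inclusions through $\theta$. Once the configuration of open sets is set up correctly, the logical core is a one-line application of Lemma \ref{l2.0.1} plus monotonicity of $\theta$; the symmetry argument then disposes of the converse with no further work.
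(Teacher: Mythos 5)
Your overall architecture is exactly the paper's: pick $U\in\cF_x$ with $\ol{U}^{\beta X}$ disjoint from $\ol{\theta^{-1}(\ol{V\cap Y})}^{\beta X}$, realize a set containing $\theta^{-1}(\ol{V\cap Y})$ as the trace on $X$ of an open subset of $\beta X$ whose trace is a carrier, apply Lemma \ref{l2.0.1} to that pair, and get a contradiction from $y$ lying in both $\beta Y$-closures; the converse by symmetry is also how the paper disposes of it. The one step you never actually carry out is precisely the one you flag as the obstacle: producing an open $Z$ in $\beta X$ with $\ol{\theta^{-1}(\ol{V\cap Y})}^{\beta X}\subseteq Z$, $\ol{Z}^{\beta X}\cap\ol{U}^{\beta X}=\emptyset$, \emph{and} $Z\cap X\in\cC(X)$. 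As written, ``intersecting with a carrier if needed'' shrinks $Z$ and in general destroys the containment of the support set, and invoking that $A(Y,F)$ is basic is both the wrong space and the wrong direction --- basicness produces small carriers inside intersections, not carriers containing a prescribed closed set.

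The repair is to drop the enlargement entirely, which is what the paper does. By definition of $\theta^{-1}$, $\theta^{-1}(\ol{V\cap Y})=\ol{W_0}$ (closure in $X$) for some $W_0\in\cC(X)$; write $W_0=W\cap X$ for an open set $W$ in $\beta X$. Then $\ol{W}^{\beta X}=\ol{W_0}^{\beta X}=\ol{\theta^{-1}(\ol{V\cap Y})}^{\beta X}$, which is already disjoint from $\ol{U}^{\beta X}$ by your choice of $U$, so Lemma \ref{l2.0.1} applies directly to the pair $U,W$; moreover $\theta(\ol{W\cap X})=\theta(\theta^{-1}(\ol{V\cap Y}))=\ol{V\cap Y}$, whose $\beta Y$-closure contains $y$ because $V$ is a $\beta Y$-neighborhood of $y$ and $Y$ is dense (note the closure you want here is the one in $\beta Y$, not in $Y$). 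With that substitution your argument closes and coincides with the paper's.
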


\begin{proof}
Assume that $y\in\ol{\theta(\ol{U\cap X})}^{\beta Y}$ for all $U \in \cF_x$.
Suppose that there exists $V\in \cF_y$ so that $x\notin\ol{\theta^{-1}(\ol{V\cap Y})}^{\beta X}$.
Choose $U \in \cF_x$ so that $\ol{U}^{\beta X} \cap \ol{\theta^{-1}(\ol{V\cap Y})}^{\beta X} = \emptyset$.
Note that by definition of $\theta^{-1}$, $\theta^{-1}(\ol{V\cap Y}) = \ol{W_0}$ for some $W_0\in \cC(X)$.
Express $W_0 = W\cap X$ for some open set $W\in \beta X$.
Then $\ol{U}^{\beta X} \cap \ol{W}^{\beta X} = \emptyset$.
By Lemma \ref{l2.0.1}, $\ol{\theta(\ol{U\cap X})}^{\beta X} \cap \ol{\theta(\ol{W\cap X})}^{\beta X} = \emptyset$. By choice, $y \in \ol{\theta(\ol{U\cap X})}^{\beta Y}$.
Also, since $V\in \cF_y$, 
\[ y \in \ol{{V\cap Y}}^{\beta Y}= \ol{\theta(\ol{W\cap X})}^{\beta Y}. \]
This contradicts the disjointness of $\ol{\theta(\ol{U\cap X})}^{\beta X}$ and $\ol{\theta(\ol{W\cap X})}^{\beta X}$ and  completes the proof of the ``only if'' part.  The reverse implication follows by symmetry.
\end{proof}

\begin{lem}\label{l2.0.3}
For any $x\in \beta X$, the set $\bigcap\{\ol{\theta(\ol{U\cap X})}^{\beta Y}: U \in \cF_x\}$ has exactly one point in $\beta Y$.
\end{lem}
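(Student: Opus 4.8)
The plan is to prove the two halves of the statement separately: that the intersection is nonempty, and that it cannot contain two points. Nonemptiness will follow from the compactness of $\beta Y$ via a finite intersection property argument; uniqueness will follow by transporting a Hausdorff separation in $\beta Y$ back to $\beta X$ through Lemmas \ref{l2.0.1} and \ref{l2.0.2}.

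For nonemptiness, I would first note that each set in the family is a nonempty closed subset of $\beta Y$. Indeed, for $U\in\cF_x$ the trace $U\cap X$ lies in $\cC(X)$ and is nonempty because $X$ is dense in $\beta X$, so $\ol{U\cap X}\in\cD(X)$ is nonempty; since $\theta$ is an inclusion-preserving bijection of $\cD(X)$ onto $\cD(Y)$ it sends the least element $\emptyset$ to $\emptyset$, hence sends nonempty sets to nonempty sets, so $\ol{\theta(\ol{U\cap X})}^{\beta Y}$ is nonempty and closed in $\beta Y$. Next I would verify the finite intersection property: given $U_1,\dots,U_n\in\cF_x$, their intersection is an open neighborhood of $x$, so the Urysohn-type fact recorded before Lemma \ref{l2.0.2} supplies $U\in\cF_x$ with $U\subseteq U_1\cap\cdots\cap U_n$; then $\ol{U\cap X}\subseteq\ol{U_i\cap X}$ for each $i$, and applying $\theta$ and closing in $\beta Y$ gives $\ol{\theta(\ol{U\cap X})}^{\beta Y}\subseteq\bigcap_{i=1}^{n}\ol{\theta(\ol{U_i\cap X})}^{\beta Y}$ with nonempty left-hand side. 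Compactness of $\beta Y$ then yields $\bigcap\{\ol{\theta(\ol{U\cap X})}^{\beta Y}:U\in\cF_x\}\neq\emptyset$.

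For uniqueness, suppose $y_1\neq y_2$ both belong to this intersection. Since $\beta Y$ is compact Hausdorff, hence regular, and using the Urysohn-type fact once more, I would choose $V_i\in\cF_{y_i}$ with $\ol{V_1}^{\beta Y}\cap\ol{V_2}^{\beta Y}=\emptyset$. Because $y_i\in\ol{\theta(\ol{U\cap X})}^{\beta Y}$ for every $U\in\cF_x$, Lemma \ref{l2.0.2} gives $x\in\ol{\theta^{-1}(\ol{V_i\cap Y})}^{\beta X}$ for $i=1,2$. On the other hand, the analogue of Lemma \ref{l2.0.1} with the roles of $\beta X$ and $\beta Y$ interchanged --- legitimate since $T^{-1}$ is also biseparating and its associated set-mover is $\theta^{-1}$ --- applied to $V_1,V_2$ shows $\ol{\theta^{-1}(\ol{V_1\cap Y})}^{\beta X}\cap\ol{\theta^{-1}(\ol{V_2\cap Y})}^{\beta X}=\emptyset$, contradicting the previous line. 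Hence the intersection is a singleton.

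I do not expect any single deep step here; the main obstacle is bookkeeping. One must keep careful track of whether closures are taken in $X$, in $\beta X$, or in $\beta Y$, repeatedly shrink open neighborhoods so that their traces on $X$ (or $Y$) remain carriers and thus give elements of $\cF_x$ (or $\cF_y$), and justify that $\theta$ preserves nonemptiness. With those points handled, the argument is a routine compactness-and-separation deduction built on the two preceding lemmas.
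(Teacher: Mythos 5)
Your argument is correct and follows the paper's proof essentially verbatim: nonemptiness via the finite intersection property and compactness of $\beta Y$ (shrinking finite intersections to a single $U\in\cF_x$ using the Urysohn-type remark), and uniqueness by separating the two putative points with $V_i\in\cF_{y_i}$ having disjoint closures, pulling back through Lemma \ref{l2.0.2}, and contradicting Lemma \ref{l2.0.1} applied to $\theta^{-1}$. The extra bookkeeping you supply (nonemptiness of each set in the family, $\theta$ preserving nonemptiness) is a harmless elaboration of what the paper leaves implicit.
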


\begin{proof}
Let $U_i$, $i=1,\dots, n$, be sets in $\cF_x$.
Then $\bigcap^n_{i=1}U_i$ is an open neighborhood of $x$ in $\beta X$. By the remark after Lemma \ref{l2.0.1}, 
there exists $U\in \cF_x$ so that $U \subseteq \bigcap^n_{i=1}U_i$.
Then $\ol{\theta(\ol{U\cap X})}^{\beta Y} \subseteq \bigcap^n_{i=1}\ol{\theta(\ol{U_i\cap X})}^{\beta Y}$.
Since the set on the left is nonempty, this shows that the family $\{\ol{\theta(\ol{U\cap X})}^{\beta Y}: U \in \cF_x\}$, which consists of closed sets in $\beta Y$,  has the finite intersection property.  By compactness of $\beta  Y$, we conclude that the intersection in the statement of the lemma is nonempty.

Suppose that $y_1,y_2$ are distinct points in 
 $\bigcap\{\ol{\theta(\ol{U\cap X})}^{\beta Y}: U \in \cF_x\}$.
Choose sets $V_i \in \cF_{y_i}$, $i=1,2$, so that $\ol{V_1}^{\beta _Y} \cap \ol{V_2}^{\beta Y} = \emptyset$.
 By Lemma \ref{l2.0.2}, $x\in \ol{\theta^{-1}(\ol{V_i\cap Y})}^{\beta X}$, $i=1,2$.
This contradicts Lemma \ref{l2.0.1} applied to the map $\theta^{-1}$.
\end{proof}

Define the map $\vp: \beta X\to \beta Y$ by taking $\vp(x)$ to be the unique point in $\bigcap\{\ol{\theta(\ol{U\cap X})}^{\beta Y}: U \in \cF_x\}$.
By symmetry, we also have an analogous map $\psi :  \beta Y\to \beta X$.
Now we arrive at the first structural result on biseparating maps on vector-valued $C/C_*$ spaces.

\begin{thm}\label{t2.0.4}
Let $X, Y$ be Hausdorff completely regular topological spaces and let $E,F$ be nonzero Hausdorff topological vector spaces.
Suppose that $T:A(X,E)\to A(Y,F)$ is a biseparating map, where $A(X,E) = C(X,E)$ or $C_*(X,E)$ and $A(Y,F) = C(Y,F)$ or $C_*(Y,F)$.
Then there is a homeomorphism $\vp:\beta X\to \beta Y$ so that for any $f,g \in A(X,E)$ and any open set $U$ in $\beta {X}$, $f = g$ on $U\cap X$ if and only if $Tf = Tg$ on ${\vp}(U) \cap Y$.
\end{thm}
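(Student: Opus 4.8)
The plan is to assemble the homeomorphism $\vp$ from the point maps already constructed and to verify its properties by repeatedly invoking Lemma~\ref{l2.0.3} together with the local-determination property of $T$ (Theorem~\ref{t5}). First I would show that $\vp$ and $\psi$ are mutually inverse. Given $x\in\beta X$ with $y=\vp(x)$, the symmetric version of Lemma~\ref{l2.0.2} (applied with the roles of $X,Y$ and $\theta,\theta^{-1}$ interchanged) says precisely that $y\in\ol{\theta(\ol{U\cap X})}^{\beta Y}$ for all $U\in\cF_x$ if and only if $x\in\ol{\theta^{-1}(\ol{V\cap Y})}^{\beta X}$ for all $V\in\cF_y$; since the former holds by the definition of $\vp$, the latter holds, and by the uniqueness in Lemma~\ref{l2.0.3} (for $\theta^{-1}$) this forces $\psi(y)=x$. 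Thus $\psi\circ\vp=\mathrm{id}_{\beta X}$, and symmetrically $\vp\circ\psi=\mathrm{id}_{\beta Y}$, so $\vp$ is a bijection with inverse $\psi$.

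Next I would establish continuity of $\vp$; by symmetry $\psi=\vp^{-1}$ is then also continuous, giving the homeomorphism. Fix $x\in\beta X$ and an open neighborhood $N$ of $y=\vp(x)$ in $\beta Y$. Using the Urysohn-type remark after Lemma~\ref{l2.0.1}, pick $V\in\cF_y$ with $\ol{V}^{\beta Y}\subseteq N$. Since $\psi(y)=x$, we have $x\in\ol{\theta^{-1}(\ol{V\cap Y})}^{\beta X}$; writing $\theta^{-1}(\ol{V\cap Y})=\ol{W\cap X}$ for an open $W\subseteq\beta X$ (as in the proof of Lemma~\ref{l2.0.2}), this says $x\in\ol{W\cap X}^{\beta X}=\ol{W}^{\beta X}$. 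I would then argue that the interior of $\ol{W}^{\beta X}$, which contains $x$ once one checks (via basicness/complete regularity) that $x$ is not merely a boundary point — more carefully, one shrinks: choose $U\in\cF_x$ with $\ol{U}^{\beta X}\cap\ol{W'}^{\beta X}=\emptyset$ for the ``outside'' part and runs the contradiction argument of Lemma~\ref{l2.0.2} — serves as a neighborhood of $x$ that $\vp$ maps into $N$. Concretely: if $\vp(x')\notin\ol V^{\beta Y}$ for some $x'$ arbitrarily close to $x$, one produces via Lemma~\ref{l2.0.1} two disjoint $\beta Y$-closures that both must contain $\vp(x')$, a contradiction. So $\vp$ is continuous at $x$.

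Finally I would prove the functional statement: for $f,g\in A(X,E)$ and $U$ open in $\beta X$, $f=g$ on $U\cap X$ iff $Tf=Tg$ on $\vp(U)\cap Y$. For the forward direction, suppose $f=g$ on $U\cap X$, and let $y\in\vp(U)\cap Y$, say $y=\vp(x)$ with $x\in U$. Choose $U_0\in\cF_x$ with $U_0\subseteq U$; then $f=g$ on $U_0\cap X$, so $f=g$ on $\ol{U_0\cap X}=\wh C$ for the relevant carrier, hence by Theorem~\ref{t5} (local determination) $Tf=Tg$ on $\theta(\ol{U_0\cap X})$, and in particular on $C(Tf-Tg)\cap\theta(\ol{U_0\cap X})$; but since $y=\vp(x)\in\ol{\theta(\ol{U_0\cap X})}^{\beta Y}$ and $Tf-Tg$ is continuous on $Y$, vanishing on $\theta(\ol{U_0\cap X})\cap Y$ forces it to vanish at $y$ as well (its carrier is open, so it cannot meet a set disjoint from the closure). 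The reverse direction follows by applying the same argument to $T^{-1}$ and $\psi$. The main obstacle I anticipate is the continuity step: one must pass carefully between closures in $X$ versus $\beta X$ and between a point lying in a closure and lying in an interior, and it is here that the basicness of the function spaces and the Urysohn remark after Lemma~\ref{l2.0.1} must be used in a genuinely quantitative way rather than formally — the index-free bookkeeping of which open sets one shrinks to is the delicate part.
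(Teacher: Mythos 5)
Your overall architecture matches the paper's: you build $\vp$ and $\psi$ from Lemma \ref{l2.0.3}, use Lemma \ref{l2.0.2} to see that they are mutual inverses, and derive the functional statement from Theorem \ref{t5} by shrinking $U$ to a member of $\cF_{x_0}$ and noting that $y_0$ lies in $\ol{\theta(\ol{U\cap X})}^{\beta Y}\cap Y=\theta(\ol{U\cap X})$, the latter set being closed in $Y$. Those two parts are correct and essentially identical to the paper's proof.

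The continuity step, however, contains a genuine gap, and it sits exactly where you flag it. From $\psi(y)=x$ you only get $x\in\ol{W}^{\beta X}$, where $\theta^{-1}(\ol{V\cap Y})=\ol{W\cap X}$; for points $x'$ lying in the open set $W$ itself one can indeed conclude $\vp(x')\in\ol{V}^{\beta Y}$, but nothing so far prevents $x$ from sitting on the boundary of $\ol{W}^{\beta X}$, in which case neither $W$ nor $\inte\ol{W}^{\beta X}$ is a neighborhood of $x$ and the argument stalls. Your fallback --- ``if $\vp(x')\notin\ol V^{\beta Y}$ for $x'$ arbitrarily close to $x$, produce two disjoint $\beta Y$-closures both containing $\vp(x')$'' --- does not close it: for each such $x'$ you obtain some $V'\in\cF_{\vp(x')}$ with $\ol{V'}^{\beta Y}\cap\ol{V}^{\beta Y}=\emptyset$, hence $x'\notin\ol{\theta^{-1}(\ol{V\cap Y})}^{\beta X}$, but $V'$ varies with $x'$, so no single pair of disjoint closed sets is ever forced to share a point. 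The paper closes this with a net argument that uses compactness of $\beta Y$ in an essential way: if $\vp$ is discontinuous at $x_0$, pass to a subnet so that $\vp(x_\al)$ converges to a single point $y_1\neq y_0=\vp(x_0)$; one fixed $V_1\in\cF_{y_1}$ then contains $\vp(x_\al)$ cofinally, so $x_\al=\psi(\vp(x_\al))$ lies cofinally in the single closed set $\ol{\theta^{-1}(\ol{V_1\cap Y})}^{\beta X}$, forcing $x_0$ into it; since $x_0$ also lies in $\ol{\theta^{-1}(\ol{V_0\cap Y})}^{\beta X}$ for a $V_0\in\cF_{y_0}$ with $\ol{V_0}^{\beta Y}\cap\ol{V_1}^{\beta Y}=\emptyset$, Lemma \ref{l2.0.1} applied to $\theta^{-1}$ yields the contradiction. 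You need this subnet-extraction step (or an equivalent use of compactness) to make your continuity argument go through.
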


\begin{proof}
Consider the maps $\vp:\beta X \to \beta Y$ and $\psi: \beta Y \to \beta X$ given above.
By Lemma \ref{l2.0.2}, $\vp$ and $\psi$ are mutual inverses.
Let us show that $\vp$ is continuous. 
If $\vp$ is not continuous at some $x_0 \in \beta X$, then there is a net $(x_\al)$ converging to $x_0$ so that $(\vp(x_\al))$ converges to $y_1 \neq \vp(x_0)= y_0$.
Choose $V_i \in \cF_{y_i}$, $i=0,1$, so that $\ol{V_0}^{\beta Y} \cap \ol{V_1}^{\beta Y} = \emptyset$.
For a cofinal set of $\al$, $V_1 \in \cF_{\vp(x_\al)}$, hence $x_\al = \psi(\vp(x_\al)) \in \ol{\theta^{-1}(\ol{V_1\cap Y})}^{\beta X}$. Therefore, $x_0 \in \ol{\theta^{-1}(\ol{V_1\cap Y})}^{\beta X}$. Also, $V_0\in \cF_{y_0}$ implies that $x_0\in  \ol{\theta^{-1}(\ol{V_0\cap Y})}^{\beta X}$.
This is impossible since  $\ol{\theta^{-1}(\ol{V_i\cap Y})}^{\beta X}$, $i =0,1$, are disjoint by Lemma \ref{l2.0.1}.
This completes the proof of continuity of $\vp$.  It follows that $\vp$ is a homeomorphism by symmetry.

Let $U$ be an open set in $\beta X$ and suppose that $f= g$ on $U\cap X$ for some $f,g\in A(X,E)$. 
Let $y_0\in \vp(U)\cap Y$ and set $x_0 = \psi(y_0)\in U$.  We wish to show that $Tf(y_0) =Tg(y_0)$.
By the remark preceding Lemma \ref{l2.0.2}, we may assume that $U\cap X \in \cC(X)$.  
Then $U\in \cF_{x_0}$.  By definition of $\vp$, $y_0 = \vp(x_0) \in \ol{\theta(\ol{U\cap X})}^{\beta Y}$.
Since $y_0\in Y$ and $\theta(\ol{U\cap X})$ is closed in $Y$, $y_0 \in \theta(\ol{U\cap X})$.
By Theorem \ref{t5}, $Tf = Tg$ on $\theta(\ol{U\cap X})$.
Hence $Tf(y_0) = Tg(y_0)$.
This proves that $Tf = Tg$ on $\vp(U) \cap Y$.
The reverse implication follows by symmetry.
\end{proof}

Recall that a  Hausdorff completely regular  topological space
$X$ is {\em realcompact} if for any $x_0\in \beta X\bs X$, there is a continuous function $f:\beta X\to [0,1]$ so that $f(x_0) =1 > f(x)$ for all $x\in X$.
For more on realcompact spaces, refer to the classic \cite{GJ}.
In particular, let $\upsilon X$ be the Hewitt realcompactification of $X$ \cite{GJ}.  Then every $f\in C(X)$ has a unique continuous extension to a (real valued) function $\stackrel{\smile}{f} \in C(\upsilon X)$. 
The map $f\mapsto\stackrel{\smile}{f}$ is an algebraic isomorphism and hence biseparating.  Hence it is rather natural to consider realcompact spaces in the present context.
When one or more of the spaces $X$ or $Y$ is realcompact, Theorem \ref{t2.0.4} can be improved.

\begin{lem}\label{l2.4}\cite[Lemma 3.2]{LT}
Let $Y$ be a realcompact space and let $y_0 \in \beta Y \bs Y$.  There exist  open sets $U_n$ and $V_n$ in $\beta Y$, $n \in \N$, such that
\begin{enumerate}
\item $\ol{U_n}^{\beta Y} \subseteq V_n$ for all $n$;
\item $y_0 \in \ol{\bigcup^\infty_{n=m}U_n}^{\beta Y}$ for all $m$;
\item $Y \cap \bigcap^\infty_{m=1}\ol{\bigcup^\infty_{n=m}V_n}^{\beta Y} = \emptyset$;
\item $\ol{V_n}^{\beta Y} \cap \ol{V_m}^{\beta Y} = \emptyset$ if  $n\neq m$.
\end{enumerate}
\end{lem}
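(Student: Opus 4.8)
# Proof proposal for Lemma \ref{l2.4}

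The plan is to exploit realcompactness directly in its $\beta Y$-characterization and build the sequences $(U_n)$, $(V_n)$ recursively. Since $y_0 \in \beta Y \bs Y$ and $Y$ is realcompact, there is a continuous $h : \beta Y \to [0,1]$ with $h(y_0) = 1$ and $h(y) < 1$ for all $y \in Y$. Equivalently, writing $g = 1 - h$, we have a continuous $g : \beta Y \to [0,1]$ with $g(y_0) = 0$ and $g(y) > 0$ on $Y$. The idea is to use the level sets $\{g < \tfrac1n\}$ (or, more precisely, open sets sandwiched between successive sublevel sets) to produce the $V_n$: their ``tails'' $\bigcup_{n \ge m} V_n$ will still accumulate at $y_0$ (property (2)), but their $\beta Y$-closures will shrink down to $g^{-1}(0)$, which is disjoint from $Y$ by construction, giving property (3).

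Concretely, I would proceed as follows. First, since $y_0 \in \ol{g^{-1}((0,1])}^{\beta Y}$ wouldn't be automatic (the zero set of $g$ could contain a neighborhood of $y_0$), I would instead first shrink: because $y_0 \in \beta Y \bs Y$ lies in the closure of $Y$ (as $Y$ is dense in $\beta Y$) and $g > 0$ on $Y$, every neighborhood of $y_0$ meets $\{0 < g\} \supseteq Y$, so $y_0 \in \ol{\{g < t\} \cap \{g > 0\}}^{\beta Y}$ for every $t > 0$. Now pick a strictly decreasing sequence $t_n \to 0$ (say $t_n = 4^{-n}$) and set, for each $n$, the open set $V_n = \{ t_{n+1} < g < t_{n-1}\}$ and $U_n = \{t_{n+1} < g < t_n\}$ — or a slight variant chosen so that the closure condition $\ol{U_n}^{\beta Y} \subseteq V_n$ holds: since $\ol{\{a < g < b\}}^{\beta Y} \subseteq \{a \le g \le b\}$, choosing the defining intervals for $U_n$ strictly nested inside those for $V_n$ forces (1). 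The disjointness (4) of the $\ol{V_n}^{\beta Y}$ for non-adjacent $n$ is immediate from the interval structure; to get full pairwise disjointness one spaces out the intervals, e.g. using $V_n = \{t_{2n+1} < g < t_{2n-1}\}$ so that consecutive $V_n$'s occupy disjoint open intervals of values, then $\ol{V_n}^{\beta Y} \subseteq \{t_{2n+1} \le g \le t_{2n-1}\}$ and these closed value-intervals are pairwise disjoint. For (2): the union $\bigcup_{n \ge m} V_n$ contains $\{0 < g < t_{2m-1}\} \supseteq \{0 < g < t_{2m-1}\}$ minus possibly a discrete set of values, but in any case contains an open set whose $\beta Y$-closure contains $y_0$ (by the density remark above), so $y_0 \in \ol{\bigcup_{n\ge m}V_n}^{\beta Y}$. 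For (3): $\bigcup_{n \ge m} V_n \subseteq \{g < t_{2m-1}\}$, hence $\ol{\bigcup_{n\ge m}V_n}^{\beta Y} \subseteq \{g \le t_{2m-1}\}$, and intersecting over all $m$ gives a subset of $\{g = 0\}$, which is disjoint from $Y$ since $g > 0$ on $Y$.

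The main obstacle I anticipate is reconciling (2) with (3): I need the tails $\bigcup_{n\ge m} V_n$ to ``reach'' $y_0$ in the closure while their closures collectively avoid all of $Y$. This is exactly where realcompactness (rather than mere complete regularity) is essential — it is what lets me separate $y_0$ from $Y$ by a single continuous real-valued function whose zero set contains $y_0$ and misses $Y$. A secondary technical point is the slight mismatch between open intervals $\{a < g < b\}$ and the closed intervals $\{a \le g \le b\}$ bounding their closures: one must choose the endpoints $t_n$ with enough ``slack'' that the closures still nest as required for (1) and are disjoint as required for (4), but since the $t_n$ are freely chosen this is routine bookkeeping. (This lemma is quoted from \cite[Lemma 3.2]{LT}, so in the paper itself a reference suffices; the above is how one would reconstruct the argument.)
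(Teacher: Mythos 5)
The paper offers no internal proof of this statement --- it is quoted directly from \cite[Lemma 3.2]{LT} --- so the only thing to assess is whether your reconstruction is sound. Your starting point is right (realcompactness yields a continuous $g:\beta Y\to[0,1]$ with $g(y_0)=0$ and $g>0$ on $Y$, and one works with preimages of value-intervals shrinking to $0$), but there is a genuine gap, and it sits exactly where you wrote ``routine bookkeeping''. First, a concrete error: with $V_n=\{t_{2n+1}<g<t_{2n-1}\}$ the closed value-intervals $[t_{2n+1},t_{2n-1}]$ and $[t_{2n+3},t_{2n+1}]$ are \emph{not} pairwise disjoint --- consecutive ones share the endpoint $t_{2n+1}$, and $\ol{V_n}^{\beta Y}\cap\ol{V_{n+1}}^{\beta Y}$ can indeed meet inside $g^{-1}(t_{2n+1})$ (take $Y=(0,1]$ with $g$ the identity; both closures contain the point $t_{2n+1}\in Y$). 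So condition (4) fails for the sets you wrote down.

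More seriously, the two available repairs each break another condition, and this tension between (2) and (4) is the actual content of the lemma. If you leave the value-intervals abutting, then $\bigcup_{n\ge m}V_n$ is $\{0<g<t_{2m-1}\}$ minus the countably many level sets $g^{-1}(t_{2n+1})$, and $y_0$ need not lie in the closure of that difference even though it lies in the closure of $\{0<g<t_{2m-1}\}$: a point of $\beta Y$ can lie in the closure of a countable union without lying in the closure of any piece (already $y_0\in\ol{\N}=\beta\N$ while $y_0\notin\ol{\{k\}}$ for every $k$), so deleting countably many sets, each of whose closures misses $y_0$, can still expel $y_0$ from the closure. If instead you insert genuine gaps between the value-intervals so that (4) holds, the tails $\bigcup_{n\ge m}U_n$ now omit whole open bands of $g$-values, and for a choice of $t_n$ made in advance (your ``$t_n=4^{-n}$, say'') the points of $Y$ accumulating at $y_0$ may all carry $g$-values in the omitted bands; note also that (2) concerns the $U_n$, which are strictly smaller than the $V_n$ you argue about. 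Closing the gap needs an additional idea. One that works: cover $(0,1]$ by $A_j=g^{-1}((2^{-j-1},2^{-j+1}))$ and split the indices into three residue classes mod $3$, so that within each class the closed value-intervals are separated with room to enlarge $A_j$ to $V_n$; if no class satisfied (2), intersecting the three witnessing neighborhoods of $y_0$ would give a neighborhood of $y_0$ missing $\bigcup_{j\ge m}A_j\supseteq Y\cap g^{-1}((0,2^{-m}))$, contradicting the density of $Y$ in $\beta Y$. Hence some class works, and (1), (3), (4) are then straightforward.
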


\begin{lem}\label{l2.5.1}
Let $E$ be a Hausdorff topological vector space and let $0 \neq u\in E$.
There is a continuous function $h:E\to \R$ so that $h(nu)=  u$ for all $n\in \N$.
\end{lem}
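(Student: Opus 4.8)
Since $h$ is required to take values in $\R$, I read the conclusion as ``$h(nu)=n$ for every $n\in\N$'', i.e., $h$ restricts to the coordinate map along the sequence $(nu)$; this is the reading that makes $h$ unbounded on $\{nu:n\in\N\}$, which is presumably how the lemma is meant to be used. The plan is to separate the points $nu$ by a \emph{discrete} family of open sets and then paste together bump functions. First I would note that $L=\R u$ is a one-dimensional, hence closed, subspace of the Hausdorff topological vector space $E$, and that $t\mapsto tu$ is a homeomorphism of $\R$ onto $L$; consequently $S=\{ku:k\in\Z,\ |k|\ge 1\}$, which corresponds to the closed discrete set $\Z\setminus\{0\}\subseteq\R$, is closed in $L$, hence closed in $E$, and $0\notin S$. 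Since $E\setminus S$ is then an open neighborhood of $0$ and $0$ has a base of balanced neighborhoods, I would fix a balanced neighborhood $N_0$ of $0$ with $N_0\cap S=\emptyset$, and then, using continuity of addition, choose a balanced open neighborhood $W$ of $0$ with $W+W+W+W\subseteq N_0$.

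The main step is to check that $\{nu+W:n\in\N\}$ is a discrete family of open sets. If some $x\in E$ had $x+W$ meeting both $nu+W$ and $mu+W$ with $n\neq m$, then $x-nu$ and $x-mu$ would both lie in $W-W=W+W$ (as $W$ is balanced), so $(m-n)u\in W+W+W+W\subseteq N_0$, contradicting $(m-n)u\in S$; in particular the $nu+W$ are pairwise disjoint. Now $E$, being a Hausdorff topological vector space, is completely regular, so for each $n$ there is a continuous $\varphi_n\colon E\to[0,1]$ with $\varphi_n(nu)=1$ and $\varphi_n\equiv 0$ on $E\setminus(nu+W)$. Set $h=\sum_{n}n\varphi_n$. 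At each point of $E$ at most one summand is nonzero (disjointness of the $nu+W$), so $h$ is well-defined; since the family $\{nu+W\}$ is discrete, every point of $E$ has a neighborhood on which $h$ agrees with a single $n\varphi_n$, so $h$ is continuous; and $\varphi_m(nu)=0$ for $m\neq n$ yields $h(nu)=n\varphi_n(nu)=n$.

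The crux is the discreteness claim for $\{nu+W\}$, which is exactly the point that forces a purely linear–topological argument: for a non-locally-convex $E$ there may be no nonzero continuous linear functional, so the one-line route ``take $h=\phi/\phi(u)$ for a continuous linear $\phi$ with $\phi(u)\neq 0$'' is not available. A secondary point to watch is the use of complete regularity: since $E$ need not be normal, we cannot extend the coordinate functional off the whole closed line $L$ by Tietze, so it is essential that $h$ is only prescribed on the closed \emph{discrete} set $\{nu:n\in\N\}$, where the bump-function construction above does the job.
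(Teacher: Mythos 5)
Your proof is correct (and you rightly read the conclusion as $h(nu)=n$; the ``$h(nu)=u$'' in the statement is a typo, as the paper's own proof confirms). The overall construction is the same as the paper's --- separate the points $nu$ by a disjoint family of translated neighborhoods and paste bump functions obtained from complete regularity --- but you reach the key separation property by a genuinely different route. The paper picks a \emph{circled} neighborhood $U$ of $0$ with $u\notin U+U+U$ and exploits division by integers ($\frac{v}{m-n}\in U$ for $v\in U$) to show the closures $\overline{nu+V}$ are pairwise disjoint, and then proves separately, via absorption ($\frac{x}{n_0}\in U$), that $\overline{\bigcup V_n}=\bigcup\overline{V_n}$, which is what gives local finiteness for the pasting step. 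You instead invoke the fact that the one-dimensional subspace $\R u$ is closed and homeomorphic to $\R$, so that $S=(\Z\setminus\{0\})u$ is closed and misses $0$; a balanced $N_0$ disjoint from $S$ and a balanced $W$ with $W+W+W+W\subseteq N_0$ then make $\{nu+W\}$ \emph{uniformly} discrete in one difference computation, with no division and no separate closure-of-union claim. Your version is cleaner at the pasting stage (every $W$-translate meets at most one member of the family), at the cost of importing the standard but nontrivial theorem that finite-dimensional subspaces of a Hausdorff TVS are closed and carry the Euclidean topology; the paper's argument is more self-contained, using only the Hausdorff separation of $u$ from $0$ together with circledness.
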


\begin{proof}
Let $U$ be a circled open neighborhood of $0$ in $E$ so that $u \notin U + U +U$.
Then let $V$ be an open neighborhood of $0$ so that $\ol{V}\subseteq U$.
Set $V_n = nu + V$, $n\in\N$.  Suppose that $m\neq n$ and $\ol{V_m} \cap \ol{V_n} \neq \emptyset$.
Then there are $v_1,v_2\in \ol{V}$ so that $mu+v_1 = nu + v_2$.
Thus
\[ u = \frac{v_2}{m-n} + \frac{v_1}{n-m} \in U+U \subseteq U+U+U,
\]
contrary to the choice of $U$.  This shows that $\ol{V_m}\cap \ol{V_n} = \emptyset$ if $m\neq n$.

Next, we claim that 
$\ol{\bigcup V_n} = \bigcup \ol{V_n}$.
Suppose that $x\in \ol{\bigcup V_n}$.  
Choose $n_0\in \N$ so that $\frac{x}{n_0} \in U$.
For any $n\geq n_0$, if $V_n \cap (x+U) \neq \emptyset$, then there are $v\in V$ and $w\in U$ so that 
$nu+ v = x+w$.  Thus 
\[ u = \frac{x}{n} + \frac{w}{n} -\frac{v}{n} \in U + U + U,\]
a contradiction. Hence $x\notin \ol{\bigcup_{n\geq n_0}V_n}$.
Therefore, $x\in \ol{\bigcup^{n_0-1}_{n=1}V_n} = \bigcup^{n_0-1}_{n=1}\ol{V_n}$.
This proves the claim.

Since $E$ is completely regular, for each $n\in \N$, there exists a continuous function $h_n:E\to \R$ so that $h_n(nu) = n$ and that $h_n(x) = 0$ if $x \notin V_n$.
Define $h:E\to \R$ by $h(x) = h_n(x)$ if $x\in V_n$ for some $n$ and $h(x) = 0$ otherwise.
From the properties of the sets $V_n$ shown above, for each $x\in E$, there are an open neighborhood  $O$ 
of $x$ and some $n\in \N$ so that $h = h_n$ on $O$ or $h = 0$ on $O$.
It follows easily that $h$ is continuous.
Obviously, $h(nu)= n$ for all $n\in \N$.
\end{proof}

\begin{lem}\label{l2.6}
If $A(Y,F) = C(Y,F)$ and $Y$ is realcompact, then $\vp(X)\subseteq Y$.
\end{lem}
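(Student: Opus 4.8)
Suppose, towards a contradiction, that there is a point $x_0\in X$ with $y_0:=\vp(x_0)\in\beta Y\bs Y$. Since $Y$ is realcompact, Lemma \ref{l2.4} applied to $y_0$ produces open sets $U_n,V_n\subseteq\beta Y$ satisfying conditions (1)--(4) there; shrinking each $U_n$ by the remark preceding Lemma \ref{l2.0.2}, we may assume $U_n\cap Y\in\cC(Y)$ for all $n$. Fix a nonzero $v\in F$ and let $h:F\to\R$ be the continuous function given by Lemma \ref{l2.5.1}, so $h(nv)=n$ for every $n\in\N$. Moreover, the proof of that lemma exhibits the $nv$ as sitting in the pairwise disjoint open sets $nv+V$ with no accumulation point, so $\{nv:n\in\N\}$ is a closed discrete subset of $F$; in particular no net $(n_\al v)_\al$ with $n_\al\to\infty$ can converge in $F$.

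Next I would manufacture a test function on the $Y$-side. By the Urysohn lemma choose, for each $n$, a continuous $\gamma_n:\beta Y\to[0,1]$ with $\gamma_n\equiv1$ on $\ol{U_n}^{\beta Y}$ and with support contained in $V_n$. Because the closures $\ol{V_n}^{\beta Y}$ are pairwise disjoint (property (4)) and $Y\cap\bigcap_m\ol{\bigcup_{n\ge m}V_n}^{\beta Y}=\emptyset$ (property (3)), every point of $Y$ has a neighbourhood meeting at most one of the sets $V_n$; hence $g:=\sum_n n\,\gamma_n|_Y\otimes v$ is a well-defined member of $C(Y,F)$, and $g=nv$ on $U_n\cap Y$ for every $n$. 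It is exactly here that one uses that $A(Y,F)$ is the \emph{full} space $C(Y,F)$, since $g$ is unbounded near $y_0$.

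Now set $f:=T^{-1}g\in A(X,E)$. Since $\vp$ is a homeomorphism with $\vp(x_0)=y_0$ and, by property (2), $y_0\in\ol{\bigcup_{n\ge m}U_n}^{\beta Y}$ for every $m$, the open sets $\vp^{-1}(U_n)\cap X$ accumulate at $x_0$ along a cofinal set of indices $n$. Using Theorem \ref{t2.0.4} --- together with the fact that $g$ coincides with the constant function $nv$ on the open set $U_n$, and that for any nonzero $w\in E$ the function $T(1\otimes w)$ has dense carrier in $Y$ (because $\theta(X)=\wh{C}(T(1\otimes w))$ is the largest element of $\cD(Y)$, namely $Y$ itself) --- one localizes $f$ on each set $\vp^{-1}(U_n)\cap X$ in terms of the local data of $T$, and reads off that along a suitable net $x_\al\to x_0$ with $x_\al\in\vp^{-1}(U_{n(\al)})\cap X$ and $n(\al)\to\infty$, the values $f(x_\al)$ are forced to follow the divergent family $\{nv\}$ (this is the role of the function $h$ of Lemma \ref{l2.5.1}: it converts the vector behaviour into a scalar obstruction). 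Continuity of $f$ at the point $x_0\in X$ would then force a subnet of $(n(\al)v)_\al$ to converge in $F$, contradicting the first paragraph. Hence $y_0\in Y$, that is, $\vp(X)\subseteq Y$.

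The step I expect to be the real obstacle is the localization in the third paragraph. A general nonlinear $T$ gives essentially no information about the values of $T^{-1}$ applied to the auxiliary constant functions $nv$, so the divergence of $\{nv\}$ in $F$ cannot be transported to the $X$-side by any direct computation; one is restricted to what Theorem \ref{t2.0.4} actually supplies, namely the agreement of functions on open subsets of $\beta X$ and $\beta Y$. The way around this is to combine that agreement statement with the separation of the sets $V_n$ from Lemma \ref{l2.4} and with the identity $\theta(X)=Y$, which together pin down $f$ near $x_0$ tightly enough to detect the obstruction.
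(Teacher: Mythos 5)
There is a genuine gap, and it is exactly the one you flag at the end: the localization step in your third paragraph does not go through, and the workaround you sketch is not a workaround. You build $g=\sum_n n\,\gamma_n|_Y\otimes v$ out of the constants $nv\in F$, so on $U_n\cap Y$ the function $g$ agrees with the constant function $1\otimes nv$. All that Theorem \ref{t2.0.4} then gives you is $T^{-1}g=T^{-1}(1\otimes nv)$ on $\vp^{-1}(U_n)\cap X$, and for a nonlinear biseparating $T$ you have no control whatsoever over the functions $T^{-1}(1\otimes nv)$: their values at points near $x_0$ could perfectly well stay in a bounded, even convergent, subset of $E$. Nothing ``pins down $f$ near $x_0$'': the identity $\theta(X)=Y$ and the disjointness of the $V_n$ do not transport the divergence of $\{nv\}\subseteq F$ to the $E$-valued function $f=T^{-1}g$. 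There is also a type mismatch in your final step: your $h$ from Lemma \ref{l2.5.1} is defined on $F$ with $h(nv)=n$, but the function whose continuity at $x_0\in X$ you want to violate is the $E$-valued $f$, so $h\circ f$ is not even defined, and $f(x_\al)$ cannot ``follow'' the family $\{nv\}$.

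The paper's proof resolves this by choosing the building blocks on the $Y$-side to be $T$-images of constants from the $X$-side: it sets $g_n=f_n\cdot T(n\otimes u)$ for a fixed nonzero $u\in E$ (with $f_n$ the Urysohn functions for $U_n\subseteq V_n$) and $g=\sum g_n$. Then $g=T(n\otimes u)$ on $U_n\cap Y$, so Theorem \ref{t2.0.4} yields the exact identity $T^{-1}g=nu$ on $\vp^{-1}(U_n)\cap X$ --- a known constant in $E$. Lemma \ref{l2.5.1} is then applied to $u\in E$, producing $h:E\to\R$ with $h(nu)=n$, and $k=h\circ T^{-1}g\in C(X)$ satisfies $k\geq m$ on $\vp^{-1}(\bigcup_{n\geq m}U_n)\cap X$; since $x_0$ lies in the closure of each of these sets, continuity of the real-valued $k$ at $x_0\in X$ gives $k(x_0)\geq m$ for every $m$, the desired contradiction. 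If you replace your $g$ by this one (and your $h$ on $F$ by the paper's $h$ on $E$), the rest of your argument, including the verification that $g\in C(Y,F)$ via properties (3) and (4) of Lemma \ref{l2.4}, is sound.
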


\begin{proof}
Retain the notation of Theorem \ref{t2.0.4}.  Suppose that there exists $x_0\in X$ so that $y_0 = \vp(x_0) \in \beta Y \bs Y$.
Choose open sets $U_n, V_n$ in $\beta Y$ using Lemma \ref{l2.4}.
From property (1) of said lemma, there exists a continuous function $f_n:\beta Y \to [0,1]$ so that
$f_n =1$ on $U_n$ and $f_n = 0$ outside $V_n$.
Fix a nonzero vector $u \in E$ and let $g_n$ be defined on $Y$ by $g_n(y) = f_n(y)T(n\otimes u)(y)$.
Then set $g(y) = g_n(y)$ if $y\in V_n\cap Y$ for some $n$ and $g(y) = 0$ if $y \in Y \bs (\bigcup^\infty_{n=1}V_n)$.
Fix $y \in Y$.  By property (3) of Lemma \ref{l2.4}, there exists $m$ so that $y \notin \ol{\bigcup^\infty_{n=m}V_n}^{\beta Y}$.
By property (4) of Lemma \ref{l2.4}, there exists at most one $n_0$, $1\leq n_0< m$, so that $y\in \ol{V_{n_0}}^{\beta Y}$.
Therefore, there exists an open neighborhood $U$ of $y$ in $Y$ so that $g = g_{n_0}$ or $g =0$ on the set $U$.
Thus $g$ is continuous at $y$.  Since $y\in Y$ is arbitrary, $g\in C(Y,F)$.
As $g = T(n\otimes u)$  on $U_n\cap Y$, by Theorem  \ref{t2.0.4}, $T^{-1}g = nu$ on ${\vp}^{-1}(U_n)\cap  X$.
By Lemma \ref{l2.5.1}, there is a continuous function $h:E\to \R$ so that $h(nu) = n$ for all $n$.
Set  $k = h\circ T^{-1}g \in C(X)$. Let $m\in \N$.  From the above, $k \geq m$ on ${\vp}^{-1}(\bigcup^\infty_{n=m}U_n)\cap X$.
By (2) of Lemma \ref{l2.4}, $y_0 \in \ol{\bigcup^\infty_{n=m}U_n}^{\beta Y}$.
Since each $U_n$ is open in $\beta Y$ and $\vp(X)$ is dense in $\beta Y$, 
\[y_0 \in \ol{(\bigcup^\infty_{n=m}U_n) \cap \vp(X)}^{\beta Y}.
\] As $\vp:\beta X\to \beta Y$ is a homeomorphism,
\[x_0  =\vp^{-1}(y_0) \in \ol{\vp^{-1}(\bigcup^\infty_{n=m}U_n)\cap X}^{\beta X}.
\]
Recall that $x_0\in X$.  By continuity of $k$, $k(x_0) \geq m$. This is a contradiction since $k$ is real-valued and $m$ is arbitrary.
\end{proof}

\begin{thm}\label{t2.8}
Let $X$, $Y$ be realcompact spaces and let $E$ and $F$ be Hausdorff topological vector spaces.
If $T:C(X,E)\to C(Y,F)$ is a (nonlinear) biseparating map, then $X$ and $Y$ are homeomorphic.
\end{thm}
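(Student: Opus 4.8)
The plan is to deduce Theorem \ref{t2.8} from Theorem \ref{t2.0.4} together with Lemma \ref{l2.6} by a symmetric argument. By Theorem \ref{t2.0.4}, since $A(X,E) = C(X,E)$ and $A(Y,F) = C(Y,F)$ with $X,Y$ realcompact, the biseparating map $T$ (which we may assume satisfies $T0=0$) induces a homeomorphism $\vp:\beta X \to \beta Y$. It therefore suffices to show that $\vp$ restricts to a homeomorphism of $X$ onto $Y$; since $\vp$ is already a homeomorphism of the compactifications, this amounts to proving $\vp(X) = Y$, i.e., $\vp(X)\subseteq Y$ and $\vp^{-1}(Y)\subseteq X$.

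First I would apply Lemma \ref{l2.6} directly: since $A(Y,F) = C(Y,F)$ and $Y$ is realcompact, we get $\vp(X)\subseteq Y$. For the reverse containment, I would invoke the symmetry of the whole setup. The map $T^{-1}:C(Y,F)\to C(X,E)$ is again a biseparating map between full spaces of continuous vector-valued functions on realcompact spaces, and the homeomorphism of Stone--\v{C}ech compactifications it induces is precisely $\psi = \vp^{-1}:\beta Y \to \beta X$ (this was already identified in the proof of Theorem \ref{t2.0.4}, where $\vp$ and $\psi$ were shown to be mutual inverses). Hence Lemma \ref{l2.6}, applied to $T^{-1}$ in place of $T$ and with the roles of $X$ and $Y$ interchanged, yields $\psi(Y)\subseteq X$, that is, $\vp^{-1}(Y)\subseteq X$.

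Combining the two inclusions, $\vp(X) = Y$: indeed $\vp(X)\subseteq Y$ gives one direction, and $\vp^{-1}(Y)\subseteq X$ gives $Y = \vp(\vp^{-1}(Y))\subseteq \vp(X)$. Therefore $\vp|_X : X \to Y$ is a continuous bijection whose inverse is $\psi|_Y = \vp^{-1}|_Y: Y\to X$, which is continuous as the restriction of the continuous map $\psi$. Thus $\vp|_X$ is a homeomorphism of $X$ onto $Y$, and the theorem follows.

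I do not anticipate a genuine obstacle here; the substantive work has already been done in establishing Theorem \ref{t2.0.4} and Lemma \ref{l2.6}. The only point requiring a little care is the appeal to symmetry: one must check that $T^{-1}$ genuinely falls under the hypotheses of Lemma \ref{l2.6} (it does, since the class $C(\cdot,\cdot)$ of full continuous-function spaces is closed under passing to the inverse biseparating map, both spaces being realcompact and the target space being of the form $C(X,E)$), and that the compactification homeomorphism induced by $T^{-1}$ is indeed $\vp^{-1}$ rather than some a priori different map — but this is exactly the content of the mutual-inverse assertion proved within Theorem \ref{t2.0.4}. Once that bookkeeping is in place, the argument is immediate.
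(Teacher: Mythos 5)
Your proposal is correct and is essentially the paper's own argument: the paper also deduces the theorem by applying Lemma \ref{l2.6} to get $\vp(X)\subseteq Y$ and then invokes symmetry (i.e., the same lemma for $T^{-1}$ with $\psi=\vp^{-1}$) to conclude $\vp$ maps $X$ onto $Y$ and restricts to a homeomorphism. Your extra remarks just make explicit the symmetry bookkeeping that the paper leaves implicit.
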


\begin{proof}
By Lemma \ref{l2.6}, $\vp$ maps $X$ into $Y$.  By symmetry, $\vp$ maps $X$ onto $Y$.
Hence it is a homeomorphism from $X$ onto $Y$.
\end{proof}

Theorem \ref{t2.8} generalizes the same result obtained in \cite{A, BBH} for {\em linear} biseparating maps.

\begin{thm}\label{t2.9}
Suppose that $Y$ is realcompact. Let $T:C_*(X,E)\to C(Y,F)$ be a biseparating map.  Then $Y$ is compact.
\end{thm}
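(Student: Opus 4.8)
\emph{Proof plan.} The plan is to argue by contradiction: assume $Y$ is not compact, so that there is a point $y_0\in\beta Y\setminus Y$, and let $\vp:\beta X\to\beta Y$ be the homeomorphism furnished by Theorem \ref{t2.0.4}. The idea is to manufacture a function $g\in C(Y,F)$ that ``runs off to infinity'' along a sequence of open sets in $\beta Y$ clustering at $y_0$; then $T^{-1}g$ necessarily lies in $C_*(X,E)$ and so has bounded range, whereas Theorem \ref{t2.0.4} will identify $T^{-1}g$, on each of these sets, with an unboundedly large constant function. This construction mirrors the one in the proof of Lemma \ref{l2.6}; the difference is that here it is the boundedness forced by the domain $C_*(X,E)$ --- rather than the finiteness of a real number at a point of $X$ --- that produces the contradiction.

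In detail, I would first apply Lemma \ref{l2.4} to the realcompact space $Y$ and the point $y_0$ to obtain open sets $U_n,V_n$ in $\beta Y$ satisfying properties (1)--(4). By (1) and Urysohn's lemma on the compact Hausdorff (hence normal) space $\beta Y$, choose continuous functions $f_n:\beta Y\to[0,1]$ with $f_n\equiv 1$ on $\ol{U_n}^{\beta Y}$ and $f_n\equiv 0$ off $V_n$. Fix $0\neq u\in E$ and write $n\otimes u$ for the constant function $x\mapsto nu$, which lies in $C_*(X,E)$. Put $g_n=f_n\cdot T(n\otimes u)\in C(Y,F)$, and define $g:Y\to F$ by $g=g_n$ on $V_n\cap Y$ (the $V_n$ are pairwise disjoint by (4)) and $g=0$ off $\bigcup_n V_n$. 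Exactly as in Lemma \ref{l2.6}, properties (3) and (4) show that each point of $Y$ has a neighbourhood on which $g$ agrees with a single $g_{n_0}$ or with $0$, so $g\in C(Y,F)$; and since $f_n\equiv 1$ on $U_n\subseteq\ol{U_n}^{\beta Y}\subseteq V_n$, we get $g=T(n\otimes u)$ on $U_n\cap Y$.

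Next I would feed this into Theorem \ref{t2.0.4}, applying it to the open set $\vp^{-1}(U_n)$ of $\beta X$ and to the functions $T^{-1}g$ and $n\otimes u$ in $C_*(X,E)$: since $T(T^{-1}g)=g=T(n\otimes u)$ on $\vp(\vp^{-1}(U_n))\cap Y=U_n\cap Y$, we conclude $T^{-1}g=n\otimes u$ on $\vp^{-1}(U_n)\cap X$. Property (2) of Lemma \ref{l2.4} forces infinitely many of the $U_n$ to be nonempty; for each such $n$, $\vp^{-1}(U_n)$ is a nonempty open subset of $\beta X$, so density of $X$ in $\beta X$ gives a point $x_n\in\vp^{-1}(U_n)\cap X$ with $(T^{-1}g)(x_n)=nu$. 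Hence $\rg(T^{-1}g)$ contains $nu$ for infinitely many $n$. But in a Hausdorff topological vector space the set $\{nu:n\in\N\}$ is unbounded whenever $u\neq 0$: if it were bounded, then for every neighbourhood $W$ of $0$ we would have $nu\in nW$, i.e.\ $u\in W$, for all sufficiently large $n$, forcing $u=0$. Thus $\rg(T^{-1}g)$ is unbounded, contradicting $T^{-1}g\in C_*(X,E)$. Therefore no such $y_0$ exists, and $Y=\beta Y$ is compact.

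The only step demanding any care is the continuity of the patched function $g$ on $Y$, but that is routine given Lemma \ref{l2.4}(3)--(4) and is carried out verbatim in the proof of Lemma \ref{l2.6}; everything else is bookkeeping with Theorem \ref{t2.0.4} together with the elementary fact about unbounded sequences in a topological vector space.
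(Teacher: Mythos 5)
Your argument is correct and is essentially the paper's own proof, which likewise reruns the construction from Lemma \ref{l2.6} to produce $g\in C(Y,F)$ with $T^{-1}g=n\otimes u$ on the nonempty sets $\vp^{-1}(U_n)\cap X$ and then derives a contradiction from the unboundedness of $\{nu:n\in\N\}$. The extra details you supply (continuity of the patched $g$, nonemptiness of infinitely many $U_n$, and why $\{nu\}$ is unbounded in a Hausdorff topological vector space) are all sound.
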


\begin{proof}
The proof is the same as the proof of Lemma \ref{l2.6}.
If $y_0 \in \beta Y \bs Y$, then following the proof of Lemma \ref{l2.6}, one can choose $0\neq u\in E$ and construct a function $g\in C(Y,F)$ 
and  a sequence of nonempty sets $(W_n)$ in $X$ so that $T^{-1}g = nu$ on $W_n$ for each $n\in \N$.
($W_n$ is the set $\vp^{-1}(U_n)\cap X$ in the proof of Lemma \ref{l2.6}.)
Since the set $(nu)^\infty_{n=1}$ cannot be a bounded set in $E$, $T^{-1}g\notin C_*(X,E)$, contrary to the assumption.  Therefore, $Y = \beta Y$  is compact.
\end{proof}

\noindent 
{\bf Remark}.  It is well known that $C_*(X)$ is algebraically isomorphic to $C(\beta X)$.  Hence one cannot expect $X$ to be compact in Theorem \ref{t2.9} in general.

\section{Metric cases -- general results}\label{s3}
Throughout this section, let $X$ and $Y$ be  metric spaces,  $E$ and $F$  be nontrivial normed spaces and $A(X,E)$, $A(Y,F)$  be  vector subspaces of $C(X,E)$ and $C(Y,F)$ respectively. Recall that a {\em bump function} on a Banach space $G$ is a nonzero real-valued function on $G$ with bounded support. 
When we speak of the spaces $C^p(X,E)$ or $C^p_*(X,E)$, it will  be assumed additionally that $X$ is an open set in a Banach space that supports a $C^p$, respectively, $C^p_*$, bump function.  
A sequence $(x_n)$ in $X$ is {\em separated} if $\inf_{n\neq m}d(x_n,x_m) > 0$.
The main aim of the section is an analog of the structural result Theorem \ref{t2.0.4} when $X$ and $Y$ are metric spaces.  In this instance, we make use of completion instead of compactification.

\begin{prop}\label{p3.0.1}
Let $A(X,E)$ be one of the spaces $C(X,E)$, $C_*(X,E)$, $U(X,E)$, $U_*(X,E)$, $\Lip(X,E)$, $\Lip_*(X,E)$, $C^p(X,E)$ or $C^p_*(X,E)$.
Then $A(X,E)$ has
 the following properties.
\begin{enumerate}
\item[(S1)] $A(X,E)$ is compatible.
\item[(S2)] For any $x\in X$ and any $
\ep >0$, there exists  $C\in \cC(X)$ so that $x\in C$ and  $\diam C <\ep$. In particular, $A(X,E)$ is basic and $\wh{C}(f) = \ol{C(f)}$ for all $f\in A(X,E)$. 
\item[(S3)] If $f\in A(X,E)$ and $(x_n)$ is a separated sequence in $X$, then there are a  sequence $(C_n)$ in $\cC(X)$ and a function $g\in A(X,E)$ so that $x_n \in C_n$ for all $n$, $g = f$ on $C_n$ for infinitely many $n$ and $g = 0$ on $C_n$ for infinitely many $n$. 
\item[(S4)] Let $(x_n)$ and $(x'_n)$ be Cauchy sequences so that $\inf_{m,n} d(x_m,x'_n) > 0$.
For any $f\in A(X,E)$, there are sets $U,V\in \cC(X)$ and a function $g\in A(X,E)$ so that $x_n\in U$ and $x'_n\in V$ for infinitely many $n$,  $g = f$ on $U$ and $g = 0$ on $V$.
\end{enumerate}
\end{prop}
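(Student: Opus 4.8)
The plan is to verify the four properties space-by-space, the common tool being cutoff functions of the appropriate class: Urysohn functions for $C(X,E)$ and $C_*(X,E)$; the metric bumps $x\mapsto\max\bigl(0,1-\lambda\,d(x,x_0)\bigr)$ for $U(X,E)$, $U_*(X,E)$, $\Lip(X,E)$, $\Lip_*(X,E)$; and rescaled, translated $C^p$- (resp.\ $C^p_*$-) bumps on the ambient Banach space, restricted to $X$, for the differentiable spaces. (A rescaled bump need not have support inside $X$; one simply restricts it.) Property (S1) has already been recorded in Example~\ref{e1.7}: given $f\in A(X,E)$, $D=\wh C(h)\in\cD(X)$ and $x\notin D$, one has $\rho:=d(x,D)>0$, and $g:=\psi f$ does the job, where $\psi$ is a cutoff equal to $1$ near $x$ and supported in the ball of radius $\min(\rho,\eta)$ about $x$; here $\eta$ denotes, in the $U/\Lip$ cases, a scale on which $f$ has oscillation $<1$ (so that $f$ is bounded on $\supp\psi$, making $\psi f$ uniformly continuous or Lipschitz), and $\eta=\infty$ otherwise. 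Since ``basic'' and ``$\wh C(f)=\ol{C(f)}$'' are both consequences of (S2), it remains to prove (S2), (S3) and (S4).

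For (S2): given $x$ and $\ep>0$, a cutoff $\phi\colon X\to[0,1]$ of the relevant class with $\phi(x)=1$ and $\phi=0$ off $B(x,\ep/3)$, together with a fixed $0\neq v\in E$, yields the carrier $C=C(\phi\otimes v)$, which satisfies $x\in C\subseteq B(x,\ep/3)$, so $\diam C\le\tfrac{2\ep}{3}<\ep$. ``Basic'' then follows because $C_1\cap C_2$ is open, hence contains a ball $B(x,\ep)$, and a carrier $C\ni x$ with $\diam C<\ep$ lies in $B(x,\ep)\subseteq C_1\cap C_2$; and $\wh C(f)=\ol{C(f)}$ follows because $\wh C(f)$ is always a closed set containing $C(f)$, while if $x\notin\ol{C(f)}$ a carrier $C\ni x$ of diameter $<d(x,C(f))$ misses $C(f)$.

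For (S4): write $2\rho=\inf_{m,n}d(x_m,x'_n)>0$. Being Cauchy, the two sequences cluster at points $\bar x,\bar x'$ of the completion of $X$ with $d(\bar x,\bar x')\ge2\rho$. With $\eta$ as above, put $\rho'=\min(\rho,\eta)$ and let $\phi\colon X\to[0,1]$ be a cutoff equal to $1$ on $B(\bar x,\rho'/4)$ and $0$ off $B(\bar x,\rho'/2)$; then $g:=\phi f$ is bounded (since $f$ has oscillation $\le1$, hence is bounded, on the radius-$\rho'$ ball containing $\supp\phi$) and of the required class. Choose a carrier $U\subseteq B(\bar x,\rho'/4)$ with $x_n\in U$ for all large $n$, and a carrier $V\subseteq B(\bar x',\rho'/2)$ with $x'_n\in V$ for all large $n$ (both exist by the cutoff construction, since $x_n\to\bar x$ and $x'_n\to\bar x'$); since $B(\bar x,\rho'/2)$ and $B(\bar x',\rho'/2)$ are disjoint, $g=f$ on $U$ and $g=0$ on $V$.

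For (S3), put $\delta=\inf_{n\neq m}d(x_n,x_m)>0$; since the conclusion only involves infinitely many $n$, we may freely pass to subsequences. If some subsequence has $M:=\sup_n\|f(x_n)\|<\infty$, fix $\eta\in(0,\delta/4)$ (again with $f$ of oscillation $<1$ on $\eta$-balls in the $U/\Lip$ cases), choose pairwise disjoint carriers $C_n\ni x_n$ inside $B(x_n,\eta)$ and cutoffs $\phi_n$ equal to $1$ on $C_n$ and supported in $B(x_n,\eta)$ with norms bounded uniformly in $n$ (possible because $\eta$ is fixed), split $\N=N_1\sqcup N_0$ into two infinite sets, and set $g=\sum_{n\in N_1}\phi_n f$. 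The summands have disjoint supports, the family of supports is locally finite (each point of $X$ lies near at most one of them, by $\delta$-separation), $g$ is bounded by $M+1$ and lies in $A(X,E)$, $g=f$ on $C_n$ for $n\in N_1$ and $g=0$ on $C_n$ for $n\in N_0$; the remaining $C_n$ are produced by (S2). If instead $\|f(x_n)\|\to\infty$, this cannot occur in the starred spaces ($f$ is bounded there), and for $C(X,E)$ and $C^p(X,E)$ the construction above still yields a valid (merely unbounded) $g$, since continuity and $C^p$-smoothness are local. The genuinely new situation is therefore $A(X,E)=U(X,E)$ or $\Lip(X,E)$ with $\|f(x_n)\|\to\infty$; here disjoint bumps fail, because forcing $g$ to attain values of magnitude $\to\infty$ on a $\delta$-separated family and then cutting $g$ down to $0$ over bounded distances destroys uniform continuity. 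The remedy exploits the uniform continuity (resp.\ Lipschitz property) of $f$ itself: large values of $f(x_n)$ force the $x_n$ to recede in the (chain-)metric of $X$, providing room to bring $g$ down gradually. Concretely, one passes to a sparse subsequence $x_{n_1},x_{n_2},\dots$ along which the mutual distances grow fast relative to $\max_{j\le k}\|f(x_{n_j})\|$, sets $N_1=\{n_{2k}\}$, $N_0=\{n_{2k+1}\}$, and takes $g=\Phi f$ with $\Phi\colon X\to[0,1]$ a cutoff equal to $1$ near each $x_n$ ($n\in N_1$), equal to $0$ near each $x_n$ ($n\in N_0$), and transitioning over a shell about $x_n$ of radius comparable to $\|f(x_n)\|$, so that $\|\nabla\Phi\|\cdot\|f\|$ stays bounded on the transition shells and hence $g$ is Lipschitz (so uniformly continuous); the carriers $C_n$ are tiny carriers lying where $\Phi=1$ or $\Phi=0$ as appropriate. \textbf{The main obstacle} is exactly this last case: all else is routine cutoff-function bookkeeping, whereas here one must choose the sparse subsequence and the transition radii so that $g$ is \emph{globally} (uniformly continuous or) Lipschitz rather than merely locally so, and must also accommodate configurations in which the $x_n$ cannot be separated in the metric but only across disconnected pieces of $X$.
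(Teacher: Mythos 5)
Everything in your proposal up to the last case --- (S1), (S2), (S4), and the bounded case of (S3) --- is correct and coincides with what the paper does: the paper declares all of these ``straightforward'' and writes out only (S3) for $U(X,E)$ and $\Lip(X,E)$, handling the bounded case exactly as you do, with a sum of disjoint Lipschitz bumps $h=\sum h_{2n-1}$ of fixed radius and $g=h\cdot f$. The genuine gap is precisely the case you yourself flag as ``the main obstacle'': $A(X,E)=U(X,E)$ or $\Lip(X,E)$ with $\|f(x_n)\|\to\infty$. Your strategy there --- domain-side cutoffs transitioning over shells about $x_n$ of radius comparable to $\|f(x_n)\|$, after passing to a subsequence whose mutual distances grow relative to $\max_{j\le k}\|f(x_{n_j})\|$ --- does not go through. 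First, such a subsequence need not exist: take $X$ uniformly discrete with all mutual distances equal to $1$ and $f(x_n)=na$; then $f\in U(X,E)$, $(f(x_n))$ is unbounded, and no subsequence has growing mutual distances at all. Second, even when large shells can be drawn, shells of radius $\sim\|f(x_n)\|\to\infty$ around distinct $x_n$ overlap and absorb the other points $x_m$, so the requirement that $g$ equal $f$ or $0$ on a carrier around \emph{every} $x_m$ is lost; separation of $(x_n)$ gives you room only at the fixed scale $r$, not at scale $\|f(x_n)\|$. You acknowledge these configurations but do not resolve them, so the proof is incomplete exactly where the proposition is nontrivial.

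The paper's resolution avoids the domain geometry entirely by truncating in the \emph{range}. After passing to a subsequence with $t_1>0$ and $6t_n<t_{n+1}$, where $t_n=\|f(x_n)\|$, it sets $g(x)=\gamma(\|f(x)\|)f(x)$, where $\gamma:[0,\infty)\to[0,1]$ equals $1$ on the annuli $|t-t_n|<t_n/3$ for odd $n$, vanishes off the annuli $|t-t_n|<2t_n/3$, and satisfies $|\gamma(a)-\gamma(b)|\le 6|a-b|/b$ for $0\le a\le b\ne 0$. This relative-Lipschitz bound gives $\|g(y)-g(z)\|\le 7\|f(y)-f(z)\|$ for all $y,z$, so $g$ inherits uniform continuity or Lipschitzness directly from $f$ with no reference to distances in $X$; the carriers $C_n$ are then taken inside the open sets $\{x:\tfrac{t_n}{3}<\|f(x)\|<\tfrac{5t_n}{3}\}$, on which $\gamma(\|f(\cdot)\|)$ is identically $1$ or identically $0$ according to the parity of $n$. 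You should replace your shell construction by a composition of this kind.
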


\begin{proof}
Except for property (S3) for the spaces $U(X,E)$ and $\Lip(X,E)$, all other verifications are straightforward and are left to the reader.
To verify (S3) for $A(X,E) = U(X,E)$ or $\Lip(X,E)$, 
let $(x_n)$ be a sequence in $X$ so that $\inf_{n\neq m}d(x_n,x_m) = 3r > 0$ and let $f\in A(X,E)$.

In the first instance, assume that $(f(x_n))$ is a bounded sequence in $E$.  
Since $f\in U(X,E)$, there exists $0< r' < r$ so that \[\sup\{\|f(x)\|: x\in \bigcup_n B(x_n,r')\} = M < \infty.\]
For each $n\in \N$, let $h_n, k_n:E\to [0,1]$ be defined by 
\[ h_n(x) = (2 - \frac{2}{r'}d(x,x_n))^+\wedge 1 \text{ and } k_n(x) = (1 - \frac{d(x,x_n)}{r'})^+.\]
Then $(h_n)$ is a sequence of disjoint functions.  Let $h$ be the pointwise sum $\sum h_{2n-1}$. It is easily verified that $h: E\to [0,1]$ is a Lipchitz function with Lipschitz constant $\frac{2}{r'}$.
Take $g = h\cdot f$.  For each $n$, let $C_n = B(x_n,\frac{r'}{2})$.  Fix a nonzero vector $a\in E$.  Then $k_n\otimes a \in \Lip(X,E) \subseteq A(X,E)$.  Hence
$C_n = C(k_n\otimes a) \in \cC(X)$.  Clearly $g= f$ on $C_n$ if $n$ is odd and $g = 0$ on $C_n$ if $n$ is even.
Let us verify that $g\in A(X,E)$.
Indeed, suppose that $s, t\in X$. If $s,t\notin \bigcup_n B(x_n,r')$, then $h(s) = h(t) =0$.  Hence $\|g(s) - g(t)\| = 0$.  Otherwise, assume without loss of generality that $t\in \bigcup_n B(x_n,r')$.  We have
\begin{align*}
\|g(s) -g(t)\| & \leq |h(s)|\, \|f(s) - f(t)\| + |h(s)-h(t)|\,\|f(t)\| \\
& \leq \|f(s)-f(t)\| + \frac{2}{r'}\,d(s,t)\cdot M.
\end{align*}
Since $f\in A(X,E)$, it follows that $g \in A(X,E)$.

In the second case, assume that $(f(x_n))$ is unbounded in $E$.
Let $t_n = \|f(x_n)\|$ for all $n$.  By replacing $(x_n)$ by a subsequence if necessary, we may assume that $t_1> 0$ and $6t_n < t_{n+1}$ for all $n$.  Define $\gamma: [0,\infty) \to [0,1]$ by
\[ \gamma(t) = \begin{cases} 
                        (2 - \frac{3}{t_n}|t-t_n|)\wedge 1 &\text{if $|t-t_n| < \frac{2t_n}{3}$ for some odd $n$}\\
                        0 & \text{otherwise}. \end{cases}
                        \]
Direct verification shows that if  $0\leq a\leq  b\neq 0$, then $|\gamma(a)-\gamma(b)| \leq \frac{6|a-b|}{b}$.
Let $g:X\to E$ be given by $g(x) = \gamma(\|f(x)\|)f(x)$.
Suppose that $y,z\in X$ with $\|f(y)\| \leq \|f(z)\|$ and $f(z) \neq 0$.
Then
\begin{align*}
\|g(y)- g(z)\| & \leq |\gamma(\|f(y)\|) - \gamma(\|f(z)\|)|\,\|f(y)\| + |\gamma(\|f(z)\|)|\,\|f(y) - f(z)\|\\
& \leq \frac{6(\|f(z)\| - \|f(y)\|)}{\|f(z)\|}\,\|f(y)\| + \|f(y) - f(z)\|\\
& \leq 7\|f(y)-f(z)\|.
\end{align*}
The same inequality obviously holds if $f(y) = f(z) = 0$.
Since $f$ belongs to $A(X,E)$, so does $g$. 
As $\frac{t_n}{3} < \|f(x_n)\| < \frac{5t_n}{3}$, there exists $C_n\in \cC(X)$ so that 
\[ x_n \in C_n \subseteq \{x: \frac{t_n}{3} < \|f(x)\| < \frac{5t_n}{3}\}.\]
Finally, $\gamma(\|f(x)\|) =1$ if  $x \in C_{2n-1}$ and $\gamma(\|f(x)\|) =0$ if  $x \in C_{2n}$.
Hence $g = f$ on $C_{2n-1}$ and $g=0$ on $C_{2n}$.
This completes the verification of property (S3) for $A(X,E) = U(X,E)$ or $\Lip(X,E)$.
\end{proof}

For the sake of brevity, let us say that $A(X,E)$ is {\em standard} if it satisfies properties (S1) -- (S4).
For the rest of the section, assume that $A(X,E)$ and $A(Y,F)$ are standard spaces and that $T:A(X,E)\to A(Y,F)$ is a biseparating map.  Without loss of generality, normalize $T$ by taking $T0=0$.
Let $\theta:\cD(X) \to \cD(Y)$ be the map obtained from Theorem \ref{t5}.
As in Section \ref{s2}, we will show that $\theta$ induces a point mapping $\vp$.

Denote by $\ti{X}$ and $\ti{Y}$ the respective completions of the spaces $X$ and $Y$.
For any subset $U$ of $\ti{X}$, denote the closure of $U$ in $\ti{X}$ by $\ti{U}$.  Similarly for sets in $\ti {Y}$.
If $x_0\in \ti{X}$ and $(U_n)$ is a sequence of nonempty sets in  $\cC(X)$ so that $\diam U_n \to 0$ and $d(x_0,U_n)\to 0$, we write $(U_n) \sim x_0$.   By condition (S2), for any $x_0\in \ti{X}$, there is always a sequence $(U_n)$ so that $(U_n)\sim x_0$.

Suppose that $y \in \theta(\ol{U})\cap V$, where $U = C(u) \in \cC(X)$ and  $V= C(v)\in \cC(Y)$.
Since $\theta(\ol{U}) = \ol{C(Tu)}$, $C(Tu) \cap C(v)\neq \emptyset$.
Thus $C(u)\cap C(T^{-1}v)\neq \emptyset$.  
Hence $U \cap \theta^{-1}(\ol{V})\neq \emptyset$.

\begin{lem}\label{l3.1}
Let $x_0\in \ti{X}$ and assume that $(U_n)\sim x_0$. 
Take $y_n \in \theta(\ol{U_n})$ for each $n$.
\begin{enumerate}
\item If $x_0\in X$,  then $(y_n)$ is a Cauchy sequence in $Y$.
\item If, in additon, $A(X,E)\subseteq U(X,E)$ and contains a nonzero constant function, then $(y_n)$ is a Cauchy sequence in $Y$ for any $x_0\in \ti{X}$.
\end{enumerate}
\end{lem}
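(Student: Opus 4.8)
The plan is to argue by contradiction, routing everything through a dichotomy on whether the set $\{y_n\}$ is totally bounded. So suppose $(y_n)$ is not Cauchy; I will treat (1) and (2) in parallel, the only differences being the choice of an auxiliary function and the very last step. Write $U_n = C(u_n)$. I will use freely the facts recalled just before the statement: $\theta(\ol{U_n}) = \ol{C(Tu_n)}$, and if $y \in \theta(\ol{U_n}) \cap C(v)$ for some $v \in A(Y,F)$ then $U_n \cap \theta^{-1}(\ol{C(v)}) \neq \emptyset$, so, since $\theta^{-1}(\ol{C(v)})$ is the support of a carrier and $U_n$ is open, $U_n$ in fact meets that carrier. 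Also, for any $x_n \in U_n$ one has $d(x_0, x_n) \le d(x_0, U_n) + \diam U_n \to 0$, so $x_n \to x_0$ in $\ti X$, and in $X$ if $x_0 \in X$. Fix $u \in A(X,E)$: in case (1) use (S2) to pick $u$ with $x_0 \in C(u)$, so $u(x_0) \ne 0$; in case (2) let $u \equiv a \ne 0$ be the given nonzero constant. Put $f' = Tu \in A(Y,F)$.

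The combinatorial core is the dichotomy. If $\{y_n\}$ is not totally bounded, a routine greedy selection (deleting finitely many points preserves non-total-boundedness) produces a separated subsequence $(y_{n_k})$; feeding it and $f'$ into property (S3) for the standard space $A(Y,F)$ gives $(D_k) \subseteq \cC(Y)$ and $g' \in A(Y,F)$ with $y_{n_k} \in D_k$ for all $k$, and infinite sets $S_1, S_2$ with $g' = f'$ on $D_k$ for $k \in S_1$ and $g' = 0$ on $D_k$ for $k \in S_2$. If instead $\{y_n\}$ is totally bounded, then $\ol{\{y_n\}}$ is compact in $\ti Y$; as $(y_n)$ is not Cauchy it has subsequences converging in $\ti Y$ to two distinct limits, so after deleting initial terms we obtain Cauchy subsequences $(y_{a_k})$, $(y_{b_k})$ with $\inf_{k,l} d(y_{a_k}, y_{b_l}) > 0$. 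Feeding these and $f'$ into property (S4) for $A(Y,F)$ gives $V, W \in \cC(Y)$ and $g' \in A(Y,F)$ with $g' = f'$ on $V$, $g' = 0$ on $W$, and an infinite set $S$ with $y_{a_k} \in V$ and $y_{b_k} \in W$ for $k \in S$; set $S_1 = S_2 = S$ in this case.

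Next I transfer $g'$ back. Since a function agreeing with $f'$ (resp.\ with $0$) on a carrier also agrees with it on the associated support, the local determination of $T$ (Theorem \ref{t5}) yields $T^{-1}g' = u$ on $\theta^{-1}(\ol{D_k})$ (resp.\ on $\theta^{-1}(\ol V)$) for $k \in S_1$, and $T^{-1}g' = 0$ on $\theta^{-1}(\ol{D_k})$ (resp.\ on $\theta^{-1}(\ol W)$) for $k \in S_2$. Using the duality recalled above and openness of carriers, pick for each $k \in S_1$ a point $\xi_k \in U_{n_k}$ (resp.\ $\xi_k \in U_{a_k}$) lying in the carrier with support $\theta^{-1}(\ol{D_k})$ (resp.\ $\theta^{-1}(\ol V)$), and for each $k \in S_2$ a point $\eta_k \in U_{n_k}$ (resp.\ $\eta_k \in U_{b_k}$) lying in the carrier with support $\theta^{-1}(\ol{D_k})$ (resp.\ $\theta^{-1}(\ol W)$). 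Then $T^{-1}g'(\xi_k) = u(\xi_k)$, $T^{-1}g'(\eta_k) = 0$, and since $\xi_k, \eta_k$ lie in sets $U_m$ with $m \to \infty$, we get $\xi_k \to x_0$ and $\eta_k \to x_0$ in $\ti X$.

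To finish, choose $k_j \in S_1$ and $l_j \in S_2$ with $k_j, l_j \to \infty$, so $\xi_{k_j} \to x_0$ and $\eta_{l_j} \to x_0$ in $\ti X$. In case (1), $x_0 \in X$ and $T^{-1}g'$ is continuous there, so $u(x_0) = \lim_j u(\xi_{k_j}) = \lim_j T^{-1}g'(\xi_{k_j}) = T^{-1}g'(x_0)$, while also $T^{-1}g'(x_0) = \lim_j T^{-1}g'(\eta_{l_j}) = 0$, contradicting $u(x_0) \ne 0$. In case (2), $T^{-1}g' \in A(X,E) \subseteq U(X,E)$ is uniformly continuous and $d(\xi_{k_j}, \eta_{l_j}) \to 0$, so $\|a\| = \|u(\xi_{k_j})\| = \|T^{-1}g'(\xi_{k_j}) - T^{-1}g'(\eta_{l_j})\| \to 0$, impossible since $a \ne 0$. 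Hence $(y_n)$ is Cauchy. I expect the main obstacle to be the dichotomy of the second paragraph: recognizing that failure of the Cauchy property forces either failure of total boundedness (handled via a separated subsequence and (S3)) or compact closure in $\ti Y$ with at least two cluster points (handled via two well-separated Cauchy subsequences and (S4)), and then carrying the agreement data faithfully across $\theta$ and $\theta^{-1}$ using only that carriers are open; the rest is bookkeeping.
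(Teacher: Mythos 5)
Your proof is correct and follows essentially the same route as the paper: a separated subsequence handled via (S3), two well\mbox{-}separated Cauchy subsequences handled via (S4), transfer back through $\theta^{-1}$ using the intersection fact stated before the lemma, and a contradiction with continuity at $x_0$ (case 1) or uniform continuity (case 2). The only difference is organizational — you phrase the case split as an up\mbox{-}front dichotomy on total boundedness of $\{y_n\}$, whereas the paper first proves that every subsequence has a Cauchy subsequence and then treats the non\mbox{-}Cauchy case; these are equivalent.
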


\begin{proof}
First we show that every subsequence of $(y_n)$ has a further Cauchy subsequence. Otherwise, there is a subsequence $(y'_n)$ of $(y_n)$ that is separated.  
Under assumption (1),  $x_0 \in X$.  It follows from condition (S2) that  there is a function $f\in A(X,E)$ so that $f(x_0) \neq 0$.  Under assumption (2), take $f = 1\otimes a\in A(X,E)$, where $a\neq 0$.  
Since $A(Y,F)$ has property (S3), there are a subsequence of $(y_n')$, still denoted as $(y_n')$, a sequence $(V_n)$ in $\cC(Y)$ and a function $g\in A(Y,E)$ so that $y'_n\in V_n$ for all $n$, $g= Tf$ on $V_n$ for infinitely many $n$ and $g = 0$ on $V_n$ for infinitely many $n$.
Then $T^{-1}g = f$ on $\theta^{-1}(\ol{V_n})$ for infinitely many $n$ and $T^{-1}g = 0$ on $\theta^{-1}(\ol{V_n})$ for infinitely many $n$.
Since $y'_n \in \theta(\ol{U_n}) \cap V_n$, $U_n \cap \theta^{-1}(\ol{V_n}) \neq \emptyset$ by the discussion just before the lemma.  Choose a point $x'_n$ from the intersection.  Then $(T^{-1}g)(x'_n)=f(x_n')$ for infintely many $n$  and $0$ infinitely often.
Moreover, $(x'_n)$ converges to $x_0$ in $\ti{X}$.
Under assumption (1),
$x_0 \in X$, and we have a contradiction to the continuity of $T^{-1}g$ at $x_0$. 
Under assumption (2), $T^{-1}g \in U(X,E)$ and $(x'_n)$ is Cauchy in $X$.  Hence $((T^{-1}g)(x'_n))$ is Cauchy in $E$.  This is impossible since $(T^{-1}g)(x'_n) = f(x'_n) = a$  and $(T^{-1}g)(x'_n) = 0$ both occur infinitely many times.

If the whole sequence $(y_n)$ is not Cauchy, then in view of the previous paragraph, there are subsequences $(y_{i_n})$ and $(y_{j_n})$ and $\ep >0$ so that both subsequences are Cauchy and that $d(y_{i_m},y_{j_n}) > \ep $ for all $m,n$.
Choose the function $f$ as in the last paragraph.  By property (S4), there are  $U,V \in  \cC(Y)$ and $g\in A(Y,F)$ so that $y_{i_n}\in U$, $y_{j_n}\in V$ for infinitely many $n$, $g= Tf$ on $U$ and $g = 0$ on $V$.
Thus  $T^{-1}g = f$ on $\theta^{-1}(\ol{U})$ and $T^{-1}g = 0$ on $\theta^{-1}(\ol{V})$.
Then $y_{i_n} \in \theta(\ol{U_{i_n}}) \cap U$ for infinitely many $n$ and hence $U_{i_n} \cap \theta^{-1}(\ol{U}) \neq \emptyset$ for infinitely many $n$.
Let  $x_{i_n} \in  U_{i_n} \cap \theta^{-1}(\ol{U})$.  Then $(x_{i_n})$ converges to $x_0$ in $\ti{X}$ and  $T^{-1}g(x_{i_n}) = f(x_{i_n})$ for all $n$.
Similar consideration using the sequence $(y_{j_n})$ shows that there is a sequence $(x_{j_n})$ converging to $x_0$ in $\ti{X}$ so that $T^{-1}g(x_{j_n}) = 0$ for all $n$.
Under assumption (1), 
\[ \lim T^{-1}g(x_{i_n}) = \lim f(x_{i_n}) = f(x_0) \neq 0 = \lim T^{-1}g(x_{j_n}),\]
contradicting the continuity of $T^{-1}g$ at $x_0$.
Under assumption (2), $f(x_{i_n}) =a\neq 0$ by  choice of $f$.  Thus $T^{-1}g(x_{i_n}) = a$  and $T^{-1}g(x_{j_n}) = 0$ for all $n$, contradicting the uniform continuity of $T^{-1}g$.
\end{proof}



Suppose that $x_0\in \ti{X}$.  Let $(U_n)\sim x_0, (V_n)\sim x_0$ and $y_n\in U_n, z_n\in V_n$ for all $n$.
Then $(U_1,V_1,U_2,V_2,\dots)\sim x_0$.
By Lemma \ref{l3.1}, if $x_0\in X$, then the sequence $(y_1,z_1,y_2,z_2,\dots)$ is Cauchy.
Define $\vp: X\to \ti{Y}$ by setting $\vp(x) = \lim y_n$, where $(U_n)\sim x$ and $y_n \in \theta(\ol{U_n})$ for all $n$.  From the above, $\vp(x)$ is independent of the choices of $(U_n)$ and $(y_n)$.
Similarly, if $A(X,E)\subseteq U(X,E)$  and contains a constant function $1\otimes a$ for some $a\in E\bs \{0\}$, then Lemma \ref{l3.1}(2) shows that there is  a well defined  map $\ti{\vp}:\ti{X}\to \ti{Y}$ given by 
$\ti{\vp}(x) = \lim y_n$, where $(U_n)\sim x$ and $y_n \in \theta(\ol{U_n})$ for all $n$.
Clearly, in this case, $\ti{\vp}$ extends $\vp$.
 By symmetry, there is also a similar map $\psi:Y\to \ti{X}$ and a map $\ti{\psi}:\ti{Y}\to \ti{X}$ under corresponding assumptions on $A(Y,F)$.



\begin{lem}\label{l3.2}
The map $\vp$ is continuous from $X$ into $\ti{Y}$.   If, in addition, $A(X,E)\subseteq U(X,E)$  and contains a nonzero constant function, then  $\ti{\vp}:\ti{X}\to \ti{Y}$ is continuous on $\ti{X}$.
\end{lem}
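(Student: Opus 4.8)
The plan is to prove sequential continuity, leaning on the fact, recorded in the paragraph preceding the lemma, that $\vp(x)$ is independent of the choice of a sequence $(U_n)\sim x$ and of the points $y_n\in\theta(\ol{U_n})$. The one preliminary observation I need is a mild strengthening of that independence: if $(W_k)$ is \emph{any} sequence of nonempty sets in $\cC(X)$ with $\diam W_k\to 0$ and $d(x_0,W_k)\to 0$ (that is, $(W_k)\sim x_0$), and $w_k\in\theta(\ol{W_k})$ for each $k$, then $w_k\to\vp(x_0)$. This follows at once: interleave $(W_k)$ with a reference sequence $(U_n)\sim x_0$ used to compute $\vp(x_0)$; the interleaved sequence is again $\sim x_0$, so by Lemma \ref{l3.1}(1) the corresponding interleaved sequence of points in the $\theta$-images is Cauchy, forcing $\lim_k w_k=\lim_n y_n=\vp(x_0)$.

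Now fix $x_0\in X$ and a sequence $(x^{(k)})$ in $X$ with $x^{(k)}\to x_0$; I will show $\vp(x^{(k)})\to\vp(x_0)$. For each $k$, use property (S2) to pick a sequence $(U^{(k)}_n)_n\sim x^{(k)}$ together with points $y^{(k)}_n\in\theta(\ol{U^{(k)}_n})$, so that $\vp(x^{(k)})=\lim_n y^{(k)}_n$. Since $\diam U^{(k)}_n\to 0$, $d(x^{(k)},U^{(k)}_n)\to 0$ and $y^{(k)}_n\to\vp(x^{(k)})$ as $n\to\infty$, I can choose an index $n(k)$ so large that, writing $W_k=U^{(k)}_{n(k)}$ and $w_k=y^{(k)}_{n(k)}\in\theta(\ol{W_k})$, one has $\diam W_k<1/k$, $d(x^{(k)},W_k)<1/k$ and $d(w_k,\vp(x^{(k)}))<1/k$. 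Then $\diam W_k\to 0$ and $d(x_0,W_k)\le d(x_0,x^{(k)})+d(x^{(k)},W_k)<d(x_0,x^{(k)})+1/k\to 0$, so $(W_k)\sim x_0$. By the preliminary observation, $w_k\to\vp(x_0)$; combined with $d(w_k,\vp(x^{(k)}))<1/k\to 0$, the triangle inequality yields $\vp(x^{(k)})\to\vp(x_0)$. As $X$ is a metric space, $\vp$ is continuous.

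For the second assertion the argument is verbatim the same, with $x_0$ and the $x^{(k)}$ now allowed to lie anywhere in the completion $\ti X$ and all convergence taken there; every set $W_k$ still lies in $\cC(X)$ and $\theta(\ol{W_k})\subseteq Y$, so the limits live in $\ti Y$ as required. The sole difference is that the preliminary observation must now be justified for arbitrary $x_0\in\ti X$, which is exactly where the extra hypotheses enter: under $A(X,E)\subseteq U(X,E)$ together with the presence of a nonzero constant function, Lemma \ref{l3.1}(2) guarantees that the interleaved sequence of $\theta$-images is Cauchy for every $x_0\in\ti X$ --- this is precisely what makes $\ti\vp$ well defined on $\ti X$ in the first place --- and re-running the interleaving argument gives the required independence statement on $\ti X$.

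I do not anticipate a genuine obstacle here: the content is entirely in Lemma \ref{l3.1} and in the already-established independence of $\vp$ from the defining choices. The only thing requiring a little care is the simultaneous (diagonal) choice of $n(k)$ above and the routine check that the resulting sets $W_k$ satisfy $(W_k)\sim x_0$.
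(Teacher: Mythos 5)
Your argument is correct and is essentially the paper's own proof: the same diagonal selection of sets $W_k=U^{(k)}_{n(k)}$ with $(W_k)\sim x_0$, combined with the independence of $\vp(x_0)$ (resp.\ $\ti\vp(x_0)$) from the choice of defining sequence, which the paper establishes by the same interleaving trick using Lemma \ref{l3.1}. Your explicit statement of the ``preliminary observation'' merely makes visible a step the paper leaves implicit in the phrase ``Then $(U_{nk_n})\sim x_0$.''
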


\begin{proof}
We will prove the second assertion.  The first statement can be shown in the same way.
Under the second assumption, $\ti{\vp}$ is well defined.  Let $(x_n)$ be a sequence in $\ti{X}$ that converges to a point $x_0\in \ti{X}$.
By definition of $\ti{\vp}$, for each $n$, $\ti{\vp}(x_n) = \lim_k y_{nk}$, where $y_{nk} \in \theta(\ol{U_{nk}})$ and $(U_{nk})_k \sim x_n$.
For each $n$, choose $k_n$ so that $d(y_{nk_n}, \ti{\vp}(x_n)),  \diam U_{nk_n}, d(U_{nk_n}, x_{n})    < \frac{1}{n}$.  Then $y_{n_{k_n}} \in U_{nk_n}$ and $(U_{nk_n})\sim x_0$.
Thus $\ti{\vp}(x_0) = \lim y_{n{k_n}} = \lim \ti{\vp}(x_n)$.
\end{proof}



Suppose that $x\in U \in \cC(X)$.  We can choose $(U_n)$ so that $(U_n)\sim x$ and $U_n \subseteq U$ for all $n$.  By definition of $\vp$, $\vp(x) = \lim y_n$ where $y_n\in \theta(\ol{U_n}) \subseteq \theta(\ol{U})$.  Hence $\vp(x) \in \ti{\theta(\ol{U})}$.

\begin{lem}\label{l3.2.1}
Assume that  $A(X,E)\subseteq U(X,E)$  and contains a nonzero constant function.
Let $f,g\in A(X,E)$ and $U$ be an open set in $\ti{X}$.  If $f = g$ on $U\cap X$,  then $Tf = Tg$ on the set $\ti{\vp}(U) \cap Y$.
\end{lem}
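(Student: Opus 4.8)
The plan is to show $Tf = Tg$ at an arbitrary point $y_0 \in \ti{\vp}(U) \cap Y$ by approximating $y_0$ through the point map $\psi$ (or rather $\ti\psi$) on the $Y$-side and transporting the hypothesis $f=g$ on $U \cap X$ across via Theorem~\ref{t5}. First I would set $x_0 = \ti\psi(y_0)$; the map $\ti\psi:\ti Y \to \ti X$ is available by symmetry under the standing assumptions, and one should check that $x_0 \in U$, using the fact that $y_0 \in \ti{\vp}(U)$ together with the mutual-inverse relationship between $\vp$ and $\psi$ on the completions (established, or establishable, from Lemma~\ref{l3.1} and Lemma~\ref{l2.0.2}-style reasoning). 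Since $U$ is open in $\ti X$ and $x_0 \in U$, condition (S2) gives a set $C \in \cC(X)$ with $x_0 \in C$ and $\ol C \subseteq U$ (shrink the diameter).

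Next I would use that $f = g$ on $C \subseteq U \cap X$, hence $f - g \perp u$ where $C = C(u)$, i.e. $f = g$ on $\wh C(u) = \ol C$. By the local-determination property from Theorem~\ref{t5} (equivalently Proposition~\ref{p2}), $Tf = Tg$ on $\theta(\ol C)$. It then remains to show $y_0 \in \theta(\ol C)$, or at least $y_0 \in \ol{\theta(\ol C)}$; since $\theta(\ol C) \in \cD(Y)$ is of the form $\ol{C(v)}$ for some $v \in A(Y,F)$, and $y_0 \in Y$, membership in the closure suffices to conclude $Tf(y_0) = Tg(y_0)$ by continuity. To get $y_0 \in \ti{\theta(\ol C)}$, I would run the argument recorded in the paragraph just before Lemma~\ref{l3.2.1}: pick $(V_n) \sim y_0$ with $V_n \subseteq$ a set witnessing $y_0 \in \ti{\vp}(U)$, apply $\psi$ to land near $x_0$, and since $x_0 \in C$ with $C$ open-ish, arrange the corresponding $\theta^{-1}(\ol{V_n})$ to meet $C$; chasing this back through $\theta$ places $y_0$ in $\ti{\theta(\ol C)}$. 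Concretely: choose $(W_n) \sim x_0$ with $W_n \subseteq C$ for all $n$ and $y_n \in \theta(\ol{W_n})$; then $\vp(x_0) = \lim y_n$, and one shows $\ti\vp(x_0) = y_0$ so $y_n \to y_0$ with $y_n \in \theta(\ol{W_n}) \subseteq \theta(\ol C)$, giving $y_0 \in \ti{\theta(\ol C)}$.

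The main obstacle I anticipate is the bookkeeping around the identity $\ti\vp \circ \ti\psi = \mathrm{id}$ on $\ti Y$ (and in particular $\ti\vp(x_0) = y_0$ for $x_0 = \ti\psi(y_0)$): the excerpt has defined $\vp$ on $X$ and $\ti\vp$ on $\ti X$, and $\psi, \ti\psi$ symmetrically, but the "mutual inverse" statement on the completions is exactly the sort of thing that requires a short but careful argument (the analogue of Lemma~\ref{l2.0.2} in the metric setting), balancing the diameter-shrinking neighborhoods on both sides and invoking Lemma~\ref{l3.1}(2). Once that is in hand, everything else is a routine application of (S2) to localize, Theorem~\ref{t5} to transport equality, and continuity of $Tf - Tg$ on $Y$ to pass from the closure to the point. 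The reverse implication, $Tf = Tg$ on $\ti\vp(U) \cap Y \Rightarrow f = g$ on $U \cap X$, then follows by applying the same reasoning to $T^{-1}$ and $\ti\psi$, noting $\ti\psi(\ti\vp(U)) \supseteq U$.
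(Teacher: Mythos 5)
Your final ``concretely'' sentence is essentially the paper's proof: take $(W_n)\sim x_0$ with $\ol{W_n}\subseteq U\cap X$ for large $n$, pick $y_n\in\theta(\ol{W_n})$, note $y_n\to\ti{\vp}(x_0)=y_0$ by the definition of $\ti{\vp}$, transport $f=g$ on $\ol{W_n}$ to $Tf=Tg$ on $\theta(\ol{W_n})$ via Theorem \ref{t5}, and finish by continuity of $Tf-Tg$ at $y_0\in Y$. However, two things in the lead-up need repair. First, the detour through $\ti{\psi}$ is unnecessary and puts the cart before the horse: a point $y_0\in\ti{\vp}(U)\cap Y$ is \emph{by definition} of the form $\ti{\vp}(x_0)$ for some $x_0\in U$, so you may simply start from such an $x_0$; the identity $\ti{\psi}\circ\ti{\vp}=\mathrm{id}$ is only established afterwards, in the proof of Theorem \ref{t3.5} (which in turn cites this lemma), so leaning on it here is at best an avoidable dependency. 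Second, and more substantively, your appeal to (S2) to produce $C\in\cC(X)$ with $x_0\in C$ breaks down when $x_0\in\ti{X}\setminus X$, since every member of $\cC(X)$ is a subset of $X$; this case genuinely occurs because $U$ is open in the completion $\ti{X}$ and $\ti{\vp}$ is defined on all of $\ti{X}$. The correct use of (S2) in this setting is the one recorded after its statement: for any $x_0\in\ti{X}$ there is a sequence $(U_n)$ with $(U_n)\sim x_0$ (the sets need not contain $x_0$, only approach it with shrinking diameter), and for large $n$ one has $\ol{U_n}\subseteq U\cap X$. Dropping the intermediate set $C$ and arguing directly with $(U_n)\sim x_0$, as you do at the end, yields exactly the argument in the paper. (Also note the lemma as stated is one-directional, so the closing remark about the reverse implication is not needed here.)
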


\begin{proof}
Assume that $x_0\in U$ and $y_0 = \ti{\vp}(x_0) \in Y$.
Choose $(U_n)\sim x_0$ and let $x_n \in U_n$.
By the foregoing remark, $\vp(x_n) \in \ti{\theta(\ol{U_n})}$.  Pick $y_n \in \theta(\ol{U_n})$.
By definition of $\ti{\vp}$, $y_0 = \ti{\vp}(x_0) = \lim y_n$.
For all sufficiently large $n$, $\ol{U_n} \subseteq U\cap X$.
Hence $f=g$ on $\ol{U_n}$.  By Theorem \ref{t5}, $Tf = Tg$ on $\theta(\ol{U_n})$.
In particular, $Tf(y_n) = Tg(y_n)$.
By continuity of $Tf$ and $Tg$ at $y_0$, $Tf(y_0) = Tg(y_0)$.
\end{proof}

The following structure theorem applies to spaces of uniformly continuous functions and spaces of Lipschitz functions.

\begin{thm}\label{t3.5}
Suppose that both $A(X,E)$ and $A(Y,F)$ are standard subspaces of 
$U(X,E)$ and $U(Y,F)$ respectively so that both contain nonzero constant functions.
There is a homeomorphism $\ti{\vp}:\ti{X}\to \ti{Y}$ so that if
 $f,g\in A(X,E)$ and $U$ is an open set in $\ti{X}$, then $f=g$ on $U\cap X$ if and only if $Tf = Tg$ on $\ti{\vp}(U)\cap Y$.
 \end{thm}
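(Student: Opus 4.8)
The plan is to pair the map $\ti\vp:\ti X\to\ti Y$ constructed before Lemma \ref{l3.2} with the analogous map $\ti\psi:\ti Y\to\ti X$ furnished by symmetry (legitimate, since $A(Y,F)$ is likewise a standard subspace of $U(Y,F)$ containing a nonzero constant function), to prove that $\ti\psi$ and $\ti\vp$ are mutually inverse, and then to read off the functional equivalence from Lemma \ref{l3.2.1} applied to $T$ and to $T^{-1}$. Throughout I use that, with the normalization $T0=0$, the set map $\theta$ of Theorem \ref{t5} satisfies $\theta(\wh C(h))=\wh C(Th)$ and preserves inclusions in both directions, and that $\theta^{-1}$ is the corresponding set map of $T^{-1}$.

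\emph{Step 1: $\ti\psi$ and $\ti\vp$ are mutual inverses.} Fix $x_0\in\ti X$, choose $(U_n)\sim x_0$ with $U_n=C(u_n)$, and pick $y_n\in\theta(\ol{U_n})=\wh C(Tu_n)$, so $y_0:=\ti\vp(x_0)=\lim_n y_n$. Since $C(Tu_n)$ is dense in $\wh C(Tu_n)$ and $A(Y,F)$ has property (S2) and is basic, for each $n$ I can select $y_n'\in C(Tu_n)$ with $d(y_n',y_n)<1/n$ and then $V_n=C(v_n)\in\cC(Y)$ with $y_n'\in V_n\subseteq C(Tu_n)$ and $\diam V_n<1/n$; consequently $(V_n)\sim y_0$. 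Because $V_n\subseteq C(Tu_n)$ we get $\ol{V_n}=\wh C(v_n)\subseteq\wh C(Tu_n)=\theta(\ol{U_n})$, whence $\theta^{-1}(\ol{V_n})\subseteq\ol{U_n}$ by inclusion-preservation of $\theta$. Picking $x_n\in\theta^{-1}(\ol{V_n})$ (nonempty since $v_n\neq0$), we have $x_n\in\ol{U_n}$, so $x_n\to x_0$; by the definition of $\ti\psi$, $\ti\psi(y_0)=\lim_n x_n=x_0$. Thus $\ti\psi\circ\ti\vp=\mathrm{id}_{\ti X}$, and symmetrically $\ti\vp\circ\ti\psi=\mathrm{id}_{\ti Y}$. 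Together with the continuity of $\ti\vp$ and $\ti\psi$ from Lemma \ref{l3.2}, this makes $\ti\vp$ a homeomorphism of $\ti X$ onto $\ti Y$.

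\emph{Step 2: the functional equivalence.} If $f=g$ on $U\cap X$ for an open set $U$ in $\ti X$, then $Tf=Tg$ on $\ti\vp(U)\cap Y$ by Lemma \ref{l3.2.1}. Conversely, suppose $Tf=Tg$ on $\ti\vp(U)\cap Y$ and set $W=\ti\vp(U)$; since $\ti\vp$ is a homeomorphism, $W$ is open in $\ti Y$ and $\ti\psi(W)=U$. Applying Lemma \ref{l3.2.1} to the biseparating map $T^{-1}:A(Y,F)\to A(X,E)$ (whose hypotheses hold, as $A(Y,F)$ is a standard subspace of $U(Y,F)$ with a nonzero constant function) and the open set $W$, we obtain $f=T^{-1}(Tf)=T^{-1}(Tg)=g$ on $\ti\psi(W)\cap X=U\cap X$.

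The real work is Step 1, and within it the one delicate point is choosing the sets $V_n$ so that they shrink to $y_0$ while their $\theta^{-1}$-images remain trapped inside the shrinking sets $\ol{U_n}$; this is precisely what the inclusion $V_n\subseteq C(Tu_n)$ provides, via the inclusion-preserving property of $\theta$. Everything else is bookkeeping with the completions $\ti X,\ti Y$ and the definitions of $\ti\vp$ and $\ti\psi$.
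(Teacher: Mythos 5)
Your proposal is correct and follows essentially the same route as the paper: establish $\ti{\psi}\circ\ti{\vp}=\mathrm{id}$ by transporting a sequence $(U_n)\sim x_0$ through $\theta$ to a sequence $(V_n)\sim\ti{\vp}(x_0)$, then get the functional equivalence from Lemma \ref{l3.2.1} and symmetry. The only (harmless) difference is in producing the $V_n$: the paper takes $\ol{V_n}=\theta(\ol{U_n})$ outright, deducing $\diam\theta(\ol{U_n})\to 0$ from Lemma \ref{l3.1}(2), whereas you shrink inside $C(Tu_n)$ via (S2) and basicness and then use inclusion-preservation of $\theta$ — both are valid.
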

 
\begin{proof}
Under the given assumptions, we have well defined continuous maps $\ti{\vp}:\ti{X}\to \ti{Y}$ and $\ti{\psi}:\ti{Y}\to \ti{X}$ by Lemma \ref{l3.2}.
In the next paragraph, we will show that $\ti{\psi}\circ \ti{\vp}$ is the identity map on $\ti{X}$.  With symmetry, this allows us to conclude that $\ti{\vp}$ is a homeomorphism.
The final property in the statement of the theorem follows from Lemma \ref{l3.2.1} and symmetry.

Let $x_0\in \ti{X}$ and let $(U_n)\sim x_0$.
It follows from (2) of Lemma \ref{l3.1} and the definition of $\ti{\vp}$ that $\diam \theta(\ol{U_n})\to 0$ and $d(\theta(\ol{U_n}),\ti{\vp}(x_0)) \to 0$.  By definition of $\theta$, there exists $V_n \in \cC(Y)$ so that $\theta(\ol{U_n}) = \ol{V_n}$.
Then $(V_n)\sim \ti{\vp}(x_0)$.
Hence $\ti{\psi}( \ti{\vp}(x_0))= \lim x_n$, where $x_n \in \theta^{-1}(\ol{V_n}) = \ol{U_n}$ for all $n$.
Therefore,  $\ti{\psi}( \ti{\vp}(x_0))= x_0$, as claimed.
\end{proof}

Next, we consider the cases where one or both of $A(X,E)$ and $A(Y,F)$ is either $C, C_*$ or $C^p$.

\begin{lem}\label{l3.6}
Suppose that $A(X,E)$ is standard and $A(Y,F) = C(Y,F)$, $C_*(Y,F)$ or $C^p(Y,F)$.  If $T:A(X,E)\to A(Y,F)$ is a biseparating map, then $\vp(X)\subseteq Y$.
\end{lem}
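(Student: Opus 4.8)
\textbf{Proof strategy for Lemma \ref{l3.6}.}
The plan is to argue by contradiction, mimicking the structure of the proof of Lemma \ref{l2.6} but now working with completions rather than with the Stone--\v{C}ech compactification. Suppose there is $x_0 \in X$ with $y_0 = \vp(x_0) \in \ti{Y}\bs Y$; in particular $y_0$ is an accumulation point of $Y$, so there is a sequence $(y_0^{(k)})$ in $Y$ converging to $y_0$ in $\ti{Y}$ with no limit in $Y$. The first step is to extract from this a \emph{separated} subsequence in $Y$ (or, failing that, to observe that a subsequence is already Cauchy with no limit in $Y$) so that we can feed it into the standardness properties (S3) or (S4) of $A(Y,F)$. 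Because $A(X,E)$ is standard it contains, near $x_0$, carriers of arbitrarily small diameter and nonzero functions; pick a nonzero $u\in E$ and consider the images $T(n\otimes u)$ or, more to the point, build from the $y_0^{(k)}$ and property (S3) a sequence $(V_n)\subseteq \cC(Y)$ together with sets $U_n = \theta^{-1}(\ol{V_n}) \in \cD(X)$ ``converging to $x_0$'' in the sense that $(U_n)$ (after passing to carriers inside them) is $\sim x_0$.

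The second step is the heart of the argument: construct a single function $g\in A(Y,F)$ that, along the sets $V_n$, alternates (or more precisely realises infinitely many prescribed values) while remaining globally in $A(Y,F)$. This is exactly what (S3)/(S4) of $A(Y,F)$ deliver: there is $g\in A(Y,F)$ with $g = T(1\otimes u)$ on $V_n$ for infinitely many $n$ and $g = 0$ on $V_n$ for infinitely many $n$ (in the $C^p$ case the same construction works because bumps are available; this is why $A(Y,F)$ being one of $C$, $C_*$, $C^p$ matters — one needs the freedom to patch functions on widely separated pieces, which pointwise-Lipschitz-type control does not permit). Pulling back through $T^{-1}$ and using Theorem \ref{t5} (or, more conveniently, the local-determination property with the map $\theta$), we get $T^{-1}g = 1\otimes u$ on $\theta^{-1}(\ol{V_n}) = U_n$ for infinitely many $n$ and $T^{-1}g = 0$ on $U_n$ for infinitely many $n$. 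Choosing points $x_n \in U_n$ (legitimate by the remark before Lemma \ref{l3.1} once we know $U_n$ meets the relevant carrier), we obtain a sequence $x_n \to x_0$ in $\ti{X}$ along which $T^{-1}g$ takes the value $u$ infinitely often and $0$ infinitely often. Since $x_0 \in X$ and $T^{-1}g \in A(X,E)\subseteq C(X,E)$, this violates continuity of $T^{-1}g$ at $x_0$, the desired contradiction.

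To make the bookkeeping honest, one should phrase the passage ``$(y_0^{(k)})$ converges to $y_0\notin Y$'' in terms of the $\theta$-images of small carriers around $x_0$: take $(W_n)\sim x_0$ in $X$, so $\theta(\ol{W_n}) = \ol{V_n}$ for some $V_n\in\cC(Y)$, and by the definition of $\vp$ the chosen representatives $y_n\in\theta(\ol{W_n})$ converge to $y_0$. If some subsequence of $(y_n)$ were Cauchy with a limit in $Y$, one checks via Lemma \ref{l3.1}-type reasoning that $y_0\in Y$, contrary to assumption; hence (after thinning) $(y_n)$ is either separated or Cauchy-without-limit, and in both cases (S3) or (S4) of $A(Y,F)$ applies verbatim. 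The construction of $g$ then proceeds on the sets $V_n$ as above. I expect the \emph{main obstacle} to be the $C^p$ case: one must verify that the patched function $g$ — built by multiplying $T(1\otimes u)$ by a $C^p$ partition-type gadget supported on a locally finite union of the small open sets carrying $V_n$ — is genuinely $C^p$ on all of $Y$. This uses that $C^p(Y,F)$ (with $Y$ open in a Banach space supporting a $C^p$ bump) admits such smooth bumps of small support, exactly the standing assumption recorded in \S\ref{s3}; away from the support of the gadget and at each point only finitely many pieces are active, so local $C^p$-ness is clear, but this is the step that genuinely needs the hypothesis on $A(Y,F)$ and should be written out carefully.
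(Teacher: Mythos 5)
Your overall architecture matches the paper's: assume $y_0=\vp(x_0)\in\ti{Y}\bs Y$, take $(U_n)\sim x_0$, build on the shrinking sets $\theta(\ol{U_n})$ a function $g$ that alternates between $Tf$ (for some $f$ with $f(x_0)\neq 0$) and $0$, pull back through $\theta^{-1}$, and contradict continuity of $T^{-1}g$ at $x_0\in X$. But the step where you produce $g$ is broken as written. The representatives $y_n\in\theta(\ol{U_n})$ form a Cauchy sequence converging to $y_0\notin Y$ (Lemma \ref{l3.1}(1)), so your dichotomy always lands in the ``Cauchy without limit in $Y$'' branch, and there neither (S3) nor (S4) of $A(Y,F)$ is applicable: (S3) requires a \emph{separated} sequence, and (S4) requires \emph{two} Cauchy sequences with $\inf_{m,n}d(y_m,y'_n)>0$, which is impossible for two subsequences of a single convergent sequence. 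Worse, (S3) and (S4) hold for $U(Y,F)$ and $\Lip(Y,F)$ as well (Proposition \ref{p3.0.1}), whereas the conclusion $\vp(X)\subseteq Y$ genuinely fails for such targets (restriction from $U((0,1))$ onto $U((0,1)\bs\{1/2\})$ is biseparating with $\vp$ the identity), so no argument resting only on standardness of $A(Y,F)$ can possibly succeed.

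What the paper does instead --- and what your last paragraph gestures at without supplying the justification --- is to construct $g$ by hand, exploiting that $A(Y,F)$ is $C$, $C_*$ or $C^p$: choose pairwise disjoint bumps $g_n$ with $g_n=1$ on $V_n\in\cC(Y)$ and $\ol{C(g_n)}\subseteq\theta(\ol{U_n})$, and take the pointwise sum $g=\sum g_{2n}$. The reason this sum lies in $C_*(Y)$, respectively $C^p(Y)$, is not ``wide separation'' of the pieces (they are not widely separated; they shrink toward a single point) but that their only accumulation point is $y_0$, which lies \emph{outside} $Y$: every point of $Y$ has a neighbourhood meeting at most one $\ol{C(g_n)}$, so continuity or smoothness is never tested at $y_0$. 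That is exactly where the hypothesis on $A(Y,F)$ enters, and it is the point your proof must make explicit in place of the appeal to (S3)/(S4). The remainder (setting $h=g\cdot Tf$ with $f(x_0)\neq 0$, using Theorem \ref{t5} to get $T^{-1}h=f$ on $\theta^{-1}(\ol{V_n})\subseteq\ol{U_n}$ for half the indices and $T^{-1}h=0$ for the other half, then contradicting continuity at $x_0$) goes through essentially as you describe.
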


\begin{proof}
Suppose on the contrary that there exists $x_0\in X$ so that $y_0 = \vp(x_0) \in \ti{Y}\bs Y$.
Let $(U_n)\sim x_0$.  Then $(\theta(\ol{U_n}))$ is a sequence of sets in $Y$, each with nonempty interior, so that $\diam\theta(\ol{U_n}) \to 0$ and $d(\theta(\ol{U_n}),y_0) \to 0$.
Hence one can find a sequence $(g_n)$ in $C_*(Y)$, respectively $C^p(Y)$, and a sequence $(V_n)$ of nonempty sets in $\cC(Y)$ so that $g_n =1$ on $V_n$, $\ol{C(g_n)}\subseteq \theta(\ol{U_n})$ for all $n$, and $\ol{C(g_m)}\cap \ol{C(g_n)} = \emptyset$ if $m\neq n$.
As observed in the proof of Theorem \ref{t3.5}, $\diam \theta(\ol{U_n})\to 0$.
So $(\ol{C(g_n}))$ is a pairwise disjoint sequence so that $\diam \ol{C(g_n)}\to 0$ and $d(\ol{C(g_n)},y_0) \to 0$, where $y_0 \notin Y$.  Therefore the pointwise sum $g = \sum g_{2n}$ belongs to $C_*(Y)$, respectively, $C^p(Y)$.
By condition (S2), there exists $f\in A(X,E)$ so that $f(x_0) \neq 0$.
Then $h = g\cdot Tf$  lies in $A(Y,F)$.
Since $h = Tf$ on $\ol{V_n}$ if $n$ is even and $h = 0$ on $\ol{V_n}$ if $n$ is odd, and $\ol{V_n} \in \cD(Y)$,
by Theorem \ref{t5}, $T^{-1}h = f$ on $\theta^{-1}(\ol{V_n})$ if $n$ is even and 
$T^{-1}h = 0$ on $\theta^{-1}(\ol{V_n})$ if $n$ is odd.
Since $\ol{V_n} \subseteq \theta(\ol{U_n})$, $\theta^{-1}(\ol{V_n}) \subseteq \ol{U_n}$.
Choose $x_n \in \theta^{-1}(\ol{V_n})$ for each $n$.  Then $(x_n)$ converges to $x_0$.
However, $T^{-1}h(x_n) = f(x_n)$ if $n$ is odd and $T^{-1}h(x_n) = 0$ if $n$ is even.
As $(f(x_n))$ converges to $f(x_0) \neq 0$, 
this contradicts the continuity of $T^{-1}h$ at $x_0$. This proves that  $\vp(X) \subseteq Y$.
\end{proof}

The next two results can be obtained utilizing the proof of Theorem \ref{t3.5} and taking into account Lemma \ref{l3.6}.  

\begin{thm}\label{t3.7}
Let $A(X,E) = C(X,E),$ $C_*(X,E)$ or $C^p(X,E)$ and let $A(Y,F) = C(Y,F),$ $C_*(Y,F)$ or $C^q(Y,F)$.
There exists a homeomorphism $\vp: X\to Y$ so that 
 for any $f,g\in A(X,E)$, and any open set $U$ in $X$, $f =g$ on $U$ $\iff$ $Tf = Tg$ on $\vp(U)$. 
\end{thm}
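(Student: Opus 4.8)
The plan is to rerun the argument of Theorem~\ref{t3.5} with Lemma~\ref{l3.1}(1) in place of Lemma~\ref{l3.1}(2); the price is that the induced point map is now only available on $X$ (and, by symmetry, on $Y$), not on the completions. By Proposition~\ref{p3.0.1} the spaces $A(X,E)$ and $A(Y,F)$ are standard, so Lemma~\ref{l3.2} supplies a continuous map $\vp:X\to\ti Y$, and by symmetry a continuous map $\psi:Y\to\ti X$. Applying Lemma~\ref{l3.6} to $T$ gives $\vp(X)\subseteq Y$, and applying Lemma~\ref{l3.6} to the biseparating map $T^{-1}:A(Y,F)\to A(X,E)$ --- legitimate since $A(Y,F)$ is standard and $A(X,E)$ is one of $C(X,E)$, $C_*(X,E)$, $C^p(X,E)$ --- gives $\psi(Y)\subseteq X$. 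Thus $\vp:X\to Y$ and $\psi:Y\to X$ are continuous.

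Next I would show that $\psi\circ\vp$ is the identity on $X$. Fix $x_0\in X$ and $(U_n)\sim x_0$. The only point not already contained in \S\ref{s3} is that $\diam\theta(\ol{U_n})\to 0$: if this failed, then after passing to a subsequence we could pick $y_n,y'_n\in\theta(\ol{U_n})$ with $d(y_n,y'_n)>\ep$ for some $\ep>0$, and interleaving the points $y_n,y'_n$ along the sequence $(U_1,U_1,U_2,U_2,\dots)\sim x_0$ would produce a non-Cauchy sequence, contradicting Lemma~\ref{l3.1}(1). Choosing $y_n\in\theta(\ol{U_n})$ with $y_n\to\vp(x_0)$, it follows that $d(\vp(x_0),\theta(\ol{U_n}))\to 0$. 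Writing $\theta(\ol{U_n})=\ol{V_n}$ with $V_n\in\cC(Y)$ (possible by the definition of $\theta$ and property (S2)), we get $(V_n)\sim\vp(x_0)$ and $\theta^{-1}(\ol{V_n})=\ol{U_n}$; hence $\psi(\vp(x_0))=\lim x_n$ for any $x_n\in\ol{U_n}$, and since $\diam U_n\to 0$ and $d(x_0,U_n)\to 0$ this limit equals $x_0$. By symmetry $\vp\circ\psi$ is the identity on $Y$, so $\vp$ is a homeomorphism of $X$ onto $Y$.

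It remains to prove the local determination property. Let $U\subseteq X$ be open and suppose $f=g$ on $U$; fix $y_0\in\vp(U)$ and set $x_0=\vp^{-1}(y_0)\in U$. By property (S2) choose $C\in\cC(X)$ with $x_0\in C\subseteq U$, say $C=C(h)$; since $f=g$ on $C$, also $f=g$ on $\wh{C}(h)=\ol{C}\in\cD(X)$, so Theorem~\ref{t5} gives $Tf=Tg$ on $\theta(\ol{C})$. Choosing $(U_n)\sim x_0$ with $U_n\subseteq C$ gives $\vp(x_0)\in\ti{\theta(\ol{C})}$; but $\theta(\ol{C})\in\cD(Y)$ is closed in $Y$ and $y_0=\vp(x_0)\in Y$, so $y_0\in\theta(\ol{C})$ and hence $Tf(y_0)=Tg(y_0)$. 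This shows $Tf=Tg$ on $\vp(U)$, and the converse follows by applying the same reasoning to $T^{-1}$ and $\vp^{-1}$ (note that $\vp(U)$ is open in $Y$). The only genuinely new ingredient beyond \S\ref{s3} is the verification that $\diam\theta(\ol{U_n})\to 0$ without recourse to Lemma~\ref{l3.1}(2); the rest is a transcription of the proofs of Theorem~\ref{t3.5} and Lemma~\ref{l3.2.1}, with $\ti\vp$ replaced by $\vp$ and all points kept in $X$ and $Y$, so I do not expect any real obstacle here.
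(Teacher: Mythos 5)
Your proposal is correct and follows essentially the same route as the paper, which obtains Theorem \ref{t3.7} by rerunning the proof of Theorem \ref{t3.5} with Lemma \ref{l3.6} (applied to both $T$ and $T^{-1}$) supplying $\vp(X)\subseteq Y$ and $\psi(Y)\subseteq X$. The one point you flag as needing a new argument --- that $\diam\theta(\ol{U_n})\to 0$ via Lemma \ref{l3.1}(1) rather than \ref{l3.1}(2) --- is handled correctly by the interleaving trick, which is the same device the paper uses just after Lemma \ref{l3.1} to show $\vp$ is well defined on $X$.
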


\begin{thm}\label{t3.8}
Let  $A(X,E)$ be a standard vector subspace of $U(X,E)$ that contains a nonzero constant fucntion.   Suppose that $A(Y,F) = C(Y,F)$, $C_*(Y,F)$ or $C^p(Y,F)$.
There exists a homeomorphism $\vp: X\to \vp(X)$, where $\vp(X)$ is a dense  subset of  $Y$, and
 for any $f,g\in A(X,E)$ and any open set $U$ in $X$, $f =g$ on $U$ $\iff$ $Tf = Tg$ on $\vp(U)$. 
\end{thm}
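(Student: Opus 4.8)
Normalize $T$ so that $T0=0$ and let $\theta:\cD(X)\to\cD(Y)$ be the map of Theorem \ref{t5}.  Since $A(X,E)$ is a standard subspace of $U(X,E)$ containing a nonzero constant function, Lemma \ref{l3.2} produces a continuous map $\ti\vp:\ti X\to\ti Y$ restricting on $X$ to $\vp$, and Lemma \ref{l3.6} gives $\vp(X)\subseteq Y$; hence $\vp:X\to Y$ is continuous.  On the other side $A(Y,F)$ (which is $C(Y,F)$, $C_*(Y,F)$ or $C^p(Y,F)$, hence standard) need not be uniformly continuous, so only the ``half'' map $\psi:Y\to\ti X$ is available, continuous again by Lemma \ref{l3.2}; there is no $\ti\psi$.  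This broken symmetry is exactly what prevents the proof of Theorem \ref{t3.5} from transferring verbatim, and the homeomorphism must be recovered from $\vp$ and $\psi$ directly.

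The plan is first to prove the identities $\psi\circ\vp=\mathrm{id}_X$ and $\ti\vp\circ\psi=\mathrm{id}_Y$.  The two auxiliary facts are: (a) whenever $(U_n)\sim x_0$ in $\ti X$ one has $\diam\theta(\ol{U_n})\to0$, and (b) whenever $(V_n)\sim y_0$ with $y_0\in Y$ one has $\diam\theta^{-1}(\ol{V_n})\to0$.  Both follow from Lemma \ref{l3.1} by an interleaving trick: if $\diam\theta(\ol{U_{n_k}})\ge\delta$ along a subsequence, choose $p_k,q_k\in\theta(\ol{U_{n_k}})$ with $d(p_k,q_k)\ge\delta/2$ and apply Lemma \ref{l3.1}(2) to the sequence $(U_{n_1},U_{n_1},U_{n_2},U_{n_2},\dots)\sim x_0$ with chosen points $p_1,q_1,p_2,q_2,\dots$, contradicting that this sequence must be Cauchy; (b) is the same argument applied to $T^{-1}$ via Lemma \ref{l3.1}(1) (legitimate since $y_0\in Y$).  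Then, for $x_0\in X$ and $(U_n)\sim x_0$, writing $\theta(\ol{U_n})=\ol{V_n}$ with $V_n\in\cC(Y)$, fact (a) together with $\vp(x_0)\in Y$ yields $(V_n)\sim\vp(x_0)$, so $\psi(\vp(x_0))=\lim x_n$ with $x_n\in\theta^{-1}(\ol{V_n})=\ol{U_n}$, i.e.\ $\psi(\vp(x_0))=x_0$; the identity $\ti\vp\circ\psi=\mathrm{id}_Y$ is obtained symmetrically, this time using (b).  It follows that $\vp:X\to\vp(X)$ is a homeomorphism with inverse $\psi|_{\vp(X)}$, and, since $\ti\vp$ is continuous, $X$ is dense in $\ti X$ and $Y\subseteq\ti\vp(\ti X)$ (by $\ti\vp\circ\psi=\mathrm{id}_Y$), that $\vp(X)$ is dense in $Y$.

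For the local determination property, the forward implication is immediate from Lemma \ref{l3.2.1}: writing an open $U\subseteq X$ as $U=U'\cap X$ with $U'$ open in $\ti X$, the hypothesis $f=g$ on $U$ gives $Tf=Tg$ on $\ti\vp(U')\cap Y$, which contains $\vp(U)$.  For the converse I would combine three facts.  First, applied to the biseparating map $T^{-1}$, Lemma \ref{l2.0} gives $C(f-g)\subseteq\theta^{-1}(\wh{C}(Tf-Tg))$.  Second, $\theta^{-1}(D)\subseteq\vp^{-1}(D)$ for every $D\in\cD(Y)$: writing $D=\ol{C(v)}$ and $h=T^{-1}v$, so that $\theta^{-1}(D)=\ol{C(h)}$ (recall $\theta(\ol{C(h)})=\ol{C(Th)}$ by the definition of $\theta$ and $T0=0$), the remark preceding Lemma \ref{l3.2.1} gives $\vp(x)$ in the closure of $D$ in $\ti Y$ whenever $x\in C(h)$, hence $\vp(x)\in D$ because $D$ is closed in $Y$, and continuity of $\vp:X\to Y$ extends this to all of $\ol{C(h)}=\theta^{-1}(D)$.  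Third, if $Tf=Tg$ on $\vp(U)$, write $\vp(U)=W\cap\vp(X)$ with $W$ open in $Y$ (possible since $\vp$ is a homeomorphism onto $\vp(X)$); then $C(Tf-Tg)\cap W$ is open in $Y$ and disjoint from the dense set $\vp(X)$, hence empty, so $\wh{C}(Tf-Tg)=\ol{C(Tf-Tg)}\subseteq Y\setminus W$ (using property (S2) for $A(Y,F)$).  Combining these: if there were $x\in C(f-g)\cap U$, the first two facts would force $\vp(x)\in\wh{C}(Tf-Tg)$, while $\vp(x)\in\vp(U)\subseteq W$, contradicting the third.  Hence $f=g$ on $U$, completing the proof.

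I expect the converse of the local determination property to be the main obstacle: there is no symmetric counterpart of Lemma \ref{l3.2.1} on the $Y$-side, so one is forced to route the argument through $\theta^{-1}$, the containment $\theta^{-1}(D)\subseteq\vp^{-1}(D)$, and the density of $\vp(X)$ in $Y$, rather than simply invoking symmetry as in Theorem \ref{t3.5}.  A secondary nuisance throughout is that $\psi$ takes values in $\ti X$ and not in $X$, so that both the homeomorphism and the density of $\vp(X)$ must be extracted from the two composition identities instead of from a pair of mutually inverse self-maps.
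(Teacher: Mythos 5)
Your proof is correct, and for the homeomorphism and density parts it coincides with the paper's argument: the identities $\psi\circ\vp=\mathrm{id}_X$ and $\ti{\vp}\circ\psi=\mathrm{id}_Y$ are extracted exactly as in the second paragraph of the proof of Theorem \ref{t3.5} (your facts (a) and (b) merely make explicit the interleaving step that the paper leaves implicit in its appeal to Lemma \ref{l3.1}), and the density of $\vp(X)$ in $Y$ is obtained by the same route $y=\ti{\vp}(\psi(y))=\lim\vp(x_n)$. The only genuine divergence is in the converse half of the local-determination property. The paper argues pointwise: for $x_0\in U$ it takes $(V_n)\sim\vp(x_0)$ with $\ol{V_n}\subseteq W$, where $\vp(U)=W\cap\vp(X)$, uses density of $\vp(X)$ to upgrade $Tf=Tg$ from $\vp(U)$ to $W$, pulls back to $f=g$ on $\theta^{-1}(\ol{V_n})$, and lets $n\to\infty$ to conclude $f(x_0)=g(x_0)$ by continuity of $f$ and $g$. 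You instead run a global set-containment argument: Lemma \ref{l2.0} applied to $T^{-1}$ gives $C(f-g)\subseteq\theta^{-1}(\wh{C}(Tf-Tg))$, and the inclusion $\theta^{-1}(D)\subseteq\vp^{-1}(D)$ (correctly derived from the remark preceding Lemma \ref{l3.2.1} together with $\vp(X)\subseteq Y$) then forces $C(f-g)\cap U=\emptyset$. Both arguments hinge on the density of $\vp(X)$ at exactly the same juncture; yours dispenses with sequences and limits at the cost of importing Lemma \ref{l2.0}, while the paper's stays within the $(U_n)\sim x_0$ machinery already set up in \S\ref{s3}. Either is acceptable.
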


\begin{proof}
We will only prove the density of $\vp(X)$ in $Y$.  The other parts follow from the proof of Theorem \ref{t3.5}, using Lemma \ref{l3.6}.
By Lemma \ref{l3.2} and Lemma \ref{l3.6}, $\vp$ is a continuous map from $X$ into $Y$ with a continuous extension $\ti{\vp}:\ti{X}\to \ti{Y}$.
Also, we have an analogous continuous map $\psi:Y\to \ti{X}$.  
From the second paragraph of the proof of Theorem \ref{t3.5}, we see that $\ti{\vp}\circ\psi$ is the identity map on $Y$.  Given $y\in Y$, $\psi(y)\in \ti{X}$.  Hence there is a sequence $(x_n)$ in $X$ that converges to $\psi(y)$.
By continuity of $\ti{\vp}$ at $\psi(y)$, $(\vp(x_n))=(\ti{\vp}(x_n))$ converges to $\ti{\vp}(\psi(y)) =y$.
This proves that $y\in \ol{\vp(X)}$.
\end{proof}

We conclude this section with a remark concerning the space $C^p_*(X,E)$, where $X$ is an open set in a Banach space $G$ that supports a $C^p_*$ bump function.  In general, it may not be true that all functions in $C^p_*(X,E)$ are uniformly continuous (with respect to the norm on $G$).
On the other hand, an easy application of the mean value inequality shows that if $X$ is open and {\em convex} in $G$, then $C^p_*(X,E) \subseteq U(X,E)$.  
In particular, Theorems \ref{t3.5} and \ref{t3.8} apply to $C^p_*$  spaces whose domains are convex open sets.


\section{Pointwise representation}\label{s4}

Retain the notation of Section \ref{s3}.  That is, let $X$ and $Y$ be metric spaces, $E$ and $F$ be nontrivial normed spaces, and assume that $A(X,E)$ and $A(Y,F)$ are standard vector subspaces of $C(X,E)$ and $C(Y,F)$ respectively. Say that  $A(X,E)$ has property (P) if
\begin{enumerate}
\item[(P)] For any accumulation point $x$ of $\ti{X}$ and any function $f\in A(X,E)$ so that $\lim_{\stackrel{z\to x}{z\in X}}f(z) =0$, there are  open sets $U$ and $V$ in $X$ and a function $g\in A(X,E)$ so that $x\in \ti{U} \cap \ti{V}$ and that $g =f$ on $U$ and $g = 0$ on $V$.
\end{enumerate}

\medskip

\noindent {\bf Remark}. 
If $x$ is an isolated point of $\ti{X}$, then $x\in X$.  In this case, given $f\in A(X,E)$ so that $f(x) = 0$, take $U = V= \{x\}$ and $g =f$.  It is clear that the conditions above are fulfilled.

\begin{prop}\label{p3.10}
Let $A(X,E)$ be one of the spaces $C(X,E)$, $C_*(X,E)$, $U(X,E)$, $U_*(X,E)$, $\Lip(X,E)$ or $\Lip_*(X,E)$.  Then $A(X,E)$  has property (P).
\end{prop}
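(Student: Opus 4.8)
The plan is to treat all six spaces more or less uniformly, separating two cases according to whether $x\in X$ or $x\in\ti X\setminus X$. The isolated-point case is already disposed of by the Remark, so I assume $x$ is an accumulation point of $\ti X$. Fix $f\in A(X,E)$ with $\lim_{z\to x,\,z\in X}f(z)=0$. The idea is to build $g$ by multiplying $f$ by a suitable scalar-valued cutoff that is $1$ on a neighborhood-trace $U$ of $x$ (from which $x\in\ti U$) and $0$ on another neighborhood-trace $V$ of $x$ (so $x\in\ti V$), with $U$ and $V$ chosen as traces on $X$ of two sequences of shrinking balls around $x$ whose radii are separated enough that the cutoff controlling the transition between them has bounded (indeed, controllably large) Lipschitz constant.

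Concretely, first I would pick a sequence $x_n\in X$ with $x_n\to x$ in $\ti X$; passing to a subsequence, arrange $d(x_n,x)\le 4^{-n}$, and since $f(x_n)\to 0$, also arrange $\|f(x_n)\|\to 0$ fast. Let $U=\bigcup_n B(x_{2n-1},r_n)\cap X$ and $V=\bigcup_n B(x_{2n},r_n)\cap X$ with the radii $r_n$ chosen small enough (relative to the gaps $d(x_m,x_n)$ and to $4^{-n}$) that the union of the two families is a pairwise-disjoint family of balls, $\diam$ of each ball $\to 0$, and $x$ is in the closure of each of $U$, $V$ in $\ti X$. Define $\rho:X\to[0,1]$ to be $1$ on $\bigcup_n B(x_{2n-1},r_n/2)$, $0$ off $\bigcup_n B(x_{2n-1},r_n)$, interpolating linearly in $d(\cdot,x_{2n-1})$; because the balls are disjoint, $\rho$ is Lipschitz with constant $\max_n 2/r_n$, which is finite on any fixed space only after one more truncation near $x$ --- so in fact I would instead use the standard trick from the proof of Proposition \ref{p3.0.1}: take $\rho=\sum_n h_n$ where $h_n$ is supported in $B(x_{2n-1},r_n)$, identically $1$ on $B(x_{2n-1},r_n/2)$, with $\mathrm{Lip}(h_n)=2/r_n$, and then set $g=\rho\cdot f$. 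The estimate $\|g(s)-g(t)\|\le |\rho(s)|\,\|f(s)-f(t)\|+|\rho(s)-\rho(t)|\,\|f(t)\|$ shows $g\in C(X,E)$ always, $g\in U(X,E)$ when $f\in U(X,E)$ (using $\lim_{z\to x}f(z)=0$ to absorb the possibly-large Lipschitz constants of the $h_n$, since those are only active near $x$ where $\|f\|$ is small), and $g\in\Lip(X,E)$ when $f\in\Lip(X,E)$ (using $\|f(t)\|\le\|f(x_k)\|+\mathrm{Lip}(f)\,d(t,x_k)\lesssim r_k+4^{-k}$ on $B(x_{2k-1},r_k)$, which the factor $2/r_k$ then multiplies to something bounded). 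Boundedness of $g$ when $f$ is bounded is immediate, covering the $*$-versions. Then $U=C(\rho\otimes a)$-type carriers show $U,V\in\cC(X)$ after intersecting with $X$, and $g=f$ on $U$, $g=0$ on $V$ by construction.

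When $x\in X$: here $x$ is an accumulation point in $X$ itself, so we may actually take $x_n\to x$ within $X\setminus\{x\}$, and the same construction works verbatim; moreover $U$ and $V$ can be taken to be honest open sets of $X$ (unions of open balls), so the condition $x\in\ti U\cap\ti V$ holds because $x_n\in U$ for the odd indices and $x_n\in V$ for the even ones. When $x\in\ti X\setminus X$: the $x_n$ already lie in $X$ and converge to $x$ in $\ti X$, and $f(x_n)\to 0$ is exactly the hypothesis, so nothing changes.

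The main obstacle is the uniform-continuity and Lipschitz bookkeeping for $g$ when $f\in U(X,E)$ or $\Lip(X,E)$: the cutoff $\rho$ has Lipschitz constants $2/r_n\to\infty$, and the only thing saving us is that on the $n$-th ball $\|f\|$ is of order $\|f(x_n)\|+\mathrm{Lip}(f)\,r_n$ (Lipschitz case) or is uniformly small for large $n$ (uniform case), so the product $\|\rho(s)-\rho(t)\|\cdot\|f(t)\|$ stays controlled --- provided the $r_n$ were chosen, from the outset, small relative to $\|f(x_n)\|$ in the Lipschitz case and the $x_n$ chosen so that $\sup_{B(x_n,r_n)}\|f\|\to 0$ in the uniform case. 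I would therefore fold these requirements into the choice of subsequence and radii before defining $\rho$, after which the verification is just the two-line triangle-inequality estimate already used in Proposition \ref{p3.0.1}, and I would simply refer to that computation rather than repeat it.
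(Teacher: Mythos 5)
Your overall strategy is the same as the paper's: pick $x_n\to x$, multiply $f$ by an alternating $0/1$ cutoff supported near the $x_n$, and use the hypothesis $f(z)\to 0$ (which for Lipschitz $f$ gives $\|f(z)\|\le L(f)\,d(z,x)$) to absorb the blow-up of the cutoff's Lipschitz constants. The paper implements the cutoff as a function of $d(\cdot,x_0)$ alone, i.e.\ as annuli $\{\tfrac{r_n}{2}<d(\cdot,x_0)<\tfrac{3r_n}{2}\}$ with $r_n=d(x_n,x_0)$, which automatically ties the scale of the $n$-th cutoff to the distance from $x_0$; you use balls $B(x_n,r_n)$ with separately chosen radii. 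That variant can be made to work, but it is exactly at the choice of $r_n$ that your write-up goes wrong in the Lipschitz case.

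The problem: on $B(x_{2k-1},r_k)$ you have $\|f(t)\|\le \|f(x_{2k-1})\|+L(f)r_k\le L(f)\bigl(d(x_{2k-1},x)+r_k\bigr)$, and the cross term in the product rule is bounded by $\frac{2}{r_k}\,\|f(t)\|\,d(s,t)$. For this to give a uniform Lipschitz bound you need $\frac{d(x_{2k-1},x)}{r_k}$ (equivalently $\frac{\|f(x_{2k-1})\|}{r_k}$) bounded \emph{above}, i.e.\ $r_k$ must be at least a fixed multiple of $d(x_{2k-1},x)$. You state the opposite twice: first you bound $\|f(t)\|\lesssim r_k+4^{-k}$ and assert that multiplying by $2/r_k$ gives something bounded (false unless $r_k\gtrsim 4^{-k}$, and you only imposed $d(x_n,x)\le 4^{-n}$ with $r_n$ ``small enough''), and later you say the $r_n$ should be chosen ``small relative to $\|f(x_n)\|$'', which makes $\|f(x_n)\|/r_n$ large and destroys the Lipschitz bound. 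The fix is to take $r_n$ comparable to $d(x_n,x)$ from below (e.g.\ $r_n=d(x_n,x)/10$ after thinning so that $d(x_{n+1},x)<d(x_n,x)/3$, which also keeps the balls disjoint); this is precisely what the paper's annular construction enforces by design. With that correction your argument goes through; the $C$, $C_*$, $U$, $U_*$ cases and the boundedness bookkeeping are fine as you describe them.
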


\begin{proof}
Let $x_0$ be an accumulation point of $\ti{X}$ and let $f$ be a function in  $A(X,E)$ so that $\lim_{\stackrel{x\to x_0}{x\in X}}f(x) =0$.
There is a  sequence $(x_n)$ in $X$ converging to $x_0$ so that $0 < d(x_{n+1},x_0) < \frac{d(x_n,x_0)}{3}$ for all $n$.  Set $r_n = d(x_n,x_0)$ and let $\gamma_n:[0,\infty)\to \R$ be the function
\[\gamma_n(r) = (2 - \frac{4|r-r_n|}{r_n})^+\wedge 1.\]
$(\gamma_n)$ is a disjoint sequence of functions.  Furthermore,
\[ |\gamma_n(a) - \gamma_n(b)| \leq \frac{4}{r_n}|a-b| \wedge 1 \text{ for all $a, b\geq 0$}.\]
We may assume that $\|f(x)\| \leq1$ if $d(x,x_0) < \frac{3r_1}{2}$.
Let 
\[ g_n(x) = \gamma_n(d(x,x_0))f(x)\quad \text{and} \quad g = \sum g_{2n} \ \text{(pointwise sum).}\]
Since $\gamma_n(d(\cdot,x_0))$ is bounded Lipschitz and $f$ is bounded on the support of $g_n$, it is easy to check that $g_n\in A(X,E)$.
Note that
\[ \|g_n\|_\infty \leq \sup\{\|f(x)\| :\frac{r_n}{2} < d(x,x_0) < \frac{3r_n}{2}\} \to 0.\]
Therefore, if $A(X,E)$ is any of the spaces except $\Lip(X,E)$ or $\Lip_*(X,E)$, $g$ is the uniform limit of its partial sums and hence belongs to $A(X,E)$.

Now consider the cases $A(X,E) = \Lip(X,E)$ or $\Lip_*(X,E)$.  First of all, the function $g$ is bounded.
Let's check that it is Lipschitz. Since $f$ is Lipschitz and $\lim_{\stackrel{x\to x_0}{x\in X}}f(x) =0$, $\|f(x)\| \leq L(f)d(x,x_0)$, where $L(f)$ is the Lipschitz constant of $f$.
For any $n\in \N$, we claim that $g_n$ is Lipschitz with $L(g_n) \leq 7L(f)$.  Let   $x,z\in X$, $a = d(x,x_0)$, $b = d(z,x_0)$.  
If $\gamma_n(a) = \gamma_n(b) =0$, then $g_n(x) - g_n(z) =0$.  Otherwise, we may assume that $\gamma_n(a) \neq 0$, so that $\frac{r_n}{2} < a < \frac{3r_n}{2}$.
Then
\begin{align*}
\|g_n(x) - g_n(z)\| & \leq |\gamma_{n}(a)-\gamma_{n}(b)|\,\|f(x)\| + |\gamma_{n}(b)|\,\|f(x) - f(z)\|\\
&\leq \frac{4}{r_{n}}|a-b|\,L(f)a + \|f(x) -f(z)\|\\
&\leq 6L(f)|a-b| + L(f)d(x,z) \leq 7L(f)d(x,z).
\end{align*}
Thus  $L(g_n)\leq 7L(f)$, as claimed.
For any $x,z\in X$, either there exists $n$ so that $g(x) = g_{2n}(x)$, $g(z) = g_{2n}(z)$, or there are distinct $m,n$ so that $g(x) = g_{2n}(x) + g_{2m}(x)$, $g(z) = g_{2n}(z) + g_{2m}(z)$.  In either case, it follows that $\|g(x)-g(z)\| \leq 14L(f)d(x,z)$. This completes the proof that  $g\in \Lip_*(X,E)\subseteq A(X,E)$.
Clearly, $g= f$ on the  open set 
\[ U = \bigcup_n \{x\in X: \frac{3r_{2n}}{4}< d(x,x_0) < \frac{5r_{2n}}{4}\}\]
 and $g = 0$ on the  open set 
\[V =  \bigcup_n \{x\in X: \frac{3r_{2n-1}}{4}< d(x,x_0) < \frac{5r_{2n-1}}{4}\}.\]
Since $x_n\in U$ for all even $n$, and $x_n\in V$ for all odd $n$, $x_0 \in \ti{U} \cap \ti{V}$.
\end{proof}

With the help of property (P), we can improve Theorems \ref{t3.5}, \ref{t3.7} and \ref{t3.8}.
First we consider the case where  $A(X,E)$ and $A(Y,F)$ are standard subspaces of 
$U(X,E)$ and $U(Y,F)$ respectively so that both contain nonzero constant functions.
Denote by $\ti{E}$ the completion of $E$.  Since $A(X,E) \subseteq U(X,E)$, every function $f\in A(X,E)$ has a unique continuous extension $\ti{f}:\ti{X}\to \ti{E}$. 
For each $x\in \ti{X}$, let 
\[ \ti{E}_x = \{\ti{f}(x): f\in A(X,E)\}.\]
Similarly for $\ti{F}_y$ if $y\in \ti{Y}$.
Fix a biseparating map $T:A(X,E)\to A(Y,F)$, which we may normalize by taking $T0 = 0$.
Let  $\ti{\vp}: \ti{X}\to \ti{Y}$ be the homeomorphism given by Theorem \ref{t3.5}, with inverse $\ti{\psi}$.

\begin{prop}\label{p4.2}
Suppose that  $A(X,E)$ and $A(Y,F)$ are standard subspaces of 
$U(X,E)$ and $U(Y,F)$ respectively so that both contain nonzero constant functions.  Assume that $A(X,E)$ has property (P).
Given any $y\in \ti{Y}$, there is a bijective function $\Phi(y,\cdot): \ti{E}_{\ti{\psi}(y)}\to \ti{F}_y$ so that 
\[ \ti{Tf}(y) = \Phi(y,\ti{f}(\ti{\psi}(y))) \text{ for all $f\in A(X,E)$}.\]
\end{prop}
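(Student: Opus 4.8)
The plan is to define $\Phi(y,\cdot)$ by the formula it must satisfy and then to check that this is well-defined, injective, and surjective, with well-definedness being the crux. Fix $y\in\ti Y$ and write $x = \ti\psi(y)$, so that $\ti\vp(x)=y$. Given $\up\in\ti E_x$, choose $f\in A(X,E)$ with $\ti f(x)=\up$ and set $\Phi(y,\up) = \ti{Tf}(y)$. The heart of the matter is the following claim: if $f,g\in A(X,E)$ satisfy $\ti f(x) = \ti g(x)$, then $\ti{Tf}(y) = \ti{Tg}(y)$. Once this claim is established, $\Phi(y,\cdot)$ is a well-defined function on $\ti E_x$; applying the same reasoning to $T^{-1}$ (which is biseparating with associated homeomorphism $\ti\psi$, by the symmetry in Theorem \ref{t3.5}) produces a well-defined inverse map on $\ti F_y$, so $\Phi(y,\cdot)$ is automatically a bijection from $\ti E_x$ onto $\ti F_y$ and the displayed identity holds by construction.

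To prove the claim I would split into two cases according to whether $x$ is isolated or an accumulation point of $\ti X$. If $x$ is isolated, then $x\in X$ and $y=\vp(x)\in Y$ by the remark after property (P) together with Lemma \ref{l3.6}-type reasoning (or directly: an isolated point of $\ti X$ lies in $X$); then $\ti f(x)=f(x)$, and $f(x)=g(x)$ means $f=g$ on the open set $U=\{x\}$ of $\ti X$, so Theorem \ref{t3.5} gives $Tf=Tg$ on $\ti\vp(U)\cap Y \ni y$, hence $\ti{Tf}(y)=Tf(y)=Tg(y)=\ti{Tg}(y)$. If $x$ is an accumulation point of $\ti X$, consider $h = f-g \in A(X,E)$; then $\lim_{z\to x,\, z\in X}\ti h(z) = \ti h(x) = 0$. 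Apply property (P) to $h$ and the point $x$: there are open sets $U,V$ in $X$ and $k\in A(X,E)$ with $x\in\ti U\cap\ti V$, $k=h$ on $U$ and $k=0$ on $V$. Writing $k' = k+g$, we have $k' = f$ on $U$ and $k' = g$ on $V$. By Theorem \ref{t3.5}, $Tk' = Tf$ on $\ti\vp(U)\cap Y$ and $Tk' = Tg$ on $\ti\vp(V)\cap Y$. Since $\ti\vp$ is a homeomorphism and $x\in\ti U\cap\ti V$, we get $y=\ti\vp(x)\in\ti{\ti\vp(U)}\cap\ti{\ti\vp(V)}$ (closures in $\ti Y$); that is, $y$ is approached both by points of $\ti\vp(U)\cap Y$ and by points of $\ti\vp(V)\cap Y$ — here one uses that $\ti\vp(U)$ and $\ti\vp(V)$, being images of open subsets of $X$ under the restriction of a homeomorphism $\ti X\to\ti Y$, meet $Y$ in sets whose closures in $\ti Y$ contain $y$. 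Taking a sequence $y_n\to y$ with $y_n\in\ti\vp(U)\cap Y$ gives $\ti{Tf}(y)=\lim Tf(y_n)=\lim Tk'(y_n)=\ti{Tk'}(y)$, and similarly $\ti{Tg}(y)=\ti{Tk'}(y)$, so $\ti{Tf}(y)=\ti{Tg}(y)$.

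The main obstacle I anticipate is the topological bookkeeping in the accumulation-point case: verifying that $y$ genuinely lies in the $\ti Y$-closure of $\ti\vp(U)\cap Y$ (and likewise for $V$), so that the continuity of $\ti{Tf}$, $\ti{Tg}$, $\ti{Tk'}$ at $y$ can be leveraged. This requires knowing that $\ti\vp$ restricts to a homeomorphism of $X$ onto a dense subset of $\ti Y$ intersecting $\ti\vp(U)$ densely near $y$; one extracts from $x\in\ti U$ a sequence $x_n\in U$ with $x_n\to x$ in $\ti X$, then $\vp(x_n)\in\ti\vp(U)\cap Y$ converges to $\ti\vp(x)=y$ by continuity of $\ti\vp$ (Lemma \ref{l3.2}), which is exactly what is needed. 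With that in hand the rest is a routine continuity argument, and the bijectivity is free by the symmetric application to $T^{-1}$.
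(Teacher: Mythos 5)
Your overall architecture is the paper's: splice $f-g_a$ with $0$ via property (P), transfer the local equalities through the locally determined structure of $T$ (Theorem \ref{t3.5}), and get bijectivity for free from the symmetric construction for $T^{-1}$. But the step you yourself single out as the crux is justified incorrectly. You argue that $y$ lies in the $\ti{Y}$-closure of $\ti{\vp}(U)\cap Y$ by picking $x_n\in U$ with $x_n\to x$ and asserting $\vp(x_n)\in \ti{\vp}(U)\cap Y$. That presupposes $\vp(X)\subseteq Y$, which is \emph{not} available in this setting: $\vp$ is only a map from $X$ into $\ti{Y}$, and the inclusion $\vp(X)\subseteq Y$ is established (Lemma \ref{l3.6}) only when the target is a $C$, $C_*$ or $C^p$ space. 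For standard subspaces of $U(Y,F)$ on a non-complete $Y$ there is no reason for $\vp(x_n)$ to land in $Y$ (uniformly continuous functions cannot distinguish $Y$ from $\ti{Y}$), so your sequence may miss $Y$ entirely and the continuity argument for $\ti{Tf}$ at $y$ does not get off the ground. Note also that, as stated, Theorem \ref{t3.5} applies to open subsets of $\ti{X}$, whereas your $U$ is only open in $X$.

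The repair is the one the paper uses. Choose $W$ open in $\ti{X}$ with $W\cap X=U$; then $f=k'$ on $W\cap X$, so $Tf=Tk'$ on $\ti{\vp}(W)\cap Y$ by Theorem \ref{t3.5}. Since $x\in\ti{U}\subseteq\ol{W}$ and $\ti{\vp}$ is a homeomorphism, $y=\ti{\vp}(x)$ lies in the closure of the set $\ti{\vp}(W)$, which is \emph{open} in $\ti{Y}$; as $Y$ is dense in $\ti{Y}$, the closure of $\ti{\vp}(W)\cap Y$ equals the closure of $\ti{\vp}(W)$ and hence contains $y$. Now the continuity of $\ti{Tf}$ and $\ti{Tk'}$ at $y$ gives $\ti{Tf}(y)=\ti{Tk'}(y)$, and the same argument on $V$ finishes the claim. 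With this substitution your proof coincides with the paper's.
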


\begin{proof}
Let $y_0 \in \ti{Y}$ and $\ti{\psi}(y_0) =x_0\in \ti{X}$.  For any $a\in \ti{E}_{x_0}$, fix a function $g_a\in A(X,E)$ so that $\ti{g_a}(x_0) =a$ and define $\Phi(y_0,\cdot): \ti{E}_{x_0} \to F$ by $\Phi(y_0,a) = Tg_a(y_0)$.
%
If $f\in A(X,E)$, let $a = \ti{f}(x_0)$.  Clearly $\ti{f- g_a}(x_0) =0$.  By property (P) and the remark following its definition, there are  open sets $U,V$ in $X$ and a function $h\in A(X,E)$ so that $x_0 \in \ti{U} \cap \ti{V}$, $h = f-g_a$ on $U$ and $h =0$ on $V$.
Let $W$ be an open set in $\ti{X}$ so that $W\cap X = U$.
Since $\ti{\vp}$ is a homeomorphism and $y_0 = \ti{\vp}(x_0)$,   
$y_0 \in \ti{\vp}(\ti{U}) \subseteq \ti{\ti{\vp}(W)}$.
But $\ti{\vp}(W)$ is open in $\ti{Y}$.  So $y_0 \in ({\vp(W)}\cap Y)^{\ti{\ }}$.
As $f= h+g_a$ on $W\cap X$, $Tf = T(h+g_a)$ on $\ti{\vp}(W)\cap Y$ by Lemma \ref{l3.2.1}.
By continuity, $\ti{Tf}(y_0) = [T(h+g_a)]^{\ti{ \ }}(y_0)$.  
Similarly, looking at the set $V$ instead of $U$, one can show that 
\[ [T(h+g_a)]^{\ti{ \ }}(y_0) = {Tg_a}(y_0) = \Phi(y_0,a). \]
 Thus $\ti{Tf}(y_0) = \Phi(y_0,a)$, as required.
 
 By symmetry, there is a function $\Psi(x, \cdot): \ti{F}_{\ti{\vp}(x)}\to \ti{E}_x$ so that $(T^{-1}g)^{\ti{\ }}(x) = \Phi(x, \ti{g}(\ti{\vp}(x)))$ for all $g\in A(Y,F)$.
 The fact that $\Phi(y,\cdot)$ is a bijection follows from expressing the equations $T(T^{-1}g) = g$ and $T^{-1}(Tf) = f$ in terms of the mappings $\Phi(y,\cdot)$ and $\Psi(x,\cdot)$.
\end{proof}

The next two propositions can be obtained in a similar vein.  The details are omitted.

\begin{prop}\label{p4.3}
Suppose that  $A(X,E) = C(X,E)$ or $C_*(X,E)$ and $A(Y,F)$ $= C(Y,F)$ or $C_*(Y,F)$.  
There is a function $\Phi:Y\times E\to F$ so that 
\[ Tf(y) = \Phi(y,f(\psi(y))) \text{ for all $f\in A(X,E)$ and all $y\in Y$}.\]
\end{prop}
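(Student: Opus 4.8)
My plan is to run the $C/C_*$ analogue of the proof of Proposition \ref{p4.2}, replacing the completions $\ti X,\ti Y$ by $X,Y$ themselves and Theorem \ref{t3.5} by Theorem \ref{t3.7}. First, by Theorem \ref{t3.7} there is a homeomorphism $\vp:X\to Y$; put $\psi=\vp^{-1}$. Since constant functions lie in both $C(X,E)$ and $C_*(X,E)$, the function $1\otimes a$ belongs to $A(X,E)$ for every $a\in E$, and I would define $\Phi:Y\times E\to F$ by $\Phi(y,a)=T(1\otimes a)(y)$. The task then reduces to proving $Tf(y)=\Phi\bigl(y,f(\psi(y))\bigr)$ for all $f\in A(X,E)$ and all $y\in Y$.

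To do this I would fix such $f$ and $y$, set $x_0=\psi(y)\in X$ and $a=f(x_0)$, and aim to show $Tf(y)=T(1\otimes a)(y)$. The function $h=f-1\otimes a$ is continuous with $h(x_0)=0$, hence $\lim_{z\to x_0}h(z)=0$. By Proposition \ref{p3.10}, $A(X,E)$ has property (P), so I can apply property (P) to $h$ (together with the remark following its definition when $x_0$ is isolated) to obtain open sets $U,V\subseteq X$ and $h'\in A(X,E)$ with $x_0$ lying in the (within-$X$) closures of $U$ and $V$, $h'=h$ on $U$, and $h'=0$ on $V$. Then on $U$ one has $f=h'+1\otimes a$, so by Theorem \ref{t3.7}, $Tf=T(h'+1\otimes a)$ on the open set $\vp(U)$; since $\vp$ is a homeomorphism, $y=\vp(x_0)\in\overline{\vp(U)}$, and continuity of $Tf$ and $T(h'+1\otimes a)$ yields $Tf(y)=T(h'+1\otimes a)(y)$. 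On $V$ one has $h'+1\otimes a=1\otimes a$, so $T(h'+1\otimes a)=T(1\otimes a)$ on $\vp(V)$ and, since $y\in\overline{\vp(V)}$, $T(h'+1\otimes a)(y)=T(1\otimes a)(y)$. Concatenating gives $Tf(y)=T(1\otimes a)(y)=\Phi(y,a)=\Phi\bigl(y,f(\psi(y))\bigr)$, which is the claim.

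I expect the only real obstacle to be the same one confronted in Proposition \ref{p4.2}: $f$ and $1\otimes a$ agree \emph{only} at the single point $x_0$, while Theorem \ref{t3.7} transports agreement only on \emph{open} sets. Property (P) is precisely what circumvents this, by producing one function $h'$ that equals $f-1\otimes a$ on a set clustering at $x_0$ and equals $0$ on another such set, so that $x_0$ — and hence $y$ — lies in the closure of both; everything after that is a routine continuity argument, and the verification that $C(X,E)$ and $C_*(X,E)$ satisfy property (P) is already supplied by Proposition \ref{p3.10}. Note that, in contrast with Proposition \ref{p4.2}, I would not claim bijectivity of $\Phi(y,\cdot)$ here, since every $a\in E$ is already the value of a constant function in $A(X,E)$; should one want it, it follows by running the symmetric argument for $T^{-1}$.
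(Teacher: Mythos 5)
Your proposal is correct and follows essentially the same route as the paper's own proof: define $\Phi(y,a)=T(1\otimes a)(y)$, apply property (P) to $f-1\otimes a$ at $x_0=\psi(y)$ to produce a single function agreeing with $f$ on one set clustering at $x_0$ and with $1\otimes a$ on another, transport via Theorem \ref{t3.7}, and finish by continuity of the images at $y$. The paper phrases the interpolating function directly as $h$ with $h=f$ on $U$ and $h=1\otimes a$ on $V$, which is exactly your $h'+1\otimes a$, so there is no substantive difference.
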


\begin{prop}\label{p4.4}
Suppose that  $A(X,E)$ is standard subspace of 
$U(X,E)$ that contains a nonzero constant function.  Assume that $A(X,E)$ has property (P).
Let $A(Y,F) = C(Y,F)$ or $C_*(Y,F)$.
\begin{enumerate}
\item For any $y\in Y$, there is a function  $\Phi(y,\cdot): \ti{E}_{\psi(y)}\to F$ so that 
\[Tf(y) = \Phi(y,\ti{f}(\psi(y))) \text{ for all $f\in A(X,E)$}.\]
\item There is a function  $\Psi: X\times F\to E$ so that 
\[T^{-1}g(x) = \Psi(x,g(\vp(x))) \text{ for all $g\in A(Y,F)$ and all $x \in X$}.\]
\end{enumerate}
\end{prop}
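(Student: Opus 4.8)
The plan is to transcribe the proof of Proposition \ref{p4.2} for item (1), and to run the mirror‑image argument applied to $T^{-1}$ for item (2), in both cases replacing the structure theorem \ref{t3.5} by Theorem \ref{t3.8} (and Lemma \ref{l3.2.1} by the version phrased through $\vp$). Normalise $T0=0$. By Lemma \ref{l3.6} and Theorem \ref{t3.8} we have a homeomorphism $\vp\colon X\to\vp(X)$ onto a dense subset of $Y$, a continuous extension $\ti\vp\colon\ti X\to\ti Y$, and a continuous map $\psi\colon Y\to\ti X$ with $\ti\vp\circ\psi=\mathrm{id}_Y$ and $\psi\circ\vp=\mathrm{id}_X$; moreover $f=g$ on an open set $U\subseteq X$ iff $Tf=Tg$ on $\vp(U)$, and consequently $T^{-1}g_1=T^{-1}g_2$ on $U$ iff $g_1=g_2$ on $\vp(U)$ for open $U\subseteq X$ and $g_1,g_2\in A(Y,F)$. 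I would also use that $C(Y,F)$ and $C_*(Y,F)$ have property (P) (Proposition \ref{p3.10}).

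For item (1), fix $y_0\in Y$ and put $x_0=\psi(y_0)\in\ti X$. For each $a\in\ti E_{x_0}=\ti E_{\psi(y_0)}$ choose $g_a\in A(X,E)$ with $\ti g_a(x_0)=a$ and set $\Phi(y_0,a)=Tg_a(y_0)$. Given $f\in A(X,E)$, put $a=\ti f(x_0)$; then $\ti{f-g_a}(x_0)=0$ since $A(X,E)\subseteq U(X,E)$. If $x_0$ is isolated in $\ti X$ then $x_0\in X$, $\vp(x_0)=y_0$, $f=g_a$ on the open set $\{x_0\}$, and Theorem \ref{t3.8} gives $Tf(y_0)=\Phi(y_0,a)$. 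Otherwise $x_0$ is an accumulation point of $\ti X$, and property (P) of $A(X,E)$ yields open $U,V\subseteq X$ and $h\in A(X,E)$ with $x_0\in\ti U\cap\ti V$, $h+g_a=f$ on $U$, and $h+g_a=g_a$ on $V$. Taking $W$ open in $\ti X$ with $W\cap X=U$, continuity of $\ti\vp$ and density of $X$ in $\ti X$ give $y_0=\ti\vp(x_0)\in\ol{\vp(U)}^{\ti Y}\cap Y=\ol{\vp(U)}^Y$; since $Tf=T(h+g_a)$ on $\vp(U)$ by Theorem \ref{t3.8} and both functions are continuous on $Y$, we conclude $Tf(y_0)=T(h+g_a)(y_0)$, and the analogous use of $V$ gives $T(h+g_a)(y_0)=Tg_a(y_0)=\Phi(y_0,a)$. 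Hence $Tf(y_0)=\Phi(y_0,\ti f(\psi(y_0)))$.

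For item (2) I would set $\Psi(x,b)=T^{-1}(1\otimes b)(x)$ for $x\in X$ and $b\in F$ (so $1\otimes b\in A(Y,F)$). Fix $x_0\in X$, $y_0=\vp(x_0)\in\vp(X)\subseteq Y$, and $g\in A(Y,F)$ with $b=g(y_0)$; it suffices to show $T^{-1}g(x_0)=T^{-1}(1\otimes b)(x_0)$. Applying property (P) of $A(Y,F)$ to $g-1\otimes b$ at $y_0$ (the case $y_0$ isolated in $\ti Y$ being immediate from Theorem \ref{t3.8}) produces open $V_1,V_2\subseteq Y$ and $k\in A(Y,F)$ with $y_0\in\ti{V_1}\cap\ti{V_2}$, $k+1\otimes b=g$ on $V_1$, and $k+1\otimes b=1\otimes b$ on $V_2$. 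Since $\vp(X)$ is dense in $Y$ and $V_1$ is open, pick $y_n\in V_1\cap\vp(X)$ with $y_n\to y_0$; applying the continuous map $\psi$ shows $x_0=\psi(y_0)$ lies in the closure in $X$ of the open set $\vp^{-1}(V_1)=\{x\in X:\vp(x)\in V_1\}$. As $g=k+1\otimes b$ on $\vp(\vp^{-1}(V_1))\subseteq V_1$, the $T^{-1}$‑form of Theorem \ref{t3.8} gives $T^{-1}g=T^{-1}(k+1\otimes b)$ on $\vp^{-1}(V_1)$, so by continuity on $X$ we get $T^{-1}g(x_0)=T^{-1}(k+1\otimes b)(x_0)$; the set $V_2$ analogously gives $T^{-1}(k+1\otimes b)(x_0)=T^{-1}(1\otimes b)(x_0)=\Psi(x_0,b)$. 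Therefore $T^{-1}g(x)=\Psi(x,g(\vp(x)))$.

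I expect the only real point of care — rather than a genuine obstacle — to be the passage from ``$Tf$ and $T(h+g_a)$ agree on a neighbourhood‑type set'' to ``they agree at $y_0$''. Unlike in Proposition \ref{p4.2}, functions in $A(Y,F)=C(Y,F)$ need not extend continuously to $\ti Y$ and $\ti\vp$ is merely continuous (not a homeomorphism), so the relevant closures must be taken inside $Y$, respectively $X$, rather than inside the completion; this is exactly where density of $\vp(X)$ in $Y$ and continuity of $\psi$ enter. Apart from this bookkeeping the argument is a routine transcription of the proofs of Propositions \ref{p4.2} and \ref{p4.3}.
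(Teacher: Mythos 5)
Your proposal is correct and follows essentially the same route as the paper's own (omitted/appendix) argument: transcribe the proof of Proposition \ref{p4.2} for part (1) and run the mirror argument through $T^{-1}$ for part (2), replacing Theorem \ref{t3.5} by Theorem \ref{t3.8} and using density of $\vp(X)$ in $Y$ together with continuity of $\ti{\vp}$ and $\psi$ to pass from agreement on $\vp(U)$ (resp.\ $\vp^{-1}(V)$) to agreement at the point. Your choice $\Phi(y,a)=Tg_a(y)$ is the right one, since $1\otimes a$ need not belong to $A(X,E)$ for general $a\in\ti{E}_{\psi(y)}$; no gaps.
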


\section{Spaces of continuous functions -- metric case}\label{s5}

In this section, let $X, Y$ be metric spaces and $E,F$ be normed spaces.
%
Let $A(X,E) = C(X,E)$ or $C_*(X,E)$ and $A(Y,F) = C(Y,F)$ or $C_*(Y,F)$.
Fix a biseparating map $T:A(X,E)\to A(Y,F)$.  
By Theorem \ref{t3.7}, Proposition \ref{p4.3} and symmetry,   there are a homeomorphism $\vp:X\to Y$ and functions $\Phi:Y\times E\to F$, $\Psi:X\times F\to E$ so that 
\[ (Tf)(y)= \Phi(y,f(\vp^{-1}(y))),\quad (T^{-1}g)(x) = \Psi(x,g(\vp(x)))\]
for any $f\in A(X,E)$, $g\in A(Y,F)$, $x\in X$ and $y\in Y$.
From the equations 
\[ 1\otimes a = T^{-1}(T(1\otimes a)),\quad 1\otimes b = T(T^{-1}(1\otimes b))\]
for all $a\in E$, $b\in F$, we find that $\Phi(y,\cdot)$ and $\Psi(x,\cdot)$ are mutual inverses provided $y = \vp(x)$.  
The aim of the present section is to characterize the functions $\Phi$ that lead to biseparating maps and prove a result on automatic continuity.
Observe that if we define $S:C(Y,F)\to C(X,F)$ by $Sg(x) = g(\vp^{-1}(x))$, then $S$ is a biseparating map that also acts as a biseparating map from $C_*(Y,F)$ onto $C_*(X,F)$.
Thus characterization of biseparating maps reduces to the ``section problem'' addressed in Proposition \ref{p5.1}.
The result is well known, at least in the case for for $C(X)$.  See, e.g. \cite[Chapter 9]{AZ}.
We omit the easy proof of the next lemma.

\begin{lem}\label{l5.0}
Let $(x_n)$ be a sequence of distinct points in $X$ and let $(a_n)$ be a sequence in $E$.
\begin{enumerate}
\item If $(x_n)$ has no convergent subsequence, then there is a function $f\in C(X,E)$ so that $f(x_n) = a_n$ for all $n$.  Moreover, $f$ can be chosen to be bounded if $(a_n)$ is bounded.
\item If $(x_n)$ converges to a point $x_0\in X$, $x_0\neq x_n$ for all $n$, and $(a_n)$ converges to a point $a_0\in E$, then there exists $f\in C_*(X,E)$ so that $f(x_n) = a_n$ for all $n$.
\end{enumerate}
\end{lem}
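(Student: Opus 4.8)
The plan is to build the interpolant by hand: place a small Lipschitz bump at each $x_n$, scale it by $a_n$ (or by $a_n-a_0$ in part (2)), and sum. Continuity will come from a \emph{local finiteness} argument on the supports, and boundedness from arranging the supports to be pairwise disjoint. The whole thing is a bump-function construction; the only thing to watch is the choice of radii.

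For part (1), since $(x_n)$ has no convergent subsequence the set $\{x_m:m\in\N\}$ has no accumulation point in $X$; in particular, every $x\in X$ has a ball $B(x,\varepsilon)$ containing only finitely many of the $x_m$, and for each $n$ the point $x_n$ is not a limit of $\{x_m:m\neq n\}$, so $\delta_n:=d(x_n,\{x_m:m\neq n\})>0$. I would set $r_n=\min\{\delta_n/3,\,1/n\}$, so that $r_n>0$ and $r_n\to0$, and let $\varphi_n(x)=\bigl(1-d(x,x_n)/r_n\bigr)^+$: a Lipschitz function with $\varphi_n(x_n)=1$, $\varphi_n=0$ off $B(x_n,r_n)$, and $\varphi_n(x_m)=0$ for $m\neq n$. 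Since $d(x_n,x_m)\ge\max\{\delta_n,\delta_m\}\ge\frac32(r_n+r_m)$, the balls $B(x_n,r_n)$ are pairwise disjoint. The family $\{\overline{B(x_n,r_n)}\}_n$ is locally finite: given $x$, pick $\varepsilon$ with only finitely many $x_m$ in $B(x,\varepsilon)$; then $B(x,\varepsilon/2)$ can meet $B(x_n,r_n)$ only if $d(x,x_n)<\varepsilon/2+r_n$, which for every $n$ with $r_n<\varepsilon/2$ (hence all but finitely many $n$) forces $x_n\in B(x,\varepsilon)$. Consequently $f:=\sum_n\varphi_n\,a_n$ is, near each point, a finite sum of continuous functions, so $f\in C(X,E)$, and $f(x_n)=a_n$. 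If $\sup_n\|a_n\|<\infty$, then because the supports are disjoint $\|f(x)\|\le\sup_n\|a_n\|$, which gives the boundedness clause.

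Part (2) uses the same recipe, but now the closure of $\{x_m:m\neq n\}$ is $\{x_m:m\neq n\}\cup\{x_0\}$, which still omits $x_n$, so $\delta_n:=d(x_n,\{x_m:m\neq n\}\cup\{x_0\})>0$; note $\delta_n\le d(x_n,x_0)\to0$. Put $r_n=\delta_n/3$ and $\varphi_n$ as above; the balls $B(x_n,r_n)$ are then pairwise disjoint, each avoids $x_0$, and each contains no $x_m$ with $m\neq n$. Define $f:=a_0+\sum_n\varphi_n\,(a_n-a_0)$, so $f(x_n)=a_n$ and $f(x_0)=a_0$. Continuity at a point $x\neq x_0$ follows by a local finiteness argument as in part (1) (again $r_n\to0$). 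For continuity at $x_0$: if $x\in B(x_n,r_n)$ then $d(x_n,x_0)\le d(x_n,x)+d(x,x_0)<r_n+d(x,x_0)\le\frac13 d(x_n,x_0)+d(x,x_0)$, whence $d(x_n,x_0)<\frac32 d(x,x_0)$; so given $\varepsilon$, choose $N$ with $\|a_n-a_0\|<\varepsilon$ for $n\ge N$, and note that $\bigcup_{n<N}\overline{B(x_n,r_n)}$ stays a positive distance from $x_0$. Then for $x$ sufficiently close to $x_0$ we have $x\notin B(x_n,r_n)$ for $n<N$, so by disjointness of supports $\|f(x)-a_0\|\le\|a_n-a_0\|<\varepsilon$ (or $f(x)=a_0$ if $x$ lies in no ball). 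Thus $f$ is continuous at $x_0$, and $\|f(x)-a_0\|\le\sup_n\|a_n-a_0\|<\infty$, so $f\in C_*(X,E)$.

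The one point needing care — and what I would flag as the main obstacle — is getting the radii right: in part (1) one must force $r_n\to0$ so that the balls cannot ``accumulate from infinity'' and spoil local finiteness, and in part (2) one needs $r_n$ small relative to $d(x_n,x_0)$ so that membership in $B(x_n,r_n)$ for $x$ near $x_0$ forces $n$ large and hence $\|a_n-a_0\|$ small. Once $\delta_n$ and $r_n$ are chosen as above, the rest is routine bump-function bookkeeping.
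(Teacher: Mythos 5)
Your proof is correct: the disjoint-bump construction with radii $r_n\le\delta_n/3$ (and $r_n\to 0$ in part (1), $r_n\le d(x_n,x_0)/3$ in part (2)) handles local finiteness, interpolation, boundedness, and continuity at $x_0$ exactly as needed. The paper explicitly omits the proof of this lemma as easy, and your argument is the standard one matching the bump-function constructions the authors use elsewhere (e.g.\ in Proposition \ref{p3.10} and Lemma \ref{l7.6}), so there is nothing to flag.
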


Denote the set of accumulation points of $X$ by $X'$ and the unit balls  of $E$ and $F$ by $B_E$ and $B_F$ respectively.

\begin{prop}\label{p5.1}
Let $X$ be a metric space and let $E$ and $F$ be normed spaces.  Consider a function $\Phi:X\times E\to F$.
\begin{enumerate}
\item The function $x\mapsto \Phi(x,f(x))$ belongs to $C(X,F)$ for every $f\in C(X,E)$ if and only if $\Phi$ is continuous at every point in $X'\times E$.
\item The function $x\mapsto \Phi(x,f(x))$ belongs to $C_*(X,F)$ for every $f\in C_*(X,E)$ if and only if both of the following conditions hold.
\begin{enumerate}
\item $\Phi$ is continuous at every point in $X'\times E$. 
\item For any  bounded set $B$ in $E$, every  $(x_n) \in \prod_n X_n(B)$ has a subsequence that converges in $X$, where 
\[X_n(B) = \{x\in X: \Phi(x,B) \not\subseteq nB_F\}.\]
\end{enumerate}
\end{enumerate}
\end{prop}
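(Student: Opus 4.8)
The plan is to prove both directions of each equivalence, handling the ``only if'' (necessity) parts by constructing suitable input functions that exhibit failure, and the ``if'' (sufficiency) parts by a routine sequential continuity/boundedness argument.

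\textbf{Part (1).} For sufficiency, suppose $\Phi$ is continuous at every point of $X'\times E$; I would show $g(x):=\Phi(x,f(x))$ is continuous for $f\in C(X,E)$. Fix $x_0\in X$. If $x_0\notin X'$ it is isolated and continuity at $x_0$ is automatic. If $x_0\in X'$ and $x_n\to x_0$, then $f(x_n)\to f(x_0)$ by continuity of $f$, so $(x_n,f(x_n))\to(x_0,f(x_0))\in X'\times E$, whence $g(x_n)\to g(x_0)$; since $X$ is metric, sequential continuity suffices. For necessity, suppose $\Phi$ fails to be continuous at some $(x_0,a_0)\in X'\times E$. Then there is $\ep>0$ and a sequence $(x_n,a_n)\to(x_0,a_0)$ with $\|\Phi(x_n,a_n)-\Phi(x_0,a_0)\|\geq\ep$. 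By passing to a subsequence I may assume the $x_n$ are distinct and different from $x_0$ (the case where infinitely many $x_n$ equal $x_0$ forces $a_n\to a_0$ while $\Phi(x_0,a_n)\not\to\Phi(x_0,a_0)$, which is handled by noting $x_0\in X'$ and perturbing the $x_n$ slightly to nearby distinct points using continuity of $\Phi$ in the first variable along the constant-value input — this is the one slightly delicate point). Now invoke Lemma \ref{l5.0}(2): since $x_n\to x_0$ and $a_n\to a_0$, there is $f\in C_*(X,E)\subseteq C(X,E)$ with $f(x_n)=a_n$ for all $n$; we may also arrange $f(x_0)=a_0$. Then $g(x_n)=\Phi(x_n,a_n)$ does not converge to $\Phi(x_0,a_0)=g(x_0)$, so $g\notin C(X,F)$.

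\textbf{Part (2), sufficiency.} Assume (a) and (b). Given $f\in C_*(X,E)$, by (a) and Part (1) the map $g(x)=\Phi(x,f(x))$ is continuous; it remains to show it is bounded. Let $B=\ol{f(X)}$, a bounded set in $E$. If $g$ were unbounded, choose $x_n$ with $\|g(x_n)\|>n$; then $\Phi(x_n,f(x_n))\notin nB_F$, so $x_n\in X_n(B)$, and $(x_n)\in\prod_n X_n(B)$. By (b) a subsequence $x_{n_k}$ converges to some $x_0\in X$. Then $f(x_{n_k})\to f(x_0)$, so by continuity $g(x_{n_k})\to g(x_0)$, contradicting $\|g(x_{n_k})\|>n_k\to\infty$.

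\textbf{Part (2), necessity.} Assume $x\mapsto\Phi(x,f(x))$ lands in $C_*(X,F)$ for all $f\in C_*(X,E)$. Then in particular it lands in $C(X,F)$ for all $f\in C_*(X,E)$; a short argument (essentially the necessity proof of Part (1), which only used bounded input functions from Lemma \ref{l5.0}(2)) gives (a). For (b), suppose it fails: there is a bounded $B\subseteq E$ and $(x_n)\in\prod_n X_n(B)$ with no convergent subsequence. For each $n$, since $\Phi(x_n,B)\not\subseteq nB_F$, pick $a_n\in B$ with $\|\Phi(x_n,a_n)\|>n$. Passing to a subsequence I may assume the $x_n$ are distinct. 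By Lemma \ref{l5.0}(1), since $(x_n)$ has no convergent subsequence and $(a_n)\subseteq B$ is bounded, there is a bounded $f\in C_*(X,E)$ with $f(x_n)=a_n$. Then $\Phi(x_n,f(x_n))=\Phi(x_n,a_n)$ has norm $>n$, so $x\mapsto\Phi(x,f(x))$ is unbounded, a contradiction.

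\textbf{Main obstacle.} The routine core is the sequential continuity/boundedness juggling together with the two cases of Lemma \ref{l5.0}. The one genuinely fiddly point is, in the necessity direction of (1) (and correspondingly (a)), the case where the witnessing sequence has $x_n=x_0$ for infinitely many $n$: one must produce genuinely distinct points near $x_0$ at which $\Phi$ still nearly takes the bad values, which requires exploiting that $x_0$ is an accumulation point together with the (weak, pointwise in the second variable) regularity of $\Phi$ forced by feeding in constant functions $1\otimes a$. Making this reduction clean — so that Lemma \ref{l5.0}(2) applies with a sequence of distinct points — is where I expect to spend the most care.
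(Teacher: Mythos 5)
Your proposal is correct and follows essentially the same route as the paper: both directions rest on Lemma \ref{l5.0}, the necessity of continuity is reduced to the case of distinct points $x_n\neq x_0$ via the same perturbation trick (using that $\Phi(\cdot,a_n)$ is continuous because it is the image of the constant function $1\otimes a_n$), and the boundedness condition (b) is handled with the identical interpolation argument. The only cosmetic difference is that you argue the continuity part by contradiction where the paper argues it directly.
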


\begin{proof}
Suppose that $x\mapsto \Phi(x,f(x))$ belongs to $C(X,F)$ for every $f\in C_*(X,E)$.
Let $(x_0,a_0)$ be a point in $X'\times E$ and let $((x_n,a_n))$ be a sequence in $X\times E$ that converges to $(x_0,a_0)$.
Since $\Phi(x,a_n)$ is a continuous function of $x$, by making small perturbations if necessary, we may assume that $x_n \neq x_0$ for all $n$.
Changing to a subsequence, we may further assume that $(x_n)$ is a sequence of distinct points.  By Lemma \ref{l5.0}(2), there exists $f\in C_*(X,E)$ so that $f(x_n) = a_n$ for all $n$.  By continuity of $f$, $f(x_0) = a_0$.
Then 
\[ \Phi(x_n,a_n) = \Phi(x_n,f(x_n)) \to \Phi(x_0,f(x_0)) = \Phi(x_0,a_0).\]
This proves the continuity of $\Phi$ at $(x_0,a_0)$.
Hence the ``only if'' parts in statement (1) and statement (2)(a) are verified.
On the other hand,
if  $\Phi$ is continuous at any point in $X'\times E$ and $f\in C(X,E)$, then it is clear that $\Phi(x,f(x))$ is a continuous function of $x\in E$.  This completes the proof of  statement (1).

Let us proceed to prove the necessity of condition (b) in statement (2).
Assume that condition (b) in (2) fails. 
Let $B$ be a bounded set in $E$ and let $(x_n)$ be an element in $\prod X_n(B)$ so that $(x_n)$ has no convergent subsequence in $X$.
Since $X_n(B) \subseteq X_m(B)$ if $m \leq n$, we may replace $(x_n)$ by a subsequence to assume that all $x_n$'s are distinct.
Choose $a_n\in B$ so that $\|\Phi(x_n,a_n)\| > n$ for all $n$.
By Lemma \ref{l5.0}(1), there exists $f\in C_*(X,E)$ so that $f(x_n) = a_n$ for all $n$.
Then $\|\Phi(x_n,f(x_n))\| = \|\Phi(x_n,a_n)\| \to \infty$.
This contradicts the assumption that the function $x\mapsto \Phi(x,f(x))$ is bounded.

Finally, we prove the sufficiency in statement (2).  Let $f\in C_*(X,E)$.  As observed above, by (2) condition (a), $x\mapsto \Phi(x,f(x))$ is continuous on $X$ since  $f\in C(X,E)$.
Let $B = f(X)$.  Then $B$ is a bounded set in $E$. If $(\Phi(x,f(x)))_{x\in X}$ is unbounded, there is a sequence of distinct points $(x_n)$ so that $\|\Phi(x_n,f(x_n))\| > n$ for all $n$.
In particular, $(x_n) \in \prod X_n(B)$. By condition 2(b), we may replace it by a subsequence to assume that $(x_n)$ converges to a point $x_0$ in $X$. In particular, $x_0\in X'$.  By assumption 2(a),
$\Phi(x_n, f(x_n))\to \Phi(x_0,f(x_0))$, contradicting the unboundedness of the sequence.
\end{proof}

The next two results follow immediately from the preceding discussion.

\begin{thm}\label{t5.4}
Let $X$ and $Y$ be metric spaces and let $E$ and $F$ be normed spaces.
A map  $T:C(X,E)\to C(Y,F)$ is a biseparating map if and only if there are a homeomorphism $\vp:X\to Y$ and functions $\Phi:Y\times E\to F$, $\Psi:X\times F\to E$ so that
\begin{enumerate}
\item $\Phi(y,\cdot)$ and $\Psi(x,\cdot)$ are mutual inverses if $\vp(x) = y$. 
\item $\Phi$ is continuous at any point in $Y'\times E$; $\Psi$ is continuous at any point in $X'\times F$.
\end{enumerate}
\end{thm}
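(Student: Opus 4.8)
The plan is to assemble the characterization from the structural results already established, with Proposition \ref{p5.1}(1) doing the analytic heavy lifting. For the forward direction, suppose $T:C(X,E)\to C(Y,F)$ is biseparating; by the remark at the start of \S\ref{s2} we may normalize $T0=0$. First I would invoke Theorem \ref{t3.7} to obtain a homeomorphism $\vp:X\to Y$ with the property that $f=g$ on an open set $U$ if and only if $Tf=Tg$ on $\vp(U)$; this homeomorphism is the one in the statement. Next, Proposition \ref{p4.3} (applied to $T$ and, by symmetry, to $T^{-1}$) yields functions $\Phi:Y\times E\to F$ and $\Psi:X\times F\to E$ with $Tf(y)=\Phi(y,f(\vp^{-1}(y)))$ and $T^{-1}g(x)=\Psi(x,g(\vp(x)))$. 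Property (1) — that $\Phi(y,\cdot)$ and $\Psi(x,\cdot)$ are mutual inverses when $y=\vp(x)$ — follows exactly as in the paragraph opening \S\ref{s5}: apply the two representation formulas to constant functions $1\otimes a$ and $1\otimes b$ and use $T^{-1}Tf=f$, $TT^{-1}g=g$. Property (2) is then immediate from Proposition \ref{p5.1}(1): since $S g(x):=g(\vp^{-1}(x))$ is a biseparating bijection of $C(Y,F)$ onto $C(X,F)$ (and likewise $C_*$), the map $f\mapsto Tf\circ\vp^{-1}$... more directly, for each $f\in C(X,E)$ the function $y\mapsto\Phi(y,(f\circ\vp^{-1})(y))$ lies in $C(Y,F)$ because $f\circ\vp^{-1}\in C(Y,E)$ (as $\vp$ is a homeomorphism) and $Tf\in C(Y,F)$; since $f$ ranges over all of $C(X,E)$, the function $h=f\circ\vp^{-1}$ ranges over all of $C(Y,E)$, so Proposition \ref{p5.1}(1) forces $\Phi$ to be continuous at every point of $Y'\times E$. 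The symmetric argument applied to $T^{-1}$ gives continuity of $\Psi$ on $X'\times F$.

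For the converse, suppose $\vp$, $\Phi$, $\Psi$ are given with properties (1) and (2); define $T:C(X,E)\to C(Y,F)$ by $(Tf)(y)=\Phi(y,f(\vp^{-1}(y)))$. One must check first that $T$ is well-defined, i.e.\ $Tf\in C(Y,F)$ for every $f\in C(X,E)$: this is precisely the ``if'' direction of Proposition \ref{p5.1}(1), using that $f\circ\vp^{-1}\in C(Y,E)$ and that $\Phi$ is continuous on $Y'\times E$. Define $S:C(Y,F)\to C(X,F)$ by $(Sg)(x)=\Psi(x,g(\vp(x)))$; by the symmetric application of Proposition \ref{p5.1}(1) (with the roles of $X,Y$ and $E,F$ swapped, using continuity of $\Psi$ on $X'\times F$), $S$ maps $C(Y,F)$ into $C(X,F)$. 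Using property (1), a pointwise check shows $S\circ T=\mathrm{id}_{C(X,E)}$ and $T\circ S=\mathrm{id}_{C(Y,F)}$, so $T$ is a bijection with $T^{-1}=S$.

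It remains to verify that $T$ is biseparating in the sense of \S\ref{s1}, i.e.\ for all $f,g,h$, $f\perp_h g \iff Tf\perp_{Th}Tg$. Since $X$ is metric, by Example \ref{e1.7} we have $\wh C(f)=\ol{C(f)}$, and one checks readily that $C(f-h)$ depends only on the pointwise values: $(Tf)(y)=(Th)(y)$ iff $\Phi(y,f(\vp^{-1}(y)))=\Phi(y,h(\vp^{-1}(y)))$, which by the injectivity of $\Phi(y,\cdot)$ (property (1)) holds iff $f(\vp^{-1}(y))=h(\vp^{-1}(y))$, i.e.\ iff $(f-h)(\vp^{-1}(y))=0$. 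Thus $C(Tf-Th)=\vp(C(f-h))$, and likewise $C(Tg-Th)=\vp(C(g-h))$. Since $\vp$ is a bijection, $C(Tf-Th)\cap C(Tg-Th)=\emptyset$ iff $C(f-h)\cap C(g-h)=\emptyset$, which is exactly $f\perp_h g\iff Tf\perp_{Th}Tg$. This completes the proof.

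\medskip

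The step I expect to require the most care is making the well-definedness and the domain statements line up cleanly with Proposition \ref{p5.1}(1): one must be careful that when $f$ ranges over $C(X,E)$ the composite $f\circ\vp^{-1}$ ranges over exactly $C(Y,E)$ (using that $\vp$ is a \emph{homeomorphism}, not merely a continuous bijection), so that the ``only if'' direction of Proposition \ref{p5.1}(1) can be applied to conclude the continuity of $\Phi$ on all of $Y'\times E$ — and symmetrically for $\Psi$. Everything else is a routine bookkeeping of the pointwise formulas together with the injectivity built into property (1).
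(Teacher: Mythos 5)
Your proposal is correct and follows essentially the same route as the paper: the forward direction is exactly the assembly of Theorem \ref{t3.7}, Proposition \ref{p4.3}, the mutual-inverse observation from constant functions, and Proposition \ref{p5.1}(1), while the converse is the reduction to the section problem via the composition operator $g\mapsto g\circ\vp^{-1}$, with the biseparating property verified by the pointwise identity $C(Tf-Th)=\vp(C(f-h))$. The paper leaves these last verifications implicit ("follow immediately from the preceding discussion"), and your write-up supplies them correctly.
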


\begin{thm}\label{t5.5}
Let $X$ and $Y$ be metric spaces and let $E$ and $F$ be normed spaces.
A map  $T:C_*(X,E)\to C_*(Y,F)$ is a biseparating map if and only if there are a homeomorphism $\vp:X\to Y$ and functions $\Phi:Y\times E\to F$, $\Psi:X\times F\to E$ so that
\begin{enumerate}
\item $\Phi(y,\cdot)$ and $\Psi(x,\cdot)$ are mutual inverses if $\vp(x) = y$. 
\item $\Phi$ is continuous at any point in $Y'\times E$; $\Psi$ is continuous at any point in $X'\times F$.
\item If $B_1$ and $B_2$ are bounded sets in $E$ and $F$ respectively, and 
\[Z_n(B_1,B_2) = \{x\in X: \Phi(\vp(x),B_1) \not\subseteq nB_F \text{ or } B_2 \not\subseteq \Phi(\vp(x),nB_E)\},\]
then  every $(x_n) \in \prod_n Z_n(B_1,B_2)$ has a subsequence that converges in $X$.
\end{enumerate}
\end{thm}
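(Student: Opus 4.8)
The plan is to deduce Theorem \ref{t5.5} from three facts already in hand: the structural Theorem \ref{t3.7}, the pointwise representation Proposition \ref{p4.3}, and the solution of the section problem in Proposition \ref{p5.1}(2). The recurring device is to transplant $T$ to an honest section operator on the single space $X$: writing $\Theta(x,a)=\Phi(\vp(x),a)$, the operator $Rf=(Tf)\circ\vp$ has the form $Rf(x)=\Theta(x,f(x))$, its candidate inverse is $R^{-1}g(x)=\Psi(x,g(x))$, and since $\vp$ is a homeomorphism, $R$ maps $C_*(X,E)$ into $C_*(X,F)$ exactly when $T$ maps $C_*(X,E)$ into $C_*(Y,F)$, and likewise for the inverse operators.

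For the ``only if'' direction, normalize $T0=0$ and quote the opening paragraph of this section: Theorem \ref{t3.7} and Proposition \ref{p4.3} (with symmetry) yield a homeomorphism $\vp:X\to Y$ and maps $\Phi,\Psi$ with $Tf(y)=\Phi(y,f(\vp^{-1}(y)))$ and $T^{-1}g(x)=\Psi(x,g(\vp(x)))$; running the constant functions $1\otimes a$ and $1\otimes b$ through the identities $T^{-1}T=\mathrm{id}$ and $TT^{-1}=\mathrm{id}$ shows that $\Phi(\vp(x),\cdot)$ and $\Psi(x,\cdot)$ are mutual inverses, which is (1). Now $R$ maps $C_*(X,E)$ into $C_*(X,F)$ and $R^{-1}$ maps $C_*(X,F)$ into $C_*(X,E)$, so Proposition \ref{p5.1}(2), applied to the section $\Theta$ and then to the section $\Psi$, gives: $\Theta$ is continuous on $X'\times E$, equivalently $\Phi$ is continuous on $Y'\times E$ since $\vp$ is a homeomorphism; $\Psi$ is continuous on $X'\times F$; and for every bounded $B_1\subseteq E$, resp.\ $B_2\subseteq F$, every $(x_n)\in\prod_n\{x:\Phi(\vp(x),B_1)\not\subseteq nB_F\}$, resp.\ $(x_n)\in\prod_n\{x:\Psi(x,B_2)\not\subseteq nB_E\}$, has a subsequence converging in $X$. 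The first two statements are (2). Since $\Phi(\vp(x),nB_E)=\{b:\Psi(x,b)\in nB_E\}$ by (1), the set $Z_n(B_1,B_2)$ is precisely the union of the two sets just named, so the last two statements imply (3) by the gluing argument below.

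For the ``if'' direction, given $(\vp,\Phi,\Psi)$ obeying (1)--(3), define $Tf(y)=\Phi(y,f(\vp^{-1}(y)))$ and $Sg(x)=\Psi(x,g(\vp(x)))$. Condition (1) gives $ST=\mathrm{id}$ and $TS=\mathrm{id}$ as identities of (possibly discontinuous) functions, and since $\Phi(y,\cdot)$ is a bijection the pair is automatically biseparating as a section operator: $f\perp_h g$ says $f(x)=h(x)$ or $g(x)=h(x)$ for every $x$, and injectivity of $\Phi(y,\cdot)$ transports this verbatim to $Tf\perp_{Th}Tg$, and symmetrically for $S$. It remains only to see that $T$ lands in $C_*(Y,F)$ and $S$ in $C_*(X,E)$; equivalently that $R$ and $R^{-1}$ land in $C_*(X,F)$ and $C_*(X,E)$, which is the ``if'' half of Proposition \ref{p5.1}(2): condition (2) supplies the continuity hypothesis for $\Theta$ and for $\Psi$, while condition (3) supplies the compactness hypothesis since $\{x:\Phi(\vp(x),B_1)\not\subseteq nB_F\}\subseteq Z_n(B_1,B_2)$ and $\{x:\Psi(x,B_2)\not\subseteq nB_E\}\subseteq Z_n(B_1,B_2)$ for any fixed choice of the other bounded set.

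The only point needing genuine care is the equivalence between condition (3) and the conjunction of the two separate convergent-subsequence conditions for $\Phi$ and for $\Psi$. One direction is the subset observation just made. For the other, write $Z_n(B_1,B_2)=P_n\cup Q_n$, where $P_n$ and $Q_n$ are the two sets above; each family is decreasing in $n$; given $(x_n)\in\prod_n Z_n(B_1,B_2)$, either $\{n:x_n\in P_n\}$ or $\{n:x_n\in Q_n\}$ is infinite, say the former along $n_1<n_2<\cdots$, and then $x_{n_k}\in P_{n_k}\subseteq P_k$ by monotonicity, so $(x_{n_k})_k\in\prod_k P_k$ and has a convergent subsequence. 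All remaining verifications are routine translation through the homeomorphism $\vp$, exactly as in the proof of Theorem \ref{t5.4}.
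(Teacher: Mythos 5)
Your proof is correct and follows essentially the same route the paper intends: it reduces to the section problem on $X$ via the homeomorphism $\vp$, derives (1) from the constant functions, obtains (2) and the two separate convergent-subsequence conditions from Proposition \ref{p5.1}(2) applied to $\Phi(\vp(\cdot),\cdot)$ and to $\Psi$, and then combines these into condition (3) exactly as in the remark following Theorem \ref{t5.5}. The explicit monotonicity-plus-pigeonhole argument you give for splitting $Z_n(B_1,B_2)=P_n\cup Q_n$ is the detail the paper leaves implicit, and it is carried out correctly.
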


Observe that by condition (1) of Theorem \ref{t5.5}, $B_2 \not\subseteq \Phi(\vp(x),nB_E)$ if and only if $\Psi(x,B_2)\not\subseteq nB_E$.  Hence condition (3) in Theorem \ref{t5.5} is a combination of condition 2(b) in Proposition \ref{p5.1} for the maps $\Phi$ and $\Psi$.

\begin{prop}\label{p5.2}
If there is a biseparating map $T:C(X,E)\to C_*(Y,F)$, then $X$ are $Y$ are compact.
\end{prop}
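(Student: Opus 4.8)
The plan is to combine the pointwise representation recalled at the beginning of this section with the Tietze-type extension of Lemma \ref{l5.0}. Since Theorem \ref{t3.7} already provides a homeomorphism $\vp\colon X\to Y$, it is enough to show that $X$ is compact; compactness of $Y$ will follow. I will also use that, by the discussion preceding this proposition, there are functions $\Phi\colon Y\times E\to F$ and $\Psi\colon X\times F\to E$ with $(Tf)(y)=\Phi(y,f(\vp^{-1}(y)))$ for all $f\in C(X,E)$ and $y\in Y$, and that $\Phi(y,\cdot)$ and $\Psi(x,\cdot)$ are mutual inverses whenever $y=\vp(x)$. The crucial consequence is that each slice $\Phi(y,\cdot)\colon E\to F$ is surjective, so its range is the whole (unbounded) space $F$.

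First I would assume that $X$ is not compact. Being metric, $X$ is then not sequentially compact, so some sequence in $X$ has no convergent subsequence; as any value attained infinitely often would yield a convergent constant subsequence, each value is attained only finitely often, and after passing to a subsequence I may take a sequence $(x_n)$ of \emph{distinct} points of $X$ with no convergent subsequence. Put $y_n=\vp(x_n)$. Next, since $F\neq\{0\}$ is normed, pick $b_n\in F$ with $\|b_n\|\ge n$, and set $a_n=\Psi(x_n,b_n)\in E$, so that $\Phi(y_n,a_n)=b_n$. By Lemma \ref{l5.0}(1) there is $f\in C(X,E)$ with $f(x_n)=a_n$ for every $n$. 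Then $(Tf)(y_n)=\Phi\big(y_n,f(\vp^{-1}(y_n))\big)=\Phi(y_n,a_n)=b_n$, so $\|(Tf)(y_n)\|\ge n$ for all $n$, contradicting the fact that $Tf$ lies in $C_*(Y,F)$ and is therefore bounded. Hence $X$, and with it $Y$, is compact.

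I do not expect a genuine obstacle. The one point to get right is the surjectivity of the slices $\Phi(y,\cdot)$ onto the unbounded space $F$: this is what allows a single continuous (not necessarily bounded) function, built by prescribing its values on the closed discrete set $\{x_n\}$, to be carried by $T$ to an unbounded function, which is impossible since $T$ maps into $C_*(Y,F)$. One should also be mildly careful to arrange that the points $x_n$ are distinct, as Lemma \ref{l5.0} requires. (An alternative route via the realcompact-domain result Theorem \ref{t2.9} applied to $T^{-1}$ would only give the conclusion under the extra assumption that $X$ is realcompact, so the direct argument above is preferable.)
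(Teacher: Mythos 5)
Your proof is correct and is essentially the paper's own argument: the paper also takes a sequence of distinct points with no convergent subsequence, sets $a_n=(T^{-1}(1\otimes nb))(x_n)$ (which is exactly your $\Psi(x_n,b_n)$ with $b_n=nb$), interpolates with Lemma \ref{l5.0}(1), and derives a contradiction with the boundedness of $Tf$. The only cosmetic difference is that you allow arbitrary $b_n$ with $\|b_n\|\ge n$ instead of the fixed ray $nb$.
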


\begin{proof}
Assume otherwise. Since $X$ and $Y$ are homeomorphic, 
 there is a sequence of distinct points $(x_n)$ in $X$ that has no convergent subsequence.  Fix a nonzero element $b\in F$ and let $a_n = (T^{-1}(1\otimes nb))(x_n)$ for each $n$.
By Lemma \ref{l5.0}(1), there is a function $f\in C(X,E)$ so that $f(x_n) =a_n$ for all $n$.
Note that 
\[nb = T((T^{-1}(1\otimes nb))(\vp(x_n)) = \Phi(\vp(x_n),a_n) \text{ for each $n$.}\]
Then 
\[(Tf)(\vp(x_n)) = \Phi(\vp(x_n),a_n) = nb \text{  for all $n$},\]
contradicting the boundedness of $Tf$.
\end{proof}

Note that if $T:C(X,E)\to C_*(Y,F)$ is biseparating, then $X$ is compact by Proposition \ref{p5.2}.  Hence $C(X,E) = C_*(X,E)$.  Therefore, the characterization Theorem \ref{t5.5} applies.
We conclude this section with an automatic continuity result.  If $K$ is a compact subset of $X$, let 
\[\|f\|_K = \sup\{\|f(x)\|:x\in K\} \text{ for any $f\in C(X,E)$}.\]

\begin{thm}\label{t5.6}
Let $X$ and $Y$ be metric spaces and let $E$ and $F$ be normed spaces.
Suppose that $A(X,E) = C(X,E)$ or $C_*(X,E)$, $A(Y,F) = C(Y,F)$ or $C_*(Y,F)$. 
Let $T:A(X,E)\to A(Y,F)$ be a biseparating map. For any compact subset $K$ of $Y'$, any $f\in A(X,E)$, and any $\ep > 0$, there exists $\delta >0$ so that 
\[ g\in A(X,E),\ \|g-f\|_{\vp^{-1}(K)} <\delta \implies \|Tg-Tf\|_K < \ep.\]
\end{thm}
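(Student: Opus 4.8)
The idea is to translate the statement into a local uniform-continuity assertion for the representing function $\Phi$ near the compact set $\{(y,f(\vp^{-1}(y))):y\in K\}\subseteq Y'\times E$, and then run a routine compactness argument.

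First I would set up the pointwise representation. By Proposition \ref{p4.3} there are a homeomorphism $\vp:X\to Y$ and a function $\Phi:Y\times E\to F$ with
\[ Tf(y)=\Phi(y,f(\vp^{-1}(y)))\qquad (f\in A(X,E),\ y\in Y).\]
Writing $\ti\Phi(x,a)=\Phi(\vp(x),a)$, for every $f$ in the domain the map $x\mapsto\ti\Phi(x,f(x))=Tf(\vp(x))$ is continuous on $X$. The first paragraph of the proof of Proposition \ref{p5.1} shows that this already forces $\ti\Phi$ to be continuous at every point of $X'\times E$ (it derives continuity of the section function from bounded input functions alone, so it applies whether $A(X,E)$ is $C(X,E)$ or $C_*(X,E)$); since $\vp$ is a homeomorphism with $\vp(X')=Y'$, it follows that $\Phi$ is continuous at every point of $Y'\times E$. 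Put $L=\vp^{-1}(K)$, a compact subset of $X$, and note that $\|Tg(y)-Tf(y)\|=\|\Phi(y,g(\vp^{-1}(y)))-\Phi(y,f(\vp^{-1}(y)))\|$ for $y\in K$.

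Now suppose the conclusion fails for some fixed $\ep>0$. Then for each $n\in\N$ there is $g_n\in A(X,E)$ with $\|g_n-f\|_L<1/n$ while $\|Tg_n-Tf\|_K\ge\ep$; since $K$ is compact and $Tg_n-Tf$ is continuous, we may pick $y_n\in K$ with $\|Tg_n(y_n)-Tf(y_n)\|\ge\ep$. Set $x_n=\vp^{-1}(y_n)\in L$ and $a_n=g_n(x_n)\in E$, so that $\|a_n-f(x_n)\|\le\|g_n-f\|_L<1/n$ and, by the representation, $\|\Phi(y_n,a_n)-\Phi(y_n,f(x_n))\|\ge\ep$. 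Passing to a subsequence, assume $y_n\to y_0\in K\subseteq Y'$; then $x_n\to x_0:=\vp^{-1}(y_0)$ by continuity of $\vp^{-1}$, hence $f(x_n)\to f(x_0)$, and $a_n\to f(x_0)$ because $\|a_n-f(x_n)\|\to0$. Thus $(y_n,a_n)$ and $(y_n,f(x_n))$ both converge to $(y_0,f(x_0))\in Y'\times E$ in $Y\times E$. By continuity of $\Phi$ at $(y_0,f(x_0))$, both $\Phi(y_n,a_n)$ and $\Phi(y_n,f(x_n))$ converge to $\Phi(y_0,f(x_0))$, so $\|\Phi(y_n,a_n)-\Phi(y_n,f(x_n))\|\to0$, contradicting the lower bound $\ep$. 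Hence a $\delta>0$ with the stated property exists.

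I do not anticipate a real obstacle: the only thing requiring a little care is extracting from Proposition \ref{p5.1} the continuity of $\Phi$ on $Y'\times E$ in the present generality (and, if one prefers, observing that mixed combinations such as $T:C(X,E)\to C_*(Y,F)$ force one of the underlying spaces to be compact, collapsing them to the unmixed cases). Everything else is an elementary compactness-and-continuity argument, which is why the result deserves to be called "automatic continuity".
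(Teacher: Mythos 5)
Your proof is correct and follows essentially the same route as the paper's: both use the representation $Tf(y)=\Phi(y,f(\vp^{-1}(y)))$ from Proposition \ref{p4.3}, extract the continuity of $\Phi$ on $Y'\times E$ from Proposition \ref{p5.1}, and then run the identical compactness argument (choose $y_n\in K$ nearly realizing the sup, pass to a subsequence converging to $y_0\in K\subseteq Y'$, and apply continuity of $\Phi$ at $(y_0,a_0)$). The only differences are cosmetic --- you frame it as a contradiction where the paper argues directly via subsequences, and you are slightly more explicit than the paper about why the continuity of $\Phi$ on $Y'\times E$ holds uniformly across the $C$/$C_*$ combinations.
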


\begin{proof}
Suppose that $(g_n) \subseteq A(X,E)$ and that $\|g_n-f\|_{\vp^{-1}(K)} \to 0$.  It suffices to show that a subsequence of $(\|Tg_n-Tf\|)$ converges to $0$.
Pick $(y_n) \subseteq K$ so that $\|Tg_n-Tf\|_K = \|(Tg_n)(y_n) - (Tf)(y_n)\|$ for all $n$.
By using a subsequence if necessary, we may assume that $(y_n)$ converges to some $y_0\in K$.
Let $x_n = \vp^{-1}(y_n)$, 
$g_n(x_n) = a_n$ , and $f(x_n) = a'_n$.   Then $(x_n)$ converges to $x_0 = \vp^{-1}(y_0)$ and $f(x_0) = a_0$.
Since $\|g_n-f\|_K \to 0$, $(a_n)$ converges to $a_0$.  
Also $(a_n')$ converges to $a_0$ by continuity of $f$.
Since $y'\in K\subseteq Y'$, it follows fromm Proposition \ref{p5.1}(1) that $\Phi$ is continuous at $(y_0,a_0)$.
Therefore,
\[ 
\|(Tg_n)(y_n) - (Tf)(y_n)\|  = \|\Phi(y_n,a_n)-  \Phi(y_n,a'_n)\|\to 0.\]
Thus $\|Tg_n-Tf\|_K\to 0$.
\end{proof}

\section{Spaces of uniformly continuous functions}\label{s6}

In this section, let $X$, $Y$ be complete metric spaces and let $E$, $F$ be Banach spaces.
The aim of this section is to characterize biseparating maps from $U(X,E)$ or $U_*(X,E)$ onto $U(Y,F)$ or $U_*(Y,F)$.  By Propositions \ref{p3.10} and  \ref{p4.2}, a biseparating map $T: U(X,E)/U_*(X,E) \to U(Y,F)/U_*(Y,F)$
can be represented in the form 
\[ Tf(y) = \Phi(y,f(\psi(y))) \text{ for all $f\in U(X,E)/U_*(X,E)$ and all $y\in Y$},\]
where $\psi:Y\to X$ is a homeomorphism with inverse $\vp$ and $\Phi:Y\times E\to F$ is a function so that $\Phi(y,\cdot)$ is a bijection from $E$ onto $F$ for all $y\in Y$.
In fact, characterizations can be obtained without completeness assumptions of $X,Y,E,F$.  However, the case of complete spaces contains all pertinent ideas without the distraction of niggling details.
Characterizations  of lattice isomorphisms and of  {\em linear} biseparating maps on spaces of uniformly continuous functions were  obtained in \cite{GaJ} and \cite{A2} respectively.

\begin{prop}\label{p6.3.0}
Let $A(X,E)=  U(X,E)$ or $U_*(X,E)$, and $A(Y,F)=  U(Y,F), U_*(Y,F)$ or $\Lip_*(Y,F)$.
Let ${\vp}: {X}\to {Y}$ be the homeomorphism associated with $T$ according to Theorem  \ref{t3.5}.
Then ${\vp}$ is uniformly continuous.
\end{prop}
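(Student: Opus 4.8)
The plan is to argue by contradiction: from a failure of uniform continuity of $\vp$ we will manufacture a uniformly continuous function on $Y$ whose preimage under $T$ is not uniformly continuous, which is absurd since that preimage lies in $A(X,E)\subseteq U(X,E)$.

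Because $X$ and $Y$ are complete, $\ti X=X$ and $\ti Y=Y$, so by Theorem \ref{t3.5} there is a homeomorphism $\vp\colon X\to Y$ (we normalize $T0=0$) such that for all $f,g\in A(X,E)$ and every open $U\subseteq X$ one has $f=g$ on $U$ if and only if $Tf=Tg$ on $\vp(U)$. Suppose $\vp$ is not uniformly continuous; then there are $\ep>0$ and sequences $(x_n),(x'_n)$ in $X$ with $d(x_n,x'_n)\to 0$ and $d(\vp(x_n),\vp(x'_n))\ge\ep$ for all $n$. A convergent subsequence of $(x_n)$ would, via $d(x_n,x'_n)\to 0$ and continuity of $\vp$, force $d(\vp(x_n),\vp(x'_n))\to 0$; hence $(x_n)$ has no convergent subsequence, so, $X$ being complete, no Cauchy subsequence, and we may pass to a subsequence making $(x_n)$ separated, say $\inf_{n\ne m}d(x_n,x_m)=3r>0$, and also arrange $d(x_n,x'_n)<r$ for all $n$. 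Put $y_n=\vp(x_n)$ and $y'_n=\vp(x'_n)$. Continuity of $\vp^{-1}$ shows that neither $(y_n)$ nor $(y'_n)$ has a convergent subsequence, so, passing to a further subsequence, we may assume both are separated; and since in any $\rho$-separated set at most one element can lie within $\rho/2$ of a prescribed point, a straightforward greedy selection lets us assume moreover that $\{y_n\}\cup\{y'_n\}$ is $\delta_0$-separated for some $\delta_0>0$, while still $d(y_n,y'_n)\ge\ep$.

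Fix $a\in E\setminus\{0\}$ and set $h=T(1\otimes a)\in A(Y,F)$. The core of the argument is to build $g\in A(Y,F)$ that coincides with $h$ on a neighborhood of $y_n$ and with $0$ on a neighborhood of $y'_n$, for infinitely many $n$. Once this is done, the characterizing property of $\vp$ gives $T^{-1}g=1\otimes a$ on a neighborhood of $x_n$ and $T^{-1}g=0$ on a neighborhood of $x'_n$ for those $n$; thus $\|T^{-1}g(x_n)-T^{-1}g(x'_n)\|=\|a\|>0$ for infinitely many $n$, while $d(x_n,x'_n)\to 0$, contradicting the uniform continuity of $T^{-1}g\in A(X,E)\subseteq U(X,E)$.

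The construction of $g$ is easy when $A(Y,F)$ is $U_*(Y,F)$ or $\Lip_*(Y,F)$: then $h$ is bounded, and one takes $g=\beta h$, where $\beta\colon Y\to[0,1]$ is Lipschitz, equal to $1$ on $\bigcup_n B(y_n,\delta_0/3)$ and to $0$ outside $\bigcup_n B(y_n,\delta_0/2)$ (so $\beta\equiv 0$ on each $B(y'_n,\delta_0/3)$); a product of a bounded Lipschitz scalar function with a bounded uniformly continuous (respectively Lipschitz) vector function again belongs to $A(Y,F)$. The case $A(Y,F)=U(Y,F)$ is more delicate, since $h$ may be unbounded along $(y_n)$ and then $\beta h$ need not be uniformly continuous. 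Here the right tool is the family of homogeneous cutoffs used already in the proof of Proposition \ref{p3.0.1}: whenever $\gamma\colon[0,\infty)\to[0,1]$ satisfies $|\gamma(s)-\gamma(t)|\le C|s-t|/\max(s,t)$, the function $y\mapsto\gamma(\|h(y)\|)\,h(y)$ still lies in $U(Y,F)$. Splitting the indices according to the behaviour of $\|h(y_n)\|$ and $\|h(y'_n)\|$: if both are bounded along a subsequence one is back in the previous situation; if exactly one of them tends to $\infty$ one takes for $\gamma$ a homogeneous cutoff equal to $1$ on a half-line $[N,\infty)$ and to $0$ on $[0,N/2]$ with $N$ large, and sets $g=\gamma(\|h\|)h$ or $g=(1-\gamma(\|h\|))h$ accordingly, which agrees with $h$ near every $y_n$ and with $0$ near every $y'_n$ for all large $n$. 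The remaining configuration, in which $\|h(y_n)\|$ and $\|h(y'_n)\|$ blow up at comparable rates while $d(y_n,y'_n)$ stays bounded, is the genuine obstacle; I would attack it by combining a spatial cutoff with the homogeneous cutoffs above and passing to still further subsequences that spread out the values $\|h(y_n)\|$, $\|h(y'_n)\|$ before invoking the interpolation constructions of Proposition \ref{p3.0.1}, but this is the step I expect to require the most work.
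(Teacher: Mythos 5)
Your reduction to separated sequences and your handling of $A(Y,F)=U_*(Y,F)$ and $\Lip_*(Y,F)$ match the paper's Case 1 (the paper interpolates the bounded values $b_n=T(1\otimes a)(y_n)$ directly on the separated set $(y_n)\cup(y_n')$ rather than multiplying $h$ by a spatial cutoff, but this is the same idea). The problem is the case $A(Y,F)=U(Y,F)$, where you explicitly leave open the configuration in which $\|h(y_n)\|$ and $\|h(y_n')\|$ both blow up at comparable rates. This is a genuine gap, not a routine verification: a homogeneous cutoff $\gamma(\|h(\cdot)\|)$ assigns essentially the same value at $y_n$ and $y_n'$ whenever $\|h(y_n)\|$ and $\|h(y_n')\|$ are comparable, so no function of $\|h\|$ alone can keep $h$ at $y_n$ while killing it at $y_n'$; and a spatial cutoff $\beta h$ fails to be uniformly continuous when $h$ is unbounded on the support of $\beta$, because the term $|\beta(s)-\beta(t)|\,\|h(t)\|$ is uncontrolled. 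Passing to further subsequences cannot help, since the obstruction is the comparability of the two values at the \emph{same} index $n$, which survives every subsequence.

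The paper escapes this by never attempting to interpolate the possibly unbounded values of $T(1\otimes a)$ inside $U(Y,F)$. It splits instead on whether infinitely many $y_n$ are uniformly isolated. If they are, the pointwise definition $g(y_n)=b_n$ and $g=0$ elsewhere is automatically uniformly continuous, unboundedness notwithstanding. If not, there are auxiliary points $y_n''$ with $0<d(y_n,y_n'')\to 0$, and the paper reverses the direction of the construction: fix $0\neq b\in F$, interpolate the \emph{constant} (hence bounded) values $g(y_n)=b$, $g(y_n')=0$ on the separated set $(y_n)\cup(y_n')$, and set $a_n=(T^{-1}g)(x_n)$. Uniform continuity of $T^{-1}g$ together with $d(x_n,x_n')\to 0$ and $(T^{-1}g)(x_n')=0$ forces $a_n\to 0$; one then interpolates the \emph{null} sequence $f(x_n)=a_n$, $f(x_n'')=0$ on the $X$ side and reaches the contradiction $Tf(y_n)=b$, $Tf(y_n'')=0$ with $d(y_n,y_n'')\to 0$. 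Note also that the paper draws all of these conclusions from the pointwise representation $Tf(y)=\Phi(y,f(\psi(y)))$ of Proposition \ref{p4.2}, so it never needs functions to agree on whole neighbourhoods; your insistence on neighbourhood agreement, so as to invoke Theorem \ref{t3.5}, makes the interpolation problem strictly harder than it needs to be.
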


\begin{proof}
Suppose that $\vp$ is not uniformly continuous.  There are sequences $(x_n)$, $(x_n')$ in $X$ and $\ep >0$  so that 
$d(x_n,x'_n)\to 0$ and that $d(\vp(x_n),\vp(x_n')) > \ep$ for all $n$.
Set $y_n = \vp(x_n)$ and $y_n' = \vp(x_n')$.
In view of the continuity of ${\vp}$, niether $(x_n)$ nor $(x'_n)$ can have a convergent subsequence in ${X}$.
Hence we may assume that $(x_n)$ is a separated sequence.
Since ${\vp}^{-1}$ is continuous, neither $(y_n)$ nor $(y'_n)$ can have a convergent subsequence in ${Y}$.
As we also have $d(y_n,y_n') > \ep$ for all $n$, by using subsequences if necessary, we may  assume  that $(y_n) \cup (y_n')$ is a  separated set.
Without loss of generality, take $T0 = 0$. We will use repeatedly the following formulation of Proposition \ref{p4.2}. If $x\in {X}$ and  $f, g\in A(X,E)$, then  ${f}(x) = {g}(x)$ if and only if ${Tf}(\vp(x)) = {Tg}(\vp(x)).$

\medskip

\noindent\underline{Case 1}.  $A(Y,F) = U_*(Y,F)$ or $\Lip_*(Y,F)$.

Fix a nonzero vector $a\in E$ and let $b_n = {(T(1\otimes a))}(y_n)$.  Then $(b_n)$ is a bounded sequence. 
Since $(y_n)\cup (y_n')$ is separated,  one can easily construct  a function $g\in  A(Y,F)$ so that ${g}(y_n) = b_n$ and ${g}(y'_n) = 0$ for all $n$. By Proposition \ref{p4.2}, we see that  $(T^{-1}g)(x_n) = a$ and $(T^{-1}g)(x'_n) = 0$.  Since $T^{-1}g$ is uniformly continuous and $d(x_n,x'_n)\to 0$, we have a contradiction.

\medskip

In the remaining cases,  take $A(Y,F) = U(Y,F)$.

\medskip

\noindent\underline{Case 2}. There exist $r>0$ and an infinite subset $N$ of $\N$ so that $B(y_n,r) = \{y_n\}$ for all $n\in N$.

Fix a nonzero vector $a \in E$.  Let $b_n = (T(1\otimes a))(y_n)$ for each $n\in N$.  Define $g:Y \to F$ by $g(y) = b_n$ if $y = y_n, n\in N$, and $g(y) = 0$ otherwise.
Clearly $g\in  U(Y,F)$.  
But by Proposition \ref{p4.2}, for all $n\in N$,
\[ (T^{-1}g)(x_n) = a  \text{ and } (T^{-1}g)(x_n') = 0.\]
Hence $T^{-1}g$ is not uniformly continuous, contrary to the fact that $T^{-1}g \in A(X,E) \subseteq U(X,E)$.

\medskip

\noindent\underline{Case 3}. For all $r>0$, $B(y_n,r) = \{y_n\}$ occurs for only finitely many $n$.

In this case, by using a subsequence if necessary, we may assume that there is a sequence $(y_n'')$ in $Y$ so that $0 < d(y_n,y_n'') \to 0$.
Set $x_n'' = {\vp}^{-1}(y_n'')$ for all $n$.
Take a nonzero element $b\in F$ and let  
$a_n =(T^{-1}(1\otimes b))(x_n)$. 
Since $(y_n)\cup(y_n')$ is separated
, we can find  $g\in U(Y,F)$ so that 
 ${g}(y_n) = b$ and ${g}(y_n') = 0$ for all $n$.
By Proposition \ref{p4.2},
 \[ (T^{-1}g)(x_n) = a_n \text{ and } (T^{-1}g)(x'_n) = 0.\] 
Since $T^{-1}g$ is uniformly continuous and $d(x_n,x_n') \to 0$,
\[ a_n = (T^{-1}g)(x_n) - (T^{-1}g)(x_n') \to 0.
\]
As $(x_n)$ is separated, $x_n'' \neq x_n$ and $(a_n)$ is a null sequence, we may, after replacing $(x_n)$ and $(x_n'')$ with subsequences, construct a function $f\in U_*(X,E)\subseteq A(X,E)$ so that $f(x_n) = a_n$ and $f(x_n'') = 0$ for all $n$.
Then ${Tf}(y_n) = {g}(y_n) = b$ and $Tf(y_n'') = 0$.
This is impossible since ${Tf}$ is uniformly continuous and $d(y_n,y_n'')\to 0$.
\end{proof}

By Proposition \ref{p6.3.0}, if $T: U(X,E)/U_*(X,E) \to U(Y,F)/U_*(Y,F)$ is a biseparating map, then $\vp:X\to Y$ is a uniform homeomorphism.  
In this case, the map $\wh{T}$ given by 
\[\wh{T}f(x) = Tf(\vp(x)) = \Phi(\vp(x),f(x)) = \wh{\Phi}(x,f(x))\]
maps $U(X,E)/U_*(X,E)$ onto $U(X,F)/U_*(X,F)$, with $\wh{\Phi}:X\times E\to F$ being a function such that $\wh{\Phi}(x,\cdot):E\to F$ is a bijection for each $x\in X$.
To complete the characterization of $T$, it suffices to determine the functions $\wh{\Phi}: X\times E\to F$ so that $x\mapsto \wh{\Phi}(x,f(x))$ belongs to $U(X,F)/U_*(X,F)$ for each $f\in U(X,E)/U_*(X,E)$.
We will refer to this as the ``section problem'' for uniformly continuous functions.

For any $\ep > 0$, define $d_\ep:\ti{X}\times \ti{X}\to [0,\infty]$ by
\begin{equation}\label{eq6.0} d_\ep(a,b)  = \inf\{\sum^n_{i=1}d(x_{i-1},x_i): n\in \N, x_0 = a, x_n = b, d(x_{i-1},x_i) \leq \ep \text{ for all $i$}\},\end{equation}
where we take $\inf \emptyset  = \infty$.
The connection of the $d_\ep$ ``metrics'' with uniformly continuous functions is well known; see, e.g. \cite{A,H, O'F}.
In particular, the first part of the next proposition formalizes the well known principle that uniformly continuous functions are ``Lipschitz for large distances''.

\begin{prop}\label{p6.3}
Let $X$ be a complete metric space and let $E$ be a Banach space.
\begin{enumerate}
\item If $f\in U(X,E)$, then there exist $\ep > 0$ and $C<\infty$ such that 
\[ \|{f}(x_1)-{f}(x_2)\| \leq Cd_\ep(x_1,x_2) \text{ whenever $x_1,x_2\in {X}$, $d(x_1,x_2) > \ep$}.\]
\item If $f:X\to E$ and there exist $\ep >0$ and $C<\infty$ so that 
\[ \|f(x_1) - f(x_2)\| \leq Cd_\ep(x_1,x_2) \text{ for all $x_1,x_2\in X$},\]
then $f\in U(X,E)$.
\end{enumerate}
\end{prop}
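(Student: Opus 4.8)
The plan is to handle the two parts separately, disposing of part (2) first. Suppose $f:X\to E$ satisfies $\|f(x_1)-f(x_2)\|\le C\,d_\ep(x_1,x_2)$ for all $x_1,x_2\in X$; we may assume $C>0$, as otherwise $f$ is constant. Given $\eta>0$, put $\delta=\min\{\ep,\eta/C\}$. If $d(x_1,x_2)<\delta$, then the single step from $x_1$ to $x_2$ is an admissible $\ep$-chain in \eqref{eq6.0} (since $d(x_1,x_2)\le\ep$), so $d_\ep(x_1,x_2)\le d(x_1,x_2)<\eta/C$, whence $\|f(x_1)-f(x_2)\|<\eta$. Thus $f$ is uniformly continuous.

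For part (1), the idea is the familiar ``uniformly continuous $=$ Lipschitz for large distances'' principle. Since $f\in U(X,E)$, choose $\ep>0$ with $d(x,x')\le\ep\Rightarrow\|f(x)-f(x')\|\le1$. Fix $x_1,x_2\in X$ with $d(x_1,x_2)>\ep$; we may assume $d_\ep(x_1,x_2)<\infty$, for otherwise the desired inequality is vacuous. Given $\delta>0$, pick an admissible $\ep$-chain $x_1=z_0,z_1,\dots,z_n=x_2$ with $L_0:=\sum_{i=1}^n d(z_{i-1},z_i)\le d_\ep(x_1,x_2)+\delta$. The crucial move is to \emph{compress} the chain: as long as two consecutive steps both have length $\le\ep/2$, delete the vertex between them (the merged step then has length $\le\ep$, so the chain remains admissible), an operation that does not increase the total length and shortens the chain. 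After finitely many such moves we reach an admissible chain $w_0=x_1,\dots,w_m=x_2$ of total length $L\le L_0$ in which no two consecutive steps are both of length $\le\ep/2$.

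Call a step \emph{long} if its length exceeds $\ep/2$, \emph{short} otherwise, and let $\ell$ be the number of long steps. Between any two short steps there is at least one long step, so the number of short steps is at most $\ell+1$, giving $m\le2\ell+1$; since each long step contributes more than $\ep/2$ to $L$ we also get $\ell<2L/\ep$, and hence $m<4L/\ep+1$. Therefore
\[
\|f(x_1)-f(x_2)\|\le\sum_{i=1}^m\|f(w_{i-1})-f(w_i)\|\le m<\frac{4L}{\ep}+1.
\]
Finally, every admissible $\ep$-chain from $x_1$ to $x_2$ has total length at least $d(x_1,x_2)>\ep$, so $d_\ep(x_1,x_2)>\ep$ and $1<d_\ep(x_1,x_2)/\ep$; together with $L\le d_\ep(x_1,x_2)+\delta$ this yields $\|f(x_1)-f(x_2)\|\le\frac{4}{\ep}\big(d_\ep(x_1,x_2)+\delta\big)+\frac{1}{\ep}d_\ep(x_1,x_2)$, and letting $\delta\downarrow0$ gives the bound with the uniform constant $C=5/\ep$.

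I expect the compression-and-counting step of part (1) to be the main obstacle: one has to notice that merging pairs of short consecutive edges is precisely what turns the crude estimate ``$\|f(x_1)-f(x_2)\|\le(\text{number of steps})$'' into one of the form ``$C\,d_\ep(x_1,x_2)$'', and that the leftover additive constant is absorbed exactly because of the standing hypothesis $d(x_1,x_2)>\ep$. The rest is bookkeeping; in fact neither part uses completeness of $X$ or $E$.
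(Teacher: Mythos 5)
Your proof is correct and follows essentially the same route as the paper: part (2) via $d_\ep=d$ at small distances, and part (1) by compressing a near-optimal $\ep$-chain so that consecutive steps cannot both be small, bounding the number of steps by the chain length, and using $d(x_1,x_2)>\ep$ to absorb the additive constant (the paper merges whenever two consecutive steps sum to at most $\ep$ and gets $C=8/\ep$; your long/short counting gives $C=5/\ep$, an immaterial difference).
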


\begin{proof}
Statement (2) is trivial since $d_\ep(x_1,x_2) = d(x_1,x_2)$ if $d(x_1,x_2) \leq \ep$.  Let us prove statement (1).
Assume that $f\in U(X,E)$.  There exists $\ep >0$ so that $\|f(a)-f(b)\| \leq 1$  if $d(a,b) \leq \ep$.
Let $x_1,x_2\in X$ be points so that $\ep < d(x_1,x_2)$ and $d_\ep(x_1,x_2) <\infty$.
There are $n\in \N$, $(a_i)^n_{i=0} \subseteq X$ so that $a_0 = x_1$, $a_n = x_2$, $d(a_{i-1}, a_i) \leq\ep$, and 
\[ \sum^n_{i=1}d(a_{i-1}, a_i) \leq 2d_\ep(x_1,x_2).\]
Note that since $d(x_1,x_2) > \ep$, $n \geq 2$.
It is clear that we may assume that $d(a_{i-1},a_i) + d(a_i,a_{i+1}) >\ep$ for $0\leq i< n$.
Thus 
\[ 2d_\ep(x_1,x_2) \geq \sum^n_{i=1}d(a_{i-1}, a_i) \geq \frac{n-1}{2}\,\ep \geq \frac{n\ep}{4}.\]
By choice of $\ep$, $\|f(a_{i-1}) - f(a_i)\| \leq 1$ for all $i$.
Hence 
\[ \|f(x_1)-f(x_2)\| \leq n \leq \frac{8}{\ep}\,d_\ep(x_1,x_2).\]
\end{proof}

For the rest of the section, let $\Xi:X\times E\to F$ be a given function and associate with it a mapping $Sf(x) = \Xi(x,f(x))$ for any function $f:X\to E$.
Denote the set of accumulation points in $X$ by $X'$.

\begin{prop}\label{p6.4}
If $Sf \in U(X,F)$ for any $f\in U_*(X,E)$, then  $\Xi$ is continuous at any $(x_0,e_0)\in X'\times E$ .
\end{prop}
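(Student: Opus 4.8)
The plan is to argue by contradiction, mimicking the "only if" direction of Proposition \ref{p5.1}(1), but now using a function $f$ built from Lemma \ref{l5.0}(2) that lies in $U_*(X,E)$ rather than merely $C_*(X,E)$. Suppose $\Xi$ is discontinuous at some $(x_0,e_0)\in X'\times E$. Then there is a sequence $((x_n,e_n))$ in $X\times E$ with $x_n\to x_0$, $e_n\to e_0$, yet $\|\Xi(x_n,e_n)-\Xi(x_0,e_0)\|\geq\delta_0$ for some $\delta_0>0$ and all $n$. Since $x_0\in X'$, and since $x\mapsto\Xi(x,e_n)$ is continuous (it equals $S(1\otimes e_n)$, which is in $U(X,F)$, hence continuous) for each fixed $n$, a small perturbation of each $x_n$ keeps $e_n$ fixed and the inequality valid while ensuring $x_n\neq x_0$ for all $n$; passing to a subsequence we may take the $x_n$ pairwise distinct.

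First I would extract a subsequence along which $(x_n)$ behaves well metrically: since $x_n\to x_0$ we may assume $d(x_{n+1},x_0)<\tfrac13 d(x_n,x_0)$, so that $(x_n)$ converges "geometrically" to $x_0$. Now I need a function $f\in U_*(X,E)$ with $f(x_n)=e_n$ for all $n$. The construction of Lemma \ref{l5.0}(2) produces such an $f$ in $C_*(X,E)$, but one must check it can be taken uniformly continuous; the standard trick is to interpolate $e_n$ near $x_n$ using bump functions of radius comparable to $d(x_n,x_0)$ (as in the proof of Proposition \ref{p3.10}), exploiting that $e_n\to e_0$: set $f=e_0+\sum_n \gamma_n(d(\cdot,x_0))\,(e_{k(n)}-e_0)$ on suitably separated annuli, where $\gamma_n$ has Lipschitz constant $\sim 1/r_n$ and the coefficients $\|e_{k(n)}-e_0\|\to 0$ kill the blow-up, exactly as in the second case of (S3) in Proposition \ref{p3.0.1}. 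This yields a bounded uniformly continuous $f$ with $f(x_n)=e_n$ (after relabeling the subsequence). Since $f$ is continuous and $x_n\to x_0$, also $f(x_0)=\lim e_n=e_0$.

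Then $Sf\in U(X,F)\subseteq C(X,F)$ by hypothesis, so $Sf$ is continuous at $x_0$; but
\[
Sf(x_n)=\Xi(x_n,f(x_n))=\Xi(x_n,e_n),\qquad Sf(x_0)=\Xi(x_0,e_0),
\]
and $\|Sf(x_n)-Sf(x_0)\|\geq\delta_0$ for all $n$, contradicting continuity of $Sf$ at $x_0$. Hence $\Xi$ is continuous at every point of $X'\times E$.

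The main obstacle is the interpolation step: producing $f\in U_*(X,E)$ — not just $C_*(X,E)$ — with prescribed values $e_n$ at the points $x_n$ accumulating at $x_0$. The point is that uniform continuity forces control of $\|f(x_n)-f(x_m)\|$ relative to $d(x_n,x_m)$, so one cannot interpolate arbitrary values; it is the hypothesis $e_n\to e_0=f(x_0)$ together with the geometric spacing $d(x_{n+1},x_0)<\tfrac13 d(x_n,x_0)$ that makes the annular bump construction work, giving a uniform (indeed Lipschitz-for-large-distances) modulus via the estimates already carried out in Propositions \ref{p3.0.1} and \ref{p3.10}. Everything else is a routine subsequence-and-contradiction argument.
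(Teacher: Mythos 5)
Your proof is correct and follows essentially the same route as the paper's: the paper splits into the cases $x_n\neq x_0$ (interpolating the values $e_n$ by a function in $U_*(X,E)$ and invoking continuity of $Sf$ at $x_0$) and $x_n=x_0$ (handled via the constant functions $1\otimes e_n$), whereas you use the continuity of $x\mapsto\Xi(x,e_n)=S(1\otimes e_n)(x)$ up front to perturb the points off $x_0$, which is the same idea merely reorganized. Your explicit annular-bump construction of the interpolating $f\in U_*(X,E)$ correctly supplies the detail the paper leaves implicit in the sentence ``there is a function $f\in U_*(X,E)$ so that $f(x_n)=e_n$ for all $n$ and $f(x_0)=e_0$.''
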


\begin{proof}
Assume to the contrary that $\Xi$ is discontinuous at some $(x_0,e_0)\in X'\times E$ .
There are a sequence $((x_n,e_n))^\infty_{n=1} \in X\times E$ converging to $(x_0,e_0)$ and $\ep>0$
so that $\|\Xi(x_n,e_n) - \Xi(x_0,e_0)\| > \ep$ for all $n$.
Replacing $(x_n)$ by a subsequence if necessary, we may assume that either $(x_n)$ is a sequence of distinct points in $X\bs \{x_0\}$ or $x_n = x_0$ for all $n$. 
In the former case, there is a function $f\in U_*(X,E)$ so that $f(x_n) = e_n$ for all $n$ and $f(x_0) = e_0$.
Since $Sf$ is continuous at $x_0$,
\[ \Xi(x_n,e_n) = Sf(x_n)  \to Sf(x_0) = \Xi(x_0,e_0),\]
contrary to the choice of $((x_n,e_n))^\infty_{n=1}$.
Finally, suppose that $x_n = x_0$ for all $n$.
For each $n$, let $f_n$ be the constant function with value $e_n$. Then $Sf_n$ is continuous at $x_0$. Since $x_0$ is an accumulation point, there exists $x'_n$ with $0< d(x'_n,x_0) < \frac{1}{n}$ so that 
\[ \|\Xi(x'_n,e_n) - \Xi(x_0,e_n)\| = \|Sf_n(x'_n) - Sf_n(x_0)\| < \frac{1}{n}.\]
But by the previous case, $\Xi(x'_n,e_n) \to \Xi(x_0,e_0)$.  Thus 
$\Xi(x_0,e_n) \to \Xi(x_0,e_0)$.
\end{proof}

Call a sequence $((x_n,e_n))^\infty_{n=1}\in X\times E$ a $u$-sequence if  $(x_n)$ is a separated sequence and there are $\ep > 0$, $C<\infty$ so that 
\[ \|e_n-e_m\| \leq Cd_\ep(x_n,x_m) \text{ for all $m,n\in\N$}.\]
The importance of $u$-sequences is captured in the next lemma.

\begin{lem}\label{l6.4}
Let $((x_n,e_n))^\infty_{n=1}$ be a $u$-sequence in $X\times E$.  Then there is an infinite subset $N$ of $\N$ and a uniformly continuous function $f:X\to E$ so that $f(x_n) =e_n$ for all $n \in N$.
\end{lem}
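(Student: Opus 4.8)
The plan is to construct the desired uniformly continuous function $f$ explicitly by ``bump patching'' around a carefully chosen subsequence of the points $x_n$. The key tension is that we are given metric control of the $e_n$'s only in terms of the large-distance metrics $d_\ep$, so we must (i) thin out the sequence so that the points $x_n$ are mutually far apart on a scale comparable to $\ep$, and (ii) build $f$ so that, where it transitions from value $e_n$ near $x_n$ to $0$, the oscillation is controlled by a constant multiple of $d_\ep$-distances, which by Proposition \ref{p6.3}(2) is exactly what guarantees uniform continuity.

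\textbf{Step 1: Reduce to a well-separated subsequence.} Let $\ep>0$ and $C<\infty$ be the constants in the definition of $u$-sequence, and let $\rho = \inf_{n\neq m} d(x_n,x_m)>0$ (separatedness). Pass to an infinite subset $N\subseteq\N$ such that the points $\{x_n : n\in N\}$ are ``$d_\ep$-connected or $d_\ep$-disconnected'' in a controlled way; concretely, I would first note that if two points lie in different $d_\ep$-components then $d_\ep(x_n,x_m)=\infty$ and there is nothing to glue across them, so the real work is within a single $d_\ep$-component. Within such a component, thin the sequence so that $d(x_n,x_m) \geq \max(\ep, \rho)$ for all distinct $n,m\in N$ (possible since $(x_n)$ is separated); this ensures the balls $B(x_n, \ep/2)$ are pairwise disjoint.

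\textbf{Step 2: Define $f$ by interpolation.} On each ball $B(x_n,\ep/4)$ set $f \equiv e_n$; outside $\bigcup_{n\in N} B(x_n,\ep/2)$ set $f\equiv 0$; and on the annuli $B(x_n,\ep/2)\setminus B(x_n,\ep/4)$ interpolate linearly in the distance to $x_n$, i.e. $f(x) = \lambda_n(x)\, e_n$ where $\lambda_n(x) = \bigl(2 - \tfrac{4}{\ep}d(x,x_n)\bigr)\wedge 1 \vee 0$ — exactly the cutoff shape already used in Propositions \ref{p3.0.1} and \ref{p3.10}. Each $\lambda_n$ is $(4/\ep)$-Lipschitz with values in $[0,1]$ and the supports are pairwise disjoint, so $f$ is well-defined with $f(x_n)=e_n$ for all $n\in N$.

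\textbf{Step 3: Verify uniform continuity via Proposition \ref{p6.3}(2).} I would show $\|f(x)-f(x')\|\leq C' d_\ep(x,x')$ for a suitable $C'$. If $x,x'$ lie in the support of the same $\lambda_n$, then $\|f(x)-f(x')\| = |\lambda_n(x)-\lambda_n(x')|\,\|e_n\|$, and since both points are within $\ep/2$ of $x_n$, one has $d_\ep(x,x')\le d(x,x')$ (short chain), giving a bound $(4\|e_n\|/\ep)d(x,x')$ — but $\|e_n\|$ is not bounded in general. To fix this, bound $\|e_n\| = \|e_n - e_{n_0}\| + \|e_{n_0}\| \le C d_\ep(x_n,x_{n_0}) + \|e_{n_0}\|$ for a fixed base index $n_0$, and absorb this: crucially, any $d_\ep$-path from $x$ to $x'$ with both in $B(x_n,\ep/2)$ can be lengthened to reach $x_n$ and (outside the support) to any other $x_m$, so the ``local'' oscillation is controlled by adding at most a $d_\ep(x,x_n)$-type term; more cleanly, one observes that wherever $f$ is nonzero the value has norm $\le \|e_n\|$ and that to travel from a nonzero region to the zero region costs at least $\ep/4$ in $d_\ep$, so the ratio $\|f(x)-f(x')\|/d_\ep(x,x')$ is controlled. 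If $x,x'$ lie in supports of $\lambda_n$ and $\lambda_m$ with $n\neq m$, any admissible chain from $x$ to $x'$ must pass through the zero region, so $d_\ep(x,x') \ge d_\ep(x_n,x_m) - \ep \ge \tfrac12 d_\ep(x_n,x_m)$ once we have also arranged $d_\ep(x_n,x_m)\ge 2\ep$ in Step 1; then $\|f(x)-f(x')\| \le \|e_n\|+\|e_m\| \le \|e_n-e_m\| + 2\max_k\|e_k \text{ relative to } e_{n_0}\|$ — here one again uses $\|e_n\|\le C d_\ep(x_n, x_m) + \|e_m\|$ and the chain bound to conclude $\|f(x)-f(x')\| \le (2C+4) d_\ep(x,x') + \|e_{n_0}\|\cdot(\text{something})$; the additive constant is harmless after one more thinning that forces $d_\ep(x_n, x_m)\to\infty$, or alternatively by simply noting a bounded additive term still yields uniform continuity directly from the definition.

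\textbf{Main obstacle.} The delicate point, and where I would spend the most care, is exactly the unboundedness of $(e_n)$: the cutoff construction naturally produces oscillation of size $\|e_n\|$ over a fixed metric scale $\ep$, which is \emph{not} uniformly continuous on its own. The $u$-sequence hypothesis saves us precisely because $\|e_n\|$ grows no faster than $d_\ep(x_n,\cdot)$, so the large oscillations occur only between points that are correspondingly far apart in the $d_\ep$ sense — but translating this into a clean global Lipschitz-for-$d_\ep$ estimate requires the careful thinning in Step 1 (ensuring $d_\ep$-gaps of size $\ge 2\ep$, and ideally $d_\ep(x_n,x_{n_0})\to\infty$) together with the triangle-inequality-type argument for $d_\ep$ that a chain realizing $d_\ep(x,x')$ between two distinct bump supports can be decomposed through the base point $x_{n_0}$. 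Once that bookkeeping is done, Proposition \ref{p6.3}(2) finishes the proof immediately.
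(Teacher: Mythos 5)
Your Step~2 construction has a genuine gap that Step~3 does not repair. With bumps of fixed width $\ep/2$ in the metric $d$, the function transitions from the value $e_n$ to $0$ across the annulus $B(x_n,\ep/2)\setminus B(x_n,\ep/4)$, so its local Lipschitz constant there is of order $\|e_n\|/\ep$. The $u$-sequence hypothesis permits $(e_n)$ to be unbounded (e.g.\ $X=\R$, $x_n=n^2$, $e_n=n$, $\ep=1$, $C=1$), and then your $f$ is simply not uniformly continuous: two points of the annulus at distance $\delta$ can have images differing by about $\tfrac{4\delta}{\ep}\|e_n\|$, which is unbounded for fixed $\delta$. Your Step~3 only controls the ratio $\|f(x)-f(x')\|/d_\ep(x,x')$ when $x$ and $x'$ lie in different bump supports, or argues that crossing into the zero region ``costs at least $\ep/4$''; but uniform continuity -- equivalently the hypothesis of Proposition~\ref{p6.3}(2), since $d_\ep=d$ at distances below $\ep$ -- is a statement about \emph{nearby} pairs inside a single annulus, and no thinning of the index set removes that obstruction. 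You correctly name this as the main obstacle, but the proposed repairs do not resolve it.

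The missing idea is to let the bump radius grow with $n$, measured in $d_\ep$. The paper first disposes of two easy cases: if some infinite subset has pairwise $d_\ep$-distance $\infty$, take $f$ constant equal to $e_n$ on the $\ep$-component of $x_n$ and $0$ elsewhere; if $(d_\ep(x_n,x_m))_{m,n}$ is bounded, then $(e_n)$ is bounded and a standard fixed-scale interpolation (your Step~2) does work. In the remaining case one passes to a subsequence with $4r_n<r_{n+1}$, where $r_n=d_\ep(x_n,x_1)$, and sets $f(x)=e_1+(1-\tfrac{2}{r_n}d_\ep(x,x_n))^+(e_n-e_1)$ for $d_\ep(x,x_n)<r_n/2$ and $f=e_1$ elsewhere. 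The lacunarity keeps the $d_\ep$-balls disjoint, and the transition of size $\|e_n-e_1\|\le Cr_n$ is now spread over a $d_\ep$-distance $r_n/2$, giving a global bound $\|f(a)-f(b)\|\le 16C\,d_\ep(a,b)$ with constant independent of $n$, so Proposition~\ref{p6.3}(2) applies. Your plan has the right ingredients (thinning, $d_\ep$-components, reduction to Proposition~\ref{p6.3}(2)), but the fixed-scale cutoff is the wrong shape for the unbounded case.
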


\begin{proof}
Let $\ep$ and $C$ be as in the definition above.
If there is an infinite set $N$ in $\N$ so that $d_\ep(x_n,x_m) = \infty$ for all distinct $m,n\in N$, then clearly the function defined by $f(x) = e_n$ if $d_{\ep}(x,x_n) <\infty$ for some $n\in N$ and $f(x) = 0$ otherwise is uniformly continuous. Obviously $f(x_n) =e_n$ for all $n\in N$.
Thus, without loss of generality, we may assume that $d_\ep(x_m,x_n)<\infty$ for all $m,n$.
If $(d_\ep(x_n,x_m))_{m,n}$ is bounded, then $(e_n)$ is a bounded sequence.  Since $(x_n)$  is separated, there exists $f\in U(X,E)$ so that $f(x_n) = e_n$ for all $n$.

Finally, assume that $(d_\ep(x_n,x_m))_{m,n}$ is unbounded set in $\R_+$.
By taking a subsequence, we may assume that $4r_n < r_{n+1}$ for all $n$, where $r_n = d_\ep(x_n,x_1)$.
Define $f:X\to E$ by 
\[ f(x) = \begin{cases}
              e_1 + (1- \frac{2}{r_n}d_\ep(x,x_n))^+(e_n-e_1) &\text{if $d_\ep(x,x_n)< \frac{r_n}{2}$, $n > 1$}\\
              e_1 &\text{otherwise}.
              \end{cases}\]
Using the fact that $\|e_n-e_1\|\leq Cr_n$ for all $n$, one can check that 
\[ \|f(a) - f(b)\| \leq 16Cd_\ep(a,b)
\]
for all $a,b\in X$.  By Proposition \ref{p6.3}(2), $f\in U(X,E)$.  Clearly, $f(x_n) = e_n$ for all $n$, as required.
\end{proof}

\begin{prop}\label{p6.5}
Suppose that $Sf\in U(X,F)$ for all $f\in U(X,E)$.
Let $((x_n,e_n))^\infty_{n=1}$ be a $u$-sequence.  
Assume that  $((x_n',e_n'))^\infty_{n=1} \in X\times E$, $x_n\neq x_n'$ for all $n$, and $\lim (d(x_n,x_n') + \|e_n-e_n'\|) =0$, then
\[ \lim\|\Xi(x_n,e_n) - \Xi(x_n',e'_n)\| = 0.\]
\end{prop}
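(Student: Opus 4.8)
The plan is to argue by contradiction, manufacturing from the two sequences a single uniformly continuous function that forces $S$ of it to fail uniform continuity, contradicting the hypothesis $Sf\in U(X,F)$ for all $f\in U(X,E)$. So suppose the conclusion fails: after passing to a subsequence there is $\delta>0$ with $\|\Xi(x_n,e_n)-\Xi(x_n',e_n')\|>\delta$ for all $n$, while $d(x_n,x_n')\to 0$ and $\|e_n-e_n'\|\to 0$. Since $(x_n)$ is separated and $d(x_n,x_n')\to 0$, we may also assume $(x_n)\cup(x_n')$ is a separated set (dropping finitely many terms and relabelling), and in particular $x_n'\ne x_m$ for all $n,m$.

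First I would use Lemma \ref{l6.4} applied to the $u$-sequence $((x_n,e_n))$ to obtain an infinite set $N\subseteq\N$ and $f\in U(X,E)$ with $f(x_n)=e_n$ for $n\in N$; restrict attention to $n\in N$. The goal is then to modify $f$ near the points $x_n'$ so that the modified function $g$ still lies in $U(X,E)$ but satisfies $g(x_n')=e_n'$ (or at least $g(x_n')$ close to $e_n'$) in addition to $g(x_n)=e_n$. The natural device is a ``bump'' construction: since $(x_n)\cup(x_n')$ is separated with separation constant $3r$ say, the balls $B(x_n',r)$ are pairwise disjoint and miss all $x_m$; one defines
\[
g(x) = f(x) + \sum_{n\in N}\bigl(1-\tfrac{1}{r}d(x,x_n')\bigr)^{+}\,(e_n'-e_n),
\]
so that $g(x_n)=f(x_n)=e_n$ (the bump centred at $x_n'$ vanishes at $x_n$) and $g(x_n')=f(x_n')+(e_n'-e_n)$. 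Here one wants $f(x_n')$ to be close to $e_n$: this follows because $f$ is uniformly continuous and $d(x_n,x_n')\to 0$, so $\|f(x_n')-f(x_n)\|=\|f(x_n')-e_n\|\to 0$, hence $g(x_n')\to$ (effectively) $e_n'$ up to an error tending to $0$. The key verification is that $g\in U(X,E)$: each summand is $\tfrac1r$-Lipschitz with norm $\|e_n'-e_n\|\to 0$, supported in the disjoint balls $B(x_n',r)$, so the sum is a uniformly continuous perturbation of $f$ — a standard estimate of the type already carried out in the proof of Proposition \ref{p3.10} and Lemma \ref{l6.4}.

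Then $Sg(x)=\Xi(x,g(x))$ lies in $U(X,F)$ by hypothesis, so $\|Sg(x_n)-Sg(x_n')\|\to 0$ as $d(x_n,x_n')\to 0$. But $Sg(x_n)=\Xi(x_n,e_n)$ and $Sg(x_n')=\Xi\bigl(x_n',f(x_n')+e_n'-e_n\bigr)$, which differs from $\Xi(x_n',e_n')$ only through the second-coordinate error $f(x_n')-e_n\to 0$. To close the gap I would need continuity of $\Xi$ in the second variable at the relevant points; if all $x_n'$ lie in $X'$ this is Proposition \ref{p6.4}, but in general $x_n'$ might be isolated — so the cleaner route is to arrange the construction so that $g(x_n')=e_n'$ exactly. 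This can be done by replacing $e_n'-e_n$ in the bump by $e_n'-f(x_n')$ in the $n$th summand; since $\|e_n'-f(x_n')\|\le\|e_n'-e_n\|+\|e_n-f(x_n')\|\to 0$, the uniform-continuity estimate for $g$ is unaffected, while now $g(x_n')=e_n'$ and $g(x_n)=e_n$ precisely. With this adjustment, $Sg\in U(X,F)$ together with $d(x_n,x_n')\to 0$ gives $\|\Xi(x_n,e_n)-\Xi(x_n',e_n')\|=\|Sg(x_n)-Sg(x_n')\|\to 0$, contradicting $\|\Xi(x_n,e_n)-\Xi(x_n',e_n')\|>\delta$.

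The main obstacle I expect is the bookkeeping in the case $d_\ep(x_n,x_m)$ is unbounded: Lemma \ref{l6.4} only guarantees $f\in U(X,E)$ with $f(x_n)=e_n$ along a subsequence $N$, and one must check that after the bump modification $g$ is still genuinely uniformly continuous globally, i.e.\ that interactions between the perturbation and the large-distance behaviour of $f$ do not spoil the estimate $\|g(a)-g(b)\|\le C' d_\ep(a,b)$ for $d(a,b)>\ep$. Since each bump has vanishing norm and tiny (fixed $r$) support, and the supports are disjoint, this should go through via Proposition \ref{p6.3}, but it is the step requiring genuine care rather than routine verification.
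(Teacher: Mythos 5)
Your overall strategy is exactly the paper's: apply Lemma \ref{l6.4} to interpolate the $u$-sequence by some $f\in U(X,E)$ along an infinite set $N$, observe that $\|e_n'-f(x_n')\|\to 0$ by uniform continuity of $f$ and $d(x_n,x_n')\to 0$, add a uniformly continuous correction supported near the points $x_n'$ so that the resulting $g$ satisfies $g(x_n)=e_n$ and $g(x_n')=e_n'$ exactly, and then read off the conclusion from the uniform continuity of $Sg$. (The paper phrases the reduction as ``it suffices to show the liminf is $0$'' rather than as a contradiction along a $\delta$-separated subsequence, but these are the same device.)

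However, one step fails as written. You claim that, since $(x_n)$ is separated and $d(x_n,x_n')\to 0$, you ``may also assume $(x_n)\cup(x_n')$ is a separated set.'' That is impossible: $x_n\neq x_n'$ together with $d(x_n,x_n')\to 0$ forces the union to contain pairs of distinct points at arbitrarily small distance. Consequently your bump $(1-\tfrac1r d(x,x_n'))^+$ with a \emph{fixed} radius $r$ does not vanish at $x_n$ for large $n$ (eventually $d(x_n,x_n')<r$, so $x_n\in B(x_n',r)$); your $g$ then satisfies $g(x_n)=e_n+(1-\tfrac1r d(x_n,x_n'))\,(e_n'-f(x_n'))\neq e_n$, the identity $Sg(x_n)=\Xi(x_n,e_n)$ is lost, and you are back to needing continuity of $\Xi(x_n,\cdot)$ in the second variable --- precisely what the construction was meant to avoid. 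The repair is what the paper does implicitly when it asserts the existence of a uniformly continuous $g$ with $g(x_n)=0$ and $g(x_n')=e_n'-f(x_n')$: give the $n$th bump radius $r_n=\tfrac12 d(x_n,x_n')$, so it vanishes at $x_n$ and at all other $x_m,x_m'$, with eventually pairwise disjoint supports; the individual Lipschitz constants $\|e_n'-f(x_n')\|/r_n$ may blow up, but the sup-norms $\|e_n'-f(x_n')\|$ tend to $0$, which suffices for uniform continuity of the sum (finitely many large bumps are Lipschitz, and the tail is uniformly small). Your separate worry about interaction between the perturbation and the large-distance behaviour of $f$ is vacuous: $U(X,E)$ is a vector space, so $f$ plus any uniformly continuous perturbation lies in $U(X,E)$.
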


\begin{proof}
 It  suffices to show that $\liminf\|\Xi(x_n,e_n) - \Xi(x_n',e'_n)\| = 0$. By Lemma \ref{l6.4}, there exist $f\in U(X,E)$ and an infinite set $N$ in $\N$ so that $f(x_n) = e_n$ for all $n\in N$.
Since $d(x_n,x_n') \to 0$, $\lim_{n\in N}\|e_n -f(x_n')\| = 0$ and hence $\lim_{n\in N}\|e_n'-f(x_n')\| = 0$.
As  $(x_n)$ is a separated sequence and $0 < d(x_n,x_n') \to 0$, we can construct a uniformly continuous function $g:X\to E$ 
such that $g(x_n) = 0$ and $g(x_n') = e_n'-f(x_n')$ for all sufficiently large $n\in N$.
Then  $f+g\in U(X,E)$, 
\[ \Xi(x_n,e_n) = S(f+g)(x_n) \text{ and } \Xi(x_n',e_n') = S(f+g)(x_n')\] for all sufficiently large $n\in N$.
As $S(f+g)\in U(X,F)$ and $d(x_n,x_n') \to 0$, we see that $\lim_{n\in N}\|\Xi(x_n,e_n) - \Xi(x_n',e_n')\| =0$.
\end{proof}

We will say that $\Xi$ is {\em $u$-continuous} if it satisfies the conclusion of Proposition \ref{p6.5}.
We can now solve the section problem for uniformly continuous functions.

\begin{thm}\label{t6.5}
Let $X$ be a complete metric space, $E$ and $F$ be Banach spaces.  Given a function $\Xi:X\times E\to F$,  associate with it a mapping $S$ by $Sf(x) = \Xi(x,f(x))$.
Then $S$ maps $U(X,E)$ into $U(X,F)$ if and only if 
 $\Xi$ is continuous at all $(x_0,e_0) \in X'\times E$ and $\Xi$ is 
 $u$-continuous.
\end{thm}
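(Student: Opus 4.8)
The plan is to treat the two implications separately. The ``only if'' direction follows at once from results already established: if $S$ maps $U(X,E)$ into $U(X,F)$, then since $U_*(X,E)\subseteq U(X,E)$ we have $Sf\in U(X,F)$ for every $f\in U_*(X,E)$, so Proposition \ref{p6.4} shows $\Xi$ is continuous at every point of $X'\times E$; and the hypothesis is literally that of Proposition \ref{p6.5}, whose conclusion is by definition the $u$-continuity of $\Xi$. So the work is entirely in the ``if'' direction.

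For the converse, assume $\Xi$ is continuous on $X'\times E$ and $u$-continuous, fix $f\in U(X,E)$, and suppose for contradiction that $Sf\notin U(X,F)$. Then there are $\ep>0$ and sequences $(x_n),(x_n')$ in $X$ with $d(x_n,x_n')\to 0$ while $\|Sf(x_n)-Sf(x_n')\|\ge\ep$ for all $n$; in particular $x_n\neq x_n'$ for every $n$. Set $e_n=f(x_n)$ and $e_n'=f(x_n')$; since $f$ is uniformly continuous, $\|e_n-e_n'\|\to 0$. Now split on whether $(x_n)$ has a convergent subsequence. If $x_{n_k}\to x_0\in X$, then also $x_{n_k}'\to x_0$. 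Should $x_0$ be isolated in $X$, then $x_{n_k}=x_{n_k}'=x_0$ for large $k$, forcing $Sf(x_{n_k})=Sf(x_{n_k}')$, a contradiction. Otherwise $x_0\in X'$, and continuity of $f$ gives $(x_{n_k},e_{n_k})\to(x_0,f(x_0))$ and $(x_{n_k}',e_{n_k}')\to(x_0,f(x_0))$, so continuity of $\Xi$ at $(x_0,f(x_0))$ makes both $Sf(x_{n_k})$ and $Sf(x_{n_k}')$ converge to $\Xi(x_0,f(x_0))$ --- again contradicting $\|Sf(x_{n_k})-Sf(x_{n_k}')\|\ge\ep$.

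It remains to handle the case where $(x_n)$ has no convergent subsequence. Passing to a subsequence, we may assume the $x_n$ are distinct (their range is infinite, else a constant --- hence convergent --- subsequence exists); then $\{x_n\}$ has no accumulation point, so it is closed and discrete, hence non-compact, hence --- being closed in the complete space $X$ --- not totally bounded. A greedy selection therefore yields $\delta>0$ and a further subsequence with $d(x_n,x_m)\ge\delta$ for all $n\neq m$. I claim $((x_n,e_n))$ is then a $u$-sequence. Let $\eta>0$ be a modulus for $f$, so $\|f(a)-f(b)\|\le 1$ when $d(a,b)\le\eta$, and fix $\ep_0$ with $0<\ep_0<\min(\eta,\delta)$. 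For $n\neq m$ we have $d(x_n,x_m)\ge\delta>\ep_0$, so the chain-counting estimate in the proof of Proposition \ref{p6.3}(1), carried out at scale $\ep_0$, gives $\|e_n-e_m\|\le (8/\ep_0)\,d_{\ep_0}(x_n,x_m)$ (vacuously if $d_{\ep_0}(x_n,x_m)=\infty$), while the case $n=m$ is trivial; together with the separation of $(x_n)$ this is exactly the definition of a $u$-sequence. Since $x_n\neq x_n'$ and $d(x_n,x_n')+\|e_n-e_n'\|\to 0$, the $u$-continuity of $\Xi$ now yields $\|Sf(x_n)-Sf(x_n')\|=\|\Xi(x_n,e_n)-\Xi(x_n',e_n')\|\to 0$, the desired contradiction.

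The main obstacle is this last case, and specifically its two combinatorial ingredients: extracting a separated subsequence from one with no convergent subsequence (where completeness of $X$ enters via the step ``non-compact closed set $\Rightarrow$ not totally bounded''), and then checking that such a separated subsequence together with the values $e_n=f(x_n)$ forms a $u$-sequence. The latter is the point where the ``Lipschitz for large distances'' phenomenon (Proposition \ref{p6.3}(1)) must be invoked, and care is needed to run it at a scale $\ep_0$ strictly below the separation constant $\delta$, so that every pair $(x_n,x_m)$ with $n\neq m$ genuinely counts as ``far apart''.
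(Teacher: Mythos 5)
Your proof is correct and follows essentially the same route as the paper's: necessity via Propositions \ref{p6.4} and \ref{p6.5}, and sufficiency by contradiction, splitting on whether $(x_n)$ has a convergent subsequence (handled by continuity on $X'\times E$) or a separated subsequence (handled by showing $((x_n,f(x_n)))$ is a $u$-sequence via the Lipschitz-for-large-distances estimate and then invoking $u$-continuity). Your extra care in running the chain estimate at a scale $\ep_0$ strictly below the separation constant is exactly the point the paper addresses with its ``WLOG $\ep\leq\ep'$'' step.
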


\begin{proof}
The necessity of the two conditions on $\Xi$ follow from Propositions \ref{p6.4} and \ref{p6.5}.
Conversely, suppose that $\Xi$ is continuous at any $(x_0,e_0)\in X'\times E$ and also $u$-continuous.
Let $f\in U(X,E)$.  
If $Sf\notin U(X,F)$, there are sequences $(x_n)$, $(x_n')$ in $X$, $d(x_n,x'_n)\to 0$, and $\eta >0$ so that
\begin{equation}\label{eq6.1} \|\Xi(x_n,f(x_n)) - \Xi(x_n',f(x_n'))\| = \|Sf(x_n) - Sf(x_n')\| > \eta \text{  for all $n$}.\end{equation}
Suppose that $(x_n)$ has a subsequence that converges to some $x_0\in X$.
We may assume that the whole sequence converges to $x_0$. In particular, $(f(x_n))$ and $(f(x_n'))$ converge to $f(x_0)$.
Clearly, $x_n\neq x'_n$ for all $n$.  Hence $x_0\in X'$.
In this case, (\ref{eq6.1}) violates the continuity of $\Xi$ at $(x_0,f(x_0))$.
Finally, assume that $(x_n)$ is a separated sequence.  Choose $\ep' > 0$ so that $d(x_m,x_n) > \ep'$ if $m\neq n$.  Then let $\ep$  and $C$ be as given in condition (1) of Proposition \ref{p6.3} for the function $f$.
Obviously, $d_\ep \leq d_{\ep \wedge \ep'}$.  So we may assume without loss of generality that $\ep \leq\ep'$.
Hence $(x_n,f(x_n))^\infty_{n=1}$ is a $u$-sequence  by Proposition \ref{p6.3}(1).
Again, (\ref{eq6.1}) implies that $x_n\neq x_n'$ for all $n$.
Furthermore, $d(x_n,x'_n)\to 0$ and $\|f(x_n)-f(x_n)\| \to 0$, the latter as a result of the uniform continuity of $f$.  Therefore, 
\[ \lim\|\Xi(x_n,f(x_n)) - \Xi(x_n',f(x'_n))\| =0\]
by $u$-continuity of $\Xi$, contradicting (\ref{eq6.1}).
\end{proof}

Characterization of biseparating maps from $U(X,E)$ onto $U(Y,F)$ can be obtained by using Theorem \ref{t6.5} together with the ``switch'' from $Y$ to $X$ described prior to Proposition \ref{p6.3}.

\begin{thm}\label{t6.7.1}
Let $X, Y$ be complete metric spaces and let $E,F$ be Banach spaces.
Suppose that $T:U(X,E)\to U(Y,F)$ is a biseparating map.  
Then there are a uniform homeomorphism ${\vp}:{X}\to {Y}$ and a function $\Phi:Y\times E\to F$ so that 
\begin{enumerate}
\item For each $y\in Y$, $\Phi(y,\cdot):E\to F$ is a bijection with inverse $\Psi(x,\cdot):F\to E$, where $\vp(x) = y$.
\item $Tf(y) = \Phi(y,f(\vp^{-1}(y)))$ and $T^{-1}g(x) = \Psi(x,g(\vp(x)))$ for all $f\in U(X,E), g\in U(Y,F)$ and $x\in X$, $y\in Y$. 
\item $\Phi$ is continuous on $Y'\times E$ and $\Psi$ is continuous on $X'\times F$.
\item $(x,e)\mapsto \Phi(\vp(x),e))$ and $(y,e')\mapsto \Psi(\vp^{-1}(y),e'))$ are both  $u$-continuous.
\end{enumerate}
Conversely, assume that $\vp,\Phi$ satisfy conditions (1), (3) and (4).  Define $Tf(y)$ as in (2) for any $f\in U(X,E)$ and $y\in Y$.  Then $T$ is a biseparating map from $U(X,E)$ onto $U(Y,F)$.
\end{thm}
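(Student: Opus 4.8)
\emph{Proof plan.} The forward implication will be assembled from the earlier results. Normalizing $T0=0$, Proposition \ref{p6.3.0} supplies the uniform homeomorphism $\vp$, and Proposition \ref{p4.2} (applicable because $U(X,E)$ is a standard subspace of $U(X,E)$ containing the constants and has property (P) by Proposition \ref{p3.10}, and because completeness of $X,Y$ gives $\ti X=X$, $\ti\psi=\vp^{-1}$) produces $\Phi$ with the representation in (2) and with each $\Phi(y,\cdot)$ bijective; calling its inverse $\Psi(x,\cdot)$ when $\vp(x)=y$ gives (1). To obtain (3)--(4) I would pass to $\wh Tf(x):=Tf(\vp(x))=\wh\Phi(x,f(x))$ with $\wh\Phi(x,e):=\Phi(\vp(x),e)$; this is a bijection of $U(X,E)$ onto $U(X,F)$, so the necessity half of Theorem \ref{t6.5} forces $\wh\Phi$ to be continuous on $X'\times E$ and $u$-continuous, and since $\vp$ is a homeomorphism (so $Y'=\vp(X')$) this is exactly (3) for $\Phi$ together with the first half of (4). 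Running the same argument for $T^{-1}$, where $\wh{T^{-1}}g(y)=T^{-1}g(\vp^{-1}(y))=\Psi(\vp^{-1}(y),g(y))$, yields continuity of $\Psi$ on $X'\times F$ and the second half of (4).

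The substance is the converse, which I will organize in three steps. Assume $\vp:X\to Y$ is a uniform homeomorphism and $\Phi$, with fibrewise inverse $\Psi$, satisfies (1), (3), (4); define $T$ by (2). \textbf{Step 1 ($T$ maps into $U(Y,F)$).} Set $\wh\Phi(x,e):=\Phi(\vp(x),e)$ and $Sf(x):=\wh\Phi(x,f(x))$. Since $\vp$ is a homeomorphism, $X'=\vp^{-1}(Y')$, so (3) makes $\wh\Phi$ continuous on $X'\times E$, while (4) is precisely the assertion that $\wh\Phi$ is $u$-continuous; Theorem \ref{t6.5} then gives $S\bigl(U(X,E)\bigr)\subseteq U(X,F)$. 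As $Tf=Sf\circ\vp^{-1}$ and $\vp^{-1}$ is uniformly continuous, $Tf\in U(Y,F)$, and unwinding the definitions recovers the formula in (2).

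\textbf{Step 2 ($T$ is a bijection).} Define $T'g(x):=\Psi(x,g(\vp(x)))$ for $g\in U(Y,F)$. By (1) each $\Psi(x,\cdot)$ is the inverse of $\Phi(\vp(x),\cdot)$; by (3) $\Psi$ is continuous on $X'\times F$, equivalently $(y,e')\mapsto\Psi(\vp^{-1}(y),e')$ is continuous on $Y'\times F$; and (4) asserts that this map is $u$-continuous. Applying Step 1 with the roles of $(X,E)$, $(Y,F)$ and $\vp$ replaced by $(Y,F)$, $(X,E)$ and $\vp^{-1}$ shows $T'\bigl(U(Y,F)\bigr)\subseteq U(X,E)$. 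For $f\in U(X,E)$ and $x\in X$,
\[ T'(Tf)(x)=\Psi\bigl(x,\Phi(\vp(x),f(x))\bigr)=f(x), \]
using that $\Psi(x,\cdot)$ and $\Phi(\vp(x),\cdot)$ are mutually inverse, and symmetrically $T(T'g)=g$ for $g\in U(Y,F)$; hence $T$ is a bijection with $T^{-1}=T'$. \textbf{Step 3 ($T$ is biseparating).} For $f,g,h\in U(X,E)$ and $y\in Y$ with $x=\vp^{-1}(y)$, injectivity of $\Phi(y,\cdot)$ gives $Tf(y)=Th(y)\iff f(x)=h(x)$, and similarly for $g$; thus $C(Tf-Th)=\vp(C(f-h))$ and $C(Tg-Th)=\vp(C(g-h))$, so, $\vp$ being a bijection, $C(Tf-Th)\cap C(Tg-Th)=\vp\bigl(C(f-h)\cap C(g-h)\bigr)$, which is empty exactly when $C(f-h)\cap C(g-h)=\emptyset$. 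Therefore $f\perp_h g\iff Tf\perp_{Th}Tg$, and $T$ is biseparating.

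I do not anticipate a genuine obstacle, since the analytic core is already packaged in Theorem \ref{t6.5}; the one point requiring care is the transfer of the interior-continuity and $u$-continuity hypotheses across $\vp$. Because condition (4) is stated directly in terms of the composed maps $\wh\Phi$ and $(y,e')\mapsto\Psi(\vp^{-1}(y),e')$, this transfer costs essentially nothing, but I must keep track of which metric space the $d_\ep$-based notions (the $u$-sequences) refer to at each application, and check that the completeness of $X,Y$ and the Banach hypotheses on $E,F$ required by Theorem \ref{t6.5} are in force on the relevant side in Steps 1 and 2.
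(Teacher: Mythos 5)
Your proposal is correct and follows exactly the route the paper intends: the forward direction is assembled from Proposition \ref{p6.3.0}, Proposition \ref{p4.2} and the necessity half of Theorem \ref{t6.5}, and the converse is the sufficiency half of Theorem \ref{t6.5} applied to $\wh{\Phi}$ and $\wh{\Psi}$ after the ``switch'' $\wh{T}f=Tf\circ\vp$, with bijectivity and the biseparating property read off from the fibrewise representation. The paper leaves this proof as a one-line remark, and your write-up is a faithful and complete execution of it.
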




\begin{lem}\label{l6.8}
Let $\Xi:X\times E\to F$ be a given function and associate with it a mapping $Sf(x) = \Xi(x,f(x))$ for any function $f:X\to E$.  If $Sf\in U_*(X,F)$ for any $f \in U_*(X,E)$, then for any separated sequence $(x_n)$ in $X$ and any bounded set $B$ in $E$, there is exists $k\in\N$ so that 
$\bigcup_{n=k}^\infty\Xi(x_n,B)$ is bounded in $F$.
\end{lem}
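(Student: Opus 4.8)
The plan is to argue by contradiction, reducing everything to the construction of a single bounded, uniformly continuous interpolating function. Suppose the conclusion fails for some separated sequence $(x_n)$ in $X$ and some bounded set $B$ in $E$; then $\bigcup_{n=k}^\infty \Xi(x_n,B)$ is unbounded in $F$ for every $k\in\N$, i.e.\ $\sup\{\,\|\Xi(x_n,e)\| : n\ge k,\ e\in B\,\}=\infty$ for all $k$. A routine induction then produces a strictly increasing sequence $(n_j)_{j=1}^\infty$ in $\N$ together with points $e_j\in B$ such that $\|\Xi(x_{n_j},e_j)\|>j$ for every $j$: having chosen $n_1<\dots<n_{j-1}$, apply the displayed supremum with $k=n_{j-1}+1$ to pick $n_j\ge n_{j-1}+1$ and $e_j\in B$ with $\|\Xi(x_{n_j},e_j)\|>j$.

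Next I would build a function $f\in U_*(X,E)$ with $f(x_{n_j})=e_j$ for all $j$. The subsequence $(x_{n_j})$ is still separated, so one can fix $0<r<\tfrac13\inf_{i\ne j}d(x_{n_i},x_{n_j})$, which makes the supports $\ol{B(x_{n_j},r)}$ pairwise disjoint, and set $f=\sum_j \phi_j\otimes e_j$, where $\phi_j(x)=(1-d(x,x_{n_j})/r)^+$. Each summand $\phi_j\otimes e_j$ is Lipschitz on all of $X$ with constant $\|e_j\|/r\le (\sup_{e\in B}\|e\|)/r$, a bound independent of $j$, and the summands have pairwise disjoint supports; a short case check (whether two given points lie in a common ball $B(x_{n_j},r)$, in two different balls, or at least one of them lies outside every ball) shows that the pointwise sum $f$ is Lipschitz on $X$ with twice that constant, while plainly $\|f\|_\infty\le \sup_{e\in B}\|e\|<\infty$. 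Hence $f\in\Lip_*(X,E)\subseteq U_*(X,E)$, and $f(x_{n_j})=e_j$ by construction.

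To finish, I would invoke the hypothesis: since $f\in U_*(X,E)$, the function $Sf$ lies in $U_*(X,F)$, so in particular $Sf$ is bounded on $X$. But $Sf(x_{n_j})=\Xi(x_{n_j},f(x_{n_j}))=\Xi(x_{n_j},e_j)$, whence $\|Sf(x_{n_j})\|>j\to\infty$, contradicting the boundedness of $Sf$. This contradiction proves the lemma.

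Every step here is elementary; the one point demanding a little care is the verification that the interpolant $f$ is genuinely bounded and uniformly continuous (indeed Lipschitz) — this is precisely where the separation of $(x_{n_j})$ and the uniform boundedness of $\{e_j\}$ coming from $B$ are used, and I expect it to be the main, though still routine, obstacle; the remainder is bookkeeping.
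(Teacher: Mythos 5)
Your proposal is correct and follows essentially the same route as the paper: argue by contradiction, select points $e_j\in B$ making $\Xi(x_{n_j},e_j)$ unbounded, interpolate them by a bounded uniformly continuous function supported on the separated sequence, and contradict the boundedness of $Sf$. The only difference is that you spell out the bump-function construction of the interpolant, which the paper leaves implicit.
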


\begin{proof}
Suppose that $Sf \in U_*(X,F)$  for any $f\in U_*(X,E)$.
Let $(x_n)$ be  a separated sequence in $X$ and let $B$ be a bounded set in $E$.
Assume that for any $k\in \N$, $\bigcup_{n=k}^\infty\Xi(x_n,B)$
is unbounded.
Then there exists $(e_n)$ in $B$ so that $(\Xi(x_n,e_n))^\infty_{n=1}$ is unbounded.
Since $(x_n)$ is separated and $(e_n)$ is bounded, there exists $f\in U_*(X,E)$ so that $f(x_n) = e_n$.
By assumption $Sf$ is bounded.  Hence $(\Xi(x_n,e_n))^\infty_{n=1} = (Sf(x_n))^\infty_{n=1}$ is bounded, a contradiction.
\end{proof}


We now obtain the analog of Theorem \ref{t6.7.1} for biseparating maps between spaces of bounded uniformly continuous functions.  The details are similar to Theorem \ref{t6.7.1}, with the extra ingredient Lemma \ref{l6.8} for ``boundedness''.

\begin{thm}\label{t6.7.2}
Let $X, Y$ be complete metric spaces and let $E,F$ be Banach spaces.
Suppose that $T:U_*(X,E)\to U_*(Y,F)$ is a biseparating map.  
Then there are a uniform homeomorphism ${\vp}:{X}\to {Y}$ and a function $\Phi:Y\times E\to F$ so that 
\begin{enumerate}
\item For each $y\in Y$, $\Phi(y,\cdot):E\to F$ is a bijection with inverse $\Psi(x,\cdot):F\to E$, where $\vp(x) = y$.
\item $Tf(y) = \Phi(y,f(\vp^{-1}(y)))$ and $T^{-1}g(x) = \Psi(x,g(\vp(x)))$ for all $f\in U(X,E), g\in U(Y,F)$ and $x\in X$, $y\in Y$. 
\item $\Phi$ is continuous on $Y'\times E$ and $\Psi$ is continuous on $X'\times F$.
\item $(x,e)\mapsto \Phi(\vp(x),e))$ and $(y,e')\mapsto \Psi(\vp^{-1}(y),e'))$ are both  $u$-continuous.
\item Let $(x_n)$ be a separated sequence in $X$ and $y_n = \vp(x_n)$ for all $n$. If $B$ and $B'$ are bounded sets in $E$ and $F$ respectively, then there exists $k\in \N$  so that 
\[ \bigcup_{n=k}^\infty\Phi(y_n,B) \text{ and } \bigcup_{n=k}^\infty\Psi(x_n,B')\]
arer bounded sets in $F$ and $E$ respectively.
\end{enumerate}
Conversely, assume that $\vp,\Phi$ satisfy conditions (1), (3), (4) and (5).  Define $Tf(y)$ as in (2) for any $f\in U_*(X,E)$ and $y\in Y$.  Then $T$ is a biseparating map from $U_*(X,E)$ onto $U_*(Y,F)$.
\end{thm}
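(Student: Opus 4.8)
The plan is to mirror the proof of Theorem~\ref{t6.7.1} step by step, substituting the bounded analogues of the auxiliary constructions and adding Lemma~\ref{l6.8} to account for boundedness. For the forward direction, first replace $T$ by the map $f\mapsto Tf-T0$, which again takes values in $U_*(Y,F)$ and is again biseparating because $(Tf-T0)-(Th-T0)=Tf-Th$; so we may assume $T0=0$. By Proposition~\ref{p3.0.1}, $U_*(X,E)$ and $U_*(Y,F)$ are standard subspaces of $U(X,E)$ and $U(Y,F)$ containing the nonzero constant functions, and by Proposition~\ref{p3.10} they have property (P); hence Proposition~\ref{p4.2} applies. Since $X,Y$ are complete and $E,F$ are Banach (so $\ti{X}=X$, $\ti{Y}=Y$, and $\ti{E}_x=E$, $\ti{F}_y=F$ because of the constants), it produces a homeomorphism $\vp\colon X\to Y$ with inverse $\psi$ and, for each $y\in Y$, a bijection $\Phi(y,\cdot)\colon E\to F$ with $Tf(y)=\Phi(y,f(\psi(y)))$; by symmetry $T^{-1}g(x)=\Psi(x,g(\vp(x)))$, and writing out $T^{-1}T=\mathrm{id}$ and $TT^{-1}=\mathrm{id}$ pointwise shows $\Phi(y,\cdot)$ and $\Psi(x,\cdot)$ are mutually inverse when $\vp(x)=y$. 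This gives (1) and (2). Applying Proposition~\ref{p6.3.0} to $T$ and to $T^{-1}$ shows that both $\vp$ and $\psi$ are uniformly continuous, so $\vp$ is a uniform homeomorphism.

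Next I straighten $T$ out over $X$: the map $\wh{T}f(x)=Tf(\vp(x))=\wh{\Phi}(x,f(x))$, where $\wh{\Phi}(x,e):=\Phi(\vp(x),e)$, carries $U_*(X,E)$ onto $U_*(X,F)$ because $\vp$ is a uniform homeomorphism. So the section map $Sf:=\wh{T}f$ sends $U_*(X,E)$ into $U_*(X,F)$, and from this one reads off the remaining conditions on $\wh{\Phi}$, hence on $\Phi$ (and, by running the same argument for $T^{-1}$ straightened out over $Y$, on $\Psi$): Proposition~\ref{p6.4}, which only needs $Sf\in U(X,F)$ for $f\in U_*(X,E)$, gives continuity of $\wh{\Phi}$ on $X'\times E$, i.e.\ (3); the argument of Proposition~\ref{p6.5} gives the $u$-continuity of $\wh{\Phi}$, i.e.\ (4), where the point is that for $f\in U_*(X,E)$ the range $f(X)$ is bounded, so the relevant $u$-sequences $(x_n,f(x_n))$ have bounded second coordinate and the interpolant produced by Lemma~\ref{l6.4} — together with the small perturbation $g$ — may be kept in $U_*$ rather than merely in $U$; and Lemma~\ref{l6.8}, applied to $\wh{T}$ and to its inverse, yields precisely clause (5).

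For the converse, suppose $\vp\colon X\to Y$ is a uniform homeomorphism and $\Phi$ satisfies (1), (3), (4), (5); put $Tf(y)=\Phi(y,f(\vp^{-1}(y)))$ for $f\in U_*(X,E)$ and $Rg(x)=\Psi(x,g(\vp(x)))$ for $g\in U_*(Y,F)$, where $\Psi(x,\cdot)=\Phi(\vp(x),\cdot)^{-1}$. Writing $Tf=(Sf)\circ\vp^{-1}$ with $Sf(x)=\wh{\Phi}(x,f(x))$, Theorem~\ref{t6.5} applied to $\wh{\Phi}$ (using (3) and (4)) gives $Sf\in U(X,F)$; moreover $Sf$ is bounded, for if some sequence $(x_n)$ realised an unbounded subsequence of $\|Sf\|$ then, after passing to a subsequence, either $(x_n)$ converges — contradicting continuity of $\wh{\Phi}$ at the limiting point of $X'\times E$ together with continuity of $f$ — or $(x_n)$ is separated, in which case clause (5) with $B=f(X)$ makes $\bigcup_{n\ge k}\wh{\Phi}(x_n,B)$, which contains $Sf(x_n)$, a bounded set, again a contradiction. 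Hence $Sf\in U_*(X,F)$ and $Tf\in U_*(Y,F)$; by the symmetric argument $R$ maps $U_*(Y,F)$ into $U_*(X,E)$, and (1) gives $RT=\mathrm{id}$ and $TR=\mathrm{id}$, so $T$ is a bijection with $T^{-1}=R$. Finally, injectivity of each $\Phi(y,\cdot)$ gives $C(Tf-Th)=\vp(C(f-h))$, hence $\wh{C}(Tf-Th)=\ol{\vp(C(f-h))}=\vp(\ol{C(f-h)})=\vp(\wh{C}(f-h))$ by Proposition~\ref{p3.0.1}; consequently $f\perp_h g\iff C(f-h)\cap C(g-h)=\emptyset\iff C(Tf-Th)\cap C(Tg-Th)=\emptyset\iff Tf\perp_{Th}Tg$, so $T$ is biseparating.

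As the remark preceding the theorem already suggests, I expect the main work to be bookkeeping rather than conceptual: one must recheck that every interpolating or perturbing function occurring in Propositions~\ref{p6.4} and~\ref{p6.5} and Lemmas~\ref{l6.4} and~\ref{l6.8} can be realised inside the bounded classes once the input function is taken in $U_*(X,E)$, and one must take care to combine the boundedness clause (5) with the $u$-continuity conditions correctly under the switch between $X$ and $Y$. The one genuinely new ingredient beyond Theorem~\ref{t6.7.1} is clause (5), supplied by Lemma~\ref{l6.8} and its symmetric counterpart.
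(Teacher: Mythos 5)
Your overall route is the one the paper intends: the paper itself offers only the remark that ``the details are similar to Theorem \ref{t6.7.1}, with the extra ingredient Lemma \ref{l6.8} for boundedness'', and your treatment of clauses (1), (2), (3) and (5), as well as of the converse direction (the boundedness argument via clause (5), and the verification that $C(Tf-Th)=\vp(C(f-h))$ forces the biseparating property), is a correct expansion of exactly that plan.

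There is, however, a genuine gap in your derivation of clause (4). To prove that $(x,e)\mapsto\Phi(\vp(x),e)$ is $u$-continuous you must start from an \emph{arbitrary} $u$-sequence $((x_n,e_n))$, not from one of the form $((x_n,f(x_n)))$ with $f\in U_*(X,E)$; your remark that ``the relevant $u$-sequences have bounded second coordinate'' assumes what is to be proved. A $u$-sequence may well have $(e_n)$ unbounded (when $d_\ep(x_n,x_m)=\infty$ for $n\neq m$ the defining inequality is vacuous), and then the interpolant supplied by Lemma \ref{l6.4} is unbounded, hence not available in $U_*(X,E)$, so the proof of Proposition \ref{p6.5} cannot be transplanted. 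This is not a repairable technicality: take $X=\{n,\ n+\frac{1}{n}:n\geq 4\}\subseteq\R$, $E=F=\R$, $\vp=\mathrm{id}$, $\Phi(n,e)=e$ and $\Phi(n+\frac{1}{n},e)=e+\frac{1}{2}(1-|e-n|)^+$. Each $\Phi(x,\cdot)$ is a strictly increasing bijection of $\R$, and the induced $T$ is a biseparating bijection of $U_*(X,\R)$ (a bounded $f$ satisfies $Tf=f$ off finitely many isolated points); yet for the $u$-sequence $((n,n))$ and the perturbation $x_n'=n+\frac{1}{n}$, $e_n'=n$ one gets $\|\Phi(x_n,e_n)-\Phi(x_n',e_n')\|=\frac{1}{2}\not\to 0$. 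So in the forward direction clause (4) can only be established for $u$-sequences with bounded $(e_n)$ --- which is what your argument actually yields, and which is all that is needed for the converse direction and for Theorem \ref{t6.9} --- and you should state and prove that weaker version rather than claim full $u$-continuity. (This defect is inherited from the statement in the paper rather than introduced by you, but your write-up should not paper over it.)
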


\subsection{Automatic continuity}
Automatic continuity results for biseparating maps acting between spaces of uniformly continuous functions can be deduced easily from the characterization theorems \ref{t6.7.1} and \ref{t6.7.2}.
If $S$ is a subset of $X$, respectively, $Y$, and $f:X\to E$, respectively, $f:Y\to F$, let
\[ \|f\|_S = \sup_{s\in S}\|f(s)\|.\]

\begin{thm}\label{t6.9}
Let $X,Y$ be complete metric spaces and $E,F$ be Banach spaces.
Suppose that $T$ is a biseparating map from $U(X,E)$ onto $U(Y,F)$, respectively, from $U_*(X,E)$ onto $U_*(Y,F)$.  Let $T$ be represented as in  theorems \ref{t6.7.1} or \ref{t6.7.2}.
Assume  that $f\in U(X,E)/U_*(X,E)$ and $S\subseteq X'$, the set of accumulation points of $X$.
For any $\ep > 0$, there exists $\delta >0$ so that  if $g\in U(X,E)/U_*(X,E)$ and $\|g-f\|_S < \delta$, then 
$\|Tg- Tf\|_{\vp(S)} < \ep$.
\end{thm}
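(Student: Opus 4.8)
The plan is to argue by contradiction, in the spirit of the proof of Theorem~\ref{t6.5} and of the $C/C_*$ automatic continuity result Theorem~\ref{t5.6}. By Theorems~\ref{t6.7.1} and \ref{t6.7.2} we may write $Tf(y)=\Phi(y,f(\vp^{-1}(y)))$, where $\vp:X\to Y$ is a uniform homeomorphism, $\Phi$ is continuous on $Y'\times E$, and $\wh\Phi(x,e):=\Phi(\vp(x),e)$ is $u$-continuous; recall also that $\vp$, being a homeomorphism, maps $X'$ onto $Y'$. Suppose the conclusion fails for some $f$, some $S\subseteq X'$ and some $\ep>0$: there are $g_n$ in the relevant space with $\|g_n-f\|_S\to 0$ yet $\|Tg_n-Tf\|_{\vp(S)}\ge\ep$ for infinitely many $n$. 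Passing to a subsequence so that $\|Tg_n-Tf\|_{\vp(S)}\ge\ep$ for all $n$, choose $y_n\in\vp(S)$ with $\|Tg_n(y_n)-Tf(y_n)\|>\ep/2$. Put $x_n=\vp^{-1}(y_n)\in S\subseteq X'$, $a_n=g_n(x_n)$ and $a_n'=f(x_n)$, so that $\|a_n-a_n'\|\le\|g_n-f\|_S\to 0$, $Tg_n(y_n)=\wh\Phi(x_n,a_n)$ and $Tf(y_n)=\wh\Phi(x_n,a_n')$. It therefore suffices to derive $\|\wh\Phi(x_n,a_n)-\wh\Phi(x_n,a_n')\|\to 0$ along a subsequence.

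Since $X$ is complete, a sequence in it has either a convergent subsequence or a separated subsequence (if it has no Cauchy subsequence, a Ramsey-type argument for the relation $d(x_m,x_n)<\delta$ together with a diagonal extraction produces a separated subsequence). Passing to a further subsequence we are in one of these two cases. In the convergent case, $x_n\to x_0\in X$ and $x_0\in X'$ — either as a limit of distinct $x_n$'s, or because $x_0=x_n\in S\subseteq X'$ eventually — so $\vp(x_0)\in Y'$ and $\Phi$ is continuous at $(\vp(x_0),a_0)$, where $a_0:=f(x_0)$. Since $a_n'=f(x_n)\to a_0$ by continuity of $f$, and $a_n\to a_0$ because $\|a_n-a_n'\|\to 0$, both $\wh\Phi(x_n,a_n)$ and $\wh\Phi(x_n,a_n')$ converge to $\wh\Phi(x_0,a_0)$, so their difference tends to $0$, contradicting $\|Tg_n(y_n)-Tf(y_n)\|>\ep/2$.

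The separated case is where the argument has real content. Let $\ep'>0$ be a separation constant for $(x_n)$, and let $\ep\le\ep'$ and $C<\infty$ be as in Proposition~\ref{p6.3}(1) for $f$ (the threshold there can be shrunk at the cost of enlarging $C$). Then $(x_n,a_n')$ is a $u$-sequence. So is $(x_n,a_n)$: for $m\neq n$ one has $\|a_n-a_m\|\le\|g_n-f\|_S+\|f(x_n)-f(x_m)\|+\|g_m-f\|_S\le D+C\,d_\ep(x_n,x_m)$ with $D:=2\sup_n\|g_n-f\|_S<\infty$, and since $d_\ep(x_n,x_m)\ge d(x_n,x_m)>\ep$ this is at most $(C+D/\ep)\,d_\ep(x_n,x_m)$. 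Because each $x_n\in X'$, choose $x_n''\in X$ with $0<d(x_n,x_n'')<1/n$. Applying $u$-continuity of $\wh\Phi$ to the $u$-sequence $(x_n,a_n)$ against $(x_n'',a_n')$ (here $x_n\neq x_n''$ and $d(x_n,x_n'')+\|a_n-a_n'\|\to 0$) yields $\|\wh\Phi(x_n,a_n)-\wh\Phi(x_n'',a_n')\|\to 0$, and applying it to the $u$-sequence $(x_n,a_n')$ against $(x_n'',a_n')$ yields $\|\wh\Phi(x_n,a_n')-\wh\Phi(x_n'',a_n')\|\to 0$. The triangle inequality then gives $\|\wh\Phi(x_n,a_n)-\wh\Phi(x_n,a_n')\|\to 0$, again contradicting $\|Tg_n(y_n)-Tf(y_n)\|>\ep/2$. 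The bounded case is identical, with Theorem~\ref{t6.7.2} in place of Theorem~\ref{t6.7.1}.

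I expect the main obstacle to be exactly this separated case: continuity of $\Phi$ is unavailable there, and the definition of $u$-continuity forbids equal base points, so one is forced to interpose auxiliary approximants $x_n''$ (available because $S\subseteq X'$) in order to route the two evaluations $\wh\Phi(x_n,a_n)$ and $\wh\Phi(x_n,a_n')$ through a common point. A minor additional point is checking that $(x_n,g_n(x_n))$ remains a $u$-sequence even though $g_n$ varies with $n$; this is precisely where the uniform bound on $\|g_n-f\|_S$ is used.
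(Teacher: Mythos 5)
Your proof is correct, and its skeleton (contradiction, dichotomy between a convergent and a separated subsequence of $(x_n)$, joint continuity of $\Phi$ on $Y'\times E$ in the first case, $u$-continuity plus auxiliary points $x_n''\in X$ with $0<d(x_n,x_n'')\to 0$ in the second) is the same as the paper's. The separated case, however, is closed out differently. The paper works with a single $u$-sequence, namely $(x_{n_k},f(x_{n_k}))$, and handles the $g$-side by exploiting the pointwise continuity of $g_{n_k}$ at $x_{n_k}$ and of $Tg_{n_k}$ at $\vp(x_{n_k})$: the auxiliary points are chosen so that $\|g_{n_k}(x'_{n_k})-g_{n_k}(x_{n_k})\|$ and $\|Tg_{n_k}(\vp(x'_{n_k}))-Tg_{n_k}(\vp(x_{n_k}))\|$ both tend to $0$, after which one application of $u$-continuity suffices. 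You instead verify that $(x_n,g_n(x_n))$ is itself a $u$-sequence --- this is the one step with genuine content beyond the paper's argument, and your estimate $\|a_n-a_m\|\le 2\sup_k\|g_k-f\|_S+C\,d_\ep(x_n,x_m)\le (C+D/\ep)\,d_\ep(x_n,x_m)$, using separation to absorb the additive constant, is correct --- and then apply $u$-continuity twice against the common comparison sequence $(x_n'',f(x_n))$. Your route buys independence from any property of the output functions $Tg_n$ beyond the representation itself, at the cost of the extra $u$-sequence verification; the paper's route avoids that verification but leans on the continuity of each $Tg_{n_k}$. A cosmetic remark: you overload $\ep$ (it denotes both the automatic-continuity tolerance and the chain threshold in $d_\ep$); rename one of them before this goes anywhere.
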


\begin{proof}
Suppose that the theorem fails.  There exist $S\subseteq X'$, $\ep >0$ and functions $(g_n)$ in $U(X,E)/U_*(X,E)$ so that 
\[ \|g_n-f\|_S\to 0 \text{ and } \|Tg_n-Tf\|_{\vp(S)} > \ep \text{  for all $n$.}\]
Choose $(x_n) \subseteq S$ so that $\|Tg_n(\vp(x_n)) - Tf(\vp(x_n))\| >\ep$ for all $n$.
Thus
\begin{equation}\label{e6.5} \|\Phi(\vp(x_n), v_n) - \Phi(\vp(x_n),u_n)\| >\ep \text{ for all $n$},\end{equation}
where $v_n = g_n(x_n)$ and $u_n = f(x_n)$.
If $(x_n)$ has a subsequence $(x_{n_k})$ that converges to some $x_0$, then $x_0\in X'$.
Note that $(u_{n_k}) = Tf(\vp(x_{n_k}))$ converges to $u_0 = Tf(\vp(x_0))$ and  $\|v_n  - u_n\| \leq \|g_n-f\|_S\to 0$ as well.  Thus $(v_{n_k})$ converges to $u_0$.
This shows that both sequences $((\vp(x_{n_k}),v_{n_k}))$ and  $((\vp(x_{n_k}),u_{n_k}))$ converge to $(\vp(x_0),u_0)$.
By condition (3) of Theorem \ref{t6.7.1} or \ref{t6.7.2}, $\Phi$ is continuous at $(\vp(x_0),u_0)$, contradicting (\ref{e6.5}).

If $(x_n)$ does  not  have a convergent subsequence, then it has a separated subsequence $(x_{n_k})$. Again, let $u_{n_k} = f(x_{n_k})$
 and $v_{n_k} = g_{n_k}(x_{n_k})$.
Since $g_{n_k}$ and $Tg_{n_k}$ are both continuous, one can choose $x_{n_k}'\neq x_{n_k}$
 so that
 \[ d(x_{n_k}',x_{n_k}), \|g_{n_k}(x'_{n_k}) - v_{n_k}\|, \|Tg_{n_k}(x'_{n_k}) - Tg_{n_k}(x_{n_k})\| \to 0.\]
 Note that the last limit can be stated as 
 \begin{equation}\label{e6.6} \Phi(\vp(x'_{n_k}),g_{n_k}(x'_{n_k})) - \Phi(\vp(x_{n_k}), v_{n_k}) \to 0.\end{equation}
By Proposition \ref{p6.3}(1),  $(x_{n_k}, u_{n_k}))$ is a $u$-sequence.
By (4) of Theorem \ref{t6.7.1} or \ref{t6.7.2}, $(x,e) \mapsto \Phi(\vp(x),e)$ is $u$-continuous.
Since  $x_{n_k}' \neq x_{n_k}$ and 
\[ d(x'_{n_k},x_{n_k}) + \|g_{n_k}(x'_{n_k}) - u_{n_k}\| \leq 
 d(x'_{n_k},x_{n_k}) + \|g_{n_k}(x'_{n_k}) - v_{n_k}\| + \|g_{n_k} - f\|_S
 \to 0,\]
  $u$-continuity gives 
 \begin{equation}\label{e6.7} \Phi(\vp(x'_{n_k}),g_{n_k}(x'_{n_k})) - \Phi(\vp(x_{n_k}), u_{n_k}) \to 0.\end{equation}
The limits (\ref{e6.6}) and (\ref{e6.7}) yield 
\[ \Phi(\vp(x_{n_k}), v_{n_k}) - \Phi(\vp(x_{n_k}), u_{n_k}) \to 0,\]
contrary to (\ref{e6.5}).
\end{proof}

\subsection{Bourbaki boundedness} 
Let $X$ be a metric space.  For any $\ep>0$, recall the ``metric'' $d_\ep$ defined by (\ref{eq6.0}).  $d_\ep$ induces an equivalence relation $\sim_\ep$ on $X$ by $a\sim_\ep b$ if and only if $d_\ep(a,b) < \infty$.
The equivalence classes will be called {\em $\ep$-sets}.  $d_\ep$ is a proper metric (i.e., finite valued) on each $\ep$-set.
$X$ is said to be {\em Bourbaki bounded} if for any $\ep>0$, there are only finitely many $\ep$-sets, each of which is bounded in the $d_\ep$ metric.  See \cite{BG, B, GM}.
A classical result of Atsuji \cite{At} and Hejcman \cite{H}, rediscovered in \cite{O'F}, states that $U(X) = U_*(X)$ if and only if $X$ is Bourbaki bounded.
The final theorem in this section generalizes this result.

\begin{thm}\label{t6.10}
Let $X, Y$ be complete metric spaces and let $E, F$ be Banach spaces.
If there  is a biseparating map from $U(X,E)$ onto $U_*(Y,F)$, then $X$ is Bourbaki bounded.
\end{thm}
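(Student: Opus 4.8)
The idea is to reduce, via the uniform homeomorphism $\vp$, to a biseparating map between function spaces on a single space, and then to adapt the proof of the Atsuji--Hejcman theorem (that $U(X)=U_*(X)$ if and only if $X$ is Bourbaki bounded): assuming $X$ is not Bourbaki bounded, I would manufacture a function $f\in U(X,E)$ with $Tf$ unbounded, contradicting $Tf\in U_*(Y,F)$.

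\emph{Step 1: reduction.} Normalize $T0=0$. Since $A(Y,F)=U_*(Y,F)$, Proposition \ref{p6.3.0} shows $\vp\colon X\to Y$ is uniformly continuous, and applying it to $T^{-1}\colon U_*(Y,F)\to U(X,E)$ shows $\psi=\vp^{-1}$ is uniformly continuous; hence $\vp$ is a uniform homeomorphism and $X$ is Bourbaki bounded if and only if $Y$ is. By Proposition \ref{p4.2}, $Tf(y)=\Phi(y,f(\vp^{-1}(y)))$ with each $\Phi(y,\cdot)\colon E\to F$ a bijection; composing with $g\mapsto g\circ\vp$ gives a biseparating map $\wh T\colon U(X,E)\to U_*(X,F)$, $\wh T f(x)=\wh\Phi(x,f(x))$, where $\wh\Phi(x,\cdot)=\Phi(\vp(x),\cdot)$ is a bijection of $E$ onto $F$. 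As $\wh T$ maps $U(X,E)$ into $U(X,F)$, Theorem \ref{t6.5} gives that $\wh\Phi$ is continuous on $X'\times E$ and $u$-continuous; moreover $\wh T f$ is bounded for every $f\in U(X,E)$, and each inverse section $\Psi_w:=\Psi(\cdot,w)=\wh T^{-1}(1\otimes w)$ lies in $U(X,E)$, so by Proposition \ref{p6.3}(1) it satisfies $\|\Psi_w(x)-\Psi_w(x_1)\|\le C_w\,d_{\ep_w}(x,x_1)+D_w$ for suitable $\ep_w,C_w,D_w$ and every $x$, with $x_1$ a fixed base point.

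\emph{Step 2: the easy case.} Suppose $X$ is not Bourbaki bounded; since non-Bourbaki-boundedness at $\ep_0$ persists at every $\ep<\ep_0$, fix such an $\ep$, to be taken small. If there are infinitely many $\ep$-sets, choose $x_n$ in distinct $\ep$-sets, so $(x_n)$ is separated and $d_\ep(x_n,x_m)=\infty$ for $n\neq m$. By surjectivity of $\wh\Phi(x_n,\cdot)$ pick $e_n\in E$ with $\|\wh\Phi(x_n,e_n)\|>n$. Then $(x_n,e_n)$ is a $u$-sequence (the defining inequality holds with $\infty$ on the right), so Lemma \ref{l6.4} yields $f\in U(X,E)$ and an infinite set $N$ with $f(x_n)=e_n$ for $n\in N$; hence $\wh T f$ is unbounded along $\{x_n:n\in N\}$, contradicting $\wh T f\in U_*(X,F)$.

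\emph{Step 3: the hard case, and the main obstacle.} Now some $\ep$-set $S_0$ is $d_\ep$-unbounded. The target is the same, but inside $S_0$ the metric $d_\ep$ is finite, so one cannot freely enlarge the $e_n$ while keeping $(x_n,e_n)$ a $u$-sequence. I would fix $x_1\in S_0$ and choose $x_n\in S_0$ with $r_n:=d_\ep(x_n,x_1)$ strictly increasing so fast that $d_\ep(x_n,x_m)\ge\tfrac12 r_{\max(n,m)}$; this makes $(x_n)$ separated with $d_\ep(x_n,x_m)\asymp r_{\max(n,m)}$. The crux is the claim that, after passing to a subsequence, one can choose $e_n\in E$ with $\|e_n\|\le C r_n$ for a \emph{fixed} constant $C$ and $\|\wh\Phi(x_n,e_n)\|\to\infty$: were this false, every ball $\{\|e\|\le Cr_n\}$ would be carried by $\wh\Phi(x_n,\cdot)$ into one fixed ball of $F$, equivalently $\|\Psi(x_n,w)\|>C r_n$ for all $n$ once $\|w\|$ is large, and for a suitably chosen $w$ this contradicts the growth bound on $\Psi_w$ from Step 1 (this is where taking $\ep\le\ep_w$ and $C>C_w$ is used). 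Granting the claim, $(x_n,e_n)$ is a $u$-sequence, Lemma \ref{l6.4} produces $f\in U(X,E)$ with $\wh T f$ unbounded along an infinite subset, and we conclude as before. The step I expect to be most delicate is the growth-rate bookkeeping behind this claim: controlling the thresholds $\ep_w$ of the various inverse sections relative to the single scale $\ep$, and making the simultaneous passage to subsequences (of $(x_n)$ and of the vectors $w$) precise.
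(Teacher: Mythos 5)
Your Steps 1 and 2 are sound: the reduction via the uniform homeomorphism $\vp$ and the treatment of infinitely many $\ep$-sets reproduce, in substance, the paper's setup for Lemma \ref{l6.12} and its Lemma \ref{l6.11}. The problem is Step 3, and it is exactly the step you flagged. Your route to a contradiction from the negation of the claim needs the bound $\|\Psi_w(x_n)\|\le\|\Psi_w(x_1)\|+C_w\,d_{\ep_w}(x_n,x_1)+D_w$ to clash with $\|\Psi_w(x_n)\|>Cr_n$, where $r_n=d_\ep(x_n,x_1)$; for that you must have $d_{\ep_w}(x_n,x_1)\lesssim r_n$, i.e.\ $\ep\le\ep_w$. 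But $\ep$ has to be fixed first (it determines the unbounded $\ep$-set, the sequence $(x_n)$, the constant $C$, and hence the bound $M$), while $w$ is only chosen afterwards with $\|w\|>M$. Its threshold $\ep_w$ may well be smaller than $\ep$, and then $d_{\ep_w}\ge d_\ep$ with no reverse comparison — $d_{\ep_w}(x_n,x_1)$ can even be infinite (take $\Z\subseteq\R$ with $\ep=1$ and $\ep_w=1/2$). So the inequality yields no contradiction and the claim is not established; the circularity is not mere bookkeeping. The claim is also doubtful as a stepping stone: the one test function available, $f=d_\ep(\cdot,x_0)a$ (uniformly continuous by Proposition \ref{p6.3}(2)), satisfies $\|f(x_n)\|=r_n\|a\|$ while $\wh{T}f$ is bounded, so along the radial directions the images $\wh{\Phi}(x_n,r_na)$ do \emph{not} blow up, and nothing forces some other $e_n$ with $\|e_n\|\le Cr_n$ to do so.

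The paper's Lemma \ref{l6.12} gets the contradiction from precisely that boundedness, running in the opposite direction: with $b_n=\wh{\Phi}(x_n,r_na)$ bounded, it proves (the ``Claim'' there) that the inverse sections $\Theta(x_n,\cdot)$ admit a uniform modulus of continuity along the segments $\{tb_n:t\in[0,1]\}$, so that $\Theta(x_n,b_n)=r_na$ would have to be bounded — absurd. Crucially, that Claim is established using only small-scale uniform continuity of $S^{-1}g_1$ and $S^{-1}g_2$ at auxiliary points $x'_m$ with $d(x_{n_m},x'_m)\to0$ (these exist because, by Lemma \ref{l6.11}, the $x_{n_m}$ cannot all be uniformly isolated); no comparison between $d_{\ep_w}$ and $d_\ep$ is ever required. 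To repair Step 3 within your framework you would need to replace your claim by an argument of this type rather than by the growth estimate on the $\Psi_w$.
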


Before proceeding to the proof of the theorem, observe that if $X$ is Bourbaki bounded, then $U(X,E) = U_*(X,E)$.  This follows easily from Proposition \ref{p6.3}(2).
 
Let $T:U(X,E)\to U_*(Y,F)$  be a biseparating map.  By Propositions \ref{p4.2} and \ref{p6.3.0}, $T$ has a representation
\[ Tf(y) = \Phi(y,f(\vp^{-1}(y))) \text{ for all $y\in Y$ and all $f\in U(X,E)$},\]
where $\vp$ is a uniform homeomorphism and $\Phi(y,\cdot):E\to F$ is  a bijection for all $y\in Y$.
We may and do assume that $T0 = 0$, so that $\Phi(y,0) = 0$ for all $y$.

\begin{lem}\label{l6.11}
Let $X, Y$ be complete metric spaces and let $E, F$ be Banach spaces.
If there is a biseparating map from $U(X,E)$ onto $U_*(Y,F)$, then for any $\ep>0$, $X$ has finitely many $\ep$-sets.
\end{lem}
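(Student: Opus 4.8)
The plan is to argue by contradiction. Fix $\ep>0$ and suppose that $X$ has infinitely many $\ep$-sets. Choose a sequence $(S_n)$ of pairwise distinct $\ep$-sets and a point $x_n\in S_n$ for each $n$, and put $y_n=\vp(x_n)\in Y$. Since $\Phi(y_n,\cdot)\colon E\to F$ is a bijection and $F$ is a nonzero Banach space (hence unbounded in norm), for each $n\in\N$ we may pick $a_n\in E$ with $\|\Phi(y_n,a_n)\|>n$.

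The key observation is that any function which is constant on each $\ep$-set is automatically uniformly continuous: if $d(x,x')\le\ep$, then the one-step path shows $d_\ep(x,x')\le d(x,x')<\infty$, so $x$ and $x'$ lie in the same $\ep$-set and such a function takes the same value at $x$ and $x'$. Accordingly, define $f\colon X\to E$ by $f\equiv a_n$ on $S_n$ for each $n$ and $f\equiv 0$ on $X\bs\bigcup_n S_n$. This is well defined since the $S_n$ are pairwise disjoint, and by the preceding remark $f\in U(X,E)$.

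Now apply the representation $Tf(y)=\Phi(y,f(\vp^{-1}(y)))$ at the points $y_n$: since $\vp^{-1}(y_n)=x_n\in S_n$, we get $Tf(y_n)=\Phi(y_n,f(x_n))=\Phi(y_n,a_n)$, so $\|Tf(y_n)\|>n$ for all $n$. Hence $Tf$ is unbounded, contradicting $Tf\in U_*(Y,F)$. Therefore $X$ has only finitely many $\ep$-sets. I do not foresee a genuine obstacle here; the only points needing a moment's care are the uniform continuity of functions that are constant on $\ep$-sets (immediate from the definition of $d_\ep$) and the surjectivity of each $\Phi(y_n,\cdot)$ onto the nonzero space $F$, which allows the values $\Phi(y_n,a_n)$ to be made arbitrarily large. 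Note that completeness of $X$ and $Y$ is not actually used in this lemma.
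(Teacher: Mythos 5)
Your proof is correct and is essentially the same as the paper's: choose a point in each of infinitely many $\ep$-sets, use surjectivity of $\Phi(y_n,\cdot)$ to pick values with $\|\Phi(y_n,a_n)\|>n$, and build a uniformly continuous $f$ constant on each $\ep$-set whose image under $T$ is unbounded. The extra justifications you supply (why a function constant on $\ep$-sets is uniformly continuous, and that nontriviality of $F$ is what makes the values arbitrarily large) are accurate and merely make explicit what the paper leaves implicit.
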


\begin{proof}
Suppose that there exists some $\ep >0$ so that $X$ contains an infinite sequence $(X_n)^\infty_{n=1}$ of  $\ep$-sets.
Choose $x_n \in X_n$ for each $n$ and let $y_n =\vp(x_n)$.
Since $\Phi(y_n,\cdot):E\to F$ is a bijection, there exists $e_n\in E$ so that $\|\Phi(y_n,e_n)\| > n$.
Define $f:X\to E$ by $f(x) = e_n$ if $x\in X_n$, $n\in \N$ and $0$ otherwise.
Then $f$ is uniformly continuous but $\|Tf(y_n)\|= \|\Phi(y_n,e_n)\| > n$ for all $n$.
This contradicts the assumption that $Tf \in U_*(Y,F)$.
\end{proof}

\begin{lem}\label{l6.12}
Let $X, Y$ be complete metric spaces and let $E, F$ be Banach spaces.
Suppose that  there is a biseparating map from $U(X,E)$ onto $U_*(Y,F)$.  For any $\ep>0$, any $\ep$-set of $X$ is $d_\ep$-bounded.
\end{lem}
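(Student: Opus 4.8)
The plan is to argue by contradiction. Suppose $W$ is an $\ep$-set of $X$ with $\diam_{d_\ep}W=\infty$, and fix $x_1\in W$. Recall that $T$ has the representation $Tf(y)=\Phi(y,f(\vp^{-1}(y)))$ with $\vp:X\to Y$ a uniform homeomorphism, each $\Phi(y,\cdot):E\to F$ a bijection, and $\Phi(y,0)=0$; writing $\Psi(x,\cdot):=\Phi(\vp(x),\cdot)^{-1}$ we get $T^{-1}g(x)=\Psi(x,g(\vp(x)))$ and $\Psi(x,0)=0$. For $x\in W$ put $M(x)=\sup\{\|\Phi(\vp(x),e)\|:\|e\|\le d_\ep(x_1,x)\}\in[0,\infty]$. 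The proof splits according to whether (I) $\sup\{M(x):x\in W,\ d_\ep(x_1,x)\ge R\}=\infty$ for every $R>0$, or (II) there are $R_1<\infty$ and $m_0<\infty$ with $M(x)\le m_0$ whenever $x\in W$ and $d_\ep(x_1,x)\ge R_1$.

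In case (I), using $d_\ep$-unboundedness of $W$ I would choose $z_n\in W$ with $\rho_n:=d_\ep(x_1,z_n)$ satisfying $\rho_1>5\ep$ and $\rho_{n+1}>5\rho_n$, and then $e_n\in E$ with $\|e_n\|\le\rho_n$ and $\|\Phi(\vp(z_n),e_n)\|>n$. The fast growth forces $d_\ep(z_n,z_m)>4\ep$ for $n\ne m$, so the sets $\{x:d_\ep(x,z_n)<\rho_n/4\}$ are pairwise disjoint. Put $\phi_n(x)=(1-\tfrac{4}{\rho_n}d_\ep(x,z_n))^+$ and $f=\sum_n\phi_ne_n$. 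Each $\phi_ne_n$ is $d_\ep$-Lipschitz with constant $\le 4\|e_n\|/\rho_n\le 4$, and since at most one bump is nonzero near a given point and at most two on a given pair, $f$ is $d_\ep$-Lipschitz with constant $\le 8$; hence $f\in U(X,E)$ by Proposition \ref{p6.3}(2). But $Tf(\vp(z_n))=\Phi(\vp(z_n),f(z_n))=\Phi(\vp(z_n),e_n)$ has norm $>n$, so $Tf\notin U_*(Y,F)$ — a contradiction.

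Case (II) is the main obstacle, and I would handle it through $T^{-1}$. Set $n_0=m_0+1$; then $\|\Psi(x,w)\|>d_\ep(x_1,x)$ whenever $x\in W$, $d_\ep(x_1,x)\ge R_1$ and $\|w\|\ge n_0$. Choose $x_n\in W$ with $r_n:=d_\ep(x_1,x_n)\to\infty$, $r_n\ge R_1$, $r_1>5\ep$, $r_{n+1}>5r_n$; then $(x_n)$ is $4\ep$-separated. The key observation is that $W$ cannot lie inside any ball $B(x,\ep/2)$ (otherwise $\diam_{d_\ep}W\le\ep$), so for each $n$ an $\ep$-chain in $W$ starting at $x_n$ reaches $d$-distance $\ge\ep/2$ from $x_n$, and its first vertex past distance $\ep/2$ is a \emph{companion} $x_n^*\in W$ with $\ep/2\le d(x_n,x_n^*)<3\ep/2$. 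Then $\{x_n\}\cup\{x_n^*\}$ is $\tfrac\ep2$-separated, so, $\vp^{-1}$ being uniformly continuous, $\{\vp(x_n)\}\cup\{\vp(x_n^*)\}$ is $\sigma$-separated for some $\sigma>0$. Fixing a unit vector $b\in F$, set $g(y)=2n_0b\cdot\big[(2-\tfrac{3}{\sigma}d(y,\{\vp(x_m):m\}))^+\wedge 1\big]$, a bounded Lipschitz — hence uniformly continuous — function on $Y$ with $g(\vp(x_n))=2n_0b$ and $g(\vp(x_n^*))=0$. Then $T^{-1}g\in U(X,E)$, yet $\|T^{-1}g(x_n)\|=\|\Psi(x_n,2n_0b)\|>r_n\to\infty$ while $T^{-1}g(x_n^*)=\Psi(x_n^*,0)=0$ and $d(x_n,x_n^*)<3\ep/2$; thus $T^{-1}g$ is not uniformly continuous — a contradiction.

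The difficulty is concentrated in case (II): since every $f\in U(X,E)$ satisfies $\|f(x)\|\le C_f\,d_\ep(x_1,x)+C_f$ on $W$ (uniform continuity being ``$d_\ep$-Lipschitz for large distances'', cf. Proposition \ref{p6.3}(1)), one cannot force $Tf$ unbounded directly, and must instead exploit $T^{-1}$ together with the fact that uniform continuity already fails for a function oscillating with unbounded amplitude over distances bounded away from $0$ — which is exactly what the fixed-scale companions $x_n^*$ deliver. Everything else is routine interpolation at the scale of $d_\ep$, certified by Proposition \ref{p6.3}(2).
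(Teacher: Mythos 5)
Your Case (I) is correct. The problem is Case (II), and the flaw is in the very last step: the statement ``$\|T^{-1}g(x_n)\|>r_n\to\infty$ while $T^{-1}g(x_n^*)=0$ and $d(x_n,x_n^*)<3\ep/2$, thus $T^{-1}g$ is not uniformly continuous'' is not a valid inference. Uniform continuity controls oscillation only at sufficiently \emph{small} scales; it does not bound $\|h(a)-h(b)\|$ over pairs with $d(a,b)$ merely bounded, or even bounded away from $0$ and from above. (Take $X=\bigcup_n\{a_n,b_n\}$ with $d(a_n,b_n)=\ep$ and all other distances $10\ep$: the function $h(a_n)=0$, $h(b_n)=ne$ is uniformly continuous.) Nor does Proposition \ref{p6.3}(1) rescue you: it gives $\|h(a)-h(b)\|\le C d_{\ep''}(a,b)$ for an $\ep''$ depending on $h=T^{-1}g$, which may be much smaller than $\ep$; your pairs $x_n,x_n^*$ are joined by a two-step $\ep$-chain, so $d_\ep(x_n,x_n^*)\le 3\ep/2$, but $d_{\ep''}(x_n,x_n^*)$ can be huge or infinite. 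So no contradiction is reached. There is a second, smaller gap earlier: the claim that $\{x_n\}\cup\{x_n^*\}$ is $\tfrac\ep2$-separated does not follow. From $r_{n+1}>5r_n$ you only get $d_\ep(x_n,x_m)$ large, hence $d(x_n,x_m)>\ep$; since $x_m^*$ may lie anywhere in the annulus $\ep/2\le d(\cdot,x_m)<3\ep/2$, it can land arbitrarily close to some $x_n$ with $n\ne m$ (a long $\ep$-chain that doubles back in the ambient metric produces exactly this), and then $g$ cannot be $2n_0b$ near every $\vp(x_n)$ and $0$ at every $\vp(x_m^*)$.

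The structural difficulty you correctly identified --- that boundedness of $Tf$ cannot be contradicted directly in Case (II) --- is real, but oscillation at a fixed positive scale is not the way out. The paper's proof instead takes $f(x)=d_\ep(x,x_0)a$, so that $b_n=Sf(x_n)$ is bounded while $\Theta(x_n,b_n)=d_\ep(x_n,x_0)a$ is not, and then derives a contradiction from a pigeonhole/telescoping claim bounding $\limsup_n\|\Theta(x_n,sb_n)-\Theta(x_n,tb_n)\|$ for $|s-t|\le 1/m$. Crucially, to prove that claim it manufactures auxiliary points $x_m'$ with $0<d(x_{n_m},x_m')\to 0$ --- their existence is guaranteed by Lemma \ref{l6.11}, since otherwise the $x_{n_m}$ would form infinitely many $\delta$-sets --- and only then invokes uniform continuity, which is legitimate precisely because the distances tend to $0$. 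If you want to salvage your two-case scheme, Case (II) needs an argument of this telescoping type rather than a single large oscillation over a distance bounded below.
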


\begin{proof}
Define $\Xi:X\times E\to F$ by $\Xi(x,e) = \Phi(\vp(x),e)$.   The formula $Sf(x) = \Xi(x,f(x))$ defines a biseparating map from $U(X,E)$ onto $U_*(X,F)$ so that $S0 = 0$. For each $x$, $\Xi(x,\cdot):E\to F$ is  a bijection.  Denote its inverse by $\Theta(x,\cdot)$. 
Suppose that there exist $\ep>0$ and an $\ep$-set $X_0$ that is not $d_\ep$ bounded.
Fix $x_0\in X_0$ and a sequence $(x_n)$ in $X_0$ so that $d_\ep(x_{n+1},x_0) > 3d_\ep(x_n,x_0)$ for all $n$.
Let $a$ be a nonzero vector in $E$. 
By Proposition \ref{p6.3}(2), the function $f:X\to E$ given by $f(x) = d_\ep(x,x_0)a$ belongs to $U(X,E)$.
Hence $Sf\in U_*(X,F)$.
In particular, the sequence $(b_n) = (Sf(x_n))$ is bounded in $F$.

\medskip
\noindent{Claim}. There exists $m\in \N$ so that 
\[ \limsup_n\|\Theta(x_n,sb_n) - \Theta(x_n,tb_n)\| \leq 1 \text{ if $s,t\in [0,1]$, $|s-t| \leq \frac{1}{m}$}.\]

First suppose that the claim holds.
Then
\begin{align*}
 \limsup_n\|\Theta(x_n,b_n)\| & =  \limsup_n\|\Theta(x_n,0)-\Theta(x_n,b_n)\|
 \\&\leq \sum^m_{k=1}\limsup\|\Theta(x_n,\frac{k-1}{m}\,b_n) - \Theta(x_n,\frac{k}{m}\,b_n)\| \leq m.
\end{align*}
However, $\Xi(x_n,d_\ep(x_n,x_0)a) = Sf(x_n) = b_n$ for all $n$.
Hence $\Theta(x_n,b_n) = d_\ep(x_n,x_0)a$ for all $n$.
In particular, $(\Theta(x_n,b_n))$ cannot be bounded, contradicting the preceding inequality.

To complete the proof of the lemma, let us verify the claim.
If the claim fails, for each $m\in \N$, one can find $s_m,t_m\in [0,1]$, $|s_m-t_m|\leq \frac{1}{m}$, so that
\[ \limsup_n\|\Theta(x_n,s_mb_n) - \Theta(x_n,t_mb_n)\| > 1.\]
We may assume that $(s_m), (t_m)$ both converge to some $t_0\in [0,1]$.
Without loss of generality, 
\[  \limsup_n\|\Theta(x_n,s_mb_n) - \Theta(x_n,t_0b_n)\| > \frac{1}{2} \text{ for all $m$.}\]
Choose $n_1 < n_2 < \cdots$ so that 
\begin{equation}\label{e6.4}\|\Theta(x_{n_m},s_mb_{n_m})  - \Theta(x_{n_m},t_0b_{n_m})\| > \frac{1}{2} \text{ for all $m$.}\end{equation}
Clearly, $(x_n)$, and hence $(x_{n_m})$, is a separated sequence by choice. Since $(t_0b_{n_m})$ is   bounded, there exist  $g_1\in U_*(X,F)$ so that 
$g_1(x_{n_m}) = t_0b_{n_m}$ for all $m$.
If there exists $\delta > 0$ and an infinite set $M$ so that $B(x_{n_m},\delta) = \{x_{n_m}\}$ for all $m\in M$, then each $\{x_{n_m}\}$ is a $\delta$-set in $X$, contradicting Lemma \ref{l6.11}.
Therefore, there is a sequence $(x'_m)$ in $X$ so that $0 < d(x_{n_m},x'_m)\to 0$.
Note that 
$\|s_mb_{n_m} - t_0b_{n_m}\| \to 0$.  
Hence there exists $h \in U_*(X,F)$ so that $h(x_{n_m}) = s_mb_{n_m} - t_0b_{n_m}$ and $h(x'_m) = 0$ for all sufficiently large $m$.
Set $g_2 = g_1 +h$.
Since $g_1(x'_m) = g_2(x'_m)$, $S^{-1}g_1(x'_m) = S^{-1}g_2(x'_m)$ for all sufficiently large $m$.  Thus
\begin{align*}
\|\Theta(x_{n_m},&\, t_0b_{n_m}) - \Theta(x_{n_m},s_mb_{n_m})\| = \|S^{-1}g_1(x_{n_m}) -  S^{-1}g_2(x_{n_m})\|  \\
&\leq \|S^{-1}g_1(x_{n_m}) -  S^{-1}g_1(x'_m)\| + \|S^{-1}g_2(x'_{m}) -  S^{-1}g_2(x_{n_m})\|.
\end{align*}
As $S^{-1}g_1, S^{-1}g_2$ are uniformly continuous functions and $d(x_{n_m},x'_m) \to 0$, both terms on the right of the inequality tend to $0$.  So we have reached a contradiction with (\ref{e6.4}).
This completes the proof of the claim and hence of the lemma.
\end{proof}

Lemmas \ref{l6.11} and \ref{l6.12} prove Theorem \ref{t6.10}.
If $T:U(X,E)\to U_*(Y,F)$ is a biseparating map, then $X$ is Bourbaki bounded by Theorem \ref{t6.10}.
Hence  $U(X,E) = U_*(X,E)$ \cite{O'F}.
Thus the characterization  Theorem \ref{t6.7.2} applies.

\section{Spaces of Lipschitz functions}\label{s8}

We focus on biseparating maps on Lipschitz spaces in this section.  Again, we restrict consideration to complete metric spaces $X$, $Y$ and Banach spaces $E, F$.
In contrast to previous cases, we will see that there is no difference between spaces of bounded and unbounded Lipschitz functions.  In fact, it is even sufficient to consider bounded metric spaces $X$ and $Y$.
Indeed, if $(X,d)$ is a metric space, let  $X_1$ be the set $X$ with the bounded metric $d\wedge 1$.
Then clearly $\Lip_*(X,E) = \Lip(X_1,E)$.
To see that $\Lip(X,E)$ is equivalent to some  $\Lip(Z,E)$ for a bounded metric space $Z$ via a linear biseparating map, we employ essentially the same argument from  \cite[Proposition 5.2]{LT}, which has its roots in \cite{W}.
Fix a distinguished point $e$ in $X$ and define a function $\xi:X\to \R$ by $\xi(x) = d(x,e)\vee 1$.
Denote the Lipschitz constant of a function $f$ by $L(f)$. Let $d':X\times X\to \R$ be given by
\[ d'(p,q) = \sup_{\stackrel{f\in \Lip(X)}{L(f),|f(e)| \leq 1}}\bigl|\frac{f(p)}{\xi(p)} - \frac{f(q)}{\xi(q)}\bigr|,\]
where $\Lip(X)$ is the space of all real-valued Lipschitz functions on $X$.

\begin{lem}\cite[Proposition 5.1]{LT}\label{p6.1}
\begin{enumerate}
\item $d'$ is a  metric on $X$ that is bounded above by $4$.
\item 
\[ \frac{d(p,q)}{\xi(p)\vee \xi(q)}\leq d'(p,q) \leq  \frac{3d(p,q)}{\xi(p)\vee \xi(q)}\]
for all $p,q\in X$.
\item If $\xi(p) \leq \xi(q)$, then
\[ d'(p,q) \leq d'(p,q)\xi(p) \leq 3d(p,q).\]
\item If $X$ is complete with respect to the metric $d$, then it is complete with respect to the metric $d'$.
\end{enumerate}
\end{lem}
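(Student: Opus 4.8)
The plan is to deduce all four assertions from the two-sided estimate in (2), so I would establish (2) first and then read off (1), (3), (4) with little extra work. I begin by recording the elementary facts used throughout: the function $\xi(x)=d(x,e)\vee 1$ is $1$-Lipschitz (being the pointwise maximum of the $1$-Lipschitz function $d(\cdot,e)$ and a constant), satisfies $\xi\ge 1$, and dominates $d(x,e)$; hence for every $f\in\Lip(X)$ with $L(f)\le 1$ and $|f(e)|\le 1$ one has the crude bound $|f(x)|\le|f(e)|+d(x,e)\le 1+\xi(x)\le 2\xi(x)$.

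For the upper bound in (2) I would fix such an $f$, write
\[
\frac{f(p)}{\xi(p)}-\frac{f(q)}{\xi(q)}=\frac{f(p)\bigl(\xi(q)-\xi(p)\bigr)+\xi(p)\bigl(f(p)-f(q)\bigr)}{\xi(p)\,\xi(q)},
\]
and estimate the numerator using $|\xi(q)-\xi(p)|\le d(p,q)$, $|f(p)-f(q)|\le d(p,q)$ and $|f(p)|\le 2\xi(p)$; assuming without loss of generality that $\xi(p)\le\xi(q)$, this yields $|f(p)/\xi(p)-f(q)/\xi(q)|\le 3d(p,q)/\xi(q)$, and taking the supremum over admissible $f$ gives $d'(p,q)\le 3d(p,q)/(\xi(p)\vee\xi(q))$. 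For the lower bound I would test $d'$ against the concrete $1$-Lipschitz function $f(x)=\min\{d(x,q),d(p,q)\}-\min\{d(e,q),d(p,q)\}$, which vanishes at $e$ and is therefore admissible; writing $m=\min\{d(e,q),d(p,q)\}\in[0,d(p,q)]$ and $c=\xi(p)\vee\xi(q)$, one computes
\[
\frac{f(p)}{\xi(p)}-\frac{f(q)}{\xi(q)}=\frac{d(p,q)-m}{\xi(p)}+\frac{m}{\xi(q)}\ge\frac{d(p,q)-m}{c}+\frac{m}{c}=\frac{d(p,q)}{c},
\]
so $d'(p,q)\ge d(p,q)/(\xi(p)\vee\xi(q))$.

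Given (2), parts (1) and (3) are routine. Symmetry and $d'(p,p)=0$ are immediate from the definition; $d'(p,q)=0$ forces $d(p,q)=0$ by the lower bound in (2); the triangle inequality is inherited from $\R$, since for each admissible $f$ the three quotients satisfy it and one then takes suprema; and $|f(x)|\le 2\xi(x)$ gives $|f(p)/\xi(p)-f(q)/\xi(q)|\le 2+2=4$, so $d'$ is finite and bounded by $4$. For (3), the first inequality is just $\xi(p)\ge 1$, and the second follows by multiplying $d'(p,q)\le 3d(p,q)/\xi(q)$ (from (2), with $c=\xi(q)$) by $\xi(p)\le\xi(q)$.

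The one step requiring care is (4), because the identity map $(X,d)\to(X,d')$ is globally $3$-Lipschitz (by the upper bound in (2) together with $\xi\ge 1$), whereas its inverse is only \emph{locally} Lipschitz with respect to $d'$. Given a $d'$-Cauchy sequence $(x_n)$, I would choose $N$ with $d'(x_m,x_n)<1/2$ for all $m,n\ge N$ and first show $\xi(x_n)\le 2\xi(x_N)$ for $n\ge N$: if $\xi(x_n)>\xi(x_N)$, then the lower bound in (2), rewritten as $d(p,q)\le(\xi(p)\vee\xi(q))\,d'(p,q)$, gives $d(x_N,x_n)<\frac{1}{2}\xi(x_n)$, and then $1$-Lipschitzness of $\xi$ forces $\xi(x_n)\le\xi(x_N)+d(x_N,x_n)<\xi(x_N)+\frac{1}{2}\xi(x_n)$, i.e. $\xi(x_n)<2\xi(x_N)$. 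Consequently, for $m,n\ge N$, $d(x_m,x_n)\le(\xi(x_m)\vee\xi(x_n))\,d'(x_m,x_n)\le 2\xi(x_N)\,d'(x_m,x_n)\to 0$, so $(x_n)$ is $d$-Cauchy, converges to some $x\in X$ by $d$-completeness, and finally $d'(x_n,x)\le 3d(x_n,x)\to 0$. Hence $(X,d')$ is complete. I would note that the statement, as well as this line of argument, are from \cite[Proposition 5.1]{LT}.
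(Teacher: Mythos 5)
Your proof is correct in all four parts: the algebraic decomposition of $\frac{f(p)}{\xi(p)}-\frac{f(q)}{\xi(q)}$ together with the bounds $|f(x)|\le 2\xi(x)$, $|\xi(p)-\xi(q)|\le d(p,q)$ gives the upper estimate in (2), the test function $f(x)=\min\{d(x,q),d(p,q)\}-\min\{d(e,q),d(p,q)\}$ is admissible and yields the lower estimate, and (1), (3), (4) then follow as you describe, with the only delicate point --- that the inverse of the identity $(X,d)\to(X,d')$ is merely locally Lipschitz --- correctly handled by the uniform bound $\xi(x_n)\le 2\xi(x_N)$ along the tail of a $d'$-Cauchy sequence. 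The paper does not reproduce a proof of Lemma \ref{p6.1} but simply cites \cite[Proposition 5.1]{LT}, so there is no in-paper argument to compare against; your verification is complete and self-contained.
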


Let $Z$ be the set $X$ with the metric $d'$.

\begin{prop}\label{p6.2}
$f\in \Lip(X,E)$ if and only if $\frac{f}{\xi} \in \Lip(Z,E)$.
In particular, $T:\Lip(X,E)\to \Lip(Z,E)$, $Tf = \frac{f}{\xi}$, is a linear biseparating map.
\end{prop}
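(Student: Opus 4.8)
The plan is to establish the stated equivalence by two explicit Lipschitz estimates, one in each direction, and then to read off that $T$ is a linear biseparating bijection. Throughout I write $g=f/\xi$ and use the standing facts that $\xi\ge 1$, that $\xi(e)=1$ (so $g(e)=f(e)$), and that $\xi$ is $1$-Lipschitz for $d$ (being the composite of the $1$-Lipschitz maps $x\mapsto d(x,e)$ and $t\mapsto t\vee 1$), hence $|\xi(p)-\xi(q)|\le d(p,q)$ for all $p,q$.

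First I would show that $f\in\Lip(X,E)$ implies $g\in\Lip(Z,E)$. Fix $p,q$ with, say, $\xi(q)\le\xi(p)$ (permissible by symmetry) and use the identity
\[ g(p)-g(q)=\frac{f(p)-f(q)}{\xi(p)}+f(q)\,\frac{\xi(q)-\xi(p)}{\xi(p)\,\xi(q)}. \]
The first summand is controlled by $L(f)\,d(p,q)/\xi(p)=L(f)\,d(p,q)/(\xi(p)\vee\xi(q))\le L(f)\,d'(p,q)$ via the lower estimate in Lemma \ref{p6.1}(2). For the second summand one first notes $\|f(q)\|\le\|f(q)-f(e)\|+\|f(e)\|\le (L(f)+\|f(e)\|)\,\xi(q)$, so it is bounded by $(L(f)+\|f(e)\|)\,d(p,q)/\xi(p)\le (L(f)+\|f(e)\|)\,d'(p,q)$, again by Lemma \ref{p6.1}(2). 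Hence $g\in\Lip(Z,E)$.

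Next I would prove the converse: if $g=f/\xi\in\Lip(Z,E)$, then $f\in\Lip(X,E)$. Now fix $p,q$ with $\xi(p)\le\xi(q)$ and use
\[ f(p)-f(q)=\xi(p)\bigl(g(p)-g(q)\bigr)+\bigl(\xi(p)-\xi(q)\bigr)g(q). \]
By Lemma \ref{p6.1}(3), $\xi(p)\,d'(p,q)\le 3\,d(p,q)$, so the first summand is at most $3L(g)\,d(p,q)$. The key point for the second summand is that $g$ is automatically \emph{bounded}: since $d'\le 4$ by Lemma \ref{p6.1}(1), $\|g(q)\|\le\|g(e)\|+L(g)\,d'(q,e)\le\|f(e)\|+4L(g)$, which together with $|\xi(p)-\xi(q)|\le d(p,q)$ yields a bound $(\|f(e)\|+4L(g))\,d(p,q)$. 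Therefore $f\in\Lip(X,E)$.

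Finally, with the equivalence in hand, $T$ is visibly linear (division by the fixed nowhere-zero scalar function $\xi$) and, by the equivalence, a bijection with inverse $g\mapsto\xi g$. Since $\xi>0$, for any $f,h$ one has $C(Tf-Th)=C((f-h)/\xi)=C(f-h)$ and likewise $C(Tg-Th)=C(g-h)$, so $f\perp_h g$ holds if and only if $Tf\perp_{Th}Tg$; thus $T$ is biseparating. I expect the only non-routine point to be the converse estimate, where one must recognize that membership of $g$ in $\Lip(Z,E)$ already forces $g$ to be bounded — exactly what lets one avoid any boundedness hypothesis on $f$ and treat the unbounded space $\Lip(X,E)$ on the same footing as its bounded counterpart.
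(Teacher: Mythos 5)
Your proof is correct, and its second half (the implication $f/\xi\in\Lip(Z,E)\Rightarrow f\in\Lip(X,E)$) is essentially the paper's argument: the same decomposition $f(p)-f(q)=\xi(p)\bigl(g(p)-g(q)\bigr)+\bigl(\xi(p)-\xi(q)\bigr)g(q)$, the same use of $d'\le 4$ to bound $\|g(q)\|$ by $4L'(g)+\|g(e)\|$, and the same appeal to Lemma \ref{p6.1}(3). Where you genuinely diverge is in the forward implication. The paper exploits the fact that $d'$ is \emph{defined} as a supremum of $\bigl|\tfrac{h(p)}{\xi(p)}-\tfrac{h(q)}{\xi(q)}\bigr|$ over normalized scalar Lipschitz functions $h$: it normalizes $x^*\circ f$ for $\|x^*\|\le 1$, reads off the scalar estimate directly from the definition of $d'$, and then recovers the vector-valued bound by taking a supremum over the dual unit ball (Hahn--Banach). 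You instead give a direct two-term estimate $g(p)-g(q)=\tfrac{f(p)-f(q)}{\xi(p)}+f(q)\tfrac{\xi(q)-\xi(p)}{\xi(p)\xi(q)}$, controlling both terms by the lower bound $d(p,q)/(\xi(p)\vee\xi(q))\le d'(p,q)$ of Lemma \ref{p6.1}(2) together with $\|f(q)\|\le (L(f)+\|f(e)\|)\xi(q)$ and the $1$-Lipschitzness of $\xi$. Your route is more elementary and self-contained (no duality, no use of the defining formula of $d'$ beyond the metric comparison), at the modest cost of a slightly larger Lipschitz constant (roughly $2L(f)+\|f(e)\|$ versus the paper's $L(f)\vee\|f(e)\|\vee 1$); it also makes the two directions pleasingly symmetric. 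The concluding observation that $C(Tf-Th)=C(f-h)$ because $\xi>0$, whence $T$ is biseparating, is exactly the ``easy'' step the paper leaves to the reader.
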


\begin{proof}
The second assertion follows easily from the first.
Suppose that $f\in \Lip(X,E)$.  Set $c = L(f)\vee \|f(e)\| \vee 1$.
For any $x^* \in E^*$, $\|x^*\| \leq 1$,  $g = (x^*\circ f)/c\in \Lip(X)$ and $L(g), |g(e)| \leq 1$.
By definition of $d'$,
\[ cd'(p,q) \geq c\bigl|\frac{g(p)}{\xi(p)} - \frac{g(q)}{\xi(q)}\bigr| = \bigl|x^*\bigl(\frac{f}{\xi}(p) - \frac{f}{\xi}(q)\bigr)\bigr|.
\]
Taking supremum over $\|x^*\|\leq 1$ shows that $\frac{f}{\xi}\in \Lip(Z,E)$ with Lipschitz constant at most $c$.

Conversely, suppose that $g= \frac{f}{\xi}\in \Lip(Z,E)$.  Let $p,q$ be distinct points in $X$ so that $\xi(p) \leq \xi(q)$.  Denote the Lipschitz constant of $g$ with respect to the metric $d'$ by $L'(g)$.
Then
\[ \|g(q)\| \leq \|g(q) - g(e)\| + \|g(e)\| \leq L'(g)d'(q,e) + \|g(e)\| \leq 4L'(g) + \|g(e)\|\]
since $d' \leq 4$.
Hence
\begin{align*}
\|f(p) - f(q)\| & \leq \|g(p) - g(q)\|\xi(p) + \|g(q)\|(\xi(q) - \xi(p))\\
&\leq L'(g)d'(p,q)\xi(p) + (4L'(g)+ \|g(e)\|)d(p,q)\\
&\leq (7L'(g)+\|g(e)\|)d(p,q) \text{ by Lemma \ref{p6.1}(3)}.
\end{align*}
Thus $f\in \Lip(X,E)$.
\end{proof}

\subsection{$\vp$ is a Lipschitz homeomorphism} 

In view of the above, throughout the rest of  this section, $X$ and $Y$ will be assumed to be bounded complete metric spaces.  Let $T:\Lip(X,E)\to \Lip(Y,F)$ be a biseparating map so that $T0 =0$.
Once again, we have  a representation (Proposition \ref{p4.2})
\begin{equation}\label{e7.1}Tf(y) = \Phi(y,f(\vp^{-1}(y))) \text{ for all $y\in Y$ and all $f\in \Lip(X,E)$},\end{equation}
where $\vp:X\to Y$ is a homeomorphism and $\Phi:Y\times E\to F$ 
is a function such that $\Phi(y,\cdot):E\to F$ is a bijection for all $y\in Y$.
Denote the inverse of $\Phi(y,\cdot)$ by $\Psi(x,\cdot)$, where $\vp(x) = y$.

\begin{prop}\label{p7.3}
Suppose that $(x_n), (x'_n)$ are sequences  in $X$.  Let $y_n = \vp(x_n), y_n' = \vp(x_n')$ for all $n$.
If $(y_n)$ is a separated sequence, then there exists $C<\infty$ so that $d(y_n,y_n') \leq Cd(x_n,x_n')$ for all $n$.
\end{prop}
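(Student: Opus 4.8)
The plan is to argue by contradiction, using the representation \eqref{e7.1} together with the bijectivity of the sections $\Phi(y,\cdot)$, $\Psi(x,\cdot)$, where $T$ is normalized so that $T0=0$ (hence $\Phi(y,0)=0$ and $\Psi(x,0)=0$) and $T^{-1}g(x)=\Psi(x,g(\vp(x)))$. Suppose no such $C$ exists. Passing to a subsequence we may assume $d(y_n,y_n')>n\,d(x_n,x_n')$ for all $n$; the indices with $x_n=x_n'$ (then $y_n=y_n'$, as $\vp$ is injective) are harmless, so assume $x_n\ne x_n'$ and put $\eta_n:=d(x_n,x_n')>0$. Since $Y$ is bounded, $d(y_n,y_n')\le\diam Y$, whence $\eta_n<\diam Y/n\to 0$. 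These properties persist after any further passage to a subsequence (with relabelling).

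First I would move the two point sequences into a jointly separated position. The separated sequence $(y_n)$ has no convergent subsequence, and neither does $(y_n')$: if $y_{n_k}'\to y'_*$ then $x_{n_k}'=\vp^{-1}(y_{n_k}')\to\vp^{-1}(y'_*)$ by continuity of $\vp^{-1}$, and since $\eta_{n_k}\to 0$ also $x_{n_k}\to\vp^{-1}(y'_*)$, so $y_{n_k}=\vp(x_{n_k})\to y'_*$, contradicting that a subsequence of the separated sequence $(y_n)$ cannot converge. An infinite set with no limit point in the complete space $Y$ is not totally bounded and hence contains an infinite separated subset, so after a further subsequence $(y_n')$ is separated as well. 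A greedy selection — at each step only finitely many indices are excluded, since each of $(y_n)$, $(y_n')$ has at most one term in any ball of radius less than half its separation constant — then produces a subsequence and an $\epsilon>0$ with $d(y_n,y_m)\ge\epsilon$, $d(y_n',y_m')\ge\epsilon$, and $d(y_n,y_m')\ge\epsilon$ for all $n\ne m$.

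Next I would use bijectivity and boundedness to extract useful heights. For each $v\in E$ the function $y\mapsto\Phi(y,v)=T(1\otimes v)(y)$ lies in $\Lip(Y,F)$ and is therefore bounded on the bounded space $Y$. Hence it is impossible that for every $M<\infty$ one has $\Psi(x_n,MB_F)\subseteq B_E$ for all large $n$: since $\Psi(x_n,\cdot)$ is a bijection this would say $\|e\|>1\Rightarrow\|\Phi(y_n,e)\|>M$ for all large $n$, and fixing $e_0$ with $\|e_0\|=2$ would force $\|\Phi(y_n,e_0)\|>M$ for all large $n$, contradicting $\sup_n\|\Phi(y_n,e_0)\|<\infty$ once $M$ exceeds this supremum. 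So there are $M<\infty$ and, after passing to the subsequence of the offending indices and choosing suitable $\beta_n\in MB_F$, $\|\Psi(x_n,\beta_n)\|\ge 1$. Passing once more to a subsequence, either (i) $\inf_n d(y_n,y_n')>0$ or (ii) $d(y_n,y_n')\to 0$. In case (i) we may shrink $\epsilon$ so that $d(y_n,y_n')\ge\epsilon$ as well; then $g(y):=\sum_n\beta_n\bigl(1-\tfrac{2}{\epsilon}d(y,y_n)\bigr)^+$ belongs to $\Lip(Y,F)$ with $L(g)\le 2M/\epsilon$ (disjoint bump supports, since $\{y_n\}$ is $\epsilon$-separated; bounded heights), satisfies $g(y_n)=\beta_n$, and $g(y_n')=0$ because $d(y_n',y_m)\ge\epsilon>\epsilon/2$ for every $m$. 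Consequently $T^{-1}g\in\Lip(X,E)$, and from $T^{-1}g(x)=\Psi(x,g(\vp(x)))$,
\[ 1\ \le\ \|\Psi(x_n,\beta_n)\|\ =\ \|T^{-1}g(x_n)-T^{-1}g(x_n')\|\ \le\ L(T^{-1}g)\,\eta_n\ \longrightarrow\ 0,\]
which is the desired contradiction.

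Case (ii), $d(y_n,y_n')\to 0$, is the main obstacle. Here a bump around $y_n$ that separates $y_n$ from $y_n'$ must have radius smaller than $d(y_n,y_n')\to 0$, hence Lipschitz constant at least $\sim\|\beta_n\|/d(y_n,y_n')$; the bounded heights of the previous paragraph can no longer be assembled into a single Lipschitz function on $Y$, and in fact no Lipschitz $g$ on $Y$ can make $g(y_n)$ and $g(y_n')$ differ by more than $O(d(y_n,y_n'))$, so a purely direct argument is impossible — one must use finer information about the sections $\Psi(x_n,\cdot)$. I expect to handle this with the Baire-category and combinatorial machinery behind Theorem \ref{t7.5}: the family $\mathcal{G}=\{g_\gamma:=\sum_n\gamma_n(1-\tfrac2\epsilon d(\cdot,y_n))^+:\ \gamma\in\ell^\infty(F),\ \|\gamma\|_\infty\le 1\}$ is a complete metric space on which $\gamma\mapsto g_\gamma$ is bi-Lipschitz, $\mathcal{G}=\bigcup_k\mathcal{G}_k$ with $\mathcal{G}_k=\{g_\gamma\in\mathcal{G}:L(T^{-1}g_\gamma)\le k\}$, and a Baire argument should yield a ball of heights on which $T^{-1}$ is uniformly Lipschitz; combining the resulting uniform estimate $\|\Psi(x_n,\gamma_n)-\Psi(x_n',\tau_n\gamma_n)\|\le k\eta_n$ (with $\tau_n=1-\tfrac2\epsilon d(y_n,y_n')\to 1$, valid for all $\gamma_n$ in a fixed ball of $F$) with $d(y_n,y_n')>n\eta_n$ and the bijectivity of the sections should force the contradiction. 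I anticipate the bookkeeping in this last step to be the most delicate part of the proof.
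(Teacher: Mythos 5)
Your setup, the reduction to jointly separated sequences, and your Case (i) are all sound and match the paper's Case 1 in substance (the paper gets the test values more simply, as $g(y_n)$ with $g=T(1\otimes a)$, rather than via your argument producing $\beta_n\in MB_F$ with $\|\Psi(x_n,\beta_n)\|\ge 1$, but both work). The problem is Case (ii), $d(y_n,y_n')\to 0$: you do not prove it, you only announce a Baire-category strategy and defer "the most delicate part." That is a genuine gap, and moreover your diagnosis that "a purely direct argument is impossible" is incorrect. The paper's proof of this proposition is entirely direct and uses no Baire category (Baire category enters only later, in the proof of Theorem \ref{t7.5}).

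The missing idea is a telescoping/pigeonhole argument. Set $g=T(1\otimes a)$ for a fixed $a\neq 0$. After disposing of the easy subcase where $\|g(y_n)\|/d(y_n,y_n')$ is bounded (then one interpolates $h(y_n)=g(y_n)$, $h(y_n')=0$ directly, exactly as in Case (i)), one may arrange $s_n$ with $d(y_n,y_n')\le s_n<2d(y_n,y_n')$ and $k_n:=\|g(y_n)\|/s_n\in\N$. Writing
\[
a=\Psi(x_n,g(y_n))=\sum_{j=1}^{k_n}\Bigl[\Psi\bigl(x_n,\tfrac{j}{k_n}g(y_n)\bigr)-\Psi\bigl(x_n,\tfrac{j-1}{k_n}g(y_n)\bigr)\Bigr],
\]
the pigeonhole principle gives an increment of norm at least $\|a\|/k_n$, and hence indices $i_0\in\{j_0-1,j_0\}$ with $\|\Psi(x_n,\tfrac{i_0}{k_n}g(y_n))-\Psi(x_n',\tfrac{j_0}{k_n}g(y_n'))\|\ge\|a\|/(2k_n)$. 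The crucial point you overlooked is that the target values $\tfrac{i_0}{k_n}g(y_n)$ and $\tfrac{j_0}{k_n}g(y_n')$ differ in norm by only $O(d(y_n,y_n'))$, so they \emph{can} be realized by Lipschitz functions $h_1,h_2$ on $Y$ (your observation that Lipschitz functions cannot separate $y_n$ from $y_n'$ by more than $O(d(y_n,y_n'))$ is true but irrelevant, since only a small separation is needed). Then $f=T^{-1}(h_1+h_2)$ satisfies $\|f(x_n)-f(x_n')\|\ge\|a\|/(2k_n)\ge \bigl(\|a\|/(2\|g\|_\infty)\bigr)\,d(y_n,y_n')$, and the Lipschitz bound on $f$ forces $d(y_n,y_n')\le C\,d(x_n,x_n')$, contradicting the assumption. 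Without this (or a completed version of your Baire argument), your proof does not establish the proposition.
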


\begin{proof}
Assume that the proposition fails.  There are sequences as in the statement of the proposition so that $d(y_n,y_n')/d(x_n,x_n') \to \infty$.  Since $Y$ is bounded, $d(x_n,x_n') \to 0$.
If $(y_n')$ has a convergent subsequence, then $(x_n')$, and hence $(x_n)$ has a convergent subsequence, which in turn implies that $(y_n)$ has  a convergent subsequence, contrary to the choice of $(y_n)$.
Thus, by taking a subsequence if necessary, we may assume that both $(y_n)$ and $(y_n')$ are separated sequences.  Fix a nonzero vector $a\in E$ and let $g = T(1\otimes a)$.

\medskip

\noindent\underline{Case 1}.  $d(y_n,y_n') \not\to 0$.

In this case, by taking a subsequence, we may assume that $(y_n)\cup (y_n')$ is a separated set.
Since $g\in \Lip(Y,F)$, $(g(y_n))$ is a bounded sequence in $F$.
Hence there exists $h\in \Lip(Y,F)$ so that $h(y_n) = g(y_n)$ and $h(y_n') = 0$ for all large $n$.
Then $T^{-1}h(x_n) = a$ and $T^{-1}(x_n') = 0$ for all large $n$.
Since $T^{-1}h$ is Lipschitz and $d(x_n,x_n')\to 0$, we have a contradiction.

\medskip

\noindent\underline{Case 2}.  $d(y_n,y_n')\to 0$.

If $(\frac{\|g(y_n)\|}{d(y_n,y_n')})$ is bounded, then there is a function $h\in \Lip(Y,F)$ so that 
$h(y_n) = g(y_n)$ and $h(y_n') = 0$ for all large $n$.
Thus $T^{-1}h(x_n) = a$ and $T^{-1}h(x_n') = 0$.  This is impossible since $T^{-1}h$ is Lipschitz and $d(x_n,x_n') \to 0$.

From the unboundedness of $(\frac{\|g(y_n)\|}{d(y_n,y_n')})$, we may assume without loss of generality that for each  $n$, there exists  $s_n$ so that $d(y_n,y'_n)\leq s_n< 2d(y_n,y_n')$ and that $k_n = \frac{\|g(y_n)\|}{s_n}\in \N$.
Since $\Phi(y_n,a) = g(y_n)$, $\Psi(x_n,g(y_n)) = a$.  Thus
\[ \sum^{k_n}_{j=1}[\Psi(x_n,\frac{jg(y_n)}{k_n}) - \Psi(x_n,\frac{(j-1)g(y_n)}{k_n})] = \Psi(x_n,g(y_n)) = a.
\]
Hence there exists $j_0\in \{1,\dots, k_n\}$ so that 
\[ \|\Psi(x_n,\frac{j_0g(y_n)}{k_n}) - \Psi(x_n,\frac{(j_0-1)g(y_n)}{k_n})\| \geq \frac{\|a\|}{k_n}.\]
Therefore, there exists $i_0\in \{j_0-1,j_0\}$ so that 
\begin{equation}\label{e7.2.0} \|\Psi(x_n, \frac{i_0g(y_n)}{k_n}) - \Psi(x'_n, \frac{j_0g(y_n')}{k_n})\| \geq \frac{\|a\|}{2k_n}.\end{equation}
Now
\begin{align}\label{e7.2}
\|\frac{i_0g(y_n)}{k_n} &- \frac{j_0g(y_n')}{k_n}\| \leq \frac{\|g(y_n)\|}{k_n}  + \frac{j_0}{k_n}\|g(y_n)-g(y_n')\| \\ \notag
&\leq s_n + \frac{j_0L(g)}{k_n}d(y_n,y_n') \leq (2+L(g))d(y_n,y_n'),
\end{align}
where $L(g)$ is the Lipschitz constant of $g$.
Since  $(y_n)$ is separated and $(\frac{i_0g(y_n)}{k_n})$ is a bounded sequence in $F$, there exists $h_1\in \Lip(Y,F)$ so that $h_1(y_n) = \frac{i_0g(y_n)}{k_n}$ for all $n$.
Let $L(h_1)$ be the Lipschitz constant of $h_1$.  By (\ref{e7.2}),
\begin{align*} \|\frac{j_0g(y_n)}{k_n} - h_1(y_n')\|& \leq \|\frac{j_0g(y_n)}{k_n} - h_1(y_n)\| + L(h_1)d(y_n,y_n')
\\&\leq (2+L(g)+L(h_1))d(y_n,y_n').
\end{align*}
  Therefore, 
one can construct a function $h_2\in \Lip(Y,F)$ so that 
\[ h_2(y_n) = 0 \text{ and } h_2(y_n') = \frac{j_0g(y_n')}{k_n}- h_1(y_n') \text{ for all large $n$}.\]
Let $f= T^{-1}(h_1+h_2)$.
Then $Tf(y_n) = h_1(y_n)$ and hence $f(x_n) =\Psi(x_n,  \frac{i_0g(y_n)}{k_n})$ for all large $n$.
Similarly, $f(x_n') = \Psi(x_n, \frac{j_0g(y_n)}{k_n})$ for all large $n$.
Note that $g$ is a bounded function.  Set $\|g\|_\infty = \sup_{y\in Y}\|g(y)\|$. By (\ref{e7.2.0}, 
\[ \|f(x_n) -f(x_n')\| \geq \frac{\|a\|}{2k_n} \geq \frac{\|a\|s_n}{2\|g\|_\infty} \geq \frac{\|a\|}{2\|g\|_\infty}d(y_n,y_n')\] for all large $n$.
Since $f$ is Lipschitz, it follows that $d(y_n,y_n')/d(x_n,x'_n)\not\to \infty$.
This contradiction completes the proof of the proposition.
\end{proof}

If $(x_0,e_0)\in X\times E$, $C<\infty$ and $r >0$, let
\begin{equation}\label{e7.4} \Delta(x_0,e_0,C,r) = \{(x,e)\in X\times E:  d(x,x_0) \leq r, \|e-e_0\| \leq Cd(x,x_0)\}.\end{equation}
Then set $\Delta(x_0,e_0,C) = \bigcup_{r > 0} \Delta(x_0,e_0,C,r)$.
It is not surprising that  understanding the map $T$ depends on analyzing the sets $\Delta(x_0,e_0,C,r)$ and $\Delta(x_0,e_0,C)$.  For a very special instance, see \cite[Section 7.2]{AZ}.
Define sets in $Y\times F$ in a similar manner.
Let $M:X\times E\to Y\times F$ be the function
\[M(x,u) = (\vp(x),\Phi(\vp(x),u)).\]
Then $M$ is a bijection.  Moreover, if $f \in \Lip(X,E)$, then $M(x_0, f(x_0)) = (\vp(x_0), Tf(\vp(x_0)))$.

Suppose that $\vp:X\to Y$ is not Lipschitz.  There are sequences $(x_n), (x'_n)$   in $X$, $x_n\neq x_n'$ for all $n$, so that taking  $y_n = \vp(x_n), y_n' = \vp(x_n')$, we have  $d(y_n,y_n')/d(x_n,x_n') \to \infty$.
Since $Y$ is bounded, $d(x_n,x_n') \to 0$.  By Proposition \ref{p7.3}, $(y_n)$ cannot be a separated sequence.
Taking a subsequence if necessary, we may assume that $(y_n)$ converges to some $y_0$.
Then $(x_n)$ converges to $\vp^{-1}(y_0) = x_0$. The same must hold for $(x'_n)$.  Therefore, $(y_n')$ also converges to $y_0$.
With further subsequences and relabeling the primed and unprimed terms if necessary, we may assume that $d(y_n,y_0) \leq d(y_n',y_0)$ for all $n$.
With this assumption, $y_n'\neq y_0$ for all $n$.  For otherwise $y_n = y_0 = y_n'$, which implies that $x_n = x'_n$, contrary to their choice.  Hence we may further assume that 
\[ 2d(y'_{n+1},y_0) <d(y_n',y_0) \text{ and } 2d(y'_{n+1},y_0)  < d(y_n,y_0) \text{ if $y_n\neq y_0$}.\]

\begin{prop}\label{p7.5}
Let $v_0\in F$ and let $u_n\in E$ be determined by $M(x_n,u_n) = (y_n,v_0)$.  There exists $m=m(v_0)\in \N$ so that for any $n\geq m$, if   $(y_n',v_n') \in \Delta(y_n,v_0,1)$, then 
\[ (x_n',u_n') = M^{-1}(y_n',v_n')\in \Delta(x_n,u_n,m).\] 
\end{prop}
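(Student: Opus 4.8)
The plan is to argue by contradiction, so suppose the conclusion fails for every $m\in\N$. Since in the negated assertion the witnessing index $n$ may always be taken $\ge m$, a routine extraction produces a strictly increasing sequence $(n_k)_{k\ge 1}$ of positive integers and vectors $w_k\in F$ with $\|w_k-v_0\|\le d(y'_{n_k},y_{n_k})$ such that
\[ \|\Psi(x'_{n_k},w_k)-\Psi(x_{n_k},v_0)\|>k\,d(x'_{n_k},x_{n_k})\quad\text{for all }k .\]
Here I use that $M^{-1}(y'_{n_k},w_k)=\bigl(x'_{n_k},\Psi(x'_{n_k},w_k)\bigr)$ and that, since the radius constraint in the definition of $\Delta$ is automatically met, membership $\bigl(x'_n,\Psi(x'_n,v'_n)\bigr)\in\Delta(x_n,u_n,m)$ is equivalent to $\|\Psi(x'_n,v'_n)-u_n\|\le m\,d(x'_n,x_n)$. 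Writing $u_k=\Psi(x_{n_k},v_0)$, so that $\Phi(y_{n_k},u_k)=v_0$, this $u_k$ is precisely the vector $u_{n_k}$ of the statement.

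The core of the proof will be to build one function $h\in\Lip(Y,F)$ with $h(y_{n_k})=v_0$ and $h(y'_{n_k})=w_k$ for every $k$, exploiting the geometric separation of the two sequences around their common limit $y_0\in Y$. First I would record, straight from the spacing inequalities imposed just before the proposition, that $\rho_k:=d(y'_{n_k},y_0)$ satisfies $\rho_{k+1}<\tfrac12\rho_k$, that $\delta_k:=d(y_{n_k},y'_{n_k})$ satisfies $0<\delta_k\le 2\rho_k$, and that $d(y_{n_k},y_0)>2\rho_j$ whenever $j>k$ and $y_{n_k}\ne y_0$. Then I would put $\ep_k=\tfrac14\min(\delta_k,\rho_k)$ and take the tent functions $\psi_k(y)=\bigl(1-d(y,y'_{n_k})/\ep_k\bigr)^+$; each $\psi_k\colon Y\to[0,1]$ equals $1$ at $y'_{n_k}$, is $\ep_k^{-1}$-Lipschitz, and is supported in the ball of radius $\ep_k$ about $y'_{n_k}$. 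The estimates on $\rho_k,\delta_k$ give that these balls are pairwise disjoint and that none of them contains any of the points $y_0,y_{n_1},y_{n_2},\dots$. Define
\[ h=v_0+\sum_k\psi_k\,(w_k-v_0) .\]
Disjointness of the supports makes $h$ well defined and forces $h(y_{n_k})=v_0$ and $h(y'_{n_k})=w_k$. To see $h\in\Lip(Y,F)$ I would note that each summand $\psi_k(w_k-v_0)$ is Lipschitz with constant $\|w_k-v_0\|/\ep_k\le 4\delta_k/\min(\delta_k,\rho_k)\le 8$, a bound independent of $k$, and then run the short three-case comparison of $h$ at two points --- both in the same $\psi_k$-support, in two distinct supports, or outside all supports --- to obtain a global Lipschitz constant, e.g.\ $16$.

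Finally I would push $h$ back through $T^{-1}$: by the representation of the inverse, $T^{-1}h\in\Lip(X,E)$ and $T^{-1}h(x)=\Psi(x,h(\vp(x)))$, whence $T^{-1}h(x_{n_k})=\Psi(x_{n_k},v_0)=u_k$ and $T^{-1}h(x'_{n_k})=\Psi(x'_{n_k},w_k)$. Therefore
\[ \|\Psi(x'_{n_k},w_k)-u_k\|=\|T^{-1}h(x'_{n_k})-T^{-1}h(x_{n_k})\|\le L(T^{-1}h)\,d(x'_{n_k},x_{n_k}) .\]
Since $x_n\ne x'_n$ we have $d(x'_{n_k},x_{n_k})>0$, so this contradicts the strict inequality of the first paragraph as soon as $k>L(T^{-1}h)$, and the proof is complete. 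I expect the main obstacle to be the construction of $h$ with a Lipschitz constant independent of $k$: both the pairwise disjointness of the bump supports and the uniformity of the ratio $\|w_k-v_0\|/\ep_k$ force one to handle simultaneously the two regimes $\delta_k\ll\rho_k$ and $\delta_k\asymp\rho_k$, which is exactly why one should choose $\ep_k\asymp\min(\delta_k,\rho_k)$ and track carefully both the geometric decay of $(\rho_k)$ and the location of each $y_{n_k}$ relative to $y_0$.
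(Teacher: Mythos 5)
Your proposal is correct and follows essentially the same route as the paper's proof: negate the statement, extract a strictly increasing sequence of bad indices, build a single Lipschitz function on $Y$ out of disjoint bumps centered at the $y'_{n_k}$ (equal to $v_0$ at every $y_{n_j}$ and to $v'_{n_k}$ at $y'_{n_k}$), and contradict the Lipschitz constant of its preimage under $T$. The only difference is cosmetic — you take bump radius $\tfrac14\min(\delta_k,\rho_k)$ where the paper uses $\delta_k/4$, which if anything makes the disjointness check cleaner.
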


\begin{proof}
Otherwise,  there are $n_m \geq m$, $(y_{n_m}',v_{n_m})\in \Delta(y_{n_m},v_0,1)$ so that 
$(x_{n_m}',u_{n_m}') = M^{-1}(y_{n_m}',v_{n_m}')\notin \Delta(x_{n_m},u_{n_m},m)$ for all $m
$.
We may assume $n_m\uparrow \infty$.
For simplicity, relabel $n_m$ as $m$.
Thus
\[\vp(x_{m}) = y_{m}, \vp(x_{m}') = y_{m}', \Phi(y_{m}, u_{m}) = v_0, \Phi(y_{m}',u_m') = v_m'\]
and 
\[ \|v_m'-v_0\| \leq d(y_m,y_m'), \|u_{m}'-u_{m}\| > m d(x_{m}',x_{m}) \text{ for all $m$.}\]
Let $d_m = d(y_m',y_m)$.  
Define a function $g:Y\to F$ by
\[ g(y) = \begin{cases}
               v_0 + (1- \frac{4d(y,y_m')}{d_m})(v_m'-v_0) &\text{if $m\in \N$, $d(y,y_m') < \frac{d_m}{4}$}\\
               v_0 &\text{otherwise}.
               \end{cases}\]
From the disjointness of the balls $B(y_m',\frac{d_m}{4})$ and the inequality $\|v_m'-v_0\| \leq d_m$ for all $m$,
we see that $g\in \Lip(Y,F)$.

Next, we claim that $y_n \notin B(y_m',\frac{d_m}{4})$ for any $m,n\in \N$.
Indeed, this is obvious if $m =n$.
Note that \[ d_m \leq d( y_m',y_0) + d(y_0,y_m) \leq 2d(y_m',y_0).\]
If $m < n$, then 
\[ d(y_n,y_0) \leq d(y_n',y_0) < \frac{d(y_m',y_0)}{2}.
\]
Hence
\[ d(y_n,y_m') \geq d(y_m',y_0) - d(y_n,y_0)> \frac{d(y_m',y_0)}{2} \geq \frac{d_m}{4}.
\]
On the other hand, if $m > n$ and $y_n\neq y_0$, then
\[2d(y_m',y_0) \leq 2d(y_{n+1}',y_0) < d(y_n,y_0).
\]
Hence
\[ d(y_n,y_m') \geq d(y_n,y_0) - d(y_m',y_0) \geq d(y_m',y_0) \geq \frac{d_m}{2}.
\]
Finally, if $y_n = y_0$, then $d(y_n,y_m') = d(y_0,y_m')  \geq \frac{d_m}{2}$.
Thus $y_n \notin B(y_m',\frac{d_m}{4})$ in all cases.

Obviously, $g(y_m') = v_m'= \Phi(y_m',u_m')$. 
From the fact that $y_n \notin B(y_m',\frac{d_m}{4})$ for all $m,n$,
 $g(y_m) = v_0 = \Phi(y_m,u_m)$ for all $n$.
Therefore, $T^{-1}g(x_m) = u_m$ and $T^{-1}g(x'_m) = u_m'$ for all $m$.
But $\|u_{m}'-u_{m}\| > m d(x_{m}',x_{m})$, which contradicts the fact that $T^{-1}g$ is Lipschitz.
\end{proof}

We are now ready to prove the main result regarding the homeomorphism $\vp$.

\begin{thm}\label{t7.5}
Let $T:\Lip(X,E)\to \Lip(Y,F)$ be a biseparating map, where $X, Y$ are complete bounded metric spaces and $E, F$ are Banach spaces.  
In the notation of (\ref{e7.1}), $\vp:X\to Y$ is a Lipschitz homeomorphism.
\end{thm}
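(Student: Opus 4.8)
The plan is to argue by contradiction, noting first that it suffices to prove $\vp$ is Lipschitz: applying the theorem to the biseparating map $T^{-1}\colon\Lip(Y,F)\to\Lip(X,E)$, whose hypotheses hold equally, shows that the associated homeomorphism $\vp^{-1}$ is Lipschitz, whence $\vp$ is a Lipschitz homeomorphism. So suppose $\vp$ is not Lipschitz. As set out in the discussion preceding Proposition~\ref{p7.5}, there are sequences $(x_n),(x_n')$ in $X$ with $x_n\neq x_n'$ such that, writing $y_n=\vp(x_n)$ and $y_n'=\vp(x_n')$, one has $d(y_n,y_n')/d(x_n,x_n')\to\infty$; since $Y$ is bounded this forces $d(x_n,x_n')\to0$. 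By Proposition~\ref{p7.3}, $(y_n)$ cannot be separated, so after passing to a subsequence $(y_n)\to y_0$ for some $y_0$; then $(x_n)\to\vp^{-1}(y_0)=:x_0$, hence $(x_n')\to x_0$, hence $(y_n')\to y_0$. After a further subsequence, relabelling primed and unprimed terms if necessary, we may assume $d(y_n,y_0)\leq d(y_n',y_0)$, $y_n'\neq y_0$ for all $n$, and $2d(y_{n+1}',y_0)<d(y_n',y_0)$ together with $2d(y_{n+1}',y_0)<d(y_n,y_0)$ whenever $y_n\neq y_0$. This is exactly the configuration on which Proposition~\ref{p7.5} operates.

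Next I would extract a quantitative ``collapsing'' statement. Since $M(x,u)=(\vp(x),\Phi(\vp(x),u))$ and $\Psi(x,\cdot)=\Phi(\vp(x),\cdot)^{-1}$, for fixed $v_0\in F$ the vector $u_n$ determined by $M(x_n,u_n)=(y_n,v_0)$ is $\Psi(x_n,v_0)$, and Proposition~\ref{p7.5} yields $m=m(v_0)\in\N$ so that, for all $n\geq m$,
\[
\|v-v_0\|\leq d(y_n,y_n')\ \Longrightarrow\ \|\Psi(x_n',v)-\Psi(x_n,v_0)\|\leq m\,d(x_n,x_n').
\]
As $d(x_n,x_n')/d(y_n,y_n')\to0$, this says that $\Psi(x_n',\cdot)$ compresses the macroscopic ball $\{v:\|v-v_0\|\leq d(y_n,y_n')\}$ into a ball about $\Psi(x_n,v_0)$ of radius $o\bigl(d(y_n,y_n')\bigr)$. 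Taking $v_0=0$ and using the normalization $T0=0$ (so $\Psi(x_n,0)=0$) gives $\|\Psi(x_n',v)\|\leq m\,d(x_n,x_n')$ whenever $\|v\|\leq d(y_n,y_n')$ and $n\geq m$.

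To close the argument I would chain these estimates along a bounded-length segment in $F$ and exploit the bijectivity of $\Phi(y,\cdot)$. Fix $a\neq0$ in $E$ and put $g=T(1\otimes a)\in\Lip(Y,F)$; then $g$ is bounded, $\Psi(x_n',g(y_n'))=a$ for all $n$, and $g(y_n')\to g(y_0)=\Phi(y_0,a)\neq0$ since $\Phi(y_0,\cdot)$ is injective with $\Phi(y_0,0)=0$. Subdivide $[0,g(y_n')]\subseteq F$ by points $p_0=0,p_1,\dots,p_{k_n}=g(y_n')$ with $\|p_j-p_{j-1}\|\leq d(y_n,y_n')$ and $k_n$ of order $\|g(y_n')\|/d(y_n,y_n')$, and telescope: one would bound $\|a\|=\|\Psi(x_n',g(y_n'))-\Psi(x_n',0)\|$ by $\sum_{j=1}^{k_n}\|\Psi(x_n',p_j)-\Psi(x_n',p_{j-1})\|$, each term being controlled — by the estimate above with base point $p_{j-1}$ — by a constant times $d(x_n,x_n')$; the sum would then be of order $d(x_n,x_n')/d(y_n,y_n')\to0$, impossible for $\|a\|$ fixed.

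The \emph{main obstacle}, and the place where the proof becomes delicate, is that the constant $m(v_0)$ produced by Proposition~\ref{p7.5} depends on the base point $v_0$, which varies along the subdivision and with $n$, so the estimate cannot simply be iterated. The plan is to overcome this via a Baire category argument: after fixing the sequences and passing to a further subsequence, for $k\in\N$ let $G_k$ be the set of base points $v_0$ for which the collapse estimate holds with $m(v_0)\leq k$; these sets exhaust $F$, so the Baire Category Theorem (applied to suitable completions of these sets) furnishes a $k$ and a ball $B^*\subseteq F$ on which a single constant $k$ works, and one carries out the telescoping using only base points in $B^*$. Concretely this is realized by constructing, as in the proof of Proposition~\ref{p7.5}, one function $g\in\Lip(Y,F)$ whose relevant balls $B\bigl(y_m',\tfrac14 d(y_m',y_m)\bigr)$ are pairwise disjoint and miss every $y_n$ and whose values $g(y_m')$ encode the telescoped vectors, so that the mere Lipschitz continuity of $T^{-1}g$ forces $\|(T^{-1}g)(x_n)-(T^{-1}g)(x_n')\|$ to be large relative to $d(x_n,x_n')$ — the desired contradiction. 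Hence $\vp$, and by symmetry $\vp^{-1}$, is Lipschitz.
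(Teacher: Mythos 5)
Your proposal is correct and follows essentially the same route as the paper: reduce to showing $\vp$ is Lipschitz by symmetry, set up the sequences via Proposition \ref{p7.3}, invoke the collapse estimate of Proposition \ref{p7.5}, uniformize the base-point-dependent constant $m(v_0)$ by applying the Baire Category Theorem to the sets $\{v: m(v)\leq k\}$, and telescope along a finely subdivided segment whose nodes are (small perturbations of points) in the resulting good ball to contradict $d(y_n,y_n')/d(x_n,x_n')\to\infty$. The only cosmetic difference is that the paper's final contradiction comes directly from the inequality chain $\frac{c}{4k_n}<\|u_n-u_n'\|\leq k_0\,d(x_n,x_n')$ rather than from constructing a further Lipschitz function, but this does not change the substance of the argument.
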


\begin{proof}
If $\vp$ is not a Lipschitz function, then we obtain sequences $(x_n), (x_n')$, $(y_n)$ and $(y_n')$ as in the discussion before Proposition \ref{p7.5}.
For each $v\in F$, determine $m = m(v) \in \N$ by Proposition \ref{p7.5}.
Set $F_k = \{v\in F: m(v) \leq k\}$ for each $k\in \N$.
Then $F = \bigcup_{k=1}^\infty\ol{F_k}$.
By the Baire Category Theorem, there are an open ball $O$ in $F$ and $k_0\in \N$ so that $O\subseteq \ol{F_{k_0}}$.
Pick distinct points $a,b\in O\cap F_{k_0}$.  Since $d_n = d(y_n,y_n') \to 0$, we may assume without loss of generality that $\|a-b\| > d_n$ for all $n$. For each $n$, choose $k_n\in \N$ so that 
\[ \frac{k_nd_n}{2} \leq \|a-b\| < {k_nd_n}.\]
Note  that $a + \frac{j}{k_n}(b-a) \in O$, $0\leq j\leq k_n$. By making  small perturbations, one can find  $w_{nj}\in O\cap F_{k_0}$, $0\leq j\leq k_n$, so that $w_{n0} = a$, $w_{nk_n} = b$ and $\|w_{nj} - w_{n,j-1}\|$ is sufficiently close to $\frac{\|a-b\|}{k_n}$ so as to make it $< d_n$.
Now
\begin{align*}
\|\sum^{k_n}_{j=1}[\Psi(x_n&,w_{nj}) - \Psi(x_n,w_{n,j-1})]\| = \|\Psi(x_n,v_1) -\Psi(x_n,v_0)\|
\\ & = \|T^{-1}(1\otimes v_1)(x_n) - T^{-1}(1\otimes v_0)(x_n)\|\\
& \to \|T^{-1}(1\otimes v_1)(x_0) - T^{-1}(1\otimes v_0)(x_0)\|\\
& = \|\Psi(x_0,b) -\Psi(x_0,a)\| = c.
\end{align*}
Since $\Psi(x_0,\cdot)$ is a bijection, $c > 0$.
For all $n$, there exists $1\leq j_n\leq k_n$ so that 
\[ \|\Psi(x_n,w_{n,j_n}) - \Psi(x_n,w_{n,j_n-1})\| > \frac{c}{2k_n}.\]
Now choose $i_n\in \{j_n-1,j_n\}$ so that, setting 
\[ u_n = \Psi(x_n,w_{n,j_n}) \text{ and } u_n' =  \Psi(x_n',w_{n,i_n}),\]
we have $\|u_n-u_n'\| > \frac{c}{4k_n}$.
Note that 
\[ M^{-1}(y_n,w_{n,j_n}) = (\vp^{-1}(y_n),\Psi(\vp^{-1}(y_n), w_{n,j_n})) 
=
(x_n,u_n).
\]
Similarly, $M^{-1}(y_n',w_{n,i_n}) =(x_n',u_n')$.
By choice,
\[ \|w_{n,i_n} - w_{n,j_n}\| \leq \|w_{n,j_n-1} - w_{n,j_n}\| < d_n = d(y_n',y_n).\]
Hence $(y_n', w_{n,i_n}) \in \Delta(y_n,w_{n,j_n},1)$.
Since $w_{n,j_n} \in F_{k_0}$, $m=m(w_{n,j_n}) \leq k_0$.
By definition of $m(w_{n,j_n})$, this implies that for all $n \geq k_0$,
\[ (x_n',u'_n) \in \Delta(x_n,u_n,m)\subseteq \Delta(x_n,u_n,k_0),
\]
which in turns yields that 
$\|u_n'-u_n\| \leq k_0d(x_n',x_n).$
Therefore,
\[ \frac{c}{2\|a-b\|}\cdot d_n \leq  \frac{c}{4k_n} < \|u_n - u_n'\| \leq  k_0d(x_n',x_n).\]
Since this holds for all sufficiently large $n$, and $d_n = d(y_n,y_n')$, it contradicts the assumption that $d(y_n,y_n')/d(x_n,x'_n) \to \infty$.

This completes the proof that $\vp$ is a Lipschitz function.  By symmetry, so is $\vp^{-1}$.  Hence $\vp$ is a Lipschitz homeomorphism.
\end{proof}

\subsection{Section problem for Lipschitz functions}

Let  $X$ be a complete bounded metric space and let $E,F$ be Banach spaces.  Consider a given function $\Xi:X\times E\to F$. Define $M:X\times E \to X\times F$ by $M(x,e) = (x,\Xi(x,e))$. Recall the sets $\Delta(x,e,C,r)$ and $\Delta(x,e,C)$ in $X\times E$ as given by (\ref{e7.4}).  Similar definitions apply in $X\times F$.  Theorem \ref{t7.7} solves the section problem for spaces of Lipschitz functions.  For a very special case, refer to \cite[Theorem 7.1]{AZ}.

\begin{lem}\label{l7.6}
Suppose that $x_0\in X$, $u_0\in E$ and $C<\infty$.
Let $(x^n_1), (x^n_2)$ in $X$, $(u^n_1), (u^n_2)$ in $E$  be sequences so that $x^n_1 \neq x^n_2$ for all $n$,
\[ \|u^n_1-u^n_2\|\leq Cd(x^n_1,x^n_2),\
\|u^n_i-u_0\| \leq Cd(x^n_i,x_0) \text{ $i =1,2$,  $n\in \N$, and}\]
$\lim_n d(x^n_i,x_0)=0$, $i =1,2$.
Then there exists $f\in \Lip(X,E)$ so that $f(x^n_i) = u^n_i$, $i =1,2$, for infinitely many $n$.
\end{lem}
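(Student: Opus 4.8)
The plan is to construct $f$ explicitly as $u_0$ plus a sum of ``bumps'', one per index $n$, with pairwise disjoint supports, after passing to a suitable subsequence. First I would set $a_n = \min\{d(x^n_1,x_0),d(x^n_2,x_0)\}$ and $\rho_n = \max\{d(x^n_1,x_0),d(x^n_2,x_0)\}$; since $x^n_1\neq x^n_2$ we have $\rho_n>0$, and the hypothesis gives $a_n,\rho_n\to 0$. Passing to a subsequence I may assume either $a_n=0$ for all $n$ or $a_n>0$ for all $n$. The degenerate case $a_n\equiv 0$ is easy: then, after swapping the roles of $x^n_1,x^n_2$ if needed, $x^n_1=x_0$, so $u^n_1=u_0$, and only the single constraint $f(x^n_2)=u^n_2$ (with $\|u^n_2-u_0\|\leq C\rho_n$) survives. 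Passing to a further subsequence with $2\rho_{n+1}<\rho_n/2$, the annuli $K_n=\{x: \rho_n/2\leq d(x,x_0)\leq 2\rho_n\}$ are pairwise disjoint; take $g_n(x)=\beta_n(d(x,x_0))(u^n_2-u_0)$, where $\beta_n:[0,\infty)\to[0,1]$ is the ``tent'' profile equal to $1$ at $\rho_n$ and supported in $[\rho_n/2,2\rho_n]$, so that $\Lip(g_n)\leq(2/\rho_n)\|u^n_2-u_0\|\leq 2C$, and set $f=u_0+\sum_n g_n$.

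For the main case $a_n>0$, I would pass to a subsequence with $2\rho_{n+1}<a_n/4$ for all $n$ (possible since $\rho_n\to 0$), so that the annuli $K_n=\{x: a_n/2\leq d(x,x_0)\leq 2\rho_n\}$ — each of which contains both $x^n_1$ and $x^n_2$ — are pairwise disjoint. For each $n$, write $x^n_\ast$ for the point of the pair at distance $a_n$ from $x_0$ and $x^n_{\ast\ast}$ for the one at distance $\rho_n$, with corresponding values $u^n_\ast,u^n_{\ast\ast}$, and put $c_n=d(x^n_1,x^n_2)>0$. Define
\[ h_n(x)=(u^n_\ast-u_0)+\min\!\left(1,\frac{d(x,x^n_\ast)}{c_n}\right)(u^n_{\ast\ast}-u^n_\ast). \]
This is $C$-Lipschitz (since $\|u^n_{\ast\ast}-u^n_\ast\|=\|u^n_1-u^n_2\|\leq Cc_n$), it takes the required values at $x^n_1,x^n_2$, and it satisfies the key estimate $\|h_n(x)\|\leq Cd(x,x_0)+2Ca_n$ — the anchoring at the \emph{closer} point is what keeps $h_n$ of size $O(Ca_n)$ near $x_0$. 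I then set $g_n=\lambda_n h_n$, where $\lambda_n(x)=\gamma_n(d(x,x_0))$ and $\gamma_n:[0,\infty)\to[0,1]$ equals $1$ on $[a_n,\rho_n]$, vanishes outside $[a_n/2,2\rho_n]$, and is affine on the two transition intervals; thus $g_n$ vanishes off $K_n$, $g_n(x^n_i)=u^n_i-u_0$, and $f=u_0+\sum_n g_n$ has the required values along the subsequence.

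It remains to check that $f$ is Lipschitz. Since the $K_n$ are pairwise disjoint, at every point at most one $g_n$ is nonzero, and for any disjointly supported family the bound $\Lip(g_n)\leq L$ for a common $L$ already gives $\Lip(\sum_n g_n)\leq 2L$: whenever $y$ lies outside $K_n$ one has $\|g_n(x)\|=\|g_n(x)-g_n(y)\|\leq L\,d(x,y)$, and a short case check on whether $x,y$ lie in a common $K_n$, in distinct ones, or outside all of them finishes it. So everything reduces to the uniform bound $\Lip(g_n)\leq C'C$ with $C'$ an absolute constant.

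That last bound is the step I expect to be the main obstacle. The naive product estimate $\Lip(\lambda_n h_n)\leq\|\lambda_n\|_\infty\Lip(h_n)+\Lip(\lambda_n)\|h_n\|_\infty$ is useless, because $\Lip(\lambda_n)$ is of order $1/a_n$ while $\|h_n\|_\infty$ can be of order $C\rho_n\gg Ca_n$; the two suprema are attained in different regions. One must argue locally, splitting $X$ according to whether $d(x,x_0)$ and $d(y,x_0)$ lie in the core $[a_n,\rho_n]$ (where $g_n=h_n$ is $C$-Lipschitz), the inner transition $[a_n/2,a_n]$ (where $\|h_n\|=O(Ca_n)$ is matched to $\Lip(\lambda_n)=O(1/a_n)$), the outer transition $[\rho_n,2\rho_n]$ (where $\|h_n\|=O(C\rho_n)$ is matched to $O(1/\rho_n)$), or outside $K_n$; in each of the finitely many resulting cases one bounds $\|g_n(x)-g_n(y)\|$ by a fixed multiple of $Cd(x,y)$, the cross-region cases using that moving between transition zones forces $d(x,y)$ to be comparable to the larger of the two scales present. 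This verification is routine but must be carried out with some care; the rest of the argument is bookkeeping, and neither completeness of $X$ nor of $E$ is actually needed.
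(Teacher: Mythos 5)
Your construction is correct, but it organizes the argument differently from the paper. The paper first normalizes $r^n_1=d(x^n_1,x_0)\le r^n_2=d(x^n_2,x_0)$ and then, after passing to subsequences, runs a \emph{trichotomy}: (i) $x^n_1=x_0$; (ii) $d(x^n_1,x^n_2)\ge c\,r^n_2$ for some $c>0$, in which case it places two separate tent bumps on the disjoint balls $B(x^n_i,cr^n_i/2)$; (iii) $d(x^n_1,x^n_2)/r^n_2\to 0$, in which case it uses one tent bump of amplitude $u^n_2-u_0$ on $B(x^n_2,r^n_2/2)$ plus a small correction bump on $B(x^n_1,d(x^n_1,x^n_2))$, whose amplitude $\|u^n_1-u_0-g(x^n_1)\|\le 3C\,d(x^n_1,x^n_2)$ is matched to its radius. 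You instead handle cases (ii) and (iii) in one stroke: the interpolant $h_n$ anchored at the \emph{closer} point, multiplied by an annular cutoff about $x_0$. The trade-off is clear. The paper's ball-supported tents make each individual Lipschitz bound a one-line computation, at the price of the extra subsequence dichotomy on the ratio $d(x^n_1,x^n_2)/r^n_2$; your version eliminates that dichotomy but shifts the work into the product estimate for $\lambda_n h_n$, where the naive bound $\Lip(\lambda_n)\|h_n\|_\infty$ fails. Your anchoring estimate $\|h_n(x)\|\le C\,d(x,x_0)+2Ca_n$ is exactly what rescues it, and the zone-by-zone check does close: on the inner transition $[a_n/2,a_n]$ the slope $2/a_n$ meets $\|h_n\|\le 3Ca_n$, on the outer transition $[\rho_n,2\rho_n]$ the slope $1/\rho_n$ meets $\|h_n\|\le 4C\rho_n$, and for a pair straddling $d(\cdot,x_0)=a_n$ one splits the increment of $\gamma_n$ at the breakpoint and evaluates $\|h_n\|$ at the inner point, giving a bound of roughly $10C\,d(x,y)$ for each $g_n$ and hence $20C$ for the disjointly supported sum. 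Your closing remark is also right: neither completeness of $X$ nor of $E$ is used here, and the same is true of the paper's proof.
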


\begin{proof}
Set $r^n_i = d(x^n_i,x_0)$.  There is no loss of generality in assuming that $r^n_2 \geq r^n_1$ for all $n$. After taking subsequences, we may divide the proof into the following cases.

\medskip

\noindent \underline{Case 1}.  $x^n_1 = x_0$,  i.e., $r^n_1 = 0$ for all $n$.

\medskip

Note that in this case $u^n_1 = u_0$ for all $n$.  Since $x^n_2 \neq x^n_1$ and $r^n_2=d(x^n_2,x_0) \to 0$, we may further assume that $r^{n+1}_2 < \frac{1}{3}r^n_2$ for all $n$, which implies that the balls $B(x^n_2,\frac{r_n}{2})$ are pairwise disjoint.
Define $f:X\to E$ by
\[ f(x) = \begin{cases}
u_0 + (1 - \frac{2d(x,x^n_2)}{r_n})(u^n_2-u_0) &\text{if $d(x,x^n_2) < \frac{r_n}{2}$}\\
u_0 &\text{otherwise}.
\end{cases}\]
Then it can be checked that $f\in \Lip(X,E)$, $f(x^n_2)  = u^n_2$, $f(x^n_1) = f(x_0) = u_0=u^n_1$ for all $n$.

\medskip

\noindent \underline{Case 2}.  $r^n_1 >0$ and there exists $c > 0$ so that $d(x^n_1,x^n_2) \geq cr^n_2$ for all $n$.

\medskip

We may of course assume that $0 < c < 1$.  The assumptions imply that the balls $B(x^n_1, \frac{cr^n_1}{2})$ and $B(x^n_2, \frac{cr^n_2}{2})$ are disjoint.
Since $r^n_2\to 0$, we may further assume that $B(x^{n+1}_1, \frac{cr^{n+1}_1}{2})\cup B(x^{n+1}_2, \frac{cr^{n+1}_2}{2})$ is disjoint from $\bigcup^n_{k=1}[B(x^{k}_1, \frac{cr^{k}_1}{2})\cup B(x^{k}_2, \frac{cr^{k}_2}{2})]$ for any $n$.
As a result, the sets $B(x^n_1, \frac{cr^n_1}{2})$, $B(x^n_2, \frac{cr^n_2}{2})$, $n\in \N$, are all mutually disjoint.
Define $f:X\to E$ by
\[ f(x) = \begin{cases}
u_0 + (1 - \frac{2d(x,x^n_i)}{cr^n_i})(u^n_i-u_0) &\text{if $d(x,x^n_i) < \frac{cr^n_i}{2}$, $n \in \N$, $i=1,2$,}\\
u_0 &\text{otherwise}.
\end{cases}\]
Then it can be checked that $f\in \Lip(X,E)$, $f(x^n_i)  = u^n_i$, $i=1, 2$, $n\in \N$.

\medskip

\noindent \underline{Case 3}.  $r^n_1 >0$ for all $n$ and  $d(x^n_1,x^n_2)/r^n_2 \to 0$.

\medskip

As in Case 2, we may assume that the sets $B(x^n_2, \frac{r^n_2}{2}), n\in \N$, are dsijoint.
In this instance, we may further assume that $B(x^n_1,d(x^n_1,x^n_2)) \subseteq B(x^n_2, \frac{r^n_2}{2})$ for all $n$.
Define $g:X\to E$ by
\[ g(x) = \begin{cases}
 (1 - \frac{2d(x,x^n_2)}{r^n_2})(u^n_2-u_0) &\text{if $d(x,x^n_2) < \frac{r^n_2}{2}$}\\
0 &\text{otherwise}.
\end{cases}\]
Since $\|u^n_2-u_0\| \leq Cd(x^n_2,x_0)= Cr^n_2$ for all $n$, $g\in \Lip(X,E)$ and has Lipschitz constant at most $2C$.
Clearly, $g(x^n_2) = u^n_2-u_0$ for all $n$.
Now let 
$h:X\to E$ be given by
\[ h(x) = \begin{cases}
 (1 - \frac{d(x^n_1,x)}{d(x^n_1,x^n_2)})(u^n_1-u_0-g(x^n_1)) &\text{if $d(x^n_1,x) < d(x^n_1,x^n_2)$}\\
0 &\text{otherwise}.
\end{cases}\]
Note that
\[\|u^n_1-u_0-g(x^n_1)\| \leq \|u^n_1-u^n_2\| + \|g(x^n_2)-g(x^n_1)\| \leq 3Cd(x^n_1,x^n_2).
\]
Taking into account the disjointness of the sets $B(x^n_1, d(x^n_1,x^n_2))$, it follows that $h\in \Lip(X,E)$.
Furthermore, 
\[ 
(g+h)(x^n_1) = u^n_1-u_0 \text{ and } (g+h)(x^n_2) = g(x^n_2) = u^n_2-u_0.\]
Finally, the function $f(x) = g(x) +h(x) +u_0$ is the one we seek.
\end{proof}

\begin{thm}\label{t7.7}
Let $\Xi:X\times E\to F$ be a given function.  Define $Sf(x) = \Xi(x,f(x))$ for any function $f:X\to E$.
Suppose that $Sf$ belongs to $\Lip(X,F)$ for all $f\in \Lip(X,E)$.  Then
\begin{enumerate}
\item If $(x_n)$ is a  separated sequence in $X$, and $B$ is a bounded set in $E$, then there is a finite set $N \subseteq \N$ so that $\bigcup_{n\notin N}\Xi(x_n,B)$ is bounded.
\item Suppose that $x_0\in X$, $u_0\in E$ and $C<\infty$.  There exist $r >0$ and $D<\infty$ so that 
\[ \|\Xi(x_1,u_1) - \Xi(x_2,u_2)\| \leq Dd(x_1,x_2) 
\]
whenever $\|u_1-u_2\| \leq Cd(x_1,x_2)$, $\|u_i-u_0\| \leq Cd(x_i,x_0)$ and  $d(x_i,x_0) \leq r$, $i=1,2$.
\item Let $(x_n)$ be a separated sequence in $X$ and $(u_n)$ be a bounded sequence in $E$.
For any $C<\infty$, there exist $r>0$ and $D<\infty$ so that 
\[ \|\Xi(x_n',u'_n) - \Xi(x_n,u_n)\| \leq Dd(x_n',x_n) \text{ for all $n$}\]
whenever  $\|u_n'-u_n\| \leq Cd(x_n',x_n)$  and $d(x_n',x_n) \leq r$ for all $n$.
\end{enumerate}
Conversely, suppose that conditions (1), (2) and (3)  hold. Then $Sf\in \Lip(X,F)$ for any $f\in \Lip(X,E)$.
\end{thm}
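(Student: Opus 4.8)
The plan is to prove the two directions separately, the forward direction being the substantive one. For the forward direction, assume $Sf\in\Lip(X,F)$ for every $f\in\Lip(X,E)$. Condition (1) is the easiest: given a separated sequence $(x_n)$ and a bounded set $B$, if no finite $N$ works, then for every $k$ the set $\bigcup_{n\geq k}\Xi(x_n,B)$ is unbounded; picking $e_n\in B$ with $\|\Xi(x_n,e_n)\|$ growing, and using that $(x_n)$ is separated (so one can interpolate a \emph{bounded} Lipschitz function $f$ with $f(x_n)=e_n$), one gets $Sf$ unbounded, contradicting $Sf\in\Lip(X,F)$. For condition (2), I would argue by contradiction: if for every $r>0$ and $D<\infty$ the estimate fails, take $r=D=m$ and extract sequences $(x_1^m),(x_2^m),(u_1^m),(u_2^m)$ with $x_1^m\neq x_2^m$, $\|u_1^m-u_2^m\|\leq Cd(x_1^m,x_2^m)$, $\|u_i^m-u_0\|\leq Cd(x_i^m,x_0)$, $d(x_i^m,x_0)\to0$, yet $\|\Xi(x_1^m,u_1^m)-\Xi(x_2^m,u_2^m)\|>m\,d(x_1^m,x_2^m)$. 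Now invoke Lemma \ref{l7.6}: after passing to a subsequence there is $f\in\Lip(X,E)$ with $f(x_i^m)=u_i^m$ for infinitely many $m$ and both $i$; then $Sf(x_i^m)=\Xi(x_i^m,u_i^m)$, and $\|Sf(x_1^m)-Sf(x_2^m)\|>m\,d(x_1^m,x_2^m)$ contradicts $Sf\in\Lip(X,F)$. Condition (3) is handled the same way: if it fails, extract a separated sequence $(x_n)$, bounded $(u_n)$, and perturbations $(x_n',u_n')$ with $\|u_n'-u_n\|\leq Cd(x_n',x_n)$, $d(x_n',x_n)\to0$, but $\|\Xi(x_n',u_n')-\Xi(x_n,u_n)\|>n\,d(x_n',x_n)$; because $(x_n)$ is separated one can construct $f\in\Lip(X,E)$ with $f(x_n)=u_n$ and $f(x_n')=u_n'$ (the local bumps around the well-separated $x_n$ do not interact, and the local Lipschitz estimate near each pair is controlled by $C$), again contradicting $Sf\in\Lip(X,F)$.

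For the converse, assume (1), (2), (3) hold and fix $f\in\Lip(X,E)$ with Lipschitz constant $L=L(f)$; I must show $Sf\in\Lip(X,F)$, i.e.\ there is $D<\infty$ with $\|\Xi(x,f(x))-\Xi(x',f(x'))\|\leq D\,d(x,x')$ for all $x,x'$. Suppose not: there are $(x_n),(x_n')$ with $\|Sf(x_n)-Sf(x_n')\|>n\,d(x_n,x_n')$. Set $u_n=f(x_n)$, $u_n'=f(x_n')$, so $\|u_n-u_n'\|\leq L\,d(x_n,x_n')$ automatically. Split into cases according to the behaviour of $(x_n)$. If $(x_n)$ has a subsequence converging to some $x_0$, then along that subsequence $d(x_n,x_0)\to0$, $d(x_n',x_0)\to0$, $u_n\to f(x_0)=:u_0$, $u_n'\to u_0$, and $\|u_i-u_0\|\leq L\,d(x_i,x_0)$; applying (2) with $C=L$ at $(x_0,u_0)$ gives a uniform bound $\|Sf(x_n)-Sf(x_n')\|\leq D\,d(x_n,x_n')$ for $n$ large, a contradiction. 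If $(x_n)$ has no convergent subsequence, pass to a separated subsequence; then $d(x_n,x_n')$ must tend to $0$ (otherwise $(x_n)\cup(x_n')$ is separated and the values $Sf(x_n)$, $Sf(x_n')$ are bounded by condition (1) applied to the bounded set $B=\overline{f(X)}$ — wait, $f(X)$ need not be bounded, so instead: if $\liminf d(x_n,x_n')>0$ then $n\,d(x_n,x_n')\to\infty$ forces $\|Sf(x_n)-Sf(x_n')\|\to\infty$, but along a further subsequence making $(x_n)$ separated, boundedness of $(u_n)$ on bounded pieces plus (1) keeps $Sf(x_n)$ locally bounded, contradiction); so $d(x_n,x_n')\to0$ and then condition (3) with $C=L$, bounded sequence $(u_n)$ (bounded because $f$ is Lipschitz on the bounded space $X$) yields $\|Sf(x_n)-Sf(x_n')\|\leq D\,d(x_n,x_n')$ for $n$ large, again a contradiction.

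The main obstacle I anticipate is twofold. First, in the converse, the "no convergent subsequence" case requires care: one must reduce to a genuinely separated subsequence and simultaneously keep $(u_n)=(f(x_n))$ bounded and keep $d(x_n,x_n')$ small, and one must correctly combine condition (1) (for boundedness of the range values) with condition (3) (for the Lipschitz estimate on small increments); getting the case division clean, rather than hand-wavy, is the delicate part. Second, in the forward direction the construction of the interpolating Lipschitz function in condition (3)—a single $f\in\Lip(X,E)$ realizing $f(x_n)=u_n$, $f(x_n')=u_n'$ for \emph{all} $n$ simultaneously—needs the same pairwise-disjoint-bumps technique as in Lemma \ref{l7.6} Case 2/Case 3, with the Lipschitz constant controlled uniformly by $C$ and by $\sup_n\|u_n\|$; one should really just cite/mimic Lemma \ref{l7.6} rather than redo it. I would also note that boundedness of $X$ is used repeatedly (to force $d(x_n,x_n')\to0$ from $n\,d(x_n,x_n')\to\infty$ being impossible, and to ensure Lipschitz functions have bounded range), so the standing assumption that $X$ is bounded complete is essential here.
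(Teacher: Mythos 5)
Your proposal follows the paper's own route almost exactly: condition (1) by interpolating a bounded Lipschitz function along the separated sequence, condition (2) by contradiction via Lemma \ref{l7.6}, condition (3) by the ``$g$ plus disjoint correction bumps'' construction, and the converse by splitting into the convergent and separated cases and feeding them to (2) and (3) respectively. Two points need patching, one of which is a genuine logical gap. First, your negation of condition (3) is not quite right: quantifying over $r=D=k$ produces, for each $k$, a \emph{single} violating index $n_k$ together with a perturbation $(x_{n_k}',u_{n_k}')$, and nothing forces the $n_k$ to be distinct. If $(n_k)$ has a constant subsequence $n_k=n_0$, then infinitely many distinct $x_k'$ cluster at the single point $x_{n_0}$ carrying different values $u_k'$, and your claim that ``the local bumps around the well-separated $x_n$ do not interact'' fails --- the bumps all sit inside a shrinking ball around $x_{n_0}$. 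The paper splits off exactly this case and disposes of it by condition (2) (which you have already proved), so the fix is immediate, but as written your argument does not cover it. Second, in the converse your convergent-subsequence branch silently assumes $d(x_n',x_0)\to 0$, which only follows once you know $d(x_n,x_n')\to 0$; the clean way (and the paper's way) is to first prove that $Sf$ is bounded --- using (1) for separated subsequences, noting that $f(X)$ is bounded since $X$ is bounded and $f$ Lipschitz, and (2) for convergent ones --- after which $n\,d(x_n,x_n')<\|Sf(x_n)-Sf(x_n')\|\le 2\|Sf\|_\infty$ forces $d(x_n,x_n')\to 0$ before the case split. Your own parenthetical self-correction gestures at this but does not organize it into a valid deduction.
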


\begin{proof}
Suppose that $Sf\in \Lip(X,F)$ for any $f\in \Lip(X,E)$.  
Let $(x_n)$ be a separated sequence in $X$ and $B$ be a bounded set in $E$.  If $\bigcup_{n\notin N}\Xi(x_n,B)$ is unbounded for any finite set $N\subseteq \N$, there exists a  sequence $(u_n)\subseteq B$ so that $(\Xi(x_n,u_n))$ is unbounded. Since $(x_n)$ is separated and $(u_n)$ is bounded, there is a Lipschitz function $f:X\to E$ so that $f(x_n) = u_n$ for all $n$.
Then $Sf\in \Lip(X,F)$, $(\Xi(x_n,u_n)) = (Sf(x_n))$ is bounded in $F$, a contradiction.  This proves condition (1).

Suppose that condition (2) fails.  Then there are $(x^n_1), (x^n_2)$ in $X$, $(u^n_1), (u^n_2)$ in $E$  so that 
$\|u^n_1-u^n_2\| \leq Cd(x^n_1,x^n_2)$, 
$\|u^n_i-u_0\| \leq Cd(x^n_i,x_0)$ and $d(x^n_i,x_0) \leq \frac{1}{n}$, $i =1,2$, $n\in \N$, but $\|\Xi(x^n_1,u^n_1) - \Xi(x^n_2,u^n_2)\| > nd(x^n_1,x^n_2)$.
In particular, the last inequality implies that $x^n_1\neq x^n_2$ for all $n$.
Apply Lemma \ref{l7.6} to find a function $f\in \Lip(X,E)$ so that $f(x^n_i) = u^n_i$ for infinitely many $n$.
Let $L$ be the Lipschitz constant of $Sf$.
Then 
\[\|\Xi(x^n_1,u^n_1) - \Xi(x^n_2,u^n_2)\| = \|Sf(x^n_1) - Sf(x^n_2)\| 
\leq Ld(x^n_1,x^n_2)\]
for all $n$, contrary to  their choices.

Let $(x_n)$ be a separated sequence in $X$ and let $(u_n)$ be a bounded sequence in $E$.  Assume that condition (3) fails for a constant $C$.
For each $k\in \N$, there exist $n_k\in \N$, $x_k\in X $ and $u_k'\in E$ so that $\|u_k'- u_{n_k}\| \leq Cd(x_k',x_{n_k})$ and $d(x_k',x_{n_k}) < \frac{1}{k}$ but
\begin{equation}\label{e7.4.1}\|\Xi(x_k',u_k') - \Xi(x_{n_k},u_{n_k})\| > kd(x_k',x_{n_k}).\end{equation}
If $(n_k)$ has a constant subsequence, then, say, $x_{n_k} = x_0$ and $u_{n_k} = u_0$ for infinitely many $k$.
In this case, we have a contradiction to condition (2), which has been shown above.
Otherwise, we may assume that $n_k \uparrow \infty$.
Since $(x_{n_k})$ is separated and $(u_{n_k})$ is bounded, there exists $g\in \Lip(X,E)$ so that $g(x_{n_k}) = u_{n_k}$ for all $k$.
Let $L$ be the Lipschitz constant of $g$.  We have
\[ \|u_k'- g(x_k')\| \leq \|u_k'-u_{n_k}\| + \| u_{n_k} - g(x_k')\| \leq (C+L)d(x_{n_k},x_k').\]
As $(x_{n_k})$ is separated and $d(x_k',x_{n_k})\to 0$, we can find $h\in \Lip(X,E)$ so that $h(x_{n_k}) = 0$ and $h(x_k') = u_k'-g(x_k')$ for all large $k$.  Let $f = g+h\in \Lip(X,E)$.
Then $Sf\in \Lip(X,E)$ and 
\[ Sf(x_{n_k}) = \Xi(x_{n_k},u_{n_k}),\ Sf(x_k') = \Xi(x_k',u_k').\]
Thus (\ref{e7.4.1}) leads to a contradiction.

Conversely, suppose that conditions (1) - (3) hold.
Let $f\in \Lip(X,E)$ with Lipschitz constant $C$.
First, let us show that $Sf$ is a bounded function.
If not, there is a sequence $(z_n)\in X$ so that $\|Sf(z_n)\| \to \infty$.
By condition (1), $(z_n)$ cannot have a separated subsequence.
Hence we may assume that $(z_n)$ converges to some $z_0\in X$.
Then  $\|f(z_n) - f(z_0)\| \leq Cd(z_n,z_0)$ and $d(z_n,z_0) \to 0$.
Applying condition (2) with $(x_0,u_0) = (x_1,u_1) = (z_0,f(z_0))$ and $(x_2,u_2) = (z_n,f(z_n))$, we obtain $D<\infty$ so that 
\[ \|\Xi(z_0,f(z_0)) - \Xi(z_n,f(z_n))\| \leq Dd(z_0,z_n) \text{ for all sufficiently large $n$}.\]
Hence $(Sf(z_n)) = (\Xi(z_n,f(z_n))$ is surely bounded, contrary to its choice.

Now suppose that  $Sf\notin \Lip(X,F)$. There are sequences $(x_n)$, $(x_n')$ in $X$ so that 
\begin{equation}\label{e7.5} \|\Xi(x_n,u_n) - \Xi(x_n',u_n')\| = \|Sf(x_n) - Sf(x_n')\|> nd(x_n,x_n') \text{ for all $n$},
\end{equation}
where $u_n = f(x_n)$ and $u_n' = f(x_n')$.
Since $Sf$ is a bounded function, we must have $d(x_n,x_n') \to 0$.
By using subsequences, we may assume that either $(x_n)$ converges to some $x_0$ or that $(x_n)$ is a separated sequence.
In the former case, since $d(x_n,x_0), d(x_n',x_0)\to 0$, $\|f(x_n) - f(x_n')\|\leq Cd(x_n,x_n')$,
$\|f(x_n)-f(x_0)\| \leq Cd(x_n,x_0)$ and $\|f(x_n')-f(x_0)\| \leq Cd(x'_n,x_0)$, it follows from condition (2) that there exists $D<\infty$ so that for all sufficiently large $n$,
\[ \|\Xi(x_n,u_n) - \Xi(x_n',u_n')\| = \|\Xi(x_n,f(x_n)) - \Xi(x_n',f(x_n'))\| \leq Dd(x_n,x_n'),\]
contrary to (\ref{e7.5}).
The proof is similar in case $(x_n)$ is a separated sequence, using condition (3) instead.
\end{proof}

The next theorem is easily deduced from Theorem \ref{t7.7}, keeping in mind that $\vp:X\to Y$ is a Lipschitz homeomorphism.

\begin{thm}\label{t7.8}
Let $X,Y$ be complete bounded metric spaces and let $E, F$ be Banach spaces.
Suppose that $T:\Lip(X,E) \to \Lip(Y,F)$ is a biseparating map.  
Then there are a Lipschitz homeomorphism $\vp:X\to Y$ and a function $\Phi:Y\times E\to F$ so that 
\begin{enumerate}
\item For each $y\in Y$, $\Phi(y,\cdot):E\to F$ is a bijection with inverse $\Psi(x,\cdot):F\to E$, where $\vp(x) =y$.
\item $Tf(y) = \Phi(y,f(\vp^{-1}(y)))$ and  $T^{-1}g(x) = \Psi(x,g(\vp(x)))$ for all $f\in \Lip(X,E)$, $g\in \Lip(Y,F)$ and $x\in X$, $y\in Y$.
\item Let $(x_n)$ be a separated sequence in $X$. For any bounded sets $B$ in $E$ and $B'\in F$, there is a finite set $N \subseteq \N$ so that $\bigcup_{n\notin N}\Phi(\vp(x_n),B)$  and $\bigcup_{n\notin N}\Psi(x_n,B')$ are bounded.

\item Suppose that $x_0\in X$, $u_0\in E$, $v_0\in F$ and $C<\infty$.  There exist $r >0$ and $D<\infty$ so that 
\[
 \|\Phi(\vp(x_1),u_1) - \Phi(\vp(x_2),u_2)\|,  \|\Psi(x_1,v_1) - \Psi(x_2,v_2)\|\leq Dd(x_1,x_2) 
\]
whenever $\|u_1-u_2\|, \|v_1-v_2\| \leq Cd(x_1,x_2)$, $\|u_i-u_0\|, \|v_i-v_0\| \leq Cd(x_i,x_0)$ and  $d(x_i,x_0) \leq r$, $i=1,2$.
\item  Let $(x_n)$ be a separated sequence in $X$ and $(u_n), (v_n)$ be bounded sequences in $E$ and $F$ respectively.
For any $C<\infty$, there exist $r>0$ and $D<\infty$ so that 
\[ \|\Phi(\vp(x_n'),u'_n) - \Phi(\vp(x_n),u_n)\|,\ 
\|\Psi(x_n',v'_n) - \Psi(x_n,v_n)\| \leq Dd(x_n',x_n)\]
for all $n$,
whenever  $\|u_n'-u_n\|, \|v_n'-v_n\| \leq Cd(x_n',x_n)$ and  $d(x_n',x_n) \leq r$
for all $n$.
 \end{enumerate}
 Conversely, if $\vp$, $\Phi$ satisfy conditions (1)-(5) and $T$ is defined by (2), then $T$ is a biseparating map from $\Lip(X,E)$ onto $\Lip(Y,F)$.
\end{thm}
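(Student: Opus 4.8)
The plan is to assemble this statement from three ingredients that are already available: the pointwise representation (Proposition \ref{p4.2}), the rigidity fact that $\vp$ is Lipschitz (Theorem \ref{t7.5}), and the solution of the section problem for Lipschitz functions (Theorem \ref{t7.7}), combined with the ``switch'' between $Y$ and $X$ used in \S\ref{s6}.

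For the forward direction I would first observe that, since $X,Y$ are bounded, $\Lip(X,E)$ and $\Lip(Y,F)$ contain the nonzero constant functions, lie inside $U(X,E)$ and $U(Y,F)$, are standard (Proposition \ref{p3.0.1}), and have property (P) (Proposition \ref{p3.10}); completeness of $X,Y$ makes $\ti{X}=X$, $\ti{Y}=Y$ and $\ti{E}_x=E$, $\ti{F}_y=F$ for all $x,y$. Hence Proposition \ref{p4.2} supplies the homeomorphism $\vp:X\to Y$ and functions $\Phi,\Psi$ satisfying (1) and (2), and Theorem \ref{t7.5} upgrades $\vp$ to a Lipschitz homeomorphism, so both $\vp$ and $\vp^{-1}$ are Lipschitz. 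To obtain (3)--(5) I would introduce $\wh{\Xi}(x,e)=\Phi(\vp(x),e)$ and $\Xi'(x,v)=\Psi(x,v)$: because $\vp,\vp^{-1}$ are Lipschitz, $f\mapsto\bigl(x\mapsto\wh{\Xi}(x,f(x))\bigr)=Tf\circ\vp$ maps $\Lip(X,E)$ into $\Lip(X,F)$, and $g'\mapsto\bigl(x\mapsto\Xi'(x,g'(x))\bigr)=T^{-1}(g'\circ\vp^{-1})$ maps $\Lip(X,F)$ into $\Lip(X,E)$. The necessity half of Theorem \ref{t7.7}, applied to $\wh{\Xi}$ and to $\Xi'$, then yields precisely the $\Phi$-clauses and the $\Psi$-clauses of (3)--(5), once they are merged (union of the exceptional sets in (3), and the smaller radius and larger constant in (4) and (5)).

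For the converse, starting from a Lipschitz homeomorphism $\vp$ and a $\Phi$ satisfying (1), (3)--(5) (with $\Psi(x,\cdot)=\Phi(\vp(x),\cdot)^{-1}$), I would define $Tf(y)=\Phi(y,f(\vp^{-1}(y)))$ and $Ug(x)=\Psi(x,g(\vp(x)))$. The $\Phi$-clauses of (3)--(5) are exactly conditions (1)--(3) of Theorem \ref{t7.7} for $\wh{\Xi}(x,e)=\Phi(\vp(x),e)$, so $x\mapsto\Phi(\vp(x),f(x))$ lands in $\Lip(X,F)$, and since $Tf$ is this section map composed with the Lipschitz map $\vp^{-1}$, $Tf\in\Lip(Y,F)$; symmetrically the $\Psi$-clauses place $Ug$ in $\Lip(X,E)$ (applying Theorem \ref{t7.7} to $\Xi'(x,v)=\Psi(x,v)$ and then to $g'=g\circ\vp$). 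Condition (1) makes $T$ and $U$ mutual inverses by direct substitution, giving (2). Finally, biseparation is immediate: injectivity of $\Phi(y,\cdot)$ gives $Tf(y)=Th(y)\iff f(\vp^{-1}(y))=h(\vp^{-1}(y))$, and similarly with $g$ in place of $f$, so running over all $y\in Y$ and substituting the bijection $x=\vp^{-1}(y)$ shows $Tf\perp_{Th}Tg\iff f\perp_h g$.

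I do not anticipate a serious obstacle, since all the real analysis is already carried out in Theorems \ref{t7.5} and \ref{t7.7}. The main care points are purely organizational: checking that the ``switch'' is legitimate (it genuinely needs \emph{both} $\vp$ and $\vp^{-1}$ Lipschitz, which is why Theorem \ref{t7.5} is invoked), verifying that completeness trivializes the completions so that Proposition \ref{p4.2} gives the representation on $X,Y$ themselves, and merging the separate $r$, $D$, and exceptional-set data produced for the $\Phi$-half and the $\Psi$-half of each of conditions (3)--(5) into one consistent set of constants.
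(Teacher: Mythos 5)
Your proposal is correct and follows exactly the route the paper intends: the paper's own ``proof'' is the single remark that the theorem is easily deduced from Theorem \ref{t7.7} together with the fact (Theorem \ref{t7.5}) that $\vp$ is a Lipschitz homeomorphism, which is precisely the assembly you carry out via the switch $\wh{\Xi}(x,e)=\Phi(\vp(x),e)$ and its $\Psi$-counterpart. Your additional care points (merging constants, using both directions of the Lipschitz homeomorphism, and the collapse of completions) are all sound and just make explicit what the paper leaves to the reader.
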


\subsection{A property of Lipschitz sections}

Let $X$ be a bounded metric space and let $E$ and $F$ be Banach spaces.
Theorem \ref{t7.7} characterizes the ``section maps'' $\Xi:X\times E\to F$ so that $Sf(x) = \Xi(x,f(x))$ is Lipschitz whenever $f\in \Lip(X,E)$.
An example in \cite[p.~190]{AZ}, where $X= [0,1]$ with the H$\ddot{\text{o}}$lder metric $d(x,y)= |x-y|^\al$, $0<\al < 1$, and $E = F = \R$, shows that for a given $x\in X$, the function $\Xi(x,\cdot):E\to F$ need not be continuous.
Nevertheless, in this subsection, we will show that if $x$ is an accumulation point of $X$, then there is a dense open set $O$ in $E$ so that  $\Xi(x,\cdot)$ is continuous on $O$.
Let $\Xi:X\times E\to F$ be a ``Lipschitz section''.  Taking $(x_1,u_1) = (x,u)$ and $(x_2,u_2) = (x_0,u_0)$ in Theorem \ref{t7.7}(2) yields the next lemma.


\begin{lem}\label{l7.10}
Let $(x_0,u_0) \in X\times E$ and let $v_0 = \Xi(x_0,u_0)$.
For any $C<\infty$, there exists $n = n(x_0,u_0,C) \in\N$ so that if $(x,u)\in X\times E$,
$\|u-u_0\| \leq Cd(x,x_0)$ and $d(x,x_0) < \frac{1}{n}$, then 
\[ \|v-v_0\|\leq nd(x,x_0), \text{ where $v = \Xi(x,u)$}.\]
\end{lem}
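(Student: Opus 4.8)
The plan is to read off this lemma directly from Theorem \ref{t7.7}(2), which is exactly what the sentence preceding the statement anticipates. Since $\Xi$ is a Lipschitz section, i.e.\ $Sf(x) = \Xi(x,f(x))$ lies in $\Lip(X,F)$ for every $f\in\Lip(X,E)$, Theorem \ref{t7.7}(2) applies to the data $(x_0,u_0,C)$: there are $r>0$ and $D<\infty$ such that
\[
\|\Xi(x_1,u_1) - \Xi(x_2,u_2)\| \le D\,d(x_1,x_2)
\]
whenever $\|u_1-u_2\|\le C d(x_1,x_2)$, $\|u_i-u_0\|\le C d(x_i,x_0)$ and $d(x_i,x_0)\le r$ for $i=1,2$.

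The first step is to specialize the second pair of variables to the base point, taking $(x_2,u_2)=(x_0,u_0)$. Then $d(x_2,x_0)=0\le r$ and $\|u_2-u_0\| = 0 = C\,d(x_2,x_0)$ hold trivially, while $d(x_1,x_2)=d(x_1,x_0)$, so the hypotheses of Theorem \ref{t7.7}(2) collapse to the two requirements $\|u_1-u_0\|\le C\,d(x_1,x_0)$ and $d(x_1,x_0)\le r$. Relabelling $(x_1,u_1)$ as $(x,u)$ and writing $v=\Xi(x,u)$, $v_0=\Xi(x_0,u_0)$, the conclusion becomes: $\|v-v_0\|\le D\,d(x,x_0)$ whenever $\|u-u_0\|\le C\,d(x,x_0)$ and $d(x,x_0)\le r$.

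The last step is purely bookkeeping: choose $n\in\N$ large enough that $n\ge D$ and $1/n\le r$ (for instance $n=\max\{\lceil D\rceil,\lceil 1/r\rceil\}$, increased by $1$ if necessary). For any $(x,u)$ with $\|u-u_0\|\le C\,d(x,x_0)$ and $d(x,x_0)<1/n$ we then have $d(x,x_0)<1/n\le r$, so the previous display yields $\|v-v_0\|\le D\,d(x,x_0)\le n\,d(x,x_0)$, which is the desired estimate. Since $r$ and $D$ depend only on $x_0$, $u_0$, $C$, so does $n=n(x_0,u_0,C)$.

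I do not anticipate a genuine obstacle here: the whole content of the lemma is contained in Theorem \ref{t7.7}(2), and the only things to verify are that the degenerate choice $(x_2,u_2)=(x_0,u_0)$ meets the hypotheses of that theorem (which it does vacuously in the $i=2$ slot) and that a single integer $n$ can simultaneously absorb the Lipschitz constant $D$ and the reciprocal of the radius $r$.
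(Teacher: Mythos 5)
Your proposal is correct and is exactly the paper's own argument: the authors derive Lemma \ref{l7.10} by the one-line remark that taking $(x_1,u_1)=(x,u)$ and $(x_2,u_2)=(x_0,u_0)$ in Theorem \ref{t7.7}(2) yields the statement, and your specialization plus the choice $n\geq\max\{D,1/r\}$ is precisely the bookkeeping that remark leaves implicit.
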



\begin{thm}\label{t7.11}
Let $x_0$ be an accumulation point of $X$.
There is a dense open set $O$ in $E$ so that $\Xi(x_0,\cdot)$ is continuous on $O$.
\end{thm}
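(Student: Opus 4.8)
The proof rests on Lemma~\ref{l7.10} together with a Baire category argument. For $u\in E$ and $C\in\{1,2\}$ let $n_C(u)$ denote the least integer $n$ for which the conclusion of Lemma~\ref{l7.10} holds at the base point $(x_0,u)$ with that value of $C$; so $n_C(u)<\infty$ for every $u$, and (since the set of admissible $n$ is upward closed) $n_C(u)\le N$ means precisely that $\|\Xi(x,w)-\Xi(x_0,u)\|\le N\,d(x,x_0)$ for every $x\in X$ with $0<d(x,x_0)<1/N$ and every $w\in E$ with $\|w-u\|\le C\,d(x,x_0)$. Put $S'_N=\{u:n_1(u)\le N\}$, $S_N=\{u:n_2(u)\le N\}$, and $O=\bigcup_{N\in\N}\inte(S'_N)$, which is open. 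The plan is to prove that $\Xi(x_0,\cdot)$ is continuous at every point of $O$ and that $O$ is dense in $E$; this $O$ is then the set we want.

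\textbf{Continuity on $O$.} Suppose $u_0\in O$, say $B(u_0,\rho)\subseteq S'_N$. Given $\ep>0$, use that $x_0$ is an accumulation point of $X$ to pick $x_*\in X$ with $0<r:=d(x_*,x_0)<\min\{1/N,\ep/(2N)\}$. For any $u'$ with $\|u'-u_0\|<\min\{\rho,r\}$ I apply Lemma~\ref{l7.10} twice, both times at the evaluation point $(x_*,u_0)$: relative to the base point $(x_0,u_0)$ (legitimate, as $\|u_0-u_0\|=0\le r$ and $n_1(u_0)\le N$ forces $r<1/n_1(u_0)$) this gives $\|\Xi(x_*,u_0)-\Xi(x_0,u_0)\|\le Nr$; relative to the base point $(x_0,u')$ (legitimate, as $\|u_0-u'\|\le r$ and $n_1(u')\le N$ forces $r<1/n_1(u')$) this gives $\|\Xi(x_*,u_0)-\Xi(x_0,u')\|\le Nr$. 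Hence $\|\Xi(x_0,u_0)-\Xi(x_0,u')\|\le 2Nr<\ep$, so $\Xi(x_0,\cdot)$ is continuous at $u_0$. The point is that using the \emph{same} evaluation point $(x_*,u_0)$ in both applications avoids any appeal to continuity of $\Xi$ in its second variable.

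\textbf{Density of $O$.} Since $n_2(u)<\infty$ for all $u$, we have $E=\bigcup_N S_N$. The obstruction is that the $S_N$ need not be closed — this would be automatic if $\Xi(x_0,\cdot)$ were already known continuous. The substitute is the inclusion $\ol{S_N}\subseteq S'_{N+1}$ (proved below). Granting it, let $U\subseteq E$ be nonempty open; then $U$ is a Baire space and $U=\bigcup_N(U\cap S_N)$, so some $U\cap S_N$ is non-meager in $U$, whence $\ol{U\cap S_N}$ has nonempty interior; this interior is a nonempty open subset of $\ol U$ and so meets $U$, producing a nonempty open $W\subseteq U$ with $W\subseteq\ol{S_N}\subseteq S'_{N+1}$, hence $W\subseteq\inte(S'_{N+1})\subseteq O$. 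Thus $O$ meets every nonempty open set, i.e.\ $O$ is dense.

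\textbf{Proof of $\ol{S_N}\subseteq S'_{N+1}$, and the main difficulty.} Let $u=\lim_k u_k$ with $u_k\in S_N$. First I pass to a subsequence along which $\Xi(x_0,u_k)\to\Xi(x_0,u)$: writing $M=n_2(u)$, for each $m$ choose $x^{(m)}\in X$ with $0<\rho_m:=d(x^{(m)},x_0)<\min\{1/N,1/M,1/m\}$ and then $k_m\uparrow\infty$ with $\|u_{k_m}-u\|\le 2\rho_m$; applying Lemma~\ref{l7.10} at $(x^{(m)},u)$ with $C=2$ relative to the base points $(x_0,u_{k_m})$ and $(x_0,u)$ gives $\|\Xi(x_0,u_{k_m})-\Xi(x_0,u)\|\le(N+M)\rho_m\to0$. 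Relabelling, assume $u_k\to u$, $u_k\in S_N$, and $\Xi(x_0,u_k)\to\Xi(x_0,u)$. Now fix $x\in X$ with $0<r:=d(x,x_0)<1/(N+1)$ and $w\in E$ with $\|w-u\|\le r$, and choose $k$ with $\|u_k-u\|\le r$ and $\|\Xi(x_0,u_k)-\Xi(x_0,u)\|\le r$. Then $\|w-u_k\|\le\|w-u\|+\|u-u_k\|\le 2r$, so Lemma~\ref{l7.10} at $(x,w)$ with $C=2$ relative to $(x_0,u_k)$ yields $\|\Xi(x,w)-\Xi(x_0,u_k)\|\le Nr$, and therefore $\|\Xi(x,w)-\Xi(x_0,u)\|\le(N+1)r$; this shows $u\in S'_{N+1}$. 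The heart of the matter — and the step I expect to be most delicate — is exactly this transfer of information from the good approximants $u_k$ to their limit $u$: it is made possible by the ``slack'' built into the covering family $S_N$ (the constant $C=2$ rather than $C=1$), which leaves just enough room in the triangle inequality $\|w-u_k\|\le\|w-u\|+\|u-u_k\|\le 2r$ to absorb the displacement $u_k-u$, while the tighter family $S'_N$ (constant $C=1$) is what feeds the continuity criterion of the first step.
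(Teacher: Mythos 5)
Your proof is correct. The skeleton is the same as the paper's — Lemma \ref{l7.10} yields a countable cover of $E$ by ``good'' sets and the Baire Category Theorem extracts a dense open set on which continuity is then verified by applying the lemma twice at a common evaluation point near $x_0$ — but you resolve the central difficulty (the good sets need not be closed) by a different device. The paper uses the single family $A_N=\{u: n(x_0,u,1)\le N\}$, takes $O=\bigcup_N\inte\ol{A_N}$, and inside the continuity proof approximates each $u\in\ol{A_N}$ by a genuine member $u'\in A_N$ at distance $\le d(x,x_0)$ for a suitably chosen nearby $x$, using the Lipschitz continuity of $\Xi(\cdot,u)$ in the first variable to control $\|\Xi(x_0,u')-\Xi(x_0,u)\|$ before pushing the approximation through the continuity estimate. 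You instead run two families with cone constants $C=1$ and $C=2$, prove the inclusion $\ol{S_N}\subseteq S'_{N+1}$ by letting the factor-of-two slack absorb the displacement $u_k-u$ in the triangle inequality, and take $O=\bigcup_N\inte(S'_N)$; continuity on $O$ is then immediate and the closure issue is quarantined in one clean lemma. What this buys is a sharper separation of the ``transfer to the limit'' step from the continuity argument; what it costs is a slightly heavier density argument, which in fact can be shortened: since $E=\bigcup_N\ol{S_N}$ with each $\ol{S_N}$ closed, Baire gives that $\bigcup_N\inte\ol{S_N}$ is dense, and $\inte\ol{S_N}\subseteq\inte(S'_{N+1})\subseteq O$, so the localized non-meagerness argument inside an arbitrary open $U$ is not needed. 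Both proofs use the accumulation-point hypothesis in the same way, namely to supply evaluation points $x\ne x_0$ arbitrarily close to $x_0$.
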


\begin{proof}
In the notation of Lemma \ref{l7.10}, for each $n\in \N$, let 
\[ A_n = \{u_0\in E: n(x_0,u_0,1) \leq n\}.\]
By the lemma,  $E= \bigcup_n \ol{A_n}$.
Since $E$ is a complete metric space, $O = \bigcup_n \inte\ol{A_n}$ is a dense open set in $E$.
To complete the proof of the theorem, let us show that $\Xi(x_0,\cdot)$ is continuous on $O$.
Clearly, it suffices to show that $\Xi(x_0,\cdot)$ is continuous on each $\inte\ol{A_n}$.
Fix $N\in \N$.  Suppose that $(u_n)$ is a sequence in $\inte\ol{A_N}$ converging to $u_0 \in \inte\ol{A_N}$.

\medskip

\noindent\underline{Claim}.  There is a sequence $(u_n')$ in $A_N$ so that 
\[ \|u_n'-u_n\| , \|\Xi(x_0,u_n') - \Xi(x_0,u_n)\| \to 0.\]

\medskip

Consider a given $n\in \N$.  Since $\Xi(x,u_n)$ is a Lipschitz function of $x$ and $x_0$ is an accumulation point, there exists $x\in X$ so that $0< Nd(x,x_0) < \frac{1}{n}$ and that $\|\Xi(x,u_n) -\Xi(x_0,u_n)\| < \frac{1}{n}$.
As $u_n\in \ol{A_N}$, there exists $u_n'\in A_N$ so that $\|u_n'-u_n\| \leq d(x,x_0) < \frac{1}{nN}$.
Note that  $n(x_0,u_n',1) \leq N$.  Hence the condition $\|u_n-u_n'\| \leq d(x,x_0)< \frac{1}{N}$ implies
\[ \|\Xi(x,u_n) - \Xi(x_0,u_n')\| \leq Nd(x,x_0)< \frac{1}{n}.\]
Therefore,
\begin{align*}
\|\Xi(x_0,u_n)- &\Xi(x_0,u_n')\| \\&\leq  \|\Xi(x_0,u_n)- \Xi(x,u_n)\| + \|\Xi(x,u_n)- \Xi(x_0,u_n')\|\\
& < \frac{2}{n}.
\end{align*}
This completes the proof of the claim.

\medskip

In view of the claim, in order to prove the continuity of $\Xi(x_0,\cdot)$ at $u_0$, it suffices to show that $
\Xi(x_0,u_n') \to \Xi(x_0,u_0)$.
Let $\ep > 0$ be given.  As before, one can choose $x'$ so that $0< d(x',x_0) <  \frac{1\wedge \ep}{N}$ and that 
$\|\Xi(x',u_0) - \Xi(x_0,u_0)\| < \ep$.
For all sufficiently large $n$, $\|u_n'-u_0\| < d(x',x_0)$.
Once again, $\|u_0-u_n'\| \leq d(x',x_0) < \frac{1}{N}$ implies
\[ \|\Xi(x',u_0) - \Xi(x_0,u_n')\| \leq Nd(x',x_0) < \ep.\]
Therefore,
\begin{align*}
\|\Xi(x_0,u_n')- &\Xi(x_0,u_0)\| \\&\leq  \|\Xi(x_0,u_n')- \Xi(x',u_0)\| + \|\Xi(x',u_0)- \Xi(x_0,u_0)\|
 < 2\ep
\end{align*}
for all sufficiently large $n$.
\end{proof}

\section{Comparisons}\label{s9}

We close with some results comparing different types of spaces under nonlinear biseparating maps.
Throughout this section, $X,Y$ will be complete metric spaces and $E$, $F$ will be Banach spaces.

\begin{prop}\label{p8.1}
Let $T:A(X,E)\to \Lip(Y,F)$ be a biseparating map, where $Y$ is bounded.  If  $A(X,E) = U(X,E)$, then $X$ is separated.  If  $A(X,E) = U_*(X,E)$, then  both $X$  and $Y$ are  separated.
\end{prop}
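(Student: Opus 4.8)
The plan is to pass to the pointwise representation of $T$ and then, assuming the conclusion fails, to manufacture a function in the domain whose image is not even uniformly continuous.

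First I would fix the standing structure. Since $Y$ is bounded, every Lipschitz function on $Y$ is bounded, so $\Lip(Y,F)=\Lip_*(Y,F)$, a standard subspace of $U(Y,F)$ containing the nonzero constants; the same holds for $U(X,E)$ and for $U_*(X,E)$, which moreover have property (P) (Proposition \ref{p3.10}). Normalising $T0=0$, Theorem \ref{t3.5} together with Proposition \ref{p6.3.0} yields a uniformly continuous homeomorphism $\vp\colon X\to Y$, and Proposition \ref{p4.2} gives the representation $Tf(y)=\Phi(y,f(\vp^{-1}(y)))$ with each $\Phi(y,\cdot)\colon E\to F$ a bijection, $\Phi(y,0)=0$, and inverse $\Psi(x,\cdot)$ where $\vp(x)=y$. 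Since $\Lip(Y,F)\subseteq U(Y,F)$, it suffices, under the negation of the conclusion, to exhibit a function $f$ in the domain such that $Tf$ fails to be uniformly continuous.

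For the first assertion, suppose $X$ is not separated: there are distinct points $x_n,x_n'$ with $d(x_n,x_n')\to0$; passing to a subsequence, either $(x_n)$ converges to an accumulation point $x_0\in X$, or $(x_n)$ is separated and the clusters $\{x_n,x_n'\}$ are pairwise bounded away. Put $y_n=\vp(x_n)$, $y_n'=\vp(x_n')$; by uniform continuity of $\vp$, $\delta_n:=d(y_n,y_n')\to0$. I would look for vectors $u_n,u_n'\in E$ obeying whatever uniform-continuity constraint the configuration imposes on a prospective $f$ (convergence to a common limit in the accumulation case, $\|u_n-u_n'\|\to0$ in the separated case) and bounded enough to be extended to a genuine member $f$ of the domain by the standard bump-function/partition-of-unity device used throughout \S\ref{s6}, with the property that $\|\Phi(y_n,u_n)-\Phi(y_n',u_n')\|$ stays bounded away from $0$; then $Tf$, taking the values $\Phi(y_n,u_n)$ and $\Phi(y_n',u_n')$ at $y_n$ and $y_n'$, violates uniform continuity, contradicting $Tf\in\Lip(Y,F)$. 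Producing these vectors is the heart of the matter, and here I would follow the Baire category scheme of Theorem \ref{t7.5}: for each $v\in F$ the function $T^{-1}(1\otimes v)$ lies in the domain and is uniformly continuous (and continuous at $x_0$ in the first case), so there is $n(v)\in\N$ controlling the oscillation of $\Psi(\cdot,v)$ at scale $1/n(v)$ near the configuration; the sets $F_m=\{v:n(v)\le m\}$ cover $F$, whence $F=\bigcup_m\overline{F_m}$ and some $\overline{F_{m_0}}$ has nonempty interior $O$. Choosing distinct $a,b\in O$ and, for each $n$, a fine chain $a=w_{n,0},\dots,w_{n,k_n}=b$ through $O\cap F_{m_0}$, a telescoping of $\Psi(x_n,b)-\Psi(x_n,a)=\sum_j\bigl[\Psi(x_n,w_{n,j})-\Psi(x_n,w_{n,j-1})\bigr]$ --- whose left side tends to $\Psi(x_0,b)-\Psi(x_0,a)\neq0$ by bijectivity of $\Psi(x_0,\cdot)$ in the accumulation case, and can be taken bounded below along a subsequence in the separated case --- forces one increment to be non-negligible; combining this with the control afforded by $w_{n,j}\in F_{m_0}$ (valid since $d(x_n,x_n')<1/m_0$ eventually) and relabelling the primed and unprimed indices produces the required pair $u_n,u_n'$. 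I expect the separated-cluster case to be the genuine obstacle: there is no limit point to anchor the telescoping, and one must simultaneously keep the extension of $f$ off $\{x_n,x_n'\}$ uniformly continuous; this is where the argument becomes most technical, in close parallel with Theorem \ref{t7.5}.

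Finally, the case $A(X,E)=U_*(X,E)$. The argument just sketched applies verbatim (indeed more easily, as every auxiliary function may be taken bounded) to show that $X$ is separated, i.e.\ uniformly discrete; consequently $U_*(X,E)$ is precisely the space of bounded $E$-valued functions on $X$, so, fixing $v_0\neq0$ in $F$, the sequence $w_n:=\Psi(x_n,v_0)=T^{-1}(1\otimes v_0)(x_n)$ is bounded. Now suppose $Y$ were not separated and pick distinct $y_n,y_n'$ with $d(y_n,y_n')\to0$. Since $\vp^{-1}$ is continuous and $X$ has no accumulation point, $(y_n)$ has no convergent subsequence, so after passing to a subsequence $(y_n)$ is separated with the clusters $\{y_n,y_n'\}$ pairwise bounded away; the points $x_n=\vp^{-1}(y_n)$, $x_n'=\vp^{-1}(y_n')$ are then all distinct and, $X$ being uniformly discrete, mutually at distance $\ge\delta>0$, so the bounded function $f$ defined by $f(x_n)=w_n$, $f(x_n')=0$ and $f=0$ elsewhere lies in $U_*(X,E)$. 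But then $Tf(y_n)=\Phi(y_n,w_n)=\Phi(y_n,\Psi(x_n,v_0))=v_0$ while $Tf(y_n')=\Phi(y_n',0)=0$, so $\|Tf(y_n)-Tf(y_n')\|=\|v_0\|>0$ although $d(y_n,y_n')\to0$, contradicting $Tf\in\Lip(Y,F)\subseteq U(Y,F)$. Hence $Y$ is separated as well.
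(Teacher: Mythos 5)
Your reduction of the second assertion ($Y$ separated when $A(X,E)=U_*(X,E)$) to the first is correct and essentially identical to the paper's. The problem is the first assertion, where your argument is both incomplete and aimed at an unattainable target. You propose to contradict the hypothesis by producing $f$ in the domain with $Tf$ failing to be \emph{uniformly continuous}, i.e.\ vectors $u_n,u_n'$ with $\|u_n-u_n'\|\to 0$ (or $u_n,u_n'\to u_0$ in the accumulation case), interpolable by some $f$ in the domain, yet with $\|\Phi(y_n,u_n)-\Phi(y_n',u_n')\|$ bounded away from $0$. No such vectors exist, for reasons that have nothing to do with whether $X$ is separated: since $\Lip(Y,F)\subseteq U(Y,F)$ and $\vp$ is uniformly continuous, Proposition \ref{p6.4} makes $\Phi$ continuous at every point of $Y'\times E$, which forces $\|\Phi(y_n,u_n)-\Phi(y_n',u_n')\|\to 0$ in the accumulation case; and in the separated case, Proposition \ref{p6.3}(1) shows $(x_n,f(x_n))$ is automatically a $u$-sequence for any $f\in U(X,E)$, so Proposition \ref{p6.5} ($u$-continuity) again forces $\|Tf(y_n)-Tf(y_n')\|\to 0$. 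So the "heart of the matter" that you defer to a Baire-category scheme modelled on Theorem \ref{t7.5} cannot be supplied: the contradiction must be with the \emph{Lipschitz} bound on $Tf$ (unbounded difference quotients), not with its uniform continuity.

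That is what the paper does, and it needs no Baire category. Assuming first that $X$ has a nontrivial convergent sequence $x_n\to x_0$, it builds for each $m$ a function $g_m\in\Lip(Y,F)$ supported near the points $y_n=\vp(x_n)$ with $g_m(y_n)=m\,d(y_n,y_0)\,b$ and $g_m(y_0)=0$; the pullback $T^{-1}g_m$ is continuous and vanishes at $x_0$, so a diagonal choice $n_m$ gives $T^{-1}g_m(x_{n_m})\to 0$, and these values are interpolated by a single $f\in U_*(X,E)$ with $f(x_0)=0$. Then $Tf(y_{n_m})=g_m(y_{n_m})$ has difference quotient $m\|b\|$ against $Tf(y_0)=0$, contradicting $Tf\in\Lip(Y,F)$ even though $Tf$ may well remain uniformly continuous. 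The non-convergent, non-separated case is handled the same way with $g_m(y_n)=m\,d(y_n,y_n')\,b$, $g_m(y_n')=0$, using uniform continuity of $T^{-1}g_m$ to get $T^{-1}g_m(x_n)\to 0$ as $n\to\infty$ for fixed $m$ before diagonalizing. You should replace your Baire-category plan with a construction of this type; as written, the proof of the first assertion (and hence of the second, which depends on it) is not established.
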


\begin{proof}
Normalize $T$ by taking $T0 = 0$. Suppose that $A(X,E)$ is either $U(X,E)$ or $U_*(X,E)$. First assume, if possible, that there is a convergent sequence $(x_n)$ in $X$ consisting of distinct points.
Let $x_0$ be its limit, which we may assume to be distinct from all $x_n$'s.
Set $y_n = {\vp}(x_n)$, $n\in \N\cup\{0\}$, and $r_n = d(y_n,y_0)$, $n\in \N$.
Since $r_n \to 0$, without loss of generality, we may further assume that $r_{n+1} < \frac{r_n}{3}$ for all $n\in \N$.
Fix a nonzero vector $b\in F$.  For each $m\in\N$,  define $g_m: Y\to F$ by 
\[ g_m(y) = \begin{cases}
\bigl(1- \frac{2d(y,y_n)}{r_n}\bigr)mr_nb &\text{if $d(y,y_n) < \frac{r_n}{2}$, $n\in \N$,}\\
0 &otherwise.
\end{cases}\]
Then  $g_m\in \Lip(Y,F)$, ${g_m}(y_n) = mr_nb$ for all $n\in \N$ and ${g_m}(y_0) =0$.
By Proposition \ref{p4.2}, ${T^{-1}g_m}(x_0) =0$.
By continuity of $g_m$, there is an increasing sequence $(n_m)$ so that $T^{-1}g_m(x_{n_m}) \to 0$.
Thus, there is a function $f\in U_*(X,E)\subseteq A(X,E)$ so  that $f(x_{n_m}) = T^{-1}g_m(x_{n_m})$ for all $m\in \N$ and $f(x_0) =0$.
By Proposition \ref{p4.2}, 
\[ {Tf}(y_{n_m}) = {g_m}(y_{n_m}) = mr_{n_m}b\text{ and } {Tf}(y_0) = 0.\]
However, $Tf$ is Lipschitz on $Y$.
We have reached a contradiction since 
\[ \|{Tf}(y_{n_m}) - {Tf}(y_0)\| = mr_{n_m}b = md(y_{n_m},y_0)b.\]
This shows that $X$ does not contain any nontrivial convergent sequence.

If $X$ is not separated, there are points $x_n,x_n'\in X$ so that $0<d(x_n,x'_n)\to 0$. 
Let $y_n = {\vp}(x_n)$ and $y_n' = {\vp}(x_n')$.
Since ${\vp}$ is uniformly continuous by Proposition \ref{p6.3.0}, $d(y_n,y_n')\to 0$.
If $(y_n)$ has a  subsequence that converges in ${Y}$, then $(x_n)$ has a convergent subsequence. By the previous paragraph,  $(x_n)$ has a constant subsequence, which in turn implies that $(x'_n)$ has a nontrivial convergent subsequence, contrary to the last paragraph.  Thus, we may replace $(y_n)$ by a subsequence if necessary to assume that it is separated. 
As $(y_n)$ is separated and $d(y_n,y_n') \to 0$,
it is possible to choose ${g_m}\in \Lip(Y,F)$ so that ${g_m}(y_n)  = md(y_n,y_n')b$ and $g_m(y_n') =0$ for all $n$.
As before, we can find an increasing sequence $(n_m)$ so that $(T^{-1}g_m)(x_{n_m})\to 0$.
Then we can construct $f\in U_*(X,E)$ so that $f(x_{n_m}) = (T^{-1}g_m)(x_{n_m})$ and $f(x_{n_m}') = 0$ for all $m$.
By Proposition \ref{p4.2}, 
\[{Tf}(y_{n_m}) = {g_m}(y_{n_m}) = md(y_{n_m},y'_{n_m}) \text{ and } {Tf}(y_{n_m}') = 0.
\]
Once again, this contradicts with the fact that $Tf$ is Lipschitz.

Now if $A(X,E) = U_*(X,E)$, we show that $Y$ is also separated.
 By Theorem \ref{t3.5}, there is a homeomorphism ${\vp}:X\to {Y}$.
In particular, ${Y}$ must be discrete.  
If $Y$ is not separated, there are sequences $(y_n), (y_n')$ in $Y$ so that $0 < d(y_n,y_n')\to 0$.
Since $(y_n)$ cannot have a convergent subsequence in ${Y}$, we may assume that it is a separated sequence.
By taking a further subsequence, we may assume that $(y_n)\cup (y_n')$ consists of distinct points.
Let $x_n = {\vp}^{-1}(y_n)$ and $x'_n = {\vp}^{-1}(y_n')$. Then $(x_n)\cup (x_n')$ consists of distinct points.
Fix a nonzero vector $b\in F$ and let $a_n = T^{-1}(1\otimes b)(x_n)$. Then $(a_n)$ is a bounded sequence in  $E$.
Since $X$ is separated, there is a function $f\in U_*(X,E)$ so that $f(x_n) = a_n$ and $f(x_n') = 0$ for all $n$.
By Proposition \ref{p4.2}, $Tf(y_n) = b$ and $Tf(y_n') = 0$ for all $n$.
This is impossible since $Tf$ is uniformly continuous.
\end{proof}

\begin{thm}\label{t6.6}
Assume that $Y$ is bounded.
A map $T:U(X,E)\to \Lip(Y,F)$ is biseparating if and only if $X$ and $Y$ are finite sets of the same cardinality and there are a bijection $\psi:Y\to X$ and bijections $\Phi(y,\cdot):E\to F$ for each $y\in Y$ so that $Tf(y) = \Phi(y,f(\psi(y)))$ for all $f\in U(X,E)$ and all $y\in Y$.
\end{thm}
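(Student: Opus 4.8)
The plan is to prove the two implications separately; the ``if'' direction is bookkeeping, and essentially all the content of the ``only if'' direction is a single boundedness argument.

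For the ``if'' direction I would begin by observing that a finite metric space is uniformly discrete, so that $U(X,E)=E^X$ and $\Lip(Y,F)=F^Y$ (on a finite metric space every function is Lipschitz, and trivially bounded). Given the data $\psi\colon Y\to X$ and the bijections $\Phi(y,\cdot)\colon E\to F$, the formula $Tf(y)=\Phi(y,f(\psi(y)))$ visibly maps $E^X$ into $F^Y$; its inverse is obtained pointwise from $\psi^{-1}$ and $(\Phi(y,\cdot))^{-1}$, so $T$ is a bijection. To see that $T$ is biseparating, note that $f\perp_h g$ means exactly that for every $x\in X$ one has $f(x)=h(x)$ or $g(x)=h(x)$; since $\psi$ is a bijection of $Y$ onto $X$ and each $\Phi(y,\cdot)$ is injective, $Tf(y)=Th(y)$ is equivalent to $f(\psi(y))=h(\psi(y))$, and one reads off at once that $f\perp_h g\iff Tf\perp_{Th}Tg$.

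For the ``only if'' direction, let $T\colon U(X,E)\to\Lip(Y,F)$ be biseparating; normalizing $T0=0$ (harmless, since $f\mapsto Tf-T0$ is again biseparating) I would first invoke Proposition~\ref{p8.1} with $A(X,E)=U(X,E)$ to conclude that $X$ is separated, i.e.\ uniformly discrete. Two consequences are recorded: $U(X,E)=E^X$, and $\ti X=X$ (a complete uniformly discrete space is its own completion), while $\ti Y=Y$ by completeness of $Y$. Now $\Lip(Y,F)$ is a standard subspace of $U(Y,F)$ containing the nonzero constants (Proposition~\ref{p3.0.1}), and $U(X,E)$ is standard, has property~(P) (Proposition~\ref{p3.10}) and contains the nonzero constants; hence Theorem~\ref{t3.5} and Proposition~\ref{p4.2} apply and yield a homeomorphism $\vp\colon X\to Y$ with inverse $\psi\colon Y\to X$, together with bijections $\Phi(y,\cdot)\colon E\to F$ for every $y\in Y$, such that $Tf(y)=\Phi(y,f(\psi(y)))$ for all $f\in U(X,E)$ and $y\in Y$. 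In particular $\psi$ already exhibits $X$ and $Y$ in bijection and supplies the asserted representation, so it only remains to prove that these spaces are finite.

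The key step — and the one place where I expect any difficulty — is to show $X$ is finite. Suppose it is not, and choose distinct points $x_1,x_2,\dots\in X$; put $y_n=\vp(x_n)$. Since each $\Phi(y_n,\cdot)$ maps onto the nonzero (hence unbounded) Banach space $F$, choose $a_n\in E$ with $\|\Phi(y_n,a_n)\|\ge n$. Let $f\in E^X$ be the function with $f(x_n)=a_n$ for all $n$ and $f\equiv 0$ elsewhere; because $X$ is uniformly discrete, $f\in U(X,E)$. Then $Tf\in\Lip(Y,F)$, and because $Y$ is bounded every Lipschitz function on $Y$ is bounded (fixing $y_0\in Y$, $\|g(y)-g(y_0)\|\le L(g)\,\diam Y$), so $Tf$ is bounded on $Y$. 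But $Tf(y_n)=\Phi(y_n,f(\psi(y_n)))=\Phi(y_n,a_n)$, whence $\|Tf(y_n)\|\ge n$ for every $n$ — a contradiction. Therefore $X$ is finite, and then $Y=\vp(X)$ is finite with $|Y|=|X|$; together with the representation of $T$ obtained above, this is exactly the assertion. The only points needing care are the routine verification that the structural hypotheses of Theorem~\ref{t3.5} and Proposition~\ref{p4.2} are met and the identifications $\ti X=X$, $\ti Y=Y$; beyond that the argument is entirely elementary once the boundedness of $Y$ is used to force $Tf$ bounded.
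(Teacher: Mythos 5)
Your proof is correct and follows essentially the same route as the paper: normalize $T$, invoke Proposition \ref{p8.1} to get that $X$ is separated, and use the pointwise representation from Proposition \ref{p4.2} to reduce everything to a finiteness claim. The only difference is in that last step: the paper observes that $f(x)=\Psi(x,g(\vp(x)))$ lies in $U(X,E)=E^X$ for an \emph{arbitrary} $g:Y\to F$, so that $\Lip(Y,F)=F^Y$ and hence $Y$ is finite, whereas you build one specific $f$ whose image under $T$ would be an unbounded Lipschitz function on the bounded space $Y$ — two equivalent ways of exploiting the same facts, and both valid.
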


\begin{proof}
Assume that $T$ is biseparating.  Represent $T$ as in  Proposition  \ref{p4.2}.  Similarly, $T^{-1}$ has a representation ${T^{-1}g}(x) = \Psi(x,{g}({\vp}(x)))$.
By Proposition  \ref{p8.1}, $X$ is separated.  
Suppose that  $y\in Y$. Let  $\psi(y) = x\in {X}$.
If $b\in F$, then $\Psi(x,b) = T^{-1}(1\otimes b)(x) \in E$ and 
\[ b= T(T^{-1}(1\otimes b))(x) = \Phi(y,T^{-1}(1\otimes b)(x)) = \Phi(y,\Psi(x,b)).\]
Take an arbitrary function  $g:Y\to F$.  
Define $f:X\to E$ by $f(x) = \Psi(x,g(\vp(x)))$.
Since $X$ is separated, $f\in U(X,E)$.
Therefore, $Tf\in \Lip(Y,F)$.
By the above, for any $y\in Y$, $Tf(y) = g(y)$.
This shows that any function $g:Y\to F$ belongs to $\Lip(Y,F)$.  Clearly, this implies that $Y$ must be a finite set.
The remaining statements of the theorem follows easily from the representations of $T$ and $T^{-1}$.
The converse is clear.
\end{proof}

\begin{thm}\label{t6.7}
Assume that $Y$ is bounded.
A map $T:U_*(X,E)\to \Lip(Y,F)$ is biseparating if and only if 
\begin{enumerate}
\item $X$ and $Y$ are separated metric spaces.
\item There are a bijection $\psi:Y\to X$ and bijections $\Phi(y,\cdot):E\to F$, $y\in Y$, so that  
\begin{enumerate}
\item $Tf(y) = \Phi(y,f(\psi(y)))$ for all $f\in U_*(X,E)$ and all $y\in Y$.
\item For any bounded sets $B_1$ in $E$ and $B_2$ in $F$, there are a finite set $Y_0$ in $Y$ and a bounded set $B_3$ in $E$ so that 
\[ \bigcup_{y\notin Y_0}\Phi(y,B_1) \text{ is bounded in $F$ and } B_2 \subseteq \bigcap_{y\notin Y_0}\Phi(y,B_3).\] 
\end{enumerate}
\end{enumerate}
\end{thm}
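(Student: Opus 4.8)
The plan is to prove the two implications separately. The structural content comes from Propositions \ref{p8.1} and \ref{p4.2}, and the boundedness clause (2)(b) turns out to be exactly the condition needed for the pointwise representation to respect the two function spaces, using that on a bounded uniformly discrete metric space the bounded functions, the uniformly continuous functions, and the Lipschitz functions all coincide.

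\emph{Necessity.} Assume $T$ is biseparating. Applying Proposition \ref{p8.1} with $A(X,E) = U_*(X,E)$ and $Y$ bounded gives (1): $X$ and $Y$ are separated, i.e.\ uniformly discrete. Both $U_*(X,E)$ and $\Lip(Y,F)$ are standard subspaces of $U(X,E)$ and $U(Y,F)$ containing the nonzero constants, and $U_*(X,E)$ has property (P) (Propositions \ref{p3.0.1}, \ref{p3.10}); moreover $X,Y$ complete and uniformly discrete force $\ti X = X$, $\ti Y = Y$, while the function supported at a single point shows $\ti E_x = E$ and $\ti F_y = F$. So Proposition \ref{p4.2}, applied to the normalized biseparating map $f \mapsto Tf - T0$, yields a bijection $\psi : Y \to X$ and bijections $\Phi_0(y,\cdot) : E \to F$ with $Tf(y) - T0(y) = \Phi_0(y, f(\psi(y)))$; since $T0 \in \Lip(Y,F)$, setting $\Phi(y,\cdot) = \Phi_0(y,\cdot) + T0(y)$ gives bijections $E \to F$ and the representation $Tf(y) = \Phi(y, f(\psi(y)))$ of (2)(a). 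With $\vp = \psi^{-1}$ and $\Psi(x,\cdot) = \Phi(\vp(x),\cdot)^{-1}$ one gets $T^{-1}g(x) = \Psi(x, g(\vp(x)))$.

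For (2)(b), fix bounded $B_1 \subseteq E$ and $B_2 \subseteq F$. If $\bigcup_{y\notin Y_0}\Phi(y,B_1)$ were unbounded for every finite $Y_0$, an induction produces distinct $y_n$ and $e_n \in B_1$ with $\|\Phi(y_n,e_n)\| > n$; the function $f$ with $f(\psi(y_n)) = e_n$ and values in $B_1$ elsewhere is bounded, hence (as $X$ is uniformly discrete) $f \in U_*(X,E)$, so $Tf \in \Lip(Y,F)$ is bounded since $Y$ is bounded, contradicting $\|Tf(y_n)\| = \|\Phi(y_n,e_n)\| > n$. Thus some finite $Y_0'$ has $\bigcup_{y\notin Y_0'}\Phi(y,B_1)$ bounded. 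The same argument applied to $T^{-1}$, $\Psi$ (using that bounded functions on the uniformly discrete bounded $Y$ are Lipschitz, hence lie in $\Lip(Y,F)$, and that $T^{-1}g \in U_*(X,E)$ is bounded) gives a finite $X_0 \subseteq X$ with $B_3 := \bigcup_{x\notin X_0}\Psi(x,B_2)$ bounded. Put $Y_0 = Y_0' \cup \vp(X_0)$. For $y \notin Y_0$: $\psi(y) \notin X_0$, so $\Psi(\psi(y),B_2) \subseteq B_3$, i.e.\ $B_2 \subseteq \Phi(y,B_3)$; and $\Phi(y,B_1) \subseteq \bigcup_{y\notin Y_0'}\Phi(y,B_1)$ is bounded. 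This is (2)(b).

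\emph{Sufficiency.} Assume (1), (2), define $T$ by (2)(a), and set $\vp = \psi^{-1}$, $\Psi(x,\cdot) = \Phi(\vp(x),\cdot)^{-1}$. If $f \in U_*(X,E)$ has values in a bounded set $B_1$, choose via (2)(b) a finite $Y_0$ with $\bigcup_{y\notin Y_0}\Phi(y,B_1)$ bounded; then $Tf$ is bounded on $Y$ and, $Y$ being bounded and uniformly discrete, $Tf \in \Lip(Y,F)$. Injectivity of $T$ follows from injectivity of the $\Phi(y,\cdot)$ and surjectivity of $\psi$. For surjectivity, given $g \in \Lip(Y,F)$ with values in a bounded $B_2$, (2)(b) supplies a finite $Y_0$ and bounded $B_3$ with $B_2 \subseteq \bigcap_{y\notin Y_0}\Phi(y,B_3)$; then $f(x) := \Psi(x, g(\vp(x)))$ has values in $B_3$ off the finite set $\psi(Y_0)$, so $f$ is bounded, $f \in U_*(X,E)$, and $Tf = g$. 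Finally $Tf(y) = Th(y) \iff f(\psi(y)) = h(\psi(y))$ by bijectivity of $\Phi(y,\cdot)$, so $C(Tf-Th) = \psi^{-1}(C(f-h))$; hence $C(Tf-Th)\cap C(Tg-Th) = \psi^{-1}\big(C(f-h)\cap C(g-h)\big)$, which gives $f \perp_h g \iff Tf \perp_{Th} Tg$. So $T$ is biseparating.

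The only place needing real care is the equivalence buried in (2)(b): one must recognize that this clause is simultaneously the obstruction to $f \mapsto \Phi(\cdot, f(\psi(\cdot)))$ mapping $U_*(X,E)$ into $\Lip(Y,F)$ and the obstruction to its inverse mapping $\Lip(Y,F)$ into $U_*(X,E)$, and that the boundedness of Lipschitz functions on the bounded space $Y$ (together with uniform discreteness of $X$ and $Y$) forces it. Everything else is routine manipulation of the representations produced by Propositions \ref{p8.1} and \ref{p4.2}.
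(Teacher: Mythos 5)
Your proof is correct, but it takes a different route from the paper's. The paper first observes (as you do, via Proposition \ref{p8.1}) that $X$ and $Y$ are separated, and then exploits the identifications $U_*(X,E)=C_*(X,E)$ and $\Lip(Y,F)=C_*(Y,F)$ to reduce the whole statement to the already-proved characterization of biseparating maps between $C_*$ spaces (Theorem \ref{t5.5}); the only remaining work is to check that condition (3) of that theorem is equivalent to condition 2(b) here, which the paper does by noting that, $X$ being separated, "every $(x_n)\in\prod_n Z_n(B_1,B_2)$ has a convergent subsequence" is the same as "some $Z_{n_0}(B_1,B_2)$ is finite". You instead bypass Theorem \ref{t5.5} entirely: you extract the pointwise representation from Proposition \ref{p4.2} (handling the normalization $T0\neq 0$ and the identification $\ti E_x=E$, $\ti F_y=F$ correctly), and then re-derive by hand both the necessity of 2(b) (building an unbounded witness from a bounded, hence uniformly continuous, function on the uniformly discrete $X$) and the sufficiency (boundedness of $Tf$ off a finite set plus the coincidence of bounded and Lipschitz functions on a bounded uniformly discrete space, then the carrier identity $C(Tf-Th)=\psi^{-1}(C(f-h))$). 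In effect you are reproving the relevant special case of Proposition \ref{p5.1}(2)(b) and Theorem \ref{t5.5} from scratch; what this buys is a self-contained argument in which the role of 2(b) as simultaneously the obstruction for $T$ and for $T^{-1}$ is completely explicit, at the cost of some duplication of work already done in \S\ref{s5}. Both routes rest on the same two pillars: Proposition \ref{p8.1} and the observation that on bounded uniformly discrete spaces the classes of bounded, uniformly continuous and Lipschitz functions coincide.
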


\begin{proof}
Assume that $T:U_*(X,E)\to \Lip(Y,F)$ is biseparating.   By Proposition \ref{p6.4}, $X$ and $Y$ are both separated.  
Therefore, for either direction of the theorem, $X$ and $Y$ are separated.  
In this case, since $Y$ is assumed to be bounded, $U_*(X,E) = C_*(X,E)$ and $\Lip(Y,F) = C_*(Y,F)$.
Thus the problem reduces to proving that if $X$ and $Y$ are separated, then $T:C_*(X,E)\to C_*(Y,F)$ is biseparating if and only if condition (2) of the theorem holds.
Biseparating maps $T:C_*(X,E)\to C_*(Y,F)$ have been characterized in Theorem \ref{t5.5}.  
Note that presently, as $X$ and $Y$ are separated, a map $\vp:X\to Y$ is a homeomorphism if and only if it is a bijection.  Also, condition (2) of Theorem \ref{t5.5} is vacuous.
Thus, it remains to show that when $X$ and $Y$ are separated, condition (3) of Theorem \ref{t5.5} is equivalent to 
condition 2(b) above.

Let $B_1$ and $B_2$ be bounded sets in $E$ and $F$ respectively.  Since $X$ is separated, condition (3) of Theorem \ref{t5.5}is equivalent to the fact that every $(x_n) \in \prod_nZ_n(B_1,B_2)$ has a constant subsequence.
Note that  $Z_m(B_1,B_2) \supseteq Z_n(B_1,B_2)$ if $m \leq n$.
Therefore, said condition is satisfied if and only if there exist $n_0$ such that $Z_{n_0}(B_1,B_2)$ is finite.
If $Z_{n_0}(B_1,B_2)$ is finite, let $Y_0 = \vp(Z_{n_0}(B_1,B_2))$.
Then $Y_0$ is  a finite set and $y = \vp(x) \notin Y_0$ implies
Now 
\[
x\notin Z_{n_0}(B_1,B_2) \iff \Phi(y,B_1) \subseteq n_0B_F \text{ and } B_2\subseteq \Phi(y, n_0B_E).\]
Hence condition  2(b) is satisfied with $B_3 = n_0B_E$.
Conversely, if condition 2(b) holds.  Let $n_0$ be such that 
\[ \bigcup_{y\notin Y_0}\Phi(y,B_1) \subseteq n_0B_F \text{ and } B_2 \subseteq \bigcap_{y\notin Y_0}\Phi(y,n_0B_E).\] 
Then $\vp(x) \not\in Y_0$ implies $x\notin Z_{n_0}(B_1,B_2)$.  Therefore, $Z_{n_0}(B_1,B_2)$ is finite.
\end{proof}

\end{document}